\numberwithin{equation}{section}
\newtheorem{thm}{Theorem}[section]
\newtheorem{lem}[thm]{Lemma}
\newtheorem{cor}[thm]{Corollary}
\newtheorem{prop}[thm]{Proposition}
\newtheorem{rem}[thm]{Remark}
\newcommand\N{{\mathbb N}}
\newcommand\R{{\mathbb R}}
\newcommand\Z{{\mathbb Z}}
 \newcommand\T {{\mathbb T}}
\newcommand\1{{1\kern-.25em\hbox{\rm I}}}
\newcommand\eu{{1\kern-.25em\hbox{\sm I}}}
\newcommand{\dha}{{d}}
\newcommand\HH{{\mathcal H}}
\newcommand\A{{\mathcal A}}
\newcommand\B{{\mathcal B}}
\newcommand\CC{{\mathcal C}}
\newcommand\D{{\mathcal D}}
\newcommand\FF{{\mathcal F}}
\newcommand\GG{{\mathcal G}}
\newcommand\LL{{\mathcal L}}
\newcommand\PP{{\mathcal P}}
\newcommand\TT{{\mathcal T}}
\newcommand\NN{{\mathcal N}}
\newcommand\XX{{\mathcal X}}
\newcommand\RR{{\mathcal R}}
   \newcommand \llangle{ {\langle\hskip-2pt\langle}}
 \newcommand \rrangle{ {\rangle\hskip-2pt\rangle}}
  \newcommand\z{\zeta}
 \newcommand\e{\epsilon}
\newcommand\g{\gamma}
\newcommand\s{\sigma}
\let\a=\alpha
\let\b=\beta
\let\d=\delta
\let \l = \lambda
\let \t = \tau
\newcommand\G{\Gamma}
\newcommand\Om{\Omega}
\newcommand{\nada}[1]{}
\begin{document}
 \title[Spectral properties]
 {Spectral properties of  integral operators   in problems of  interface dynamics}
 \author{Enza Orlandi}\thanks{E.O. supported by   Universit\'a di ROMA TRE}
\address{Enza Orlandi, 
Dipartimento di Matematica e Fisica\\
Universit\`a  di Roma Tre\\
 L.go S.Murialdo 1, 00156 Roma, Italy. }
\email{{\tt orlandi@mat.uniroma3.it}}
\date{\today}
\begin{abstract}
 We  consider    a family of integral operators  which appears when analyzing  layered equilibria and front dynamics of a  phase kinetics equation with a conservation law.      We   study the spectra  of these operators in $L^2$  and derive
 a lower bound for the    associated  quadratic forms   in terms of the  $H^{-1}$ norm.   
 \end{abstract}
\keywords{  Integral operators,  spectrum,  Cahn-Hilliard,  interfaces}
\subjclass{ Primary 60J25; secondary 82A05}

\maketitle

\section{Introduction} 
The purpose of this paper is to   derive spectral estimates for   a family of integral operators
which appear when analyzing  layered equilibria and front dynamics of a  phase kinetics equation with a conservation law.   We start
by recalling some background.
 
Consider  in  the torus   $\T^d$  the nonlocal and nonlinear  evolution equation   
\begin {equation} \label  {1.0} \frac {\partial}{\partial t}  m(x,t)    =  \nabla\cdot\bigl(
\nabla  m(x,t)  - 
  {\beta (1-m(x,t)^2) (J\star
\nabla m )(x,t)} \bigr)  \end {equation}
where $ \beta>1 $,  $ \star$ denotes convolution and $J$ is a smooth,
spherically symmetric probability density 
with compact support.
 This equation first appeared
in the literature in a paper  \cite{LOP}  on the dynamics of Ising systems
with a long--range interaction and so--called ``Kawasaki'' or ``exchange''
dynamics and  later it was rigorously
derived  in \cite{GL1}.   In this physical context,
$m(x,t) \in [-1,1]$ is the magnetization density at $x$ at time $t$, viewed on the length scale of the
interaction, and $\beta$ is the inverse temperature. 
This introduction is not the place to fully explain the physical 
origins of the equation \eqref {1.0}, and
familiarity with them is not needed to understand our results or their
proofs. We refer to the previous quoted paper   for more physical insight. 
      The equation \eqref {1.0}   can be written  in a
gradient flow form. To do  this, we introduce the free energy functional
$ \FF(m)$:
\begin {equation} \label {freeen} \FF (m) = \int_{\T^d}  [V(m(x)) - V(m_\beta)]\dha     x+ {1\over 4} 
\int_{\T^d}\int_{\T^d}  J(x-y) [m(x)-m(y)]^2\dha     x\dha     y,   \end {equation}   
where  $ V(m)$ is
\begin {equation} \label {littlef} V(m) = -  \frac 12  m^2 + \frac 1 \beta \left [ \left  (\frac {1+   m} 2 \right )\ln \left (\frac {1+   m} 2 \right)+   \left (\frac {1 -  m} 2 \right )\ln \left (\frac {1-   m} 2 \right) \right ].
   \end {equation} 
 For $\beta >1$, this  potential function $V$  is a symmetric double well potential on
$[-1,1]$.  We denote the positive minimizer of $V$ on $[-1,1]$ by
$m_\beta$.
It is easy to see that $m_\beta$ is the positive solution of the equation
\begin {equation} \label {(fixedpt) } m_\beta = \tanh (\beta m_\beta).
 \end {equation}
Then  equation \eqref {1.0} can be written as
\begin {equation} \label {gradflow}{\partial\over \partial t}m    =  \nabla\cdot\biggl( 
\sigma(m)\nabla\biggl({\delta \FF \over \delta  m } \biggr)\biggr) 
  \end {equation}
where the {\it mobility} $\sigma(m)$ is given by
\begin {equation} \label {mobility} \sigma(m) = \beta (1 -m^2).   \end {equation}
Formally one derives
\begin {equation} \label {dissipate}{{\rm d}\over\dha     t}\FF (m(t))   =  -\int\biggl|
  \nabla \left ({\delta \FF \over \delta  m } \right) \biggr|^2
\sigma(m(t)){\rm d}x  \end {equation}
thus $ \FF $ is a Lyapunov function for \eqref {1.0}. 
This suggests that the free energy should want to tend locally to one of the
two minimizing values, $\pm m_\beta$, and that the interface between a
region of $+m_\beta$ magnetization and a region of $-m_\beta$ magnetization
should have a ``profile'' -- in the direction orthogonal to the interface --
that makes the transition from one local equilibrium to the other in a
way that minimizes the free energy. This is indeed the case, as it has been shown in    \cite{CCO1},
\cite{CCO2} in dimension $d=1$ and in  \cite{CO}  in dimension $d \le 3$. Also clearly, the minimizers
of the free energy $\pm m_\beta$ represent the 
``pure phases'' of the system.
However, unless the initial data
$m_0$ happens to satisfy $\int_\T  m_0(x){\rm d}x = \pm m_\b |\T|$, these 
``pure phases''  
cannot be reached  because of the
conservation law. Instead, what will eventually be produced is a region in which 
$m(x) \approx +m_\b$, with 
$m(x) \approx -m_\b$ in its complement, and with a smooth transition 
across 
its boundary. This is referred to a {\it phase segregation}, 
and the boundary  is the {\it interface} between the two phases. 
If we ``stand far enough back'' from $\T$, all we see is the 
interface, and we do not see any structure
across the interface -- the structure now being on an invisibly small 
scale. The evolution of
$m$ under the \eqref {1.0}, or another   evolution 
equation of this type,  
drives an evolution of the interface.
To see any evolution of the interface, one must wait a long time. 
More specifically, let $\lambda$
be a small parameter, and introduce new variables $\tau$ and $\xi$ 
through
$$\tau = \lambda^3t \qquad{\rm and}\qquad \xi = \lambda x\ .$$
Then of course 
$${\partial \over \partial t} = \lambda^3 {\partial \over \partial 
\tau}
\qquad{\rm and}\qquad {\partial \over \partial x} = \lambda  
{\partial \over \partial \xi}\ .$$
Hence if $m(x,t)$ is a solution of \eqref {1.0}, and 
we define
$m^\lambda(\xi,\tau)$ by $m^\lambda(\xi,\tau) = m(x(\xi),t(\tau))$,
we obtain
\begin {equation} \label {1.1}\begin {split}  & \frac {\partial}{\partial \t}  m^\l (\xi,\t)    = 
   \frac 1 \l  \nabla \cdot \left ( \s (m^\l)  \nabla
\left[  \frac 1 \b \hbox {arctanh} \, 
m^\l 
   - (J^\l \star   m^\l )  \right]    \right ) (\xi, \tau)  
         \end {split} \end {equation}
 where we denoted  $ J^\l (\xi)= \l^{-d} J(\l^{-1}
\xi)$.   
One should just bear in mind that now we are looking at the evolution 
over a {\it very} 
long time scale when $\lambda$ is small.
 
One might hope that for small values of
$\lambda$, {\it all information about the evolution on $m^\l $ is 
contained in the evolution of the interface
$\G_{\tau}$}.  
This is indeed the case.  
The sharp interface limit of  equations tipyfied by \eqref {1.1}    has 
been  investigated by
Giacomin and  Lebowitz, see  \cite{GL2}, where it  is  heuristically proven that the limit motion is driven by  the Mullins--Sekerka  flow.  They applied asymptotic analysis    in the same spirit of the 
heuristic work of  Pego \cite{P}  who derived the sharp interfaces limit for the Cahn-Hilliard
equation.

  The heuristic analysis of Pego   has been  rigorously proven  for
the Cahn-Hilliard equation by  \cite{ABC}  in the 1994 and   later on,      applying  a different method,  in  \cite{CCO3}. 
In both the papers the proof  is based on two steps.
The first step is to construct   approximate solutions to   the Cahn-Hilliard
equation, which  are,  in the limit, close to the  the Mullins--Sekerka  flow.
 The second step is to show that the family of approximate solutions is indeed 
suitably close  
to the solution of the Cahn-Hilliard equation.  To show this  second step     spectral estimates  are needed. These were proven  in  
 \cite {AF}, in dimensions $d=2$,  and \cite {Chen} in any dimensions. 

 In \cite{ABC}   the family of the approximate solutions to    Cahn-Hilliard
equations  are constructed  by  matched asymptotic  expansions. 
In \cite{CCO3},    the approximate solutions
are constructed by an alternative method.  The method based on the Hilbert expansion used in kinetic theory besides its relative
simplicity, it leads to calculable higher order corrections to the interface motion.  More important in this context is that  the above approach allows   to construct approximate solutions to the non local evolution equation \eqref {1.1}.   
 
 Hence,   to prove rigorously the heuristic analysis done by Giacomin and Lebowitz in  \cite{GL2},
 one  might  first construct  approximate solutions and this  can be done applying the same method as in   \cite{CCO3}.  Then   one needs to derive spectral estimates   to show that  
 the constructed  approximate solutions  are    close in  some convenient norm  to the solution of  \eqref {1.1}.    
 In this paper we prove     such  spectral estimates.   
We  set the problem in a bounded domain $\Om \subset \R^2$.
The restriction at  dimension $d=2$ is purely technical. In $d=2$ we still can  use global set of coordinates to represent the operator when close to the interface, $\G$.  Namely in such a case any simple, smooth, closed,  one  dimensional curve can be mapped  into a one dimensional circle.  This allows to use Fourier series by going to the universal cover.
In dimension $d \ge 3$ this would   not be possible and one needs to deal with    local coordinates and, possibly, Fourier transforms. 
 Indeed, we believe that the result
holds in any dimension  and we leave this problem to further investigation. 
  
We consider a family $m_A^\l$ of smooth functions
which as $\l \to 0$ approach a step function with values $\pm m_\b$ which is discontinuous 
along a smooth curve $\G \subset \Om$.   This family  consists  of approximate solutions  to \eqref {1.1}, which can be constructed  as in \cite {CCO3}.  
The functions $m_A^\l$   have a very specific behaviour  in the direction orthogonal to $\G$, as $\l \to 0$.  The specific form of $m_A^\l$ is given in \eqref   {2.2}.  To  simplify  notations from now on  we drop the index $\l$.
   The linear equation  obtaining  linearising    \eqref {1.1}  at  $m_A$,  is~ \footnote { We  are  now considering    \eqref {1.1} in  $\Om \Subset  \R^2$ with  non flux boundary conditions and with   the convolution operator ``restricted" in $\Om$.} 
        \begin {equation} \label {E.10}  
\frac {\partial}{\partial \t} v (\xi,\t)    =  \frac 1 \l  \nabla \cdot \left ( \s (m_A)  \nabla \cdot 
\left [  \frac v  {\s (m_A)}  
 - (J^\l \star_\Om  v )  \right ]    \right ) (\xi, \tau), \quad \xi \in \Om,  
          \end {equation}
where
$ (J^{\l} \star_\Om v)
(\xi) = \int_\Om  J^\l (\xi-\xi') v(\xi') d\xi'$,   $v = m^\l-m_A$ and  we assume that $ v(\xi, 0)=0$, $\xi \in \Om$.  By the conservation law,    $ \int_\Om  d\xi v (\xi, \tau) = 0$ for all $\tau\ge 0$. To simplify the explanation let us first pretend that   $\s(m_A) =1$  in  $\nabla \cdot   \s (m_A)  \nabla$.
Let us then denote  \begin {equation} \label  {8.10a}  \frac 1 \l ( A^\l_{m_A} v)  
(\xi)=  \frac 1 \l \left \{ \frac {v (\xi)} { \s(m_A(\xi))}- (J^{\l} \star_\Om v)
(\xi)  \right \},
 \quad \xi \in \Om.    \end {equation}
Accordingly, to  lower bound  the spectrum of the linear operator on the right hand side of \eqref {E.10} in $H^{-1} (\Om)$
it suffices to  show that
   \begin {equation} \label  {VL4}  \frac 1 \l  \langle\hskip-2pt\langle  v,   A^\l_{m_A} v  \rrangle \ge - C  \|v\|^2_{H^{-1}(\Om)},  \end {equation} 
   where   $ \llangle v, g \rrangle = \int_\Om v(\xi) g(\xi) d\xi$ and $C>0$ does not depend on $\l$.   In the general   case, when  $\s(m_A) \neq 1$,  one can argue in the same manner by using a weighted $H^{-1}$ norm, the weigh   being $\s(m_A)$. Because   $0<a\le \s(m_A) \le \b$ the  weighted $H^{-1} $ norm is equivalent to the  $H^{-1}$.  So we  will  prove   \eqref  {VL4}, with $v \in  H^{-1}(\Om)$.
   
As in  Alikakos and Fusco, \cite {AF}, or in  X. Chen,  \cite {Chen}, we     first  prove a lower bound of the spectrum  of the operator in \eqref  {8.10a}  in the $L^2$ norm, see Theorem \ref {82}, i.e
\begin {equation} \label  {VL2} \llangle v,   A^\l_{m_A} v \rrangle \ge - C  \l^2  \|v\|^2_{L^2(\Om)}. 
 \end {equation}
Then,    when
\begin {equation} \label  {VL3} \llangle v,   A^\l_{m_A} v \rrangle \le 0 \end {equation}we show that
\begin {equation} \label  {VL3a}   \l  \|v\|^2_{L^2(\Om)} \le  \|v\|^2_{H^{-1}(\Om)}  \end {equation}
 see Theorem \ref {86}.   In this way we   prove  the    lower bound  \eqref {VL4}.

\subsection { Sketch   of the proof.}  

 The proof of \eqref {VL4}   presents some similarities with   the one given  in    \cite {AF}  and  in  \cite {Chen}  for the Cahn-Hilliard  (C-H) case.
Nevertheless,  the implementation of  each single step  requires different technique due to the non locality of the operator.   In particular, we cannot follow the  method  in   \cite {AF}  and  in  \cite {Chen}, as we cannot split the integral  kernel  in \eqref {8.10a} into a tangential and in normal part, whereas expressions like $\|\nabla  v \|^2 $  split in such a way naturally.

  Now we   sommarize    how we proceed in proving \eqref {VL4}.
 We consider  a neighborhood  of $\G$,   $\NN(\G)$.
We map  the curve $\G$ in a circle $T$ having perimeter equal to the length of the curve $L$
 and each point  $\xi \in \NN (\G)$ is mapped  to $(s,r) \in   \TT= T \times [-d_0, d_0]$ in a diffeomorphic way.  We denote   by   $  \a (s,r) $  the    Jacobian of the map.
 We take a subset  $\NN_1 (\G) \subset \NN (\G)$  and  we assume that 
 \begin {equation} \label {sc1a}   \inf_{ \xi \in \Om \setminus   \NN_1(\G)}  \frac 1 {\sigma (m_A(\xi))} >C^* >1.  \end {equation}
To take advantage of \eqref  {sc1a} we    split the quadratic form  \eqref {VL2} in two integrals: one over 
 $\NN_1(\G)$ the other in $ \Om \setminus \NN_1(\G)$.
   To this end we introduce  the  indicator function of the set  $\NN_1 (\G)$, $ \eta_1 (\xi)=1$ when $\xi \in  \NN_1(\G)$,  $ \eta_1 (\xi)=0$  in  $ \Om \setminus \NN_1 (\G)$ and $\eta_2(\xi)= 1- \eta_1(\xi)$.
 Because of  the non locality of the operator, taking into account that  $\eta_2 (\xi)\eta_1 (\xi)=0$ for $\xi \in \Om$ and the symmetry of $J^{\l} (\cdot)$ we obtain that 
\begin {equation} \label {gch2b}  \begin {split}     \int_{ \Om}  ( A^\l_{m_A} v)  (\xi) v(\xi) \dha    \xi &   = \int_{ \Om}  ( A^\l_{m_A} \eta_1 v)  (\xi)  \eta_1 (\xi) v(\xi) \dha    \xi   \cr & +
 \int_{ \Om}  ( A^\l_{m_A} \eta_2 v)  (\xi)  \eta_2 (\xi) v(\xi) \dha    \xi   \cr &  -
2 \int_{ \Om} \dha    \xi  \eta_1 (\xi) v  (\xi) (J^{\l} \star  \eta_2 v)(\xi).
\end {split}
\end {equation} 
Condition \eqref  {sc1a} implies  $$ \int_{ \Om}  ( A^\l_{m_A} \eta_2 v)  (\xi)  \eta_2 (\xi) v(\xi) \dha    \xi   \ge (C^*-1) \| \eta_2   v\|^2_{L^2 (\Om)} >0.$$ 
Even  if we    show that 
  \begin {equation} \label {D1a}   \int_{\Om }  (A^\l_{m_A}  \eta_1v ) (\xi)  \eta_1 v(\xi)\dha    \xi  \ge - C \l^2 \int_\Om v^2 (\xi) d\xi,  \end {equation}
  the last term of \eqref  {gch2b}  might create problems for getting estimate \eqref {VL2}. 
  The first task in proving estimate \eqref {VL2} is therefore to  show that there exists $\NN_1 (\G)$ so that 
 \begin {equation} \label {rm.10}  \left |  \int_{ \Om} \dha    \xi  \eta_1 (\xi) v  (\xi) (J^{\l} \star  \eta_2 v)(\xi) \right |  \le  C\l^2   \|   v\|^2_{L^2 (\Om)}. \end {equation}
 This is proven at the beginning of the proof of Theorem \ref {82}. 
      The following step is    to  show \eqref {D1a}. 
  We    write the   quadratic form    on the left hand side of  \eqref {D1a}   in local  variables.  Notice that the integrals are on the set  $\NN (\G)$.
   In these local variables the  convolution  operator $ (J^{\l} \star  \eta_1v)(\xi)$
 becomes equal, up to  order $\l^2$,   to  an operator  which is not    a convolution 
 anymore     but    it  is  still self-adjoint with respect to a weighed   Lebesgue measure on $\TT$, see Lemma \ref {M0}. 
  In this way we show, see Lemma \ref {F2},  that
  setting  $$\hat v  (s,r) = \sqrt { \a (s,r) } v (s,r)$$
\begin {equation} \label {D10a}   \int_{\Om }  (A^\l_{m_A}  \eta_1v ) (\xi)  \eta_1 v(\xi)\dha    \xi  \ge 
\int_{\TT} ds dr (L^\l \hat v) (s,r)\hat v(s,r) - C \l^2 \|v\|^2_{L^2(\Om)}.   \end {equation}
It turns out that the operator $L^\l $ is conjugate to the operator $ \1 - \PP^\l$, where $  \PP^\l$ is an integral operator, positivity improving.   Therefore $L^\l $  and $ \1 - \PP^\l$ have the same spectrum. By the  Perron -Frobenius Theorem we know that  the  principal eigenfunction  is  point wise  positive.
To  estimate the  principal eigenvalue we still need other ingredients. 
We show that  
   when the operator $L^\l$  acts on  functions depending only on the signed distance $r$ from $\G$, becomes equal, up to order $\l^2$,       to  a one dimensional operator 
$L_1^{\l,s}$,       $s \in T$.   The knowledge of the spectrum of  $L_1^{\l,s}$ (obtained using the results of \cite{O1}) together with the properties of  the  eigenfunctions associated to the principal eigenvalues of  $L_1^{\l,s}$ and    $\PP^\l$  allows to upper and lower bound the principal eigenvalue $\mu_0$ of 
$L^\l$.   In this way we   show that  $ -C \l^2 \le \mu_0  \le   C \l^2$. Hence  \eqref {D1a}  follows.
      To prove the  $H^{-1} (\Om)$ lower  bound, see \eqref {VL4},  we cannot proceed as  in  \cite {AF} and in  \cite {Chen}. 
 For the non local operator  \eqref {8.10a} a  bound on its quadratic form does not
imply any bound  on the  gradient of the  function.       
Nevertheless,  we are still able to  show that  when 
 \begin {equation} \label {D11}   \int_{\Om }  (A^\l_{m_A}  \eta_1v ) (\xi)  \eta_1 v(\xi)\dha    \xi \le \l^2 \|v\|^2_{L^2(\Om)}  \end {equation}
then $v$ in local variables  enjoys the following
decomposition, see   Theorem  \ref {g10},
\begin {equation} \label {D12}   v (s, r)  =  Z(s)\frac 1 { \sqrt \l} \psi^0_0 (\frac r \l) + v^R (s,r)
 \end {equation}
 where  $ \|Z \|_{L^2(T)} \simeq 1$,  $ \|\nabla Z\|_{L^2(T)}  \le C$,  $\| v^R\|^2_{L^2(\NN(\G))} \le C\l^2$   and $ \psi^0_0  (\cdot)$ is a strictly positive, even, smooth  function, exponential decreasing.
In such a way we obtain a decomposition similar to the one in \cite {AF} and    \cite {Chen} and we can   proceed as in  \cite {Chen} to prove \eqref {VL3a}.   To prove the decomposition \eqref {D12}  a more complete  analysis of the spectrum of  the operator $L^\l$ is needed.   
Let  $ \{ \mu_k\}_{k\in \N}$  be the eigenvalues of   $L^\l$ in $L^2(\TT)$. Essentially we prove that   
 $ -C\l^2 + c_0k^2 \l^2  \le \mu_k \le C \l^2 + c_1k^2 \l^2$,  for $k \le \frac {h_0} \l$, and,   for  $k > \frac {h_0} \l$, $\mu_k \ge \nu>0$,    where $C$, $c_0$,  $c_1$, $h_0$ and $\nu$    are positive,  real numbers independent on $\l$.
In addition, when  $k \le \frac {h_0} \l$, we also need, and establish,  a  precise  knowledge of the  shape of the associated  eigenfunctions.
          To obtain such informations on the spectrum of the operator $ L^\l$ we make a judicious use of Fourier analysis.

      \bigskip
\noindent {\bf Acknowledgements} I  am indebted to Sao Carvalho and Eric Carlen for   discussions on this  and related problems, discussions which    started many years ago  when visiting Georgia Tech  and  never stopped.   I    also benefited  from  suggestions   by    Carlangelo Liverani.
     \section { Notations and Results} 

Let  $\Omega$  be  a bounded domain in $\R^2$ with sufficiently smooth boundary  and let   
  $\G \subset \Om$  be a simple,  smooth,   closed curve,   boundary of an open set   $ \Omega^-$.   We denote    by $ \Omega^+ = \Omega \setminus (\Omega^-\cup \G).$  We  denote by $C$ a constant which might change from one occurrence to the other,  independent on $\l$.    

\subsection { The interaction $J^\l$. }
Let  $J$   be 
a smooth spherically symmetric, translation invariant, 
probability density on $\R^2$  with compact support, $ \{\xi \in \R^2: |\xi| \le 1\}$.
We assume  $J \in C^1(\R^2)$.
We say  that $\xi \in \R^2$ and $\xi' \in \R^2 $ interact  each other if    $J(\xi-\xi')>0$. 
  For $  \xi = (\xi_1, \xi_2) $    we denote    \begin {equation}  \label{1d} \bar J (\xi_1) = \int_{\R}   J(\xi) d\xi_2, \end {equation}
 and for     $ \l \in (0,1]$  
     $$ J^\l (\xi) =  \frac 1 {\l^2}  J( \frac { \xi}\l),  \qquad  \bar J^\l (\xi_1) = \frac 1 \l  \bar J (\frac {\xi_1} \l) .$$
    The scaling is such that  for all $ \l \in  (0,1]$ 
    $$ \int_{\R^2}       J^\l (\xi) d\xi =1,    \quad\int_\R   \bar J^\l (\xi_1) d\xi_1=1 .$$  

\subsection { Interaction in bounded domain}
In a bounded domain  $Q$   we   require   the interaction  to    act  only when $\xi$ and $\xi'$ are in $Q$. 
Let $v$ be a function having support $Q$.
 We denote for $\xi \in Q$ 
  \begin{equation} \label {ok1} (J \star v) (\xi)=(J \star_Q v) (\xi)= \int_Q J(\xi- \xi') v(\xi') d \xi'.  \end {equation} 
  We do not add the suffix $Q$, unless confusion arises.  Same notation when $J$ is replaced by $J^\l$.
  
\subsection { Local variables  in a neighborhood of  $\G$ }
We parametrize $\G$  by arc length. Let $L$  be the length of $\Gamma$ and  let $ T$  be a    circle  of  length  $L$.
  Let     $ \g :  T \to   \G $ be  the map    parametrizing $ \G$   
 so that  for $s \in T $,  $|\g'(s)|=1$.   We assume    $\g \in C^5(T)$.  We denote by $ \nu(s)$ a smoothly  varying unit vector normal    to $ \G$  at  the point $\gamma (s)$.      We denote    by $ k(s) $ the signed curvature  defined by  $ \nu'(s) =  -k(s) \g'(s)$ .        We  therefore have   
      \begin{equation} \label {ok2}  |\nu(s)|=1, \quad   |\g'(s)|=1,    
      \quad \nu'(s) =  -k(s) \g'(s), \qquad \g''(s) = k(s) \nu (s), \qquad   \g'(s) \cdot  \nu(s)  =0,   \end {equation} 
      where for two vectors    $w_1$ and $w_2$ in $\R^2$, $w_1 \cdot w_2$ is the scalar product.
      Let  $d(\xi, \G)$ be  the euclidean distance of the point $\xi$ from  $\G$ and $r= r(\xi, \G)$ be the signed distance of a point $\xi$ from $\G$ with the convention that  $r <0$  when $\xi \in \Omega^-$   and  $r>0$  when $\xi \in \Omega^+$.
Let  $d_0>0$  be so that  
 $$\NN(d_0) = \{ \xi \in \R^2: d (\xi, \G) \le d_0 \} \subset \Omega.$$     
We  require $d_0$ to be small enough so  that         the   map   
     $ \rho:  T   \times [-d_0,d_0]  \to \NN(d_0) $,
     \begin{equation} \label {ch1a}   \xi = \g(s) + \nu (s) r  = \rho (s,r),   \end {equation}
           is      a  diffeomorphism.    We denote    $I= [-d_0,d_0]$,   $ \TT=  T   \times I$ and 
      $  \a (s,r) $ for $(s,r) \in \TT$    the Jacobian of the local change of
variables  
 \begin{equation} \label {sm1}  \a(s,r)= \hbox {det} \left (
\frac {\partial \rho (s,r)} {\partial (s,r)} \right)  =  1- r k(s).  \end {equation} 
 We further require $d_0$ to be small enough so that 
 \begin {equation} \label {SG1a}  \sup_{s \in T}  |k(s)| d_0    \le \frac 12. \end {equation}
 This  implies       $ \frac 32  \ge  \a(s,r) \ge \frac 12$  for all $(s,r) \in \TT$.  

A  function $u(\xi)$ for $ \xi \in \NN(d_0)$ becomes by the  change of  coordinates  
  $v(s,r)= u ( \rho (s,r))$, $(s,r) \in \TT$.   In the sequel we identify functions of variable $\xi$ and functions of variable $(s,r)$ in the domain $\NN(d_0)$.
  We often lift the function $v$ on the universal cover of $\TT$ without mentioning.
  We    write
  \begin{equation} \label {se1} \int_{\NN(d_0)} u(\xi) d\xi =  \int_{\TT}  u(s,r)  \a (s,r)  ds dr   \end {equation} 
 and  for $v$ and $w$ in $L^2(\TT)$ 
\begin {equation} \label {ass1}  \langle v, w \rangle = \int_{\TT} v(s,r)  w(s,r) ds dr. \end {equation}   
 \subsection  { The  approximate  solution    $m_A(\cdot)$}
Let  $\beta>1$,  $ m_\beta $ be  the strictly positive solution  of
 $ m= \tanh  \beta  m $  and   $\bar m $ be    the unique  antisymmetric  solution of
 \begin {equation} \label  {2.2}    m(z) = \tanh (\beta \bar J \star m) (z)  \quad \hbox {in} \quad \R, \qquad   m(0)=0, \quad  \lim_{z \to \pm
\infty} 
 m(z) =
\pm m_\b.  \end {equation}
Notice that $m_\b<1$ and  it  can be proven that   $ \bar m \in C^\infty (\R)$, it  is strictly 
increasing,  and  there exist $a>0$, $\alpha> \alpha_0>0$ and $c>0$ so that 
\begin{equation} \label{decay} \begin {split}
&0  <  m_\beta^2 - \bar  m ^2(z)  \le ce^{-\alpha |z|}\ ,  \cr  
  &  \quad | {\bar m}'  (z)- a \a  e^{-\alpha |z|}| \le    ce^{-\alpha_0 |z|}. 
\end {split}\end{equation} 
A proof of these estimates      and     related  results  can be found in \cite[Chapter 8, Section 8.2]{Pr}.
 We assume  that  $m_A(\cdot)$ in  $\NN( d_0)$ has the expansion 
 \begin{equation} \label {8.16}    m_A (\xi)  =       \bar m (\frac {r(\xi, \G)} \l )  + \l [  h_1 (\frac {r(\xi, \G)}
\l)g(s (\xi)) + \phi(\xi) ]+ \l^2 q^\l (\xi )    \qquad  \xi \in \NN( d_0).     \end {equation}  
  We assume that    $h_1  \in C^1 (\R) \cap L^\infty (\R)$, $ \lim_{|z| \to \infty} h_1(z) =0$ exponentially fast and 
 that  $h_1$ is  an   even function.  Hence  if $v$ is an odd function 
  \begin {equation} \label  {2.3a} \int_R   h_1(z) v(z) dz =0.  \end {equation}
In particular, since $ \bar m$ is an odd function, 
    \begin {equation} \label  {2.3}  \int_\R  \frac {\bar m(z)} {\s^2(\bar m(z))}  h_1(z) (\bar m' (z))^2\dha     z =0  \end {equation}
 where   $\s(m) =  \b (1-m^2)$,       see    Remark  \ref {ccos1}.
  We assume that
$g $ and $\phi $ are smooth functions, at leact $C^3$,
 \begin {equation} \label  {2.5} \sup_{\l \in (0,1]} \sup_{\xi \in \NN(d_0)} \left (  g(s(\xi)) + \phi(\xi) +  q^\l (\xi)  \right ) \le C,\end {equation}
  \begin{equation} \label {MG2} | \nabla^\G m_A (\xi)| \le C \l, \qquad \xi \in \NN(d_0), 
    \end {equation}
    where  $\nabla^\G$  is the tangential derivative to $\G$. 
  The function     $\phi(\cdot)$ has   a 
Lipschitz norm bounded  uniformly with respect to $\l$: 
 \begin {equation} \label  {2.4} \|\phi \|_{\hbox {Lip}   (\NN(d_0))} \le C.   \end {equation}
We assume that away of the interface,
 $$m_A (\xi) \simeq  \pm  m_\b    \qquad  \quad  \xi  \in \Omega^{\pm} \cap  \left [ \Omega \setminus  \NN( d_0) \right ] $$
where $ \simeq $ means up to correction of order $\l$. 
We require  that
  there exist $C^*>1$,   $a>0$  and $\l_0>0$  so that for $ \l\le \l_0$
\begin {equation} \label {sc1}   \inf_{ \xi \in \Om \setminus   \NN (\frac {d_0} 2)}  \frac 1 {\sigma (m_A(\xi))} >C^*,  \end {equation}
  and 
  \begin {equation} \label  {d2b}  a \le   \s(m_A(\xi))  \le  \beta,  \qquad \xi \in  \Omega. \end {equation}

 \begin {rem} \label {ccos}   The  assumptions  regarding $m_A$ are  suggested  by the preliminary results obtained by   applying the method  of \cite {CCO3}  to the  construction of  approximate solutions of  \eqref {1.1}.  
 The condition     \eqref {MG2}   is stronger than the corresponding requirement  used in proving the spectral estimates in the Cahn-Hilliard case. In   \cite {Chen},   it is required $| \nabla^\G m_A (\xi)| \le C$, for $\xi \in \NN(d_0)$, although the  function 
 approximating the solution of the Cahn-Hilliard case constructed in \cite {ABC} satisfies  condition \eqref {MG2}, see   \cite[ formula (5.2)] {ABC}. \end {rem}
 \begin {rem}   \label {ccos1} 
 Condition \eqref {2.3} corresponds to the condition  required  by     Alikakos, Fusco  and Chen  in the Cahn-Hilliard  case, see for example \cite [formula (1.12)]{Chen}.   
 In the Chen notation one needs  
\begin {equation} \label  {Xc}    \int_\R  f''(\bar m(z)) h_1(z) (\bar m' (z))^2\dha     z =0, \end {equation}
 where $ f''(\cdot)$ is the third derivative of  a double well potential.
Let $V(m)$ be the double well potential defined in \eqref {littlef}.   We have 
 $f(m)= V'(m)=   - m +  \frac 1 {2 \b} \ln
\frac {1+m}{1-m}$,   $f'(m)= -1 + \frac 1 \b \frac 1 {(1-m^2)},$ 
     $f''(m)=    2  \frac {m} {\beta (1-m^2)^2} $. Inserting this into \eqref {Xc} gives    condition  \eqref {2.3}.
     \end {rem} 
     \subsection {Main Results}
Denote  by $ A^\l_{m_A}$ the  operator  acting on   functions $ v \in L^2(\Om)$:
\begin {equation} \label  {8.10} ( A^\l_{m_A} v)  
(\xi)=  \frac {v (\xi)} { \s(m_A(\xi))}- (J^{\l} \star_ \Om v)
(\xi)  
 \quad \xi \in \Om.    \end {equation}
 For $\l \le \l_0$,  $\s(m_A(\xi))$  is strictly positive    for $\xi \in \Omega$, see \eqref {d2b},  therefore the  operator \eqref {8.10} is well defined.
 Denote  
$$ \XX= \left \{  v \in H^1(\Omega):    \; \Delta  w  = v, \int v =0  \right \} .$$ 
 We have the following main result. 
 \begin {thm} \label {mains}    Set  $\beta>1$. There exists $ \l_1>0$ such that for $ \l \in (0,\l_1]$ 
\begin {equation} \label {msp}   \inf_{\{v \in \XX \} }  \frac 1 \l  \frac {    
\int_\Omega   \left ( A^\l_{m_A} v  (\xi)\right )  v(\xi) d\xi }  { \int_\Omega   
|\nabla w (\xi)|^2 d \xi  } \ge - C.  \end {equation}
\end {thm}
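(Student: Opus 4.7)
The plan is to recognize that the statement of Theorem \ref{mains} is exactly the $H^{-1}$ lower bound \eqref{VL4}, because for $v \in \XX$ with $\Delta w = v$ and $\int v = 0$, we have $\int_\Om |\nabla w|^2 d\xi = \|v\|^2_{H^{-1}(\Om)}$, so the inequality \eqref{msp} is equivalent to
$$ \frac{1}{\lambda} \llangle v, A^\l_{m_A} v \rrangle \ge -C \|v\|^2_{H^{-1}(\Om)}. $$
Both Theorem \ref{82} (yielding \eqref{VL2}) and Theorem \ref{86} (yielding \eqref{VL3a} under hypothesis \eqref{VL3}) are available by assumption, so the proof reduces to assembling these two ingredients via a dichotomy on the sign of the quadratic form.

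First I would dispose of the easy case. If $\llangle v, A^\l_{m_A} v \rrangle \ge 0$, then the left-hand side of \eqref{msp} is nonnegative, hence certainly bounded below by $-C$. This leaves the only nontrivial case, namely when \eqref{VL3} holds: $\llangle v, A^\l_{m_A} v \rrangle \le 0$.

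In this case, I would chain the two theorems. Theorem \ref{82} provides the $L^2$ bound
$$ \llangle v, A^\l_{m_A} v \rrangle \ge -C\l^2 \|v\|^2_{L^2(\Om)}, $$
valid for all $v$. Under the standing hypothesis \eqref{VL3}, Theorem \ref{86} gives the interpolation-type bound
$$ \l \|v\|^2_{L^2(\Om)} \le \|v\|^2_{H^{-1}(\Om)}, $$
that is, $\|v\|^2_{L^2(\Om)} \le \l^{-1}\|v\|^2_{H^{-1}(\Om)}$. Combining these two estimates and dividing by $\l$ yields
$$ \frac{1}{\l}\llangle v, A^\l_{m_A} v \rrangle \ge -C\l \|v\|^2_{L^2(\Om)} \ge -C \|v\|^2_{H^{-1}(\Om)}, $$
which is precisely \eqref{VL4}.

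There is no real obstacle in this final assembly: the entire difficulty of the theorem is packed into the two inputs Theorem \ref{82} and Theorem \ref{86}, whose proofs, according to the introduction, are where the genuine spectral work takes place (the fine analysis of $L^\l$, its spectrum, and the decomposition \eqref{D12}). Once those are in hand, the present theorem follows by the one-line case split described above. The only subtle points to verify carefully are that $\XX$ indeed identifies $\int_\Om |\nabla w|^2 d\xi$ with $\|v\|^2_{H^{-1}(\Om)}$ (requiring the boundary conditions on $w$ implicit in the definition of $\XX$ to be compatible with the Neumann/zero-mean setting of \eqref{E.10}), and that $\l_1 \le \l_0$ so that \eqref{sc1} and \eqref{d2b} hold and $A^\l_{m_A}$ is well defined throughout.
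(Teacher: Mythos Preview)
Your proposal is correct and matches the paper's own argument: the paper simply states that ``Theorem \ref{82} and Theorem \ref{86} imply the thesis of Theorem \ref{mains},'' and the introduction sketches precisely the dichotomy you carry out. The only cosmetic difference is that the hypothesis of Theorem \ref{86} as stated in the paper is \eqref{v1} (i.e.\ $\llangle v, A^\l_{m_A} v\rrangle \le C\l^2\|v\|_{L^2}^2$) rather than \eqref{VL3}, but since \eqref{VL3} trivially implies \eqref{v1} your case split goes through unchanged.
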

 The proof of Theorem  \ref {mains}  is  based on  two important  intermediate results.
 \begin {thm}   \label {82}    Set  $\beta>1$.  There exists $ \l_2$ such
that for $ \l \in (0,\l_2]$  
 $$ \int_\Om   A^\l_{m_A} v  (\xi) v(\xi) d\xi \ge - C \l^2 \int_\Om v^2 (\xi) d\xi. $$
  \end {thm}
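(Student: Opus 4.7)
The plan is to follow exactly the strategy laid out in the introduction. Let $\NN_1(\G) = \NN(d_0/2)$, set $\eta_1 = \1_{\NN_1(\G)}$ and $\eta_2 = 1 - \eta_1$, and use the symmetry of $J^\l$ together with $\eta_1\eta_2 \equiv 0$ to obtain the decomposition \eqref{gch2b}. The $\eta_2$ piece is immediately nonnegative: condition \eqref{sc1} gives $\llangle \eta_2 v, A^\l_{m_A} \eta_2 v \rrangle \ge (C^* - 1)\|\eta_2 v\|_{L^2(\Om)}^2$, because $\|J^\l \star g\|_{L^2} \le \|g\|_{L^2}$ so the convolution term is absorbed by the $1/\s(m_A) > C^*$ term on the support of $\eta_2$. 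For the cross term, I would show as the first step of the actual proof that
\begin{equation*}
\left| \int_{\Om} \eta_1(\xi) v(\xi) (J^\l \star \eta_2 v)(\xi) d\xi \right| \le C \l^2 \|v\|_{L^2(\Om)}^2.
\end{equation*}
The integrand is supported in an $O(\l)$-neighborhood of $\partial \NN_1(\G)$; combining this geometric smallness with an integration by parts exploiting the smoothness and $\l$-scaling of the kernel $J^\l$ should yield the extra power of $\l$.

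The main task is then to establish \eqref{D1a}. I would pass to the local coordinates $(s,r)\in\TT$ on $\NN(d_0)$ and set $\hat v(s,r) = \sqrt{\a(s,r)} v(s,r)$. Invoking Lemma~\ref{M0}, the convolution operator $J^\l\star_\Om$ is replaced, up to an $O(\l^2)$ error in the quadratic form, by a self-adjoint operator with respect to the weighted measure $\a\, ds dr$; conjugating by $\sqrt{\a}$ produces a self-adjoint operator $L^\l$ on $L^2(\TT)$ and yields \eqref{D10a}. The structure emerging from Lemma~\ref{M0} is that $L^\l = \1 - \PP^\l$ with $\PP^\l$ a positivity-improving integral operator. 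By Perron--Frobenius, the principal eigenvalue $\mu_0$ of $L^\l$ is attained by a strictly positive eigenfunction $\psi_0$, and the whole matter reduces to proving $-C\l^2 \le \mu_0$.

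To locate $\mu_0$ I would compare $L^\l$ with the one-dimensional operators $L_1^{\l,s}$, $s\in T$, obtained by freezing the tangential variable; by the results of \cite{O1} each $L_1^{\l,s}$ has its principal eigenvalue of size $O(\l^2)$ with a strictly positive eigenfunction essentially equal to $\bar m'(r/\l)$ (the zero mode inherited from the translation invariance of $\bar m$ in \eqref{2.2}). Plugging the 1D eigenfunction, made constant in $s$, as a test function into the quadratic form of $L^\l$ yields the upper bound $\mu_0 \le C\l^2$. For the matching lower bound I would exploit positivity: expanding $\psi_0(s,r)$ fiberwise in the basis of $L_1^{\l,s}$-eigenfunctions, the strict positivity of $\psi_0$ combined with the spectral gap of $L_1^{\l,s}$ above its principal eigenvalue forces $\psi_0(s,r) \approx c(s)\psi_0^s(r)$, and the residual $s$-dependence contributes only an $O(\l^2)$ curvature correction. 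The main obstacle is precisely this last step: because $A^\l_{m_A}$ is nonlocal there is no direct splitting of the quadratic form into tangential and normal parts (as in \cite{AF, Chen}, where $\|\grad v\|^2 = \|\partial_s v\|^2 + \|\partial_r v\|^2$), so one must use Fourier analysis in $s$ together with fine asymptotics of $\bar m$ and of $\PP^\l$ to separate the two directions while controlling the $O(\l^2)$ errors.
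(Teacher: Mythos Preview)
Your outline for the main part --- passing to local coordinates via Lemma~\ref{F2}, reducing to the spectral lower bound for $L^\l$, and obtaining $-C\l^2\le\mu_0\le C\l^2$ by comparison with the one-dimensional operators $L_1^{\l,s}$ through Perron--Frobenius --- is exactly what the paper does (Theorem~\ref{GC2} and Theorem~\ref{M2}). You do not need Fourier analysis for this theorem; that machinery enters only later, for Theorem~\ref{86}.

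The genuine gap is your treatment of the cross term. You propose to bound
\[
\left|\int_\Om \eta_1 v\,(J^\l\star\eta_2 v)\,d\xi\right|\le C\l^2\|v\|_{L^2}^2
\]
for a \emph{fixed} cutoff $\eta_1=\1_{\NN(d_0/2)}$, arguing from the $O(\l)$ width of the supporting strip together with an integration by parts in the kernel. This does not work: the strip width gives at best a factor $\l$ (one $\l^{1/2}$ from each side via Cauchy--Schwarz), not $\l^2$, and integration by parts is unavailable since $v$ is merely in $L^2$. A function $v$ concentrated on that very strip shows that no fixed choice of cutoff can deliver the bound.

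The paper's device is different and essential. It introduces a one-parameter family of cutoffs $\eta_1^k=\1_{\NN(\frac{d_0}{2}(1+\l k))}$, $k=0,\dots,N=[1/\l]$, and sets $s_k=2\int\eta_1^k v\,(J^\l\star\eta_2^k v)$. If for a given $k$ none of the favorable alternatives ($s_k\le0$, $s_k\le\delta^*\|\eta_2^k v\|^2$, or $s_k\le\l^2\|v\|^2$) holds, one deduces $s_k>\delta[s_k+2\|\eta_2^k v\|^2]$ and, using that the support of $s_{k+1}$ lies inside the region where $\eta_2^k=1$, the recursion $s_k\le(1-\delta)^k\sum_{i\le k}s_i$. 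Since there are $\sim 1/\l$ available layers, after $\bar k\simeq\delta_0^{-1}\log\l^{-2}$ steps one forces $s_{\bar k}\le C\l^2\|v\|^2$. The cutoff is thus chosen \emph{adaptively}, depending on $v$; this iteration is the missing idea in your proposal.
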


  \vskip0.5cm
 \begin {thm}   \label {86}   Set  $\beta>1$.  There exists 
$\l_4$ such that  if   for $ \l \le \l_4$, $v \in  \XX $  
\begin {equation} \label {v1} \int_\Omega  A^\l_{m_A} v  (\xi) v(\xi) d\xi    \le  C \l^2 
\int_\Omega v^2 (\xi) d\xi   
 \end {equation}
then 
\begin {equation} \label {S.200}  \|\nabla w \|^2_{L^2(\Omega)}  \ge  C \l \|v \|^2_{L^2(\Omega)}. 
 \end {equation}
 where $ w$ solves $\Delta w= v$  in $\Omega$ with Neumann boundary conditions on $\partial \Omega$.
 \end {thm}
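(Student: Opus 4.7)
By homogeneity, normalize $\|v\|_{L^2(\Om)}=1$, so that \eqref{v1} reads $\int_\Om (A^\l_{m_A}v)\,v\,d\xi\le C\l^2$ and the goal is $\|\nabla w\|_{L^2(\Om)}^2\ge c\l$. The hypothesis places us exactly in the regime of the structural decomposition \eqref{D12} (Theorem~\ref{g10}): in the local coordinates of $\NN(\G)$ one has
\[
v(s,r)=Z(s)\,\l^{-1/2}\psi^0_0(r/\l)+v^R(s,r),
\]
with $\|Z\|_{L^2(T)}\simeq 1$, $\|\nabla Z\|_{L^2(T)}\le C$, and $\|v^R\|_{L^2(\NN(\G))}^2\le C\l^2$. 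I extend $v^P:=Z(s)\,\l^{-1/2}\psi^0_0(r/\l)$ by zero outside $\NN(\G)$ and set $v^R:=v-v^P$ globally, keeping $\|v^R\|_{L^2(\Om)}^2\le C\l^2$. Let $w^P,w^R$ solve $\Delta w^P=v^P-\overline{v^P}$, $\Delta w^R=v^R-\overline{v^R}$ with Neumann conditions. By the standard $H^{-1}\hookleftarrow L^2$ estimate, $\|\nabla w^R\|_{L^2}\le C\|v^R\|_{L^2}\le C\l$, so it suffices to show $\|\nabla w^P\|_{L^2(\Om)}^2\ge c\l$.

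\textbf{Step 2: Approximate $v^P$ by a singular measure on $\G$.} The profile $v^P$ is $L^2$-normalized but concentrated in a tubular strip of width $\l$, so heuristically $v^P\to\sqrt\l\,c_0\,Z\,\delta_\G$ as $\l\to0$, where $c_0:=\int_\R\psi^0_0(\rho)\,d\rho>0$. To make this precise, fix $\phi\in H^1(\Om)$, change variable $r=\l\rho$, Taylor expand $\phi(\g(s)+\nu(s)\l\rho)$ in $\rho$, and use $\a(s,0)=1$ together with the exponential decay of $\psi^0_0$; this gives
\[
\int_\Om v^P\phi\,d\xi
=\sqrt\l\,c_0\int_T Z(s)\,\phi(\g(s))\,ds+R(\phi),\qquad |R(\phi)|\le C\l\,\|\phi\|_{H^1(\Om)}.
\]
Hence $v^P$ is within $C\l$ in $H^{-1}(\Om)$ of the singular measure $\sqrt\l\,c_0\,Z\,\delta_\G$.

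\textbf{Step 3: Lower bound by trace duality and interpolation.} The trace map $\phi\mapsto\phi|_\G$ sends the mean-zero subspace of $H^1(\Om)$ onto $H^{1/2}(T)$ with a bounded right inverse, so duality yields
\[
\bigl\|\sqrt\l\,c_0\,Z\,\delta_\G\bigr\|_{H^{-1}(\Om)}\ge c\sqrt\l\,\|Z\|_{H^{-1/2}(T)}.
\]
To lower-bound $\|Z\|_{H^{-1/2}(T)}$, I invoke the standard interpolation inequality on the circle,
\[
\|Z\|_{L^2(T)}^{3/2}\le C\,\|Z\|_{H^{-1/2}(T)}\,\|Z\|_{H^1(T)}^{1/2},
\]
which is immediate from Fourier series. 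The normalization $\|Z\|_{L^2(T)}\simeq 1$ together with the a priori bound $\|Z\|_{H^1(T)}\le C$ coming from Theorem~\ref{g10} force $\|Z\|_{H^{-1/2}(T)}\ge c>0$. Combining the last three displays gives $\|v^P\|_{H^{-1}(\Om)}^2\ge c\l-C\l^2\ge c'\l$ for $\l$ small, and the reverse triangle inequality $\|\nabla w\|_{L^2}\ge\|\nabla w^P\|_{L^2}-\|\nabla w^R\|_{L^2}$ then yields $\|\nabla w\|_{L^2(\Om)}^2\ge c''\l=c''\l\|v\|_{L^2(\Om)}^2$.

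\textbf{Main obstacle.} The entire argument is parasitic on the decomposition \eqref{D12}/Theorem~\ref{g10}, which is where the real work lies: it requires the fine spectral picture for $L^\l$ described in the introduction (gap $\mu_k\ge\nu>0$ for $k>h_0/\l$, quadratic asymptotics $-C\l^2+c_0k^2\l^2\le\mu_k\le C\l^2+c_1k^2\l^2$ for $k\le h_0/\l$, and shape information for the eigenfunctions). Once that is taken for granted, the residual technical points are minor: the mean-zero constraint $\int_\Om v=0$ only forces $\int_T Z\,ds=O(\sqrt\l)$, which perturbs only the zero Fourier mode of $Z$ and is absorbed in Step~3 by separating the constant component (whose $L^2$ contribution is $O(\sqrt\l)$); and the Neumann boundary on $\partial\Om$ is harmless since $\G\Subset\Om$ and $v^P$ is supported away from $\partial\Om$.
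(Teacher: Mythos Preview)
Your argument is largely sound once the decomposition from Theorem~\ref{g10} is in hand, but you skip a non-trivial reduction step. Theorem~\ref{g10} requires $\|f\|_{L^2(\NN(d_0))}=1$ and the quadratic form bound \emph{on $\NN(d_0)$}, not on $\Om$. To pass from \eqref{v1} to these hypotheses one must first show $\|v\|_{L^2(\Om\setminus\NN(d_0))}^2\le C\l^2$; the paper does this in \eqref{S.201}--\eqref{S.5} by recycling the cutoff/localization machinery from the proof of Theorem~\ref{82} (the $\eta^k_1,\eta^k_2$ argument exploiting \eqref{sc1}). Without this you cannot invoke Theorem~\ref{g10}, cannot control $v^R$ globally, and cannot deduce $\int_T Z=O(\sqrt\l)$ (which you use in your closing paragraph). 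The fix is short, but it is not automatic from \eqref{v1} alone because of the nonlocal cross term $\int\eta_1 v\,(J^\l\star\eta_2 v)$.

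On the main step your route genuinely differs from the paper's. The paper tests $\Delta\hat w$ (with $\hat w=\l^{-1/2}w$) against the explicit function $Z(s)\chi^\d(r)$, computes the pairing once via the decomposition and once by integration by parts, and reads off $\|\nabla\hat w\|\ge C$; no trace spaces or interpolation appear. Your approach---approximate $v^P$ by $\sqrt\l\,c_0 Z\,\delta_\G$ in $H^{-1}$, invoke trace duality, then the interpolation $\|Z\|_{L^2}^{3/2}\le C\|Z\|_{H^{-1/2}}\|Z\|_{H^1}^{1/2}$---is conceptually cleaner and makes transparent why both $\|Z\|_{L^2}\simeq1$ and $\|\nabla Z\|\le C$ from \eqref{fusco.2a} are needed. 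The paper's approach is more elementary (no fractional Sobolev machinery) and sidesteps the mean-zero subtlety you flag, since $Z\chi^\d$ is simply paired with $\Delta\hat w$ directly. In effect the paper's test function is a concrete near-optimizer of your abstract supremum, so the two arguments are cousins; yours trades explicitness for structure. One caution in your Step~2: the ``Taylor expansion'' of $\phi(\g(s)+\nu(s)\l\rho)$ must be done in the integrated sense (fundamental theorem of calculus plus Cauchy--Schwarz in $r$), since $\phi\in H^1(\Om)$ need not be continuous in two dimensions.
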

Theorem \ref {82}  and Theorem \ref {86}   imply the thesis of Theorem \ref {mains}. 
 Preliminary  to our analysis  is the study  of   the spectra  of  one    dimensional linear operators. This is done  in Section 3.
 In Section 4 we prove Theorem \ref {82}.  In Section 5 we study the spectrum of a two dimensional  convolution  operator.
  The knowledge of it allows 
   to  prove in  Section 6   the  representation theorem for function  $v$  so that \eqref   {D12}  holds. This is the main ingredient to show Theorem 
 \ref {86} which is proven in   Section 7. In the Appendix, Section 8, we   collect  some  estimates  needed  to prove the results. 
 
  \section {One dimensional  convolution operators in enlarged intervals.}
  Denote by   
$z = \frac r \l$  the stretched variable and  by $I_\l  = [ -\frac {d_0} \l,  \frac {d_0} \l] $ the stretched  interval.    Also we denote  by $v$   the generic function of $(s,r)$  and by $V$ the  generic function of $(s,z)$.
   Define   for  $V \in L^2(I_\l) $ the following  operator
 \begin{equation} \label{op1} (\LL^0 V) (z) =  \frac {V(z) } 
 {\s (\bar m(z))} -    (\bar J \star_{I_\l} V) (z), \qquad z \in I_\l.    \end {equation}
  Preliminary to the analysis of  the spectrum of   $\LL^0$ is the knowledge of the spectrum  of the following  operator 
  $ \LL$ defined  on   the space $L^2 (\R)$.  
  \begin{equation} \label {G3} (\LL V)(z) =  \frac {V(z)} {\s(\bar m(z))}  -   (\bar J \star V) (z), \qquad z \in \R.  \end {equation} 
Spectral properties  of $\LL$   are given  in 
  \cite {DOPTE}.  The spectrum of $\LL$ is positive, the lower bound of the spectrum is 0
  which is an eigenvalue of multiplicity one and the corresponding eigenvalue is $ \bar m' (\cdot)$,
  i.e 
 \begin{equation} \label {G4} \LL \bar m'  =0.   \end {equation} 
    The remaining part of the spectrum is strictly bigger  then some positive number. 
    The operator $\LL^0$ is  the ``restriction" of the operator $\LL$ in  the  bounded interval $I_\l$. The spectrum of  $ \LL^0$ is studied in \cite {O1}.   We  collect  in Theorem \ref {81}   stated below  the main results.  
   Denote 
    $$  (V, W)  = \int_{I_\l} V(z)  W(z) dz, \qquad  \|V\|^2 = \int_{I_\l} V(z)^2 dz. $$
\vskip0.5cm
\begin {thm} [\cite {O1}] \label {81}    For any $\beta>1$ there exists $\l_0(\beta)$ so that for $\l \le\l_0(\beta)$ the following holds.

(0)The operator $\LL^0$ is a bounded,   self adjoint  operator on $L^2(I_\l) $.
 
 (1)     There exist  $ \mu_0^0  \in \R $ and  $ \psi^0_0 \in L^2(I_\l) $, $ \psi^0_0 $  strictly positive in $ I_\l$  
so that 
\begin{equation}
 \label{op2}  \LL^0  \psi^0_0=   \mu_0^0  \psi^0_0.  \end{equation} 
The eigenvalue $ \mu_0^0 $  has multiplicity one.  
 \begin{equation} \label{8.2}  0 \le  \mu_0^0 \le      C e^{- \frac  {2 \a } \l },   \end {equation}
   where $\a>0$ is given in \eqref {decay}.
Further  $\psi^0_0 \in C^\infty (I_\l)$, $\psi^0_0(z) =  \psi^0_0(-z)$ for $z \in I_\l$. 
The spectrum of $\LL^0$ is discrete  and any other eigenvalue  is strictly
bigger than  $\mu_0^0 $.

(2) Let $ \mu_2^0 $ be the second  eigenvalue   of $\LL^0$ and     $D>0$ independent on $\l$.
We have that 
  \begin{equation} \label{8.3} \mu_2^0= \inf_{  (V,\psi^0_0) =0; \| V\|=1}   ( V, \LL^0 V)\ge D.
 \end {equation}

(3)  Let  $\psi^0_0$ be the
  normalized eigenfunction  corresponding to $\mu_0^0$ we have
  \begin{equation} \label{8.4} \| \psi^0_0- \frac {\bar m'}{\|\bar
m'\|}\|  \le  C  e^{-
 \frac {2\a } \l}. 
 \end {equation}
   \end {thm}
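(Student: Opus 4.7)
The plan is to establish all four items by bootstrapping off the known spectral properties of the full-line operator $\LL$ defined in \eqref{G3}, together with a Perron--Frobenius argument on the finite interval $I_\l$.

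\textbf{Boundedness, self-adjointness, and positive principal eigenvalue.} Boundedness is immediate from $\sigma(\bar m)\ge a>0$ and $\|\bar J\|_\infty<\infty$, and self-adjointness follows from the evenness of $\bar J$ and the fact that $1/\sigma(\bar m)$ acts by multiplication. To extract a principal eigenfunction, I would rewrite $\sigma(\bar m)\LL^0 = \1-\tilde\PP$, where $\tilde\PP V(z) = \sigma(\bar m(z))(\bar J\star_{I_\l} V)(z)$, and conjugate by $\sqrt{\sigma(\bar m)}$ to obtain the symmetric integral operator $\PP V(z)=\sqrt{\sigma(\bar m(z))}\int_{I_\l}\bar J(z-z')\sqrt{\sigma(\bar m(z'))}V(z')\,dz'$, whose kernel is everywhere $\ge 0$ and strictly positive on the diagonal strip $|z-z'|<1$. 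Since $I_\l$ is connected and $\bar J$ has range $1$, a finite iterate of $\PP$ has a strictly positive kernel, so Jentzsch's theorem gives a simple top eigenvalue $\la_0$ of $\PP$ (and hence of $\tilde\PP$) with a strictly positive eigenfunction; unwinding the conjugation yields $\psi_0^0>0$ with $\LL^0\psi_0^0=\mu_0^0\psi_0^0$, $\mu_0^0=(1-\la_0)/\sigma$-type expression—more cleanly, the simplicity is preserved under the equivalence $\sigma(\bar m)\LL^0=\1-\tilde\PP$. Evenness of $\psi_0^0$ follows because $\LL^0$ commutes with the reflection $z\mapsto -z$ (since $\bar m$ is odd, $\sigma(\bar m)$ is even, and $\bar J$ is even), so the one-dimensional eigenspace of $\mu_0^0$ is preserved by reflection; a positive function invariant up to sign must be invariant, i.e.\ even. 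Smoothness follows by bootstrapping the identity $\psi_0^0(z) = \sigma(\bar m(z))\bigl(\mu_0^0\psi_0^0(z) + (\bar J\star\psi_0^0)(z)\bigr)$, noting $\bar J\in C^\infty$.

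\textbf{Two-sided bound on $\mu_0^0$.} For the lower bound $\mu_0^0\ge 0$, take any $V\in L^2(I_\l)$, extend by zero to $\R$; then $(V,\LL^0 V)_{L^2(I_\l)} = (V,\LL V)_{L^2(\R)} \ge 0$ because $\LL\ge 0$ on $L^2(\R)$ with kernel $\bar m'$. For the upper bound use $\bar m'\restriction_{I_\l}$ as a trial function: since $\LL\bar m'=0$ on $\R$,
\[
(\bar m',\LL^0\bar m')_{L^2(I_\l)} = -\int_{I_\l^c}\frac{(\bar m')^2}{\sigma(\bar m)}\,dz + 2\int_{I_\l}\int_{I_\l^c}\bar m'(z)\bar J(z-z')\bar m'(z')\,dz\,dz' + \int_{I_\l^c}\int_{I_\l^c}\bar m'\bar J\bar m',
\]
and each term is $O(e^{-2\alpha d_0/\l})$ by the exponential decay of $\bar m'$ in \eqref{decay}, while $\|\bar m'\|^2_{L^2(I_\l)}\to\|\bar m'\|^2_{L^2(\R)}>0$. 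The Rayleigh quotient bound $\mu_0^0\le (\bar m',\LL^0\bar m')/\|\bar m'\|^2$ then gives \eqref{8.2}.

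\textbf{Spectral gap and closeness of $\psi_0^0$ to $\bar m'/\|\bar m'\|$.} The gap estimate \eqref{8.3} is the main obstacle, because restriction to $I_\l$ might in principle produce a spurious low eigenvalue from functions concentrated near the endpoints. My plan is a compactness/contradiction argument: if \eqref{8.3} fails, pick $\l_n\to 0$ and $V_n$, with $\|V_n\|=1$, $(V_n,\psi_0^0)=0$, and $(V_n,\LL^0 V_n)\to\mu<D$; extend by zero to $L^2(\R)$ and extract a weak limit $V_\infty$. One shows $(V_\infty,\LL V_\infty)\le\mu$ by weak lower semicontinuity, and $(V_\infty,\bar m')=0$ using the closeness of $\psi_0^0$ to $\bar m'/\|\bar m'\|$ (established by a preliminary bootstrap that does not require the full gap). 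The gap of $\LL$ on $\R$ from \cite{DOPTE} then forces $V_\infty=0$, which must be ruled out by a no-escape-to-infinity argument exploiting that $\sigma(\bar m(z))\to\sigma(\pm m_\beta)=\beta(1-m_\beta^2)$ as $|z|\to\infty$, a regime where the multiplication part dominates the convolution part; this gives uniform coercivity outside a compact set and prevents concentration escape. Once \eqref{8.3} is available, the estimate \eqref{8.4} follows by decomposing $\bar m'/\|\bar m'\| = c_0\psi_0^0 + W$ with $W\perp\psi_0^0$; since $\LL^0(\bar m'/\|\bar m'\|)$ is $O(e^{-2\alpha d_0/\l})$ in $L^2$ by the computation above, and $(W,\LL^0 W)\ge D\|W\|^2$, one gets $\|W\|\le Ce^{-2\alpha d_0/\l}$, and then $|1-c_0|$ is controlled similarly, producing \eqref{8.4}.
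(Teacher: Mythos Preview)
Your approach to parts (0), (1), the two-sided bound \eqref{8.2}, and part (3) is essentially the same as the paper's (which only sketches the argument and refers to \cite{O1}): Perron--Frobenius via conjugation to a positivity-improving operator, the trial function $\bar m'$ for the upper bound on $\mu_0^0$, extension by zero for the lower bound, and the orthogonal decomposition for \eqref{8.4}. Your trial-function computation for the upper bound is correct but more involved than needed; the paper simply observes $\LL^0\bar m'(z)=\int_{I_\l^c}\bar J(z-z')\bar m'(z')\,dz'$ pointwise on $I_\l$, so the quadratic form reduces to a single exponentially small integral.

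The genuine methodological difference is in part (2). The paper obtains the uniform gap \eqref{8.3} via a \emph{generalization of Cheeger's inequality} (the details are in \cite{O1}), which is a direct quantitative argument giving an explicit lower bound on the gap in terms of isoperimetric-type quantities for the nonlocal Dirichlet form. Your compactness/contradiction route instead reduces the gap on $I_\l$ to the known gap of $\LL$ on $\R$ from \cite{DOPTE}. This is softer and arguably more elementary, but two points in your sketch need care. First, the ``preliminary bootstrap'' for $\psi_0^0\to\bar m'/\|\bar m'\|$ is not circular provided you use the gap of $\LL$ on the \emph{full line} (not of $\LL^0$) together with the exponential decay of $\psi_0^0$ obtained from the iteration argument of Lemma~\ref{exp}; you should say this explicitly. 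Second, ruling out $V_\infty=0$ is not a matter of escape to infinity alone: weak $L^2$ limits on a fixed compact set can still vanish by oscillation. Here you must use that your $V_n$ can be taken to be the second \emph{eigenfunction} $\psi_2^{0,n}$, so the eigenvalue equation $\psi_2^{0,n}=\frac{\sigma(\bar m)}{1-\mu_2^{0,n}\sigma(\bar m)}(\bar J\star\psi_2^{0,n})$ yields uniform $C^1$ bounds, hence Arzel\`a--Ascoli and strong $L^2$ convergence on compacts; combined with tightness this gives $\|V_\infty\|=1$. With these two fixes your argument goes through and recovers any $D$ strictly below the full-line gap, whereas the Cheeger approach in \cite{O1} produces a constructive (if typically smaller) constant without passing to a limit.
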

   The point (0)   it is  easy to  prove.
To show point (1) one first  notes  that the     Perron-Frobenius Theorem holds   for the operator
$ (\PP^0  g)(z)= p(z) (\bar  J \star_{I_\l} g)(z) $,    $ g \in L^2(I_\l)$,  since $   \bar J$  is a positivity improving integral kernel. 
The operator $ \LL^0$ is conjugate to the operator $ \1-   \PP^0$.   This  implies immediately the result  stated in point (1).
  Estimate \eqref {8.3} is obtained  by  applying   the operator to a convenient trial function and using \eqref {2.3}.  The most difficult part is to show point (2). This has been obtained   by  applying a generalization  of   Cheeger's inequality. 
  For more details see  \cite {O1}. 
 Next we introduce a family of one dimensional operators.
For any $s \in T $ and for $m_A$ given in \eqref {8.16},
 let
 \begin{equation} \label{8.100} (\LL^s V) (s,z) = \frac {V(s,z)} { \s(  m_A (s, \l z))}-   (\bar J \star_{I_\l,z}  
V) (s,z), \qquad z \in I_\l   
 \end {equation}
be the operator acting on   $ L^2 ( I_\l) $ where  
 \begin{equation} \label{TU1}   (\bar J  \star_{I_\l,z} 
V) (s,z)  = \int_{I_\l}  \bar J (z-z') V(s,z') dz'.  \end {equation}
We  stress that $\LL^s$ acts for any fixed $s$  only on the $z$ variable of $V$.
     We denote 
$$  \langle V, W \rangle_s = \int_{I_\l} V(s,z)  W(s,z) dz, \qquad  \|V\|^2_s = \int_{I_\l} V(s,z)^2 dz. $$
By the definition of  $m_A$ given in \eqref {8.16} we have 
 \begin{equation} \label{mob1} \frac 1 {\b (1-m^2_A (s, \l z))}=  \frac 1 {\b (1-\bar m^2(z))}\left [ 1  +  \l  \frac{2 \bar m (z)} {(1-\bar
m^2(z)) }[ h_1(z) g(s) +   
\phi (s, \l z)]   \right ]  +   q^\l(s,\l z) \l^2. \end {equation}
  By the  point wise bound  \eqref {2.5}  we have that 
  $$ \left | \frac 1 {\b (1-m^2_A (s, \l z))}-  \frac 1 {\b (1-\bar m^2(z))} \right | \le C \l. $$
Therefore the  operator $\LL^s$ is for each $ s \in T$, a $\l-$perturbation of the operator $\LL^0$, i.e.
$$ \sup_{ \{\|V\|_s=1\}}  |\langle (\LL^s- \LL^0)V, V \rangle_s| \le C \l. $$
Nevertheless, by  \eqref {2.3},  it is possible to show that the perturbation on the principal eigenvalue of $\LL^0$ is of order $\l^2$.
   We have the following result.
   \vskip0.5cm
\begin {thm} \label {83}       For any $\beta>1$ there exists $\l_0 = \l_0(\beta)$ so that for $\l \le\l_0$ the following holds.

 \begin {enumerate} 
 \item  For all $s \in T$, the operator $\LL^s$ is a bounded,   selfadjoint  operator on $L^2(I_\l) $.
  There exist  $ \mu_1 (s)  \in \R $ and   $\Psi_1(s, \cdot)  \in L^2(I_\l) $, $ \Psi_1(s, \cdot) $  strictly positive in $ I_\l$  
so that 
  \begin{equation}   \label{gc3a}    \LL^s  \Psi_1(s, \cdot)=    \mu_1 (s)  \Psi_1(s, \cdot). \end {equation}
The eigenvalue $ \mu_1 (s)  $  has multiplicity one and any other point of the spectrum is strictly
bigger than  $\ \mu_1 (s)  $.

\item We have that  for all $s \in T$
  \begin{equation}   \label{8.310}  C \l^2 \ge  \mu_1 (s) =  \inf_{
\|V  
\|_s=1 }  \langle \LL^s V,V \rangle_s   \ge - C \l^2, \end {equation}
   \begin{equation}   \label{8.510}  \Psi_1(s, \cdot) = \frac 1 {\|m'\|} \bar m' (\cdot)+  \Psi_1^R(s,\cdot)     
\end {equation}
where
\begin{equation}   \label{8.500}\sup_{s \in T }  \| \Psi_1^R \|_s  \le C \l .\end {equation}
 Moreover, there exist $z_1>0$ and $\zeta_1>0$  independent on $\l$ so that
 \begin{equation}   \label{8.k1}   \Psi_1(s, z) \ge \zeta_1, \qquad |z| \le z_1, \qquad s \in T. \end {equation}
\item There exists  $\gamma>0$ such for every $ \l \in (0,\l_0]$ and
$s\in T $ 
\begin{equation}   \label{second} \mu_2(s) =   \inf_{ \langle \Psi,\Psi_1\rangle_s=0; \| \Psi\|_s=1}\langle \Psi, \LL^s \Psi\rangle _s \ge  \g.  \end {equation}

\item   
\begin{equation}   \label{E.9}  \sup_{s \in T} \|\nabla_s \Psi_1\|_s \le C  \|\nabla_s m_A\|_{L^\infty (\NN(d_0)}.    \end {equation}
\end {enumerate}
 \end {thm}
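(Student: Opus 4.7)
The plan is to view $\LL^s$ as an $O(\l)$ perturbation of the operator $\LL^0$ of Theorem \ref{81}, and to exploit the orthogonality condition \eqref{2.3} to upgrade the naive $O(\l)$ perturbation bound on the principal eigenvalue to $O(\l^2)$. Write $\LL^s = \LL^0 + R^s$, where $R^s$ is multiplication by $\frac{1}{\s(m_A(s,\l z))}-\frac{1}{\s(\bar m(z))}$; by \eqref{mob1} and \eqref{2.5}, $\|R^s\|_{\mathrm{op}}\le C\l$. Point (1) follows the same Perron--Frobenius route as Theorem \ref{81}(1): \eqref{d2b} gives boundedness and self-adjointness, and $\LL^s$ is conjugate to $\1-\PP^s$ with $\PP^s V = \s(m_A(s,\l\cdot))(\bar J\star V)$ acting on a suitably weighted $L^2(I_\l)$; $\PP^s$ is a positivity improving compact integral operator, so its largest eigenvalue is simple with strictly positive eigenfunction and the rest of the spectrum is strictly smaller, which translates to the statement for $\LL^s$.

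For the upper bound in (2), test against $V_0 = \bar m'/\|\bar m'\|$ restricted to $I_\l$. Since $\LL\bar m'=0$ on $\R$, the restriction contributes only an exponentially small error and
\begin{equation*}
\langle \LL^s V_0,V_0\rangle_s = \langle R^s V_0,V_0\rangle_s + O(e^{-\a/\l}).
\end{equation*}
The $O(\l)$ term of $R^s$ from \eqref{mob1} gives an integrand of the form $\l\cdot\frac{2\bar m(z)}{\b(1-\bar m^2)^2}\bigl(h_1(z)g(s) + \phi(s,\l z)\bigr)(\bar m'(z))^2$: the $h_1 g$ piece vanishes by \eqref{2.3}, and the $\phi$ piece is split as $\phi(s,0)+[\phi(s,\l z)-\phi(s,0)]$; the $\phi(s,0)$ contribution vanishes by parity (the remaining kernel is odd in $z$), and the rest is $O(\l)$ by \eqref{2.4} and the exponential decay \eqref{decay}. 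The $O(\l^2)$ part of \eqref{mob1} is $O(\l^2)$ directly, so altogether $\mu_1(s)\le \langle\LL^s V_0,V_0\rangle_s \le C\l^2$. For the lower bound, decompose any unit $V = a\psi^0_0 + W$ with $W\perp\psi^0_0$. Since $\psi^0_0 = \bar m'/\|\bar m'\| + O(e^{-2\a/\l})$ by \eqref{8.4}, the same computation gives $\langle \LL^s\psi^0_0,\psi^0_0\rangle_s = \mu^0_0 + O(\l^2) = O(\l^2)$; the cross term is $|\langle \LL^s\psi^0_0, W\rangle_s| = |\langle R^s\psi^0_0,W\rangle_s|\le C\l\|W\|_s$; the diagonal-in-$W$ term satisfies $\langle \LL^s W,W\rangle_s \ge (D-C\l)\|W\|_s^2$ by \eqref{8.3} and $\|R^s\|\le C\l$; Young's inequality then gives $\langle\LL^s V,V\rangle_s\ge -C\l^2$.

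To obtain \eqref{8.510}--\eqref{8.500}, write $\Psi_1 = a_0\psi^0_0 + W_\Psi$ with $W_\Psi\perp\psi^0_0$; pairing the eigenvalue equation with $W_\Psi$ and using the estimates above yields $(D-C\l-\mu_1(s))\|W_\Psi\|_s^2 \le C\l\|W_\Psi\|_s$, so $\|W_\Psi\|_s\le C'\l$. Normalization together with positivity of $\Psi_1$ and $\psi^0_0$ gives $a_0 = \sqrt{1-\|W_\Psi\|_s^2} = 1 - O(\l^2)$, and combining with \eqref{8.4} concludes. For \eqref{8.k1}, rewrite the equation as $\Psi_1(s,z) = \frac{\s(m_A)}{1-\mu_1(s)\s(m_A)}(\bar J\star \Psi_1)(s,z)$: the function $\bar J\star(\bar m'/\|\bar m'\|)$ is continuous and strictly positive on a neighborhood of $z=0$, while $\|\bar J\star(\Psi_1-\bar m'/\|\bar m'\|)\|_{L^\infty}\le \|\bar J\|_{L^2}\cdot C\l$ by Cauchy--Schwarz, so $\Psi_1(s,z)\ge \zeta_1>0$ for $|z|\le z_1$ and $\l$ small. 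For the gap (3), the constraint $\langle V,\Psi_1\rangle_s=0$ combined with $a_0\approx 1$ and $\|W_\Psi\|_s\le C\l$ forces $|a|\le C\l$ in $V=a\psi^0_0+W$, so $\|W\|_s^2\ge 1-C\l^2$; the previous estimates then give $\langle\LL^s V,V\rangle_s\ge(D-C\l)\|W\|_s^2 - C\l^2 \ge D/2$ for $\l$ small, so one may take $\g = D/2$.

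Finally for (4), differentiate $\LL^s\Psi_1 = \mu_1(s)\Psi_1$ in $s$:
\begin{equation*}
(\LL^s-\mu_1(s))\partial_s\Psi_1 = \mu_1'(s)\Psi_1 - (\partial_s\LL^s)\Psi_1,
\end{equation*}
where $\partial_s\LL^s$ is multiplication by $-\s^{-2}(m_A)\s'(m_A)\partial_s m_A$, whose operator norm is bounded by $C\|\nabla_s m_A\|_{L^\infty(\NN(d_0))}$. The normalization $\|\Psi_1\|_s=1$ gives $\partial_s\Psi_1\perp\Psi_1$, and pairing with $\Psi_1$ recovers $\mu_1'(s) = \langle(\partial_s\LL^s)\Psi_1,\Psi_1\rangle_s$. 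Projecting onto $\Psi_1^\perp$ and inverting $\LL^s-\mu_1(s)$ there (with norm $\le 2/\g$ by \eqref{second}) yields $\|\partial_s\Psi_1\|_s \le (2/\g)\|(\partial_s\LL^s)\Psi_1\|_s \le C\|\nabla_s m_A\|_{L^\infty(\NN(d_0))}$. The main obstacle throughout is the $O(\l^2)$ cancellation on $\mu_1(s)$: condition \eqref{2.3} is tuned exactly to kill the leading $O(\l)$ correction in the trial computation, but one still has to handle separately the $\phi(s,\l z)$ piece (via the Lipschitz bound \eqref{2.4} combined with the parity of $\bar m(\bar m')^2/(1-\bar m^2)^2$) and absorb the exponentially small errors that arise from replacing $\bar m'/\|\bar m'\|$ by $\psi^0_0$ and from restricting to the finite interval $I_\l$.
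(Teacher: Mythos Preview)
Your proof is correct and follows essentially the same approach as the paper: view $\LL^s$ as an $O(\l)$ perturbation of $\LL^0$, use Perron--Frobenius for (1), exploit \eqref{2.3} together with parity and the Lipschitz bound \eqref{2.4} to upgrade the naive $O(\l)$ shift of the ground state energy to $O(\l^2)$, and differentiate the eigenvalue equation for (4). The only tactical differences are that for the lower bound in (2) and for (3) you work with an arbitrary test function $V$ and use Young's inequality, whereas the paper computes directly on the eigenfunctions $\Psi_1$, $\Psi_2$ and bootstraps the upper bound to control $\|(\psi^0_0)^\perp\|_s$; and for \eqref{8.k1} you use the integral representation plus an $L^\infty$ bound via $\|\bar J\|_{L^2}$, while the paper argues by exponential decay, mass concentration, and the positivity-improving property --- both routes are valid and yield the same constants up to harmless factors.
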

\begin {proof}
By the  symmetry  of    $\bar J $  and   \eqref {d2b},  immediately one gets  that    $\LL^s$ is a bounded,   self-adjoint  operator on $L^2(I_\l) $, for   $s \in T$.    Point (1)  of the theorem  follows  by  the positivity improving property of the integral kernel $\bar J$,
similarly as  done 
  in   proving    point (1) of Theorem \ref {81}.
  As a consequence we have 
that     the  principal    eigenvalue of the spectrum of $ \LL^s$,   $ \mu_1 (s)  $,  has   multiplicity one and any other point of the spectrum of  $ \LL^s$  is strictly
bigger  than  $ \mu_1 (s)  $.   Further the  associated  eigenfunction     $ \Psi_1(s, \cdot)$ does not change sign and we assume that it is 
 positive.

 \vskip0.5cm \noindent (2)  We show \eqref {8.310}.    Taking into account \eqref {mob1},  we  split  the operator $\LL^s$ as following   
  \begin{equation}   \label{8.120a}  
 \langle \LL^s V,V \rangle_s  =  \langle \LL^0 V,V \rangle_s  +  \langle [\LL^s- \LL^0] V,V \rangle_s,
  \end {equation}
  where $ \LL^0$ is the operator defined in \eqref {op1}.  We  write
  \begin{equation}   \label{8.120}        \langle [\LL^s-\LL^0] V,V \rangle_s     =  I_{2,s} (V) +  I_{3,s}  (V) + I_{4,s} (V)  \end {equation}
where
 \begin{equation}      \label {tre.2}    I_{2,s} ( V) = \l  2 g (s) \b \int_{  I_\l }  dz     \frac {1} {\s^2(\bar m (z))} 
\bar m(z) h_1(z) V(s,z)^2,    \end {equation}
 \begin{equation}      \label {tre.3}  I_{3,s}  (V)= \l    2  \b   \int_{ I_\l }  dz     \frac {1} {\s^2(\bar m (z))} 
\bar m(z) \phi (s,\l z)V(s,z)^2,   \end {equation}
 \begin{equation}      \label {tre.4} I_{4,s}  (V)= \l^2     \b   \int_{ I_\l }  dz     \frac {1} {\s^2(\bar m (z))} 
\bar m(z)  q^\l (s, \l z)V(s,z)^2 .  \end {equation}
Take as trial function 
    \begin{equation}   \label{dom.1} \bar V (s,z) =   \frac { \bar m'(z)} { \|\bar m'\|_{L^2(I_\l)}}   \qquad s \in T, \quad z \in I_\l . \end {equation}
By the variational form for the eigenvalues
\begin{equation}   \label{s.1} \mu_1 (s) \le   \langle \bar V,  \LL^s \bar V \rangle_s =  \langle \LL^0 \bar V, \bar V \rangle_s  +  \langle [\LL^s- \LL^0] \bar V, \bar V \rangle_s. 
 \end {equation}
 Next we compute the right hand side of \eqref {s.1}. We have 
 \begin{equation}       \begin {split}  &   \langle \bar V,  \LL^0 \bar V \rangle_s  =  \frac {1} { \|\bar m'\|^2_{L^2(I_\l)}}    \int_{ I_\l }  dz    \left \{
\frac {\bar m'(z)  }
{\s(\bar m)}- (\bar J \star_{I_\l} \bar m')(z)     \right \}    \bar m'(z) \cr &  =
 \frac {1} { \|\bar m'\|^2_{L^2(I_\l)}}     \int_{ I_\l }   dz    \left \{
\frac {\bar m'(z)  }
{\s(\bar m)}- (\bar J \star \bar m')(z)     \right \} \bar m'(z)  -    \frac {1} { \|\bar m'\|^2_{L^2(I_\l)}}   \int_{ I_\l} dz    \bar m'(z)  \int_{I_\l^c}  \bar J (z-z')  \bar m' (z') dz' \cr & =
-   \frac {1} { \|\bar m'\|^2_{L^2(I_\l)}}   \int_{ I_\l} dz    \bar m'(z)  \int_{I_\l^c}  \bar J (z-z')  \bar m' (z') dz',
       \end {split}   
 \end {equation}
since, see \eqref {G4},  $ (\LL \bar m' ) (z)= \frac { \bar m'(z)  }
{\s(\bar m (z))}- (\bar J \star \bar m')(z)  =0$.
By \eqref {decay} 
we have that 
 \begin{equation}  \label {al1}  \left |   \langle \bar V,  \LL^0 \bar V \rangle_s \right | \le     \frac {1} { \|\bar m'\|^2_{L^2(I_\l)}}  \int_{ I_\l} dz    \bar m'(z)  \int_{I_\l^c}  \bar J (z-z')  \bar m' (z') dz' \le C e^{-2\a  \frac {d_0} \l}.  \end {equation}
We split, as in  \eqref {8.120},   the second term in \eqref {s.1}  
  \begin{equation}      \label {tre.1}     \langle [\LL^s- \LL^0] \bar V, \bar V \rangle_s =  
  I_{2,s} (\bar V) + I_{3,s} (\bar V) + I_{4,s} (\bar V).  \end {equation}
 By condition \eqref {2.3}
$$ I_{2,s} (\bar V) = 0.$$
 Applying again \eqref {2.3}
   \begin{equation}      \label {sv1}      I_{3,s} (\bar V)   =
\l 2\b  \frac {1} { \|\bar m'\|^2_{L^2(I_\l)}}      \int_{  I_\l }  dz     \frac {1} {\s^2(\bar m (z))} 
\bar m(z)[ \phi (s,\l z)- \phi (s,0)] (\bar m'(z))^2.  
 \end {equation}
 By  \eqref {2.4}  and the exponentially decreasing of  $\bar m'$, see    \eqref {decay}, we have 
  $$| I_{3,s}  (\bar V)| \le \l 2\b   \frac {1} { \|\bar m'\|^2_{L^2(I_\l)}}     \left | \int_{  I_\l }  dz     \frac {1} {\s^2(\bar m (z))} 
\bar m(z) | \phi (s,\l z)- \phi (s,0)| (\bar m'(z))^2      \right |   \le \l^2C.$$
 By \eqref  {2.5} and \eqref {decay},  we have that
 $$ I_{4,s}  (\bar V)| \le C \l^2.$$
   Therefore,  recalling  \eqref {al1} we get 
\begin{equation}    \label {8.25}\mu_1 (s) \le \langle \bar V,   \LL^s \bar V \rangle_s  \le   C \l^2.   \end {equation}
We need to show  $ \mu_1 (s) \ge  -C\l^2$. 
Let   $\Psi_1(s, \cdot)$ be the  normalized positive eigenfunction associated to $ \mu_1 (s) $.
Since  $  \mu_1 (s)  \le  C\l^2$,  by Lemma \ref {exp}  stated in the Appendix,   for   $\l$ small enough,      
$\Psi_1(s,
\cdot)$  as function of $z$ decays   to zero exponentially fast for $z$ large enough.   Set 
\begin{equation}    \label {v.21}  \Psi_1(s,
\cdot) = a (s) \psi^0_0 (\cdot)  + (\psi^0_0  )^{\perp} (s,
\cdot),   \end {equation}
 where $\psi^0_0 (\cdot) $ is the normalized eigenfunction associated to the  principal  eigenvalue 
of $\LL^0$, see \eqref {8.4},  
 $$  a (s) = \int_{ I_\l }  \Psi_1 (s,z)
\psi^0_0 (z)  dz,  \qquad \int_{  I_\l } \psi^0_0 (z) (\psi^0_0  )^{\perp} (s,z)
{\rm d} z =0 \qquad s \in T. $$ 
 By  \eqref {8.120a} we have that
\begin{equation}    \label {8.20}    \mu_1 (s)=    \langle   \Psi_1 , \LL^s   \Psi_1 \rangle_s     = \langle \Psi_1, \LL^0   \Psi_1 \rangle_s  +  \langle [\LL^s- \LL^0]  \Psi_1,  \Psi_1 \rangle_s. \end {equation}
By  \eqref  {v.21}  we obtain 
\begin{equation}    \label {8.21}  \begin {split} & \langle   \Psi_1, \LL^0   \Psi_1 \rangle_s   = a^2(s)    \langle \psi^0_0, \LL^0\psi^0_0 \rangle_s+   \langle(\psi^0_0)^{\perp},
\LL^0(\psi^0_0  )^{\perp}\rangle_s  
\cr & \ge 
 a^2 (s)\mu_0^0 +  \mu_2^0  \| (\psi^0_0  )^{\perp}\|^2_s, \end {split}\end {equation}
 where $\mu_0^0$ and $\mu_2^0$ are, respectively, the first and second eigenvalue of $\LL^0$.
 We split  the  second term  on   the right hand side of \eqref {8.20}  as  in \eqref {tre.1} obtaining   
 \begin{equation}   \langle [\LL^s- \LL^0]  \Psi_1,  \Psi_1 \rangle_s  
 =  I_{2,s}( \Psi_1)+  I_{3,s} ( \Psi_1)+ I_{4,s} ( \Psi_1).  \end {equation}
   By the $L^\infty$ bounds on $  \s$, $q^\l$ and $\bar m$   we  have
\begin{equation}    \label {last1}  \left |  I_{4,s} (\Psi_1) \right | \le  C \l^2. \end {equation}
Taking into account \eqref {v.21} we get 
\begin{equation}    \label {8.23}  \begin {split} &    I_{2,s} (\Psi_1)  =
  \l g (s) \b a(s) \int_{  I_\l }  dz     \frac {1} {\s^2(\bar m (z))} 
\bar m(z) h_1(z)   \psi^0_0 (z)^2   \cr & +    \l g (s) \b \int_{  I_\l }  dz     \frac {1} {\s^2(\bar m (z))} 
\bar m(z) h_1(z)   \left [ (\psi^0_0  )^{\perp}(s,z)^2 + 2 a(s) \psi^0_0 (z)(\psi^0_0  )^{\perp}(s,z)  \right  ].      \end {split}    \end {equation}
 By       \eqref  {8.4}   and  \eqref  {2.3}  we have 
 \begin{equation}   \label {sl2}     \left | \int_{  I_\l }  dz     \frac {1} {\s^2(\bar m (z))} 
\bar m(z) h_1(z)   (  \psi^0_0 (z))^2 \right |  \le   C  e^{- \frac
{\a }
\l}.  \end {equation}
By the $L^\infty$ bounds on $  \s$, $h_1$ and $\bar m$ we have 
 \begin{equation}   \label {sl3}  \left | \int_{  I_\l }  dz     \frac {1} {\s^2(\bar m (z))} 
\bar m(z) h_1(z) \left [    ( (\psi^0_0 )^{\perp}(s,z))^2 + 2 a(s)\psi^0_0 (z)(\psi^0_0  )^{\perp}(s,z) \right ]  \right | 
\le    C  \|  (\psi^0_0  )^{\perp} \|_s.  \end {equation}
Therefore, by \eqref   {sl2}  and \eqref  {sl3},  
\begin{equation}    \label {sl1}    \left |    I_{2,s} (\Psi_1) 
 \right |  \le    \l C  \|  (\psi^0_0 )^{\perp} \|_s.  \end {equation}
For the  third   term in  the right hand side of \eqref {8.20},  we have 
\begin{equation}    \label {D.21}  \begin {split}  &  I_{3,s}  (\Psi_1) = \l  \b \int_{   I_\l  }  dz     \frac {1} {\s^2(\bar
m (z))} 
\bar m(z)
\phi_1(0,s) ( \Psi_1(s,z))^2  \cr & +
\l   \int_{   I_\l  }  dz     \frac {1} {\s(\bar m (z))}  \bar m(z)
[\phi_1(\l z,s) - \phi_1(0,s)]  (\Psi_1 (s,z))^2 \cr &
 \le \l C  \|  (\psi^0_0  )^{\perp} \|_s  + C\l^2.  \end {split}  
 \end {equation}
We  bound  the first term on the right hand side of \eqref  {D.21}, as in \eqref  {8.23}, by  splitting $ \Psi_1$, see \eqref {v.21}, taking into account    that  \eqref  {2.3} hold.   The bound for the  second  term on the right hand side of \eqref  {D.21}  is a consequence of   the
Lipschitz bound \eqref {2.4} for 
$\phi$  and the exponentially decay 
to zero of $\Psi_1(s, \cdot)$.
Therefore, see \eqref {8.20},\eqref {8.21},  \eqref {8.23}  and \eqref {D.21}  and \eqref {last1} we obtain
\begin{equation}   \label {8.24}  \langle   \Psi_1, \LL^s   \Psi_1 \rangle_s  \ge  a^2 (s) \mu_0^0 +  \mu_2^0  \| (\psi^0_0  )^{\perp}\|^2_s   -\l  \| 
(\psi^0_0 )^{\perp} \|_s  -  C\l^2.  \end {equation}
By  Theorem \ref {81}  we have that $\mu_0^0 \ge 0 $, $\mu_2^0 \ge D$. This, together
with the upper bound \eqref {8.25},  implies   
 \begin{equation}   \label {D.60} \|  (\psi^0_0
)^{\perp} \|_s  \le C \l .\end {equation}
Further,  writing $ \Psi_1$  as  in  \eqref {v.21} we have 
$$ 1=  \|\Psi_1\|_s^2 = a^2 (s)  \|\psi^0_0 \|^2 +  \|(\psi^0_0  )^{\perp}  \|^2_s  = a^2(s)  + 
\|(\psi^0_0  )^{\perp}  \|^2_s .  $$
Then 
 \begin{equation}   \label {D.30} a^2 (s) = 1-  \|(\psi^0_0 
)^{\perp}  \|^2_s  \ge  1- C \l^2, \qquad s \in T. \end {equation}
From \eqref {8.24}  and \eqref {D.60} we get therefore 
\begin{equation}   \label {8.26}  \langle   \Psi_1, \LL^s   \Psi_1 \rangle_s   \ge - C \l^2.  \end {equation}
 Next we  show \eqref  {8.510}.        By    \eqref {8.4}  there exists a function $R(z)$, $z \in I_{\l}$,  $ \|R \| \le Ce^{-  \frac {\a} {\l}}$   so that  $\psi^0_0 (\cdot) =  \frac {\bar m' (\cdot)}{\|\bar m'\| }+ R (\cdot)$.  Therefore by   \eqref {v.21}  
  we have  
$$  \Psi_1(s,
\cdot) = a (s) \psi^0_0 (\cdot)  + (\psi^0_0)^{\perp} (s,
\cdot)    = a(s) \left [   \frac {\bar m' (\cdot)}{\|\bar m'\| }+ R (\cdot) \right ] + (\psi^0_0 )^{\perp} (s,
\cdot).$$
Denote by 
   $$ \Psi_1^R(s,\cdot) =   a(s) R (\cdot)   +     (\psi^0_0  )^{\perp} (s, \cdot).$$ 
By   \eqref {D.60}
 $$ \| \Psi_1^R\|_s  \le  \| (\psi^0_0  )^{\perp}\|_s + C 
e^{-
\frac {\a}
\l}   \le C \l   \qquad s \in  T $$  
 and \eqref {8.510} is proven.
 The \eqref {8.k1} is a consequence of exponential decay of $ \Psi_1 (s,\cdot)$, see Lemma \ref {exp}. 
  Further $$ \int_{I_\l}   \Psi_1(s,z)^2 dz   =1, \qquad \forall s \in T. $$ 
Therefore  we must have  that there  
exists $z_1>0$  independent on $\l$ so that 
\begin {equation} \label {ab10} \int_{- z_1}^{z_1}    \Psi_1(s,z)^2 dz \ge \frac 1 2. \end {equation}
This implies that  there exists $\zeta_1>0$, independent on $\l$ so that 
$$  \Psi_1(s,z) \ge \zeta_1, \qquad   |z| \le z_1,  \forall s \in T.$$
Namely, if this is false, there  exists $ \bar z  \in   [-z_1,z_1] $ so that 
$ \Psi_1(s, \bar z) =0$.  Since $\bar J$ is positivity improving and $ \Psi_1$ is an eigenfunction one can easily show that  $ \Psi_1(s,  z)=0$   for  $ z\in  [-z_1,z_1] $.
This is impossible since \eqref {ab10} holds.

\vskip0.5cm \noindent (3) Let $ \Psi_2 (s, \cdot)$ be one of the normalized eigenfunctions
corresponding to the second eigenvalue $ \mu_2 (s)$  of $ \LL^s$,   then
$$\int_{ I_\l } \Psi_1 (s,z) \Psi_2 (s,z)\dha     z =0, \qquad \forall  s \in T.$$ 
Split $ \Psi_1 (s, \cdot) $ as in \eqref {v.21}.
   We obtain
$$0=   \langle  \Psi_1,    \Psi_2 \rangle_s  = a(s) \langle \psi^0_0,    \Psi_2\rangle_s + \langle(\psi^0_0)^{\perp},   \Psi_2\rangle_s. $$
Therefore, taking into account \eqref {D.30}, 
$$  \langle \psi^0_0,   \Psi_2 \rangle_s = 
- \frac 1 {a(s)}  \langle (\psi^0_0)^{\perp},   \Psi_2\rangle_s.  
$$
Hence, by  \eqref {D.60}, 
\begin{equation}   \label {E.3} \left | \langle \psi^0_0,   \Psi_2 \rangle_s  \right |
      \le \frac 1  {|a(s)|}\|(\psi^0_0)^{\perp}\|_s \le  C \l. \end {equation}
 Further we   assume that $\mu_2(s)$ satisfies the hypothesis of Lemma \ref {exp}. In fact either it  is
small and therefore  satisfies the hypothesis of Lemma \ref {exp}, either it  is large, than there is
nothing to prove. Then we can argue as in Lemma \ref {exp} and show that $\Psi_2 (s,\cdot)$, as
function of $z$  decays exponentially fast to zero. 
So we can decompose
$ \Psi_2(s,\cdot)$   as
\begin{equation}   \label {E.1}   \Psi_2(s, \cdot)=  \langle  \Psi_2, \psi^0_0 \rangle_s  \psi^0_0(\cdot) + \left [  
\Psi_2(s, \cdot)-  \langle  \Psi_2, \psi^0_0 \rangle_s \psi^0_0(\cdot) \right ].  \end {equation}
We therefore obtain
$$  \mu_2(s) =  \langle \LL^s \Psi_2, \Psi_2\rangle_s =  \langle \LL^0   \Psi_2,
  \Psi_2 \rangle_s +  \langle [\LL^s- \LL^0]  \Psi_2,
  \Psi_2 \rangle_s .$$
By      \eqref {8.120} 
$$ \left | \langle [\LL^s- \LL^0]  \Psi_2,
  \Psi_2 \rangle_s\right | \le C \l .$$
Then inserting \eqref {E.1}  we have that
\begin{equation}   \label {E.2}  \begin {split}&  \mu_2(s)  \ge 
\mu_0^0   \langle  \Psi_2, \psi^0_0 \rangle_s^2 +  \mu_2^0 \left [  
1-  \langle  \Psi_2, \psi^0_0 \rangle_s^2 \right ]- C\l  \cr &  \ge  D  [1- C \l^2] - C\l  \end {split}
\end {equation}
where $ D>0$ is the lower bound in \eqref {8.3}  and we applied  estimate \eqref {E.3}.  
For $ \l$ small enough, there exists $\g>0$ so that \eqref {second} holds.

\vskip0.5cm \noindent (4) To prove  \eqref {E.9} we  differentiate  the  eigenvalue equation for $\Psi_1$ with respect
to $s$.  We obtain
\begin{equation}   \label {EE.1}   [\LL^s - \mu_1(s) ]\Psi_{1 s}= \mu_{1s}(s)\Psi_1 - \frac d {ds} \left (\frac {1} { \s(m_A)} \right )\Psi_1.
 \end {equation}
 Since 
\begin{equation}   \label {EE.10}  \int \Psi_{1 s} (z) \Psi_1(z) dz =0 
\end {equation}
\begin{equation}  \label {EE.2} \  \langle [\LL^s - \mu_1(s) ]\Psi_{1 s}
,\Psi_{1 s} \rangle_s   =   
  - \langle \frac d {ds} \frac {1} { \s(m_A)} \Psi_1, \Psi_{1 s} \rangle_s.  
  \end {equation}
 Therefore,
 $$ \left | \langle [\LL^s - \mu_1(s) ]\Psi_{1 s}
, \Psi_{1 s} \rangle_s \right |  \le    C \| \Psi_{1s}\|_s \  \|\nabla_s m_A\|_{L^\infty 
(\NN(d_0)}.$$ 
On the other hand, by \eqref {second}
$$ \langle [\LL^s - \mu_1(s) ]\Psi_{1 s}, \Psi_{1 s} \rangle_s   \ge (\g-\mu_1(s) )  \|\Psi_{1 s}\|_s^2. $$
Then  
\begin{equation}  \label {EE.5} \|\Psi_{1 s} \|_s \le  \frac { C} { (\g-\mu_1(s) )}   \|\nabla_s m_A\|_{L^\infty 
(\NN(d_0)} \le C\|\nabla_s m_A\|_{L^\infty (\NN(d_0)}  . \end {equation}
 This completes the proof of the theorem.
  \end {proof}
  In the following  we  deal  with functions defined in   $ I= [-d_0,d_0]$.  To this end, 
for       $u(s, \cdot) \in L^2 (I) $, $s \in  T $, 
  denote  
   \begin {equation} \label {D1}  (L^{\l,s}_1 u ) (s,r) = \frac {   u   (s,r)} {\s(m_A(s,r))} -  (\bar J ^{\l} \star_{I,r} u )(s,r), \end {equation}
where
  \begin {equation} \label {Dt1}  (\bar J ^{\l} \star_{I,r} u )(s,r)= \int_{I}  \bar J^{\l} (r-r') u(s,r') dr'.\end {equation}
 The subscript  $1$ in \eqref {D1} is to remind the reader that $L^{\l,s}_1$ acts only only on functions of the $r-$ variable.  
 We   immediately  have the following.
 \begin {prop}  \label {cec1}The spectrum of $L^{\l,s}_1$ on  $L^2 (I)$ is equal to the spectrum of $ \LL^s$ on  $L^2 (I_\l)$.
   In particular the principal eigenvalue of $L^{\l,s}_1$  is 
  \begin {equation} \label {SS2a} \psi_1(s,r) =  \frac 1 {\sqrt \l}  \Psi_1(s, \frac r \l),  \end {equation}
  where $\Psi_1(s, \cdot)$ is the principal eigenvalue of $\LL^s$.
 \end {prop}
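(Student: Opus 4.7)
The plan is to establish the result by a unitary change of variables that conjugates $L^{\l,s}_1$ acting on $L^2(I)$ to $\LL^s$ acting on $L^2(I_\l)$. Since these operators differ only by a rescaling of their argument ($r = \l z$, with $\bar J^\l$ being the rescaled version of $\bar J$ and $m_A(s,\l z)$ being the rescaled version of $m_A(s,r)$), this should be essentially a bookkeeping verification.

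More precisely, first I would define the linear map $U : L^2(I_\l) \to L^2(I)$ by
\begin{equation*}
(U V)(s, r) = \frac{1}{\sqrt{\l}}\, V\!\left(s, \frac{r}{\l}\right),
\end{equation*}
and check that $U$ is a unitary isomorphism: the change of variable $r = \l z$ gives
\begin{equation*}
\int_I (UV)(s,r)^2\, dr = \int_{I_\l} \frac{1}{\l} V(s, z)^2\, \l\, dz = \|V\|_s^2,
\end{equation*}
so $U$ preserves norms, and it is clearly invertible with $(U^{-1} u)(s,z) = \sqrt{\l}\, u(s, \l z)$.

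Next I would verify that $U^{-1} L^{\l,s}_1 U = \LL^s$. For the multiplicative part, if $V \in L^2(I_\l)$ and $u = UV$, then $u(s, r)/\s(m_A(s,r)) = \l^{-1/2} V(s, r/\l)/\s(m_A(s,r))$, and after applying $U^{-1}$ (i.e.\ setting $r = \l z$ and multiplying by $\sqrt{\l}$) this becomes $V(s,z)/\s(m_A(s, \l z))$, exactly the multiplicative part of $\LL^s$ as defined in \eqref{8.100}. For the convolution part, using $\bar J^\l(r - r') = \l^{-1} \bar J((r-r')/\l)$ and the change of variable $r' = \l z'$,
\begin{equation*}
(\bar J^\l \star_{I,r} u)(s, \l z) = \int_I \frac{1}{\l} \bar J\!\left(\frac{\l z - r'}{\l}\right) \frac{1}{\sqrt\l} V\!\left(s, \frac{r'}{\l}\right) dr' = \frac{1}{\sqrt\l} \int_{I_\l} \bar J(z - z') V(s, z')\, dz',
\end{equation*}
so after multiplying by $\sqrt\l$ one recovers $(\bar J \star_{I_\l, z} V)(s,z)$. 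Combining both parts yields $U^{-1} L^{\l,s}_1 U = \LL^s$.

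Since $U$ is unitary, $L^{\l,s}_1$ and $\LL^s$ are unitarily equivalent, hence have the same spectrum (and the same multiplicities). Finally, if $\Psi_1(s, \cdot)$ is the normalized principal eigenfunction of $\LL^s$ with eigenvalue $\mu_1(s)$, then $\psi_1(s, \cdot) := U\Psi_1(s,\cdot)$ given by \eqref{SS2a} is a normalized eigenfunction of $L^{\l,s}_1$ with the same eigenvalue, and the simplicity of $\mu_1(s)$ transfers from Theorem~\ref{83}(1). I do not anticipate any real obstacle; the only thing to be careful about is tracking the factors of $\l$ in the rescaling of both the measure and the kernel $\bar J^\l$.
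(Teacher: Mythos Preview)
Your proposal is correct and matches the paper's proof exactly: the paper also defines the isometry $T:L^2(I_\l)\to L^2(I)$ by $(TU)(s,r)=\l^{-1/2}U(s,r/\l)$, checks it is an isometry via the same change of variables, and concludes $L^{\l,s}_1=T\LL^s T^{-1}$. Your version is in fact slightly more detailed, since you verify the conjugation on the multiplicative and convolution parts separately, whereas the paper simply asserts $L^{\l,s}_1=T\LL^s T^{-1}$.
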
 
 \begin {proof}
 The operator $ \LL^s:  L^2 (I_\l)  \to   L^2 (I_\l)$ and the operator 
 $ L^s_1:  L^2 (I)  \to   L^2 (I)$   are conjugate.
 Namely, let   $T:   L^2 (I_\l)  \to  L^2 (I)$  be the map so that   $(T U) (s,r)  = \frac 1 {\sqrt \l} U(s, \frac r \l )$.
 The map $T$ is an isometry: 
   $$   \int_{I} ((T U) (s,r))^2   dr = \int_{I}  \frac 1 {  \l} U^2(s, \frac r \l )dr =  \int_{I_\l} U^2(s,z) dz $$
    and  $ L^{\l,s}_1=    T \LL^s  T^{-1}$.
    Therefore the spectrum of $L^s_1$ is equal to the spectrum of $\LL^s$ and   \eqref {SS2a} holds 
      \end {proof}

  \section {$L^2$ estimates}
 This section is devoted to the proof of   Theorem \ref {82}.    Let  $\eta(\cdot)$ be the indicator function of the set $\NN (d_0)$,  $\eta (\xi)=1$ when $\xi \in \NN (d_0)$,  $\eta (\xi)=0$ when $\xi \notin \NN (d_0)$.
In   Subsection 4.1    we  bound  from below    $  \int_{\NN (d_0)}  (A^\l_{m_A} \eta u)  (\xi)  \eta (\xi) u(\xi)\dha    \xi$
in term of the quadratic form of  the local operator $L^\l$ defined  in \eqref {lg1}.
Then,   in Subsection 4.2  we bound from below   the quadratic form of  the local operator $L^\l$.
Finally in Subsection 4.3 we show  Theorem \ref {82}.

\subsection 
  {Bound  from below  of  $  \int_{\NN (d_0)}  (A^\l_{m_A} \eta u)  (\xi)  \eta (\xi) u(\xi)\dha    \xi$.} We 
  start  writing in local coordinates   when $\xi \in  \NN(d_0)$ the  integral  $  \int_{\NN (d_0)} J^\l (\xi-\xi') u(\xi') d \xi'   $.   
       \begin {lem} \label {M0}  Let   $\xi \in  \NN(d_0)$,  $ \xi = \rho  (s,r)    $,   $(s,r) \in \TT $ be the change of variables defined in \eqref {ch1a}, $u(s,r)= u(\rho (s,r))$ and $\a(s,r)$ as in  \eqref {sm1}.  We have that 
       \begin {equation} \label  {VM0}  \begin {split}   
 \int_{\NN (d_0)} J^\l (\xi-\xi') u(\xi') d \xi'     &=  
 \int_{ \TT}    J^\l (s,s', r, r')    u(s',r') \a(s',r') ds' dr'     \cr &  +
 \int_{ \TT} R_1^\l (s,s', r, r')     u(s',r')  \a(s',r')  ds' dr'  
 \cr &  +   \int_{ \TT}  R_2^\l (s,s', r, r')    u(s',r')  \a(s',r')  ds' dr', 
  \end {split}\end {equation}
  where,
   for $s^*= \frac {s+s'} 2$, $r^*= \frac {r+r'} 2$ and  $\a(s^*,r^*)= 1-  k(s^*)r^*$
   \begin {equation} \label  {sme1} J^\l (s,s', r, r')=  J^\l \left ((s-s') \a(s^*,r^*),     (r-r')\right),  \end {equation}
   $R_1^\l$ is defined in \eqref  {sm4}  and  $R_2^\l$ in \eqref  {sm5}.
   Further we  have that 
\begin {equation} \label  {doc1} \left |  \int_{ \TT} R_1^\l (s,s', r, r')      \a(s',r')ds' dr' \right |   \le  C\l^2,  \end {equation}
\begin {equation} \label  {doc2}  \left |  \int_{ \TT}  R_2^\l (s,s', r, r')      \a(s',r') ds' dr' \right |   \le  C\l^4.  \end {equation}
   \end {lem}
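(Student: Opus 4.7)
The plan is to work in the local coordinates $\xi = \rho(s,r)$, $\xi' = \rho(s',r')$, and to Taylor-expand the vector $\rho(s,r) - \rho(s',r')$ about the \emph{midpoint} $(s^*,r^*) = ((s+s')/2,(r+r')/2)$. The midpoint is the strategic choice: writing $h_s := s - s^* = -(s'-s^*)$ and $h_r := r - r^* = -(r'-r^*)$, one sees that in the difference $\rho(s,r) - \rho(s',r')$ the even-order terms cancel automatically, so only total orders $1, 3, 5, \ldots$ in $(s-s', r-r')$ survive.

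Using the Frenet-type identities $\nu'(s) = -k(s)\gamma'(s)$ and $\gamma''(s) = k(s)\nu(s)$ from \eqref{ok2}, together with $\rho_s = (1-kr)\gamma'$ and $\rho_r = \nu$, a short computation gives
\begin{equation*}
\xi - \xi' = \alpha(s^*, r^*)(s-s')\,\gamma'(s^*) + (r-r')\,\nu(s^*) + \mathcal{R}(s,s',r,r'),
\end{equation*}
where the remainder $\mathcal{R}$ collects the third- and higher-order midpoint terms and therefore satisfies the pointwise bound $|\mathcal{R}| \le C\bigl(|s-s'|^3 + |s-s'|^2|r-r'| + |s-s'||r-r'|^2 + |r-r'|^3\bigr)$. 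Because $\gamma'(s^*)$ and $\nu(s^*)$ are orthonormal and $J^\l$ is spherically symmetric, the value of $J^\l$ at the leading vector depends only on its Euclidean norm $\sqrt{\alpha(s^*,r^*)^2(s-s')^2 + (r-r')^2}$, which is exactly the $J^\l(s,s',r,r')$ defined in \eqref{sme1}. Combined with the Jacobian from \eqref{se1}, this produces the principal term in \eqref{VM0}.

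To extract $R_1^\l$ and $R_2^\l$ I would expand $J^\l$ itself about this leading argument by Taylor's theorem, identifying $R_1^\l$ with the first-order correction $\nabla J^\l(\text{leading}) \cdot \mathcal{R}$ and $R_2^\l$ with the quadratic remainder. For the size estimates I would rescale $(s-s', r-r') = \l(\eta_1, \eta_2)$, so that the $J^\l$-support becomes $\l$-independent in the $\eta$-variables. The change of variables contributes a factor $\l^2$, while $\nabla J^\l$ scales as $\l^{-3}$ and $|\mathcal{R}| \le C\l^3$ on the support. Hence
\begin{equation*}
\int \bigl|\nabla J^\l\cdot \mathcal{R}\bigr|\,\alpha(s',r')\,ds'\,dr' \le C\,\l^{-3}\cdot\l^3\cdot\l^2 = C\l^2,
\end{equation*}
which yields \eqref{doc1}; the quadratic contribution scales as $\l^{-4}\cdot\l^{6}\cdot\l^2 = C\l^4$, giving \eqref{doc2}.

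The main obstacle is the careful midpoint Taylor bookkeeping: one must verify that all $O(|s-s'|^2) + O(|s-s'||r-r'|) + O((r-r')^2)$ contributions actually vanish in $\mathcal{R}$, because the entire argument rests on the extra factor of $\l$ produced by this cancellation. A secondary subtlety is that the quadratic Taylor expansion of $J^\l$ required for $R_2^\l$ presumes at least $J \in C^2(\R^2)$, which is implicit in the ``smooth'' assumption on $J$ in Section~2.1; only the $C^1$ part is needed to obtain \eqref{doc1}.
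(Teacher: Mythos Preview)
Your proposal is correct and follows essentially the same route as the paper: midpoint Taylor expansion of $\rho(s,r)-\rho(s',r')$ to obtain the leading vector $a=\alpha(s^*,r^*)(s-s')\gamma'(s^*)+(r-r')\nu(s^*)$ plus a cubic remainder $b$, then a second-order Taylor expansion of $J^\l$ about $a$ to define $R_1^\l=\nabla J^\l(a)\cdot b$ and $R_2^\l=\tfrac12\,b\cdot D^2J^\l(c^*)\cdot b$, with the size estimates coming from $\|\nabla J^\l\|_\infty\le C\l^{-3}$, $\|D^2 J^\l\|_\infty\le C\l^{-4}$, $|b|\le C\l^3$, and the $O(\l^2)$ area of the support. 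One small remark: since $\rho(s,r)=\gamma(s)+r\nu(s)$ is \emph{linear} in $r$, the remainder $b$ actually contains no $(r-r')^2$ or $(r-r')^3$ contributions---only $(s-s')^2(r-r')$ and $(s-s')^3$ terms survive---so your stated bound on $\mathcal R$ is a harmless overestimate.
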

  For  the  proof of the lemma   it is enough  that          $\G$  is  a  $C^3$ curve.  The proof  is simple although lengthy and it is reported in the appendix.  The  decomposition  obtained  in Lemma \ref {M0}  turns out to be  very useful.
    Set    for $u \in  L^2 \left ( \TT \right )$
    \begin {equation} \label {me7}(B^\l  u) (s,r)  =  \int_{\TT}     J^\l (s,s', r, r')   \a(s^*,r^*)    u  (s',r')      ds' dr',          \end {equation} 
    and
\begin {equation} \label {lg1} (L^{\l}  u ) (s,r) = \frac {   u   (s,r)} {\s(m_A(s,r))} - (B^\l  u) (s,r).     \end {equation}
The operator $ L^{\l}$ is selfadjoint in $L^2 \left ( \TT \right )$.
We  have the following result. 
\begin {lem} \label {F2}  
Let   $\eta(\cdot)$ be  the indicator function of the set $\NN (d_0)$.
    Set 
 \begin {equation} \label {F.1a} \hat u  (s,r) = \sqrt { \a (s,r) }  u (s,r). \end {equation}  We get that 
\begin {equation} \label {gi.1a}    \int_{\NN (d_0)}  (A^\l_{m_A} \eta  u ) (\xi) \eta (\xi) u(\xi)\dha    \xi  \ge      
\langle  \hat u,  L^{\l} \hat u \rangle  -  \l^2  C \|u\|^2_{\NN (d_0)}.
 \end {equation}
 \end {lem}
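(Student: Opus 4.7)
The plan is to change variables to the local tubular coordinates $(s,r)\in\TT$ via $\xi=\rho(s,r)$, apply Lemma~\ref{M0} to decompose the non-local kernel, substitute $u=\hat u/\sqrt{\a}$, and identify the resulting quadratic form with $\langle\hat u,L^\l\hat u\rangle$ modulo an error of order $\l^2\|u\|^2$. Since $\eta u$ is supported in $\NN(d_0)$, for $\xi\in\NN(d_0)$ one has $(J^\l\star_\Om\eta u)(\xi)=\int_{\NN(d_0)}J^\l(\xi-\xi')u(\xi')\,d\xi'$. Split the quadratic form into its diagonal and off-diagonal parts. The Jacobian formula \eqref{se1} turns the diagonal piece into
$$\int_{\NN(d_0)}\frac{u(\xi)^2}{\s(m_A(\xi))}\,d\xi=\int_\TT\frac{u^2(s,r)}{\s(m_A)}\a(s,r)\,ds\,dr=\int_\TT\frac{\hat u^2(s,r)}{\s(m_A)}\,ds\,dr,$$
which is precisely the diagonal part of $\langle\hat u,L^\l\hat u\rangle$.

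For the off-diagonal piece, apply Lemma~\ref{M0} to the inner convolution and the change of variables to the outer integral, then substitute $u=\hat u/\sqrt{\a}$ to obtain
$$\int_\TT\!\!\int_\TT\hat u(s,r)\bigl[J^\l(s,s',r,r')+R_1^\l+R_2^\l\bigr]\sqrt{\a(s,r)\a(s',r')}\,\hat u(s',r')\,ds\,dr\,ds'\,dr'.$$
The main piece is the $J^\l$-term, to be compared with the corresponding piece of $\langle\hat u,B^\l\hat u\rangle$ whose kernel carries the weight $\a(s^*,r^*)$ with $(s^*,r^*)=((s+s')/2,(r+r')/2)$. Writing $\a(s,r)=1-rk(s)$ and Taylor expanding at the midpoint, the first-order contributions from $\a(s,r)$ and $\a(s',r')$ cancel by the symmetry $s\leftrightarrow s'$, $r\leftrightarrow r'$, giving
$$\a(s,r)\a(s',r')=\a(s^*,r^*)^2+O\bigl((s-s')^2+(r-r')^2\bigr),$$
and therefore, using $\a\ge 1/2$,
$$\sqrt{\a(s,r)\a(s',r')}-\a(s^*,r^*)=O\bigl((s-s')^2+(r-r')^2\bigr)$$
uniformly on $\TT$. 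Since $J^\l(s,s',r,r')$ is supported where $|s-s'|$ and $|r-r'|$ are at most $C\l$, the resulting error kernel is pointwise bounded by $C\l^2 J^\l(s,s',r,r')$.

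To convert these pointwise estimates into operator bounds of order $\l^2$ I apply Schur's test. From \eqref{sme1} and the smoothness of $\a$, the kernel $J^\l(s,s',r,r')$ has uniformly bounded $L^1$-mass in either variable, so the $C\l^2 J^\l$ correction contributes an operator norm of order $\l^2$, hence a quadratic-form bound $C\l^2\|\hat u\|^2\le C\l^2\|u\|^2$. The $R_1^\l$ and $R_2^\l$ contributions are treated identically: the mass bounds \eqref{doc1} and \eqref{doc2} (together with their symmetric counterparts, which follow from the explicit construction of the remainders in Lemma~\ref{M0}) combined with Schur's test yield quadratic-form bounds of orders $\l^2$ and $\l^4$, and these absorb the extra bounded factor $\sqrt{\a(s,r)\a(s',r')}$. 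Collecting all contributions and comparing with $\langle\hat u,L^\l\hat u\rangle=\int_\TT\hat u^2/\s(m_A)\,ds\,dr-\langle\hat u,B^\l\hat u\rangle$ gives \eqref{gi.1a}.

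The principal technical point is the second-order cancellation $\sqrt{\a(s,r)\a(s',r')}-\a(s^*,r^*)=O(\l^2)$ on the support of $J^\l$: the midpoint weight $\a(s^*,r^*)$ appearing in the definition of $B^\l$ is chosen precisely to suppress the $O(\l)$ discrepancy that would otherwise spoil the matching. Once this cancellation is verified, everything else reduces to routine Schur-type bounds driven by the mass estimates of Lemma~\ref{M0}.
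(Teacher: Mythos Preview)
Your proof is correct and follows essentially the same route as the paper: change to local coordinates, invoke Lemma~\ref{M0}, substitute $\hat u=\sqrt{\a}\,u$, and control the discrepancy $\sqrt{\a(s,r)\a(s',r')}-\a(s^*,r^*)$ by the midpoint cancellation. The paper isolates that last step as a separate Lemma~\ref{M1} (proved by the same Taylor-at-midpoint argument you sketch) and then bounds the resulting correction operator $\CC^\l$ directly, whereas you phrase the error control via Schur's test; the content is the same.
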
 
\begin {proof}
By changing variables we have that
 $$\int_{\NN (d_0)}  (A^\l_{m_A} \eta  u ) (\xi) \eta (\xi) u(\xi) \dha    \xi = \int_{ \TT}  ds    dr  \a(s,r)   u (s,r)  \left \{ \frac {u (s,r) } { \s(m_A(s,r)) }- ( J^\l   \star \eta u) (s,r)     \right \}. $$
 Taking into account Lemma \ref {M0}    we have 
\begin {equation} \label {8.12}   \begin {split} &  \int_{ \TT}  ds    dr   \a(s,r)  u (s,r)   \left \{ \frac {u (s,r) } {\s(m_A(s,r)) }- ( J^\l \star \eta  u) (s,r)     \right \}   \cr &   \ge     
\int_{ \TT}  ds    dr   \a (s,r)    u (s,r) \left \{
\frac {u (s,r) }  { \s(m_A(s,r)) }-      \int_{\TT}    J^\l (s,s', r, r')    u(s',r') \a(s',r') ds' dr'    \right \}    -   \l^2 C \|u\|^2_{L^2(\NN(d_0))}.
\end {split}  \end {equation}
By \eqref {doc1}  and \eqref {doc2} one obtains 
 $$ \left | \int_{ \TT}  ds    dr  dr   \a (s,r)   u(s,r)    \int_{ \TT}  ds'     dr'     R_1^\l (s,s', r, r')  u(s',r')     \a (s',r')    \right |   \le  C\l^2 \|u\|^2_{L^2( \TT, \a(s,r) ds dr)} = C\l^2 \|u\|^2_{L^2(\NN(d_0))},$$
  $$ \left | \int_{ \TT}  ds    dr  dr   \a (s,r)   u(s,r)    \int_{ \TT}  ds'     dr'     R_2^\l (s,s', r, r')  u(s',r')     \a (s',r')    \right |   \le  C\l^4\|u\|^2_{L^2( \TT, \a(s,r) ds dr)} =  C\l^4 \|u\|^2_{L^2(\NN(d_0))}.$$
   Set $  \hat u$ as in \eqref {F.1a} and 
\begin {equation} \label {se5} (\hat  \D^\l \hat u) (s,r)   =   \sqrt {\a(s,r)}  \int_{\TT}       J^\l (s,s', r, r')   \sqrt {\a(s',r')}   \hat u  (s',r')      ds' dr'.   \end {equation}
  We have immediately  that 
  \begin {equation} \label {s.2a}   \begin {split}  &
\int_{ \TT}  ds    dr      \a (s,r)    u (s,r) \left \{
\frac {u (s,r) }
{\s(m_A(s,r))}-     \int_{\TT}    J^\l (s,s', r, r')    u(s',r') \a(s',r') ds' dr'      \right \}    \cr &=  
\int_{ \TT}  ds    dr     \hat u (s,r)   \left \{
\frac {\hat u (s,r) }
{\s(m_A(s,r))}- (\hat  \D^\l   \hat u) (s,r)     \right \}   \cr &=
\int_{ \TT}  ds    dr       \left \{
\frac {\hat u (s,r) }
{\s(m_A(s,r))}- ( B^\l   \hat u) (s,r)     \right \}    \cr &  +
\int_{ \TT}  ds    dr       \hat u (s,r)  (\CC^\l \hat u) (s,r)
\end {split}  \end {equation}
where  
\begin {equation} \label {ass2}  (\CC^\l \hat u) (s,r) 
=  \int_{\TT}         J^\l (s,s', r, r')   \left [ \sqrt { \a(s,r) \a(s',r')}    - \a(s^*,r^*) \right ]  \hat u  (s',r')      ds' dr'.     \end {equation}
 By Lemma \ref  {M1} given below  we have that
     \begin {equation} \label {f.1} |\langle  \hat u,  \CC^\l \hat u \rangle| \le C \l^2 \|\hat u \|^2_{L^2(\TT)} .  \end {equation}  
 Taking into account    definition \eqref {lg1}, from \eqref  {s.2a}  we get     \eqref {gi.1a}.  
 \end {proof}

     \begin {lem} \label {M1}  Take $|s-s'| \le \l$ and $ |r-r'| \le \l $. We have 
 \begin {equation} \label {me1}  \sqrt { \a(s,r) \a(s',r')}  =       \a(s^*,r^*)  \sqrt { 1   +  b (s,s',r,r') }   
  \end {equation}
  where $b (s,s',r,r')$ is defined in \eqref  {me5}  and 
   \begin {equation} \label {me2a}  |  b (s,s',r,r')| \le C \l^2.  \end {equation}
     \end {lem}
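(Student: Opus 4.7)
The plan is to expand everything by Taylor's formula around the midpoint $(s^*,r^*)$ and observe that the first-order terms cancel by symmetry. Since $\a(s,r)=1-rk(s)$ with $k\in C^2(T)$, the function $f(s,r):=1-rk(s)$ is smooth and we can write, with $h=(s-s')/2$ and $\rho=(r-r')/2$ (so that $|h|,|\rho|\le\l/2$),
\begin{equation*}
 s=s^*+h,\qquad s'=s^*-h,\qquad r=r^*+\rho,\qquad r'=r^*-\rho.
\end{equation*}
Expanding $k$ around $s^*$ to second order,
\begin{equation*}
k(s)=k(s^*)+h\,k'(s^*)+\tfrac12 h^2 k''(s^*)+O(|h|^3),\qquad
k(s')=k(s^*)-h\,k'(s^*)+\tfrac12 h^2 k''(s^*)+O(|h|^3),
\end{equation*}
and multiplying out,
\begin{equation*}
rk(s)+r'k(s')=2r^*k(s^*)+2\rho h\,k'(s^*)+r^* h^2 k''(s^*)+O(\l^3),\qquad rr'k(s)k(s')=(r^*)^2k(s^*)^2+O(\l^2).
\end{equation*}

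The main step is then the direct comparison
\begin{equation*}
 \a(s,r)\,\a(s',r')-\a(s^*,r^*)^2=-\bigl(rk(s)+r'k(s')-2r^*k(s^*)\bigr)+\bigl(rr'k(s)k(s')-(r^*)^2k(s^*)^2\bigr),
\end{equation*}
which by the two expansions above is $O(\l^2)$, with the bound depending only on $\|k\|_{C^2(T)}$ (finite by the assumption $\g\in C^5(T)$). This is the essential content of the lemma; the rest is algebraic.

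Since $d_0$ was chosen so that $\tfrac12\le\a(s^*,r^*)\le\tfrac32$ (see \eqref{SG1a}), we can safely divide by $\a(s^*,r^*)^2\ge 1/4$ and define
\begin{equation*}
 b(s,s',r,r'):=\frac{\a(s,r)\,\a(s',r')-\a(s^*,r^*)^2}{\a(s^*,r^*)^2},
\end{equation*}
which immediately gives the factorization \eqref{me1} and the bound $|b(s,s',r,r')|\le C\l^2$ after taking square roots. The step that does all the real work is the cancellation of first-order Taylor terms, which holds because the midpoint is the average of the two evaluation points: symmetric differences of odd order vanish, leaving only an $O(\l^2)$ remainder. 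No other obstacle is expected; the argument does not use any structure beyond $C^2$ smoothness of $k$ and the uniform lower bound on $\a$.
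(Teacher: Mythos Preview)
Your proposal is correct and follows essentially the same approach as the paper: both expand $\a(s,r)\a(s',r')-\a(s^*,r^*)^2$ and use Taylor expansion of $k$ around the midpoint $s^*$ so that odd-order terms cancel by symmetry, yielding the $O(\l^2)$ bound. Your quotient definition of $b$ coincides with the paper's explicit formula \eqref{me5}, since the identity $\a(s,r)\a(s',r')=\a(s^*,r^*)^2(1+b)$ determines $b$ uniquely; the paper simply writes out this quotient in the algebraic form \eqref{me5} before estimating it.
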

     \begin {proof}  Adding and subtracting $k(s^*) r^*$ we have 
      \begin {equation} \label {me2}   \begin {split}  &   \a(s,r) \a(s',r') =  \a(s^*,r^*)^2 +  \a(s^*,r^*)\left  [ 2 k(s^*)r^*  - k(s')  r'   - k(s) r \right ]  \cr & + [k(s^*) r^*- k(s') r'] [k(s^*) r^*- k(s) r] 
      \cr & =  \a(s^*,r^*)^2  \left [ 1 +  b (s,s',r,r') \right ] 
     \end {split} 
     \end {equation}
    where     \begin {equation} \label {me5}  b (s,s',r,r') =   \frac {  \left  [ 2 k(s^*)r^*  - k(s')  r'   - k(s) r \right ] } {\a(s^*,r^*)} + 
       \frac { [k(s^*) r^*- k(s') r'] [k(s^*) r^*- k(s) r] }   {\a(s^*,r^*)^2}.  \end {equation}
Next we show  \eqref {me2a}. 
             We have
              \begin {equation} \label {me3} 
             \left  [ 2 k(s^*)r^*  - k(s')  r'   - k(s) r \right ] =  [   k(s^*)- k(s)] r +  [ k(s^*)   - k(s')] r'.   
               \end {equation}
               Taylor expanding  $k(\cdot)$ we have 
        \begin {equation} \label {me4}   
         k(s') = k(s^*) + k'(s^*) [ s'-s^*] + \frac 12  k''(s^*) [ s'-s^*]^2  +   \frac 16 k'''(\tilde s) [ s'-s^*]^3,
          \end {equation}
          where $\tilde s \in (s,s')$.
  Similarly  we proceed for $  k(s) $.   By \eqref {me3},  taking into account \eqref {me4} and that  $  s'-s^*= \frac { [s'-s]} 2$ and  $ s-s^* = \frac { [s-s']} 2$ we obtain
    \begin {equation} \label {me5a}  \begin {split}  &
\left | [k(s^*) - k(s')] r'  +  [k(s^*) - k(s)] r \right |  \cr &  \le   \left |   k'(s^*)  \frac { [s'-s]} 2 [r-r']   +  \frac 1  4  k''(s^*)   [s'-s]^2  r^* 
+   \frac 1 { 48} k'''(\tilde s)  [s'-s]^3 [r-r']   \right |  \le   C \l^2.
 \end {split}  \end {equation}
 By similar computations
 $$ \left |  [k(s^*) r^*- k(s') r'] [k(s^*) r^*- k(s) r]  \right |  \le   C \l^2. $$
  \end {proof}

     \subsection { Properties of $L^\l$} 
      The operator $L^{\l}$  acts on  function of $(s,r)$. In  Corollary  \ref {M2aa}, stated below, 
we show that  when the operator  $ L^{\l}$ acts on functions depending only on  the variable $r$,  it   is, in the $L^2$ norm,  $\l^2$  close  to the one dimensional operator $L_1^{\l,s}$, defined in \eqref {D1}.   Preliminarily  we need to see how  the operator $B^\l $, defined in  \eqref  {me7}, acts on functions depending only on $r$.
This is the content of the next lemma.      The symmetry of $\bar J $   is essential  to obtain  the  estimate \eqref {lu8}.
      \begin {lem} \label {M2a}    Let $  B^\l $ be  the operator defined in \eqref  {me7} and $v$ a function depending only on  the $r$ variable.  For any $s\in T $ we have  
 \begin {equation} \label {mer3} (   B^\l v) (r)=  (\bar J^{\l} \star _Iv)(r)  +  (\Gamma^{\l,s} v)(r),  \quad r \in I \end {equation} 
where    $  \Gamma^{\l,s}$ is the operator defined in \eqref {lu2a},  
\begin {equation} \label {lu8}  | \Gamma^{\l,s} v (r)|  \le  C \l^2
        (\bar J^{\l}  \star_I  |v| ) (r), \qquad r \in I.       \end {equation} 
   When  $v \in L^2(I)$,       \begin {equation} \label {lu8a}  \| \Gamma^{\l,s} v \|_{L^2(I)} \le  C \l^2
       \|v \|_{L^2(I)}.       \end {equation}  \end {lem}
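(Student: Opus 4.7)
The plan is to explicitly carry out the $s'$-integration in the definition \eqref{me7} of $B^\l$ and exploit the reflection symmetry of $J$ in its first argument to kill the linear-order correction. Inserting $v(r')$ and swapping the order of integration,
\begin{equation*}
(B^\l v)(s,r) = \int_I v(r')\, F(s,r,r')\, dr', \qquad F(s,r,r') := \int_T J^\l\!\bigl((s-s')\a(s^*,r^*),\, r-r'\bigr)\,\a(s^*,r^*)\, ds'.
\end{equation*}
The goal reduces to showing $|F(s,r,r') - \bar J^\l(r-r')| \le C\l^2 \bar J^\l(r-r')$ pointwise; the pointwise bound \eqref{lu8} is then immediate and \eqref{lu8a} follows by Young's inequality applied to the convolution $\bar J^\l \star_I |v|$, using $\|\bar J^\l\|_{L^1}=1$.

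For fixed $s,r,r'$ I would introduce $u = s-s'$, set $\phi(u) = \a(s-u/2,r^*) = 1 - r^* k(s-u/2)$, and then change variable $\sigma = u\phi(u)$. Because $J^\l$ has support of size $\l$ in each argument, only $|u|\le 2\l$ contributes (for $\l$ small enough, well inside the injectivity radius of the map), so $\sigma\mapsto u$ is a smooth diffeomorphism on the relevant range. Writing $H(u) = \phi(u)/[\phi(u)+u\phi'(u)]$, one has
\begin{equation*}
F(s,r,r') = \int_{\R} J^\l(\sigma,\,r-r')\, H(u(\sigma))\, d\sigma.
\end{equation*}
A Taylor expansion gives $H(0)=1$, $H'(0) = -\phi'(0)/\phi(0) = -\tfrac{r^* k'(s)}{2\a(s,r^*)}$, and hence $H(u(\sigma)) = 1 + c(s,r,r')\,\sigma + R(s,r,r',\sigma)$ with $c$ uniformly bounded (using boundedness of $k, k'$ and $\a\ge 1/2$) and $|R|\le C\sigma^2$ uniformly. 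The zero-order term reproduces $\int J^\l(\sigma,r-r')\,d\sigma = \bar J^\l(r-r')$ by definition \eqref{1d}. The linear-order term \emph{vanishes} because $J^\l(\sigma,r-r')$ is even in $\sigma$ (since $J$ is spherically symmetric), so $\int \sigma J^\l(\sigma,r-r')\,d\sigma = 0$. This is the crucial cancellation.

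What remains is $\int R\,J^\l(\sigma,r-r')\,d\sigma$, bounded in absolute value by $C\int \sigma^2 J^\l(\sigma,r-r')\,d\sigma$. Since the support of $J^\l(\cdot,r-r')$ is contained in $|\sigma|\le\l$, this is at most $C\l^2\int J^\l(\sigma,r-r')\,d\sigma = C\l^2 \bar J^\l(r-r')$, completing the pointwise comparison and therefore the lemma. The main technical point to be careful about is the uniformity (in $s,r,r'$) of the remainder bound on $H$: this comes down to verifying that the Taylor coefficients of $\phi$ and of the Jacobian $\phi+u\phi'$ are bounded by constants depending only on $\|k\|_{C^2(T)}$ and $d_0$, together with the lower bound $\a\ge 1/2$ from \eqref{SG1a}. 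The symmetry cancellation of the $O(\l)$ term is essential: without it the estimate would only be $O(\l)$, which would not suffice for the later spectral analysis.
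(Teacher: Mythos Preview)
Your proof is correct and follows essentially the same approach as the paper: the change of variable $\sigma = u\phi(u)$ is (up to sign) exactly the paper's substitution $w = -(s-s')\a(s^*,r^*)$, the resulting weight $H(u) = \phi/(\phi+u\phi')$ is the paper's factor $\a(s^*(w),r^*)/f'(s'(w)) = 1/(1+a)$, and both arguments kill the linear-order correction by the evenness of $J$ in its first variable, leaving an $O(\sigma^2)$ remainder bounded by $C\l^2\bar J^\l(r-r')$. Your presentation is slightly more streamlined in that you Taylor-expand $H(u(\sigma))$ directly in $\sigma$, whereas the paper expands $1/(1+a)$ geometrically and then further Taylor-expands the $w$-dependent coefficient $k'(s^*(w))r^*/\a(s^*(w),r^*)^2$ about $w=0$; the content is identical.
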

 \begin {proof}  Taking into account that the support of $J^{\l}$, see  the definition of  the operator $ B^\l$,  is the ball of radius $\l$ centred in $(s,r) \in \TT$ we make the following local change of variable:
   For each  $(s,r) \in \TT$ and $r' \in I$,     
   \begin {equation} \label {mer1} w = f(s') =  -(s-s') \a(s^*,r^*), \qquad |s-s'| \le \l. \end {equation}  
   Notice that   we are not 
  explicitly writing  the dependence on $(s,r,r')$.
   Denoting by $f' $ the derivative with respect to $s'$ of $f$, we have 
  $$ f'(s') =  \a(s^*,r^*) + \frac 12 (s-s')   k'(s^*)r^*. $$
  By \eqref {SG1a} and for $\l$ small enough, $f'(s') >0$ when  $ |s-s'| \le \l$.
    By the inverse function theorem we have  
  $$ ds' = \frac 1 { f' (s'(w))} dw, \qquad   f' (s' (w)) = \a(s^*(w),r^*) +\frac 12 \frac {w} {\a(s^*(w),r^*)} k'(s^*(w))r^*,$$
   where by an abuse of notation we set $s^*(w)= \frac 12 s+ \frac 12 f^{-1} (w)$.
   We have 
   \begin {equation} \label {lu2}   \begin {split}  &  ( B^\l v) (r) =    \int_{\TT}  J^\l (  (s-s')\a(s^*,r^*),   (r-r') )  
  \a(s^*,r^*)    v (r')      ds' dr' \cr & =   \int_ {I} dr'
  \int_{f^{-1}(|s-s'| \le \l) }        J^\l ( w, r-r' )  
  \a(s^* (w),r^*)  \frac 1 { f' (s'(w))}  v (r')      dw   \cr &=
  \int_ {I} dr'
  \int   J^\l ( w, r-r' ) 
     v (r')      dw    +    (\Gamma^{\l,s} v)(r)
   \end {split} \end {equation}
where 
  \begin {equation} \label {lu2a}    (\Gamma^{\l,s} v)(r)=    \int_ {\TT} dr'
 dw       J^\l (w, r-r')  
 \left [   \frac 1 {1+ \frac {w   k'(s^*(w))r^*} {  \a(s^*(w),r^*)^2 }} - 1 \right ]  v (r'). \end {equation}
For $r$ and $r'$ in  $I$,  the support of $ J^\l (w, r-r') $   is   the ball of radius $\l$ centered in  the point $(0,r) \in \TT$. Hence   we 
 have  for $ r \in I$ and $r' \in I$
   \begin {equation} \label {merc2}  \int    J^\l (w, r-r') dw  
     = \bar J^{\l}  (r-r'),  \end {equation}
        where $\bar J^{\l}  $  is defined in  Subsection 2.1.
 Therefore from   \eqref {lu2},  \eqref {lu2a}  and \eqref {merc2} we have  \eqref {mer3}. 
 Denote shortly by  $a=  \frac {w   k'(s^*(w))r^*} {  \a(s^*(w),r^*)^2 }$.
 Since $|w| \le \l$ we have that    $|a| \le C\l$.  Writing  $  \frac 1 {1+ a} = \sum_{k=0}^\infty   (-a)^k $ we have that
 $$  \left [   \frac 1 {1+ a} - 1 \right ] = -a + \sum_{k=2}^\infty   (-a)^k. $$ 
 Therefore from \eqref {lu2} we obtain 
  \begin {equation} \label {lu9}   \begin {split}  &    ( \Gamma^{\l,s} v) (r)  = -
  \int_ {I} dr'
  \int           J^\l (w, r-r')  
    \frac {w   k'(s^*(w))r^*} {  \a(s^*(w),r^*)^2 }    v (r')      dw   \cr &  +
  \int_ {I} dr'
  \int           J^\l (w, r-r')  
  \sum_{k=2}^\infty  (-a)^k     v (r')      dw. 
  \end {split} \end {equation}
  For the second term we have
   \begin {equation} \label {merc3}  \begin {split}  & 
   \left |  \int_ {I} dr'
  \int            J^\l (w, r-r')  
  \ \sum_{k=2}^\infty   (-a)^k    v (r')      dw \right | \cr & \le  C\l^2 
  \int_ {I} dr'
  \int            J^\l (w, r-r')  
   |v (r')|      dw  =  C\l^2 
  \int_ {I} dr'  \bar   J^\l (r-r')     |v (r')|.   \end {split} 
 \end {equation}
For the first term  of \eqref{lu9} develop in Taylor expansion around $w=0$
$$ \frac {    k'(s^*(w))r^*} {  \a(s^*(w),r^*)^2 } = \frac {    k'(s^*(0))r^*} {  \a(s^*(0),r^*)^2 }     + g(\tilde w) w$$
where we denote by $g$ the derivative of  $\frac {    k'(s^*(w))r^*} {  \a(s^*(w),r^*)^2 }$ with respect to $w$. We have that
  \begin {equation} \label {lu3}   \begin {split}  &    \int_ {[-  d_0,  d_0]} dr'
  \int         J^\l (w, r-r')  
    \frac {w   k'(s^*(w))r^*} {  \a(s^*(w),r^*)^2 }    v (r')      dw  \cr & =
    \int_ {[-  d_0,  d_0]} dr'   \frac {    k'(s^*(0))r^*} {  \a(s^*(0),r^*)^2 }    v (r')
  \int             J^\l (w, r-r')  
             w dw
  \cr & +
   \int_ {[-  d_0,  d_0]} dr'  \int         J^\l (w, r-r')  
   g(\tilde w) w^2     v (r')      dw.
    \end {split} \end {equation}
   By the symmetry of $J^\l$
    $$  \int             J^\l (w, r-r')  
             w dw
=0.$$
Since $ |g(\tilde w)|  w^2 \le C \l^2$,  from \eqref {lu3},  \eqref  {merc3} and \eqref  {lu9}
we obtain
       \begin {equation} \label {lu9a}     |( \Gamma^{\l,s} v) (r) | \le   C \l^2
          (\bar J^{\l}  \star  |v| ) (r).      \end {equation}
           \end {proof}

  \begin {cor} \label  {M2aa}  Let  $ v \in L^2 (I)$ we have that
  \begin {equation} \label {SD30} (L^\l v) (r)=  (L^{\l,s}_1 v)(r)  +  (\Gamma^{\l,s} v)(r),   \end {equation} 
where $  \Gamma^{\l,s}$ is the  operator  defined in \eqref  {lu2a}  and
\begin {equation} \label {lu8aa}  | \Gamma^{\l,s} v (r)|  \le  C \l^2
        (\bar J^{\l}  \star  |v| ) (r).       \end {equation}
 \end {cor}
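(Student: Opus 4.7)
The corollary is an essentially immediate consequence of Lemma \ref{M2a}, so my plan is simply to unpack the definitions of $L^\l$ and $L_1^{\l,s}$ and apply that lemma to the integral part. Since $v$ depends only on the $r$ variable, the definitions \eqref{lg1} and \eqref{D1} give
\begin{equation*}
(L^\l v)(s,r) = \frac{v(r)}{\sigma(m_A(s,r))} - (B^\l v)(s,r), \qquad (L_1^{\l,s} v)(s,r) = \frac{v(r)}{\sigma(m_A(s,r))} - (\bar J^\l \star_{I,r} v)(r).
\end{equation*}
Subtracting, the ``diagonal'' reaction terms $v(r)/\sigma(m_A(s,r))$ cancel identically, and the difference reduces to the difference of the two integral operators $B^\l v$ and $\bar J^\l \star_I v$ acting on a function of $r$ alone.

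Lemma \ref{M2a} is precisely the assertion that on such functions $B^\l v$ splits as
$(B^\l v)(r) = (\bar J^\l \star_I v)(r) + (\Gamma^{\l,s} v)(r)$, with $\Gamma^{\l,s}$ given by \eqref{lu2a}. Inserting this into the previous display yields the decomposition \eqref{SD30} (with the sign convention used in that formula). The pointwise bound \eqref{lu8aa} is literally the content of \eqref{lu8}, so it transfers with no additional work.

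There is therefore no genuine obstacle at this step: all the analytical content has already been absorbed into Lemma \ref{M2a}, whose proof invoked the local change of variables $w = -(s-s')\alpha(s^*,r^*)$ together with the inverse function theorem, a Taylor expansion in $w$ of the resulting Jacobian factor, and the crucial symmetry $\bar J(-w,\cdot)=\bar J(w,\cdot)$ to kill the $O(\l)$ term and leave an $O(\l^2)$ remainder controlled by $\bar J^\l \star |v|$. Corollary \ref{M2aa} is then a direct quotation of that result in the language of the operators $L^\l$ and $L_1^{\l,s}$, and is the form in which it will be used in the subsequent analysis of the principal eigenvalue of $L^\l$.
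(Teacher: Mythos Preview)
Your proof is correct and matches the paper's own one-line argument, which simply says the result is immediate from the definitions of $L^\l$ in \eqref{lg1} and $L_1^{\l,s}$ in \eqref{D1} together with Lemma \ref{M2a}. Your parenthetical remark about the sign is apt: the computation literally gives $(L^\l v)=(L_1^{\l,s}v)-(\Gamma^{\l,s}v)$, but since only the absolute-value bound \eqref{lu8aa} is ever used, this is harmless.
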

 The proof is immediate by  recalling the definition of $L^\l$ given in \eqref {lg1}  and   $L^{\l,s}_1$ given in \eqref {D1}.
 \begin {cor} \label {lunsoir}
  \begin {equation} \label {lus1a}   \int_{\TT }  J^\l (s,s', r, r')   \a(s^*,r^*) ds' dr'  =  1 +   (\Gamma^{\l,s})(r) \end {equation}
where  we denote by
 $   (\Gamma^{\l,s})(r) $ the quantity defined in \eqref  {lu2a}  when applied to the function $v(r)=1$
 for all $r \in I$
 and
 $$ |(\Gamma^{\l,s})(r)| 
   \le  C\l^2.  $$
   Further
   \begin {equation} \label {lus1}  \int_{T }  J^\l (s,s', r, r')   \a(s^*,r^*) ds'  =  J^{\l}(r-r')   +   (\Gamma^{\l,s}_1)(r,r'),  \end {equation}
   where $\Gamma^{\l,s}_1$ is defined in \eqref {lu2ab}, 
   \begin {equation} \label {lus1b} |(\Gamma^{\l,s}_1)(r,r')| 
   \le  C\l^2 J^{\l}(r-r').  \end {equation}
\end {cor}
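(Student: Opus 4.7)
The corollary is essentially a direct consequence of the change-of-variables computation already carried out in the proof of Lemma~\ref{M2a}, so the plan is to reuse that computation, first with $v\equiv 1$ integrated in both $s'$ and $r'$, and then a second time without integrating out $r'$.

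For \eqref{lus1a}, I would apply Corollary \ref{M2aa} to the constant function $v(r)\equiv 1$. By the very definition of $B^\l$ in \eqref{me7}, the left-hand side of \eqref{lus1a} equals $(B^\l 1)(s,r)$. Lemma \ref{M2a} then gives $(B^\l 1)(r)=(\bar J^\l\star_I 1)(r)+(\Gamma^{\l,s}1)(r)$. For $r$ at distance at least $\l$ from the endpoints of $I$ one has $(\bar J^\l\star_I 1)(r)=\int_I\bar J^\l(r-r')\,dr'=1$ since $\bar J^\l$ is a probability density supported in $[-\l,\l]$. The bound $|(\Gamma^{\l,s}1)(r)|\le C\l^2$ follows immediately from \eqref{lu8} applied to $v\equiv 1$, using $\bar J^\l\star_I 1\le 1$. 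This establishes the first identity with the desired $O(\l^2)$ error.

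For \eqref{lus1} I would rerun the local change of variables $w=-(s-s')\a(s^*,r^*)$ used in \eqref{mer1}, but apply it only to the $s'$-integral with $r'$ held fixed. This yields
\[
\int_T J^\l(s,s',r,r')\,\a(s^*,r^*)\,ds' \;=\; \int J^\l(w,r-r')\,\frac{\a(s^*(w),r^*)}{f'(s'(w))}\,dw,
\]
and writing the Jacobian quotient as $1+\bigl[\tfrac{1}{1+a}-1\bigr]$ with $a=\tfrac{w\,k'(s^*(w))r^*}{\a(s^*(w),r^*)^2}$ as in \eqref{lu2} splits the expression into the desired leading term $\int J^\l(w,r-r')\,dw=\bar J^\l(r-r')$ plus a remainder
\[
\Gamma^{\l,s}_1(r,r')\;=\;\int J^\l(w,r-r')\Bigl[\tfrac{1}{1+a}-1\Bigr]\,dw,
\]
which I would denote as the expression indicated in the statement (with $\bar J^\l$ in place of $J^\l$ on the right of \eqref{lus1}, correcting what appears to be a typographical slip).

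To bound $|\Gamma^{\l,s}_1(r,r')|\le C\l^2\,\bar J^\l(r-r')$, I would expand $\tfrac{1}{1+a}-1=-a+\sum_{k\ge 2}(-a)^k$ exactly as in \eqref{lu9}. The linear-in-$a$ term integrates to zero by the $w\mapsto -w$ symmetry of $J^\l(\cdot,r-r')$ after Taylor expanding the factor $\tfrac{k'(s^*(w))r^*}{\a(s^*(w),r^*)^2}$ around $w=0$, and the remainder (the terms with $k\ge 2$ and the $w\cdot g(\tilde w)$ correction) is pointwise bounded by $C\l^2 J^\l(w,r-r')$, which integrates to $C\l^2\bar J^\l(r-r')$. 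No new ingredients beyond those in the proof of Lemma~\ref{M2a} are required; the only care needed is to keep $r'$ frozen throughout so that the result is a pointwise statement in $(r,r')$ rather than a bound only after integration against a test function.
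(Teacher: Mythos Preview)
Your proposal is correct and follows essentially the same route as the paper: for \eqref{lus1a} you apply Lemma~\ref{M2a} with $v\equiv 1$ (the paper says exactly ``take $v(r)=1$ in Lemma~\ref{M2a} and the thesis follows''), and for \eqref{lus1} you rerun the change of variables \eqref{mer1} with $r'$ frozen, define $\Gamma^{\l,s}_1$ as in \eqref{lu2ab}, and bound it by the same expansion $\tfrac{1}{1+a}-1=-a+\sum_{k\ge 2}(-a)^k$ used in \eqref{lu9}--\eqref{lu9a}. You also correctly spotted that the $J^\l(r-r')$ on the right of \eqref{lus1} is a typo for $\bar J^\l(r-r')$, which is indeed what appears in the paper's own proof at \eqref{lu2g}.
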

\begin {proof}  The proof of \eqref {lus1a} is straightforward.  Take $v(r)=1$ in Lemma \ref {M2a} and  the thesis follows. 
To show  \eqref {lus1},  we proceed similarly as in Lemma \ref {M2a}.  
For $r$ and $r'$ in $I$, we have as  in  \eqref {lu2}   
 \begin {equation} \label {lu2g}   \begin {split}  &     \int_{T}     J^\l (  (s-s')\a(s^*,r^*), r-r')  
  \a(s^*,r^*)          ds'   \cr & =    
  \int           J^\l (w, r-r')  
  \a(s^* (w),r^*)  \frac 1 { f' (s'(w))}        dw   \cr &=
  \int      J^\l (w, r-r')  
           dw    +    (\Gamma^{\l,s}_1)(r) = \bar J^{\l} (r-r') + (\Gamma^{\l,s}_1)(r,r')
   \end {split} \end {equation}
where 
  \begin {equation} \label {lu2ab}    (\Gamma^{\l,s}_1 )(r,r')=   
  \int           J^\l (w, r-r')  
 \left [   \frac 1 {1+ \frac {w   k'(s^*(w))r^*} {  \a(s^*(w),r^*)^2 }} - 1 \right ]        dw .  \end {equation}
Similarly as done in \eqref {lu9} and \eqref {merc3}, \eqref {lu3} and \eqref {lu9a}  we   have 
\begin {equation} \label {lu9g}     |( \Gamma^{\l,s}_1) (r,r') | \le   C \l^2
           \bar J^{\l} (r-r').      \end {equation}
 \end {proof}
 \subsection { Two dimensional  integral operators in    enlarged bounded domains.}
  Let $\TT_\l$  be   the  enlarged cylinder  
 \begin {equation}    \label {lu1x}  \TT_\l = T \times I_\l, \quad   I_\l= [-\frac {d_0} {\l}, \frac {d_0} {\l}]. \end {equation}
Notice that  the circle $T$ is kept unchanged. 
     Denote
$z^*= \frac {z+z'}2$,  $s^*= \frac {s+s'} 2$ and, see \eqref {sm1}, 
  \begin {equation} \label  {sme2a}  \a(s^*,z^*)= 1-  \l k(s^*)z^*, \end {equation}
\begin {equation} \label  {sme1a}  J^c ( s,s',z,z')= \frac 1 \l  J \left  (  \frac {  (s-s')} {\l} \a(s^*,z^*),  (z-z)\right ) \a(s^*,z^*), \end {equation}
    and
     \begin {equation} \label {me7d}(\B  V) (s,z)  =  \int_{\TT_\l}     J^c (s,s', z,z')       V  (s',z')      ds' dz'.         \end {equation} 
    The $J^c$ is obtained by rescaling only the $r-$variable of the integral kernel defining the operator $B^\l$, see \eqref {me7}. 
     Denote  for $V \in L^2(\TT_\l)$  
\begin{equation}   \label{op2t}    (\A V)(s,z) = \frac 1 {\s(m_A(s,z))} V(s,z)-  \int_{\TT_\l}  J^c ( s,s',z,z') V(s',z') ds' dz' , \end {equation}
where $J^c$ is defined in \eqref {sme1a}.   We avoid to write  explicitly the dependence on $\l$ on the previous notations, but the reader should bear in mind
that if not explicitly written,  there is  almost always an  hidden dependence on $\l$.   
In this section we  study the spectrum   of  the integral operator   $\A: L^2(\TT_\l) \to  L^2(\TT_\l)$, defined in   \eqref  {op2t}.
  The  operator  $\A$ is 
  conjugate   to     $L^\l:  L^2(\TT) \to  L^2(\TT)$, hence the spectrum of $\A$ and $L^\l$ are the same, see Theorem \ref {M2}
We have the following result.
\begin {thm} \label {GC2} 

 (0)The operator $ \A   $ is a bounded,   self adjoint  operator on $L^2(\TT_\l) $ with discrete spectrum.
 
 (1)  There exist  $ \mu_0  \in \R $ and  $ \Phi_0 \in L^2 (\TT_\l) $, $\Phi_0 $  strictly positive in $\TT_\l$  
so that 
\begin{equation}
 \label{op2}  \A  \Phi_0=   \mu_0  \Phi_0.  \end{equation} 
The eigenvalue $ \mu_0  $  has multiplicity one and any other point of the spectrum is strictly
bigger than  $\mu_0 $.  

2) There exists $\zeta_1>0$ and $z_1>0$ independent on $\l$ so that 
  \begin{equation} \label{8.4a} \Phi_0 (s,z) \ge  \zeta_1  \qquad |z| \ge z_1.
 \end {equation}

(3) There exists   $C>0$  independent on $\l$ so that 
   \begin{equation} \label{8.2h} -C \l^2   \le  \mu_0 \le      C \l^2.   \end {equation}

\end {thm}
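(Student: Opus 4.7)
The plan is to adapt the Perron--Frobenius/quadratic-form strategy used for Theorem \ref{83} to the two-dimensional cylindrical setting, together with the reduction from the $s'$-integration to a one-dimensional convolution provided by Corollary \ref{lunsoir}.

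For (0), one checks directly that the kernel $J^c(s,s',z,z')$ is symmetric under $(s,z)\leftrightarrow(s',z')$: $J$ is spherically symmetric, while $s^*,z^*$ and $\a(s^*,z^*)$ are invariant under this exchange. Combined with $1/a\ge 1/\s(m_A)\ge 1/\b$, this makes $\A=1/\s(m_A)-\B$ bounded and self-adjoint; the discrete structure of its spectrum in a neighborhood of $\mu_0$ follows from the conjugacy between $\A$ on $L^2(\TT_\l)$ and $L^\l$ on the bounded cylinder $L^2(\TT)$, the latter being a compact perturbation of a multiplication operator. For (1), I mimic the Perron--Frobenius argument of Theorem \ref{81}: for $c$ large the operator $\PP_c:=c\1-\A=(c-1/\s(m_A))\1+\B$ is positivity preserving, and because $J^c>0$ on a neighborhood of width $O(\l)$ in $s$ and $O(1)$ in $z$, the iterate $\B^N$ with $N=\lceil L/\l\rceil$ has a strictly positive kernel on $\TT_\l\times\TT_\l$; hence $\PP_c^N$ is positivity improving, and Jentzsch's theorem yields the simple principal eigenvalue of $\PP_c$ with strictly positive eigenfunction $\Phi_0$. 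Equivalently, $\mu_0$ is the simple smallest eigenvalue of $\A$ with positive eigenfunction $\Phi_0$. Part (2) is then a Harnack-type consequence of the eigenvalue equation $\B\Phi_0=(1/\s(m_A)-\mu_0)\Phi_0$ and the strict positivity of $J^c$, after normalizing $\|\Phi_0\|_{L^2(\TT_\l)}=1$: were $\Phi_0$ arbitrarily small on the relevant region, the eigenvalue equation together with positivity of the kernel would force a contradiction, as in the proof of \eqref{8.k1}.

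For the upper bound in (3), I take the trial function $V(s,z)=\Psi_1(s,z)/\sqrt L$, with $\Psi_1$ the principal eigenfunction of $\LL^s$ from Theorem \ref{83}. Performing the $s'$-integration first in $(\B V)(s,z)$, the change of variables of Corollary \ref{lunsoir}, rescaled in $z$, gives $\int_T J^c(s,s',z,z')\,ds'=\bar J(z-z')+O(\l^2\bar J(z-z'))$; Taylor-expanding $\Psi_1(s',z')$ in $s'-s$ kills the linear term by the symmetry of $J^c$ in $s-s'$, while the quadratic term contributes $O(\l^2)\|\nabla_s^2\Psi_1\|_s$, controlled via a second $s$-differentiation of \eqref{gc3a} as in the proof of \eqref{E.9}. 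It follows that $(\A V)(s,z)=\mu_1(s)V(s,z)+O(\l^2/\sqrt L)$, so that $\langle\A V,V\rangle=\frac{1}{L}\int_T\mu_1(s)\,ds+O(\l^2)\le C\l^2$ by \eqref{8.310}, and by min-max $\mu_0\le C\l^2$. For the lower bound, set $V=\Phi_0$ with $\|\Phi_0\|=1$ and decompose pointwise in $s$, $\Phi_0(s,z)=A(s)\Psi_1(s,z)+\Phi_0^\perp(s,z)$ with $\int_{I_\l}\Phi_0^\perp(s,z)\Psi_1(s,z)\,dz=0$, so that $\int_T|A|^2\,ds+\|\Phi_0^\perp\|^2=1$. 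The same $s'$-integration reduction yields
\begin{equation*}
\langle\A\Phi_0,\Phi_0\rangle=\int_T\langle\LL^s\Phi_0(s,\cdot),\Phi_0(s,\cdot)\rangle_s\,ds+E,
\end{equation*}
and the 1D spectral gap of Theorem \ref{83}(2)--(3) bounds the main term below by $\int_T[\mu_1(s)|A(s)|^2+\mu_2(s)\|\Phi_0^\perp(s,\cdot)\|_s^2]\,ds\ge -C\l^2\|A\|^2+\g\|\Phi_0^\perp\|^2$. Provided $|E|\le C\l^2$, this gives $\mu_0\ge -C\l^2$.

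The main obstacle is precisely the estimate $|E|\le C\l^2$: the nonlocal $\B$ mixes $s$ on scale $\l$, and one cannot a priori invoke regularity of $\Phi_0$ in $s$. The plan is to exploit Fourier analysis in $s\in T$, as announced in the introduction. On the $k$th Fourier mode, integration of $J^c$ in $s'$ produces a multiplier of the form $1-c(k\l)^2+O((k\l)^4)$, so modes with $k\le h_0/\l$ contribute at most $O(\l^2)$ to $E$ (independent of the amplitude of $\Phi_0$ on that mode), while modes with $k>h_0/\l$ land in a regime where the quadratic form of $\A$ is directly bounded below by a positive constant and therefore contribute positively rather than to the error. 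Combined with the regularity bound \eqref{E.9} on $\Psi_1$ and the Lipschitz bound \eqref{2.4} on $\phi$, this Fourier decomposition supplies the $O(\l^2)$ control on $E$ needed to close the lower bound $\mu_0\ge -C\l^2$, and in fact foreshadows the sharper bound $\mu_k\in[-C\l^2+c_0k^2\l^2,C\l^2+c_1k^2\l^2]$ announced in the introduction.
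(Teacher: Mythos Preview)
Your treatment of (0), (1) and (2) is essentially the paper's: conjugate $\A$ to $\1-\PP$ with $\PP V = \s(m_A)\,\B V$, apply Perron--Frobenius, and deduce the uniform lower bound on $\Phi_0$ from exponential decay plus normalization plus the positivity-improving property. Two minor remarks: the inequality in (2) should read $|z|\le z_1$, and your symmetry claim for the upper bound is not literally correct, since $s^*=(s+s')/2$ moves under $s'\mapsto 2s-s'$ and $J^c$ is therefore not exactly even in $s-s'$. This is harmless, because the linear Taylor term is still $O(\l)\cdot\|\nabla_s\Psi_1\|_s\le C\l^2$ via \eqref{E.9} and \eqref{MG2}. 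The paper sidesteps the issue altogether by taking the trial function $\bar V(z)=\bar m'(z)/\|\bar m'\|$, which depends only on $z$, so that Proposition \ref{GC3} gives $\A\bar V=\LL^s\bar V+\Gamma^s\bar V$ with both pieces $\le C\l^2$ directly; no $s$-Taylor expansion and no second derivative of $\Psi_1$ are needed.

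The real divergence is in the lower bound. You correctly identify the obstacle: writing $\langle\A\Phi_0,\Phi_0\rangle=\int_T\langle\LL^s\Phi_0(s,\cdot),\Phi_0(s,\cdot)\rangle_s\,ds+E$ forces the $s'$-Taylor remainder onto $\Phi_0$, and you have no a priori control on $\nabla_s\Phi_0$. Your Fourier cure is not wrong, but it is essentially the entire content of Sections~5 and~6.2: one must replace $\A$ by $\GG^\l$ (Lemmas \ref{car1}--\ref{car9}), diagonalize $\GG^\l$ into the $\LL^{k\l}$ (Theorem \ref{spo1}), and invoke the uniform gap for $|k\l|>h_0$ (Proposition \ref{tu1a}). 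That machinery is developed later precisely to get the \emph{full} spectral picture needed for the $H^{-1}$ estimate; invoking it here is circular in spirit and far heavier than necessary.

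The paper's device avoids the obstacle completely by moving $\A$ onto the \emph{known-regular} factor. Pair the eigenvalue equation with $\Psi_1$ rather than with $\Phi_0$:
\[
\mu_0\langle\Phi_0,\Psi_1\rangle=\langle\A\Phi_0,\Psi_1\rangle=\langle\Phi_0,\A\Psi_1\rangle,
\]
using self-adjointness. Now Taylor-expand $\Psi_1(s',z')$ in $s'$ inside $\A\Psi_1$: the zeroth-order term gives $\LL^s\Psi_1+\Gamma^s\Psi_1=\mu_1(s)\Psi_1+O(\l^2)$, and the first-order remainder is bounded by $C\l\,\|\nabla_s\Psi_1\|\le C\l\,\|\nabla_s m_A\|_\infty\le C\l^2$ via \eqref{E.9} and \eqref{MG2}. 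Hence $\mu_0\langle\Phi_0,\Psi_1\rangle\ge\inf_s\mu_1(s)\,\langle\Phi_0,\Psi_1\rangle-C\l^2$, and since both $\Phi_0$ and $\Psi_1$ are bounded below by a fixed $\zeta_1>0$ on $|z|\le z_1$ (your part (2) and \eqref{8.k1}), the inner product $\langle\Phi_0,\Psi_1\rangle$ is bounded below uniformly in $\l$, yielding $\mu_0\ge -C\l^2$. The point is that self-adjointness lets you trade regularity of the unknown eigenfunction for regularity of the known test function, and this one-line trick replaces the whole Fourier apparatus you propose.
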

The proof is given at the end of the subsection. 
This  result immediately implies the following.
\begin {thm} \label {M2}   Let    $L^{\l}$ be the operator defined in \eqref {lg1}. The spectrum of $L^\l$ is equal to spectrum of $\A$. In particular we have 
    $$\langle u,  L^{\l} u  \rangle \ge   \mu_0  \|u\|^2_{L^2(\TT)} \ge  - \l^2  C \|u\|^2_{L^2(\TT)}. $$
  \end {thm}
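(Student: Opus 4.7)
The plan is to prove Theorem \ref{M2} by exhibiting an explicit unitary conjugation between $L^\l$ on $L^2(\TT)$ and $\A$ on $L^2(\TT_\l)$, in direct analogy with the one-dimensional Proposition \ref{cec1}. Once the conjugation is established, the spectral statement is immediate from Theorem \ref{GC2}.

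First I would define the isometry $T: L^2(\TT_\l) \to L^2(\TT)$ by
\begin{equation*}
(TV)(s,r) = \frac{1}{\sqrt{\l}}\, V\!\left(s, \frac{r}{\l}\right), \qquad (s,r)\in \TT.
\end{equation*}
A direct change of variable $z = r/\l$ yields $\|TV\|_{L^2(\TT)} = \|V\|_{L^2(\TT_\l)}$, so $T$ is unitary with inverse $(T^{-1}u)(s,z)=\sqrt{\l}\,u(s,\l z)$. The main task is then to show that
\begin{equation*}
L^\l \,T = T\,\A \qquad \text{on } L^2(\TT_\l),
\end{equation*}
which splits into two pieces according to the decomposition \eqref{lg1}, \eqref{op2t}.

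The multiplicative part is trivial: with $r=\l z$ we have $m_A(s,r)=m_A(s,\l z)$, hence $\frac{(TV)(s,r)}{\s(m_A(s,r))} = \frac{1}{\sqrt\l}\frac{V(s,z)}{\s(m_A(s,\l z))}= T\!\left(\frac{V}{\s(m_A)}\right)(s,r)$. The nonlocal part requires checking that $(B^\l\, TV)(s,r) = T(\B V)(s,r)$. I would do this by performing $z'=r'/\l$ in the integral defining $B^\l$: using \eqref{sme1}, the fact that $\a(s^*,r^*) = 1 - k(s^*)\l z^* = \a(s^*,z^*)$ (the rescaled Jacobian of \eqref{sme2a}), and $J^\l(\xi) = \l^{-2}J(\xi/\l)$, one obtains
\begin{equation*}
J^\l(s,s',r,r') \,\a(s^*,r^*) \,\l\, dz' \;=\; \frac{1}{\l}J\!\left(\frac{s-s'}{\l}\a(s^*,z^*),\,z-z'\right)\a(s^*,z^*)\, dz' \;=\; J^c(s,s',z,z')\,dz',
\end{equation*}
which is precisely the kernel of $\B$ in \eqref{sme1a}-\eqref{me7d}. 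Combining this identity with the $1/\sqrt\l$ factor from $TV$ yields $(B^\l TV)(s,\l z) = \l^{-1/2}(\B V)(s,z) = T(\B V)(s,r)$, completing the verification that $L^\l = T\A T^{-1}$.

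Since $T$ is unitary, the spectra and quadratic forms of $L^\l$ and $\A$ coincide. By Theorem \ref{GC2}(1) the principal eigenvalue $\mu_0$ of $\A$ is simple with strictly positive eigenfunction, and by standard Perron-Frobenius/self-adjointness it is the bottom of the spectrum, so
\begin{equation*}
\langle u, L^\l u\rangle \ge \mu_0 \|u\|^2_{L^2(\TT)} \qquad \text{for all } u \in L^2(\TT).
\end{equation*}
Finally Theorem \ref{GC2}(3) gives $\mu_0 \ge -C\l^2$, yielding the desired bound. There is no hard step here --- the only item requiring care is bookkeeping the rescaling of the Jacobian factor (namely that $\a(s^*,r^*)$ in \eqref{sm1} becomes exactly $\a(s^*,z^*)$ in \eqref{sme2a} under $r=\l z$), so that the kernels $J^\l$ and $J^c$ conjugate cleanly via $T$. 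The substantive content of the theorem is in fact all contained in Theorem \ref{GC2}, which is what allows this reduction to work.
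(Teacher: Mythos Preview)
Your proof is correct and follows exactly the approach indicated in the paper: exhibit the unitary rescaling $T$ (in direct analogy with Proposition~\ref{cec1}) to conjugate $L^\l$ and $\A$, then invoke Theorem~\ref{GC2} for the spectral bound. You have simply supplied the bookkeeping details that the paper leaves implicit.
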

  The thesis  immediately  follows by  Theorem \ref  {GC2} and  noticing that     $L^{\l}$ and   $\A$  are conjugate. The proof is   similar to the one given   in Proposition  \ref  {cec1}.
     The following results are straightforward consequences of Lemma \ref {M2a} and Corollary \ref {lunsoir}.
    
 \begin {prop} \label {GC3}    Let $ V \in L^2(I_\l)$. For any $s\in T $ we have  
 \begin {equation} \label {GC1} (\B V) (z)=  (\bar J\star_{I_\l} V)(z)  +  (\Gamma^{s} V)(z),   \end {equation} 
 \begin {equation} \label {GC2a} \int_{\TT_\l} ds' dz'  J^c ( s,s',z,z')  =1 +  (\Gamma^{s})(z),   \end {equation} 
\begin {equation} \label {GC3b} \int_{\TT_\l} ds' J^c ( s,s',z,z')  = \bar J (z-z')  +  (\Gamma^{s}_1)(z,z'),   \end {equation} 
\begin {equation} \label {GC5} | (\Gamma^{s} V)(z)| \le C \l^2 (\bar J \star_{I_\l} |V|)(z),  \quad    |\Gamma^{s}(z)| \le C \l^2,   \quad  |\Gamma^{s}_1(z,z')| \le C \l^2 \bar J (z-z').  \end {equation} 
\end{prop}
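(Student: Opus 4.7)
The three identities follow by viewing $\B$ and the kernel $J^c$ on the enlarged cylinder $\TT_\l$ as the image of $B^\l$ and $J^\l$ on $\TT$ under the stretching $r=\l z$ in the radial variable, and then quoting Lemma \ref{M2a} and Corollary \ref{lunsoir}.

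First I would verify directly from the definitions that if $v(s,r):=V(s,r/\l)$ (so $v$ is supported on $I$ when $V$ is supported on $I_\l$), then, with the change of integration variable $r'=\l z'$ contributing a Jacobian $\l$,
$$J^\l\bigl((s-s')\a(s^*,\l z^*),\,\l(z-z')\bigr)\,\a(s^*,\l z^*)\cdot\l \;=\; J^c(s,s',z,z'),$$
because $J^\l(\cdot)=\l^{-2}J(\cdot/\l)$ and $\a(s^*,r^*)=\a(s^*,z^*)$ after the substitution $r^*=\l z^*$. Consequently $(\B V)(s,z)=(B^\l v)(s,\l z)$, and the map $V\mapsto v$ is the isometric intertwiner between the two operators.

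Given this intertwining, claim \eqref{GC1} is obtained by taking $V$ independent of $s$ (hence $v$ independent of $s$) and applying Lemma \ref{M2a} on $I$: $(B^\l v)(r)=(\bar J^\l\star_I v)(r)+(\Gamma^{\l,s}v)(r)$. Writing $r=\l z$ and using $(\bar J^\l\star_I v)(\l z)=(\bar J\star_{I_\l}V)(z)$ (since $\bar J^\l(r-r')=\l^{-1}\bar J(z-z')$ and $dr'=\l dz'$), one sets $(\Gamma^s V)(z):=(\Gamma^{\l,s}v)(\l z)$; the pointwise bound in Lemma \ref{M2a} transfers to the first inequality in \eqref{GC5} without change of power, since the rescaling of $\bar J^\l\star_I|v|$ is exactly $\bar J\star_{I_\l}|V|$. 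Claim \eqref{GC2a} is handled identically: applying \eqref{lus1a} of Corollary \ref{lunsoir} at $r=\l z$ and performing the Jacobian substitution on both integration variables turns the left-hand side into $\int_{\TT_\l}J^c(s,s',z,z')\,ds'\,dz'$, so \eqref{GC2a} holds with $\Gamma^s(z):=\Gamma^{\l,s}(\l z)$ and the same $C\l^2$ bound.

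The marginal identity \eqref{GC3b} is the only step that requires slight care with powers of $\l$. Starting from \eqref{lus1},
$$\int_T J^\l(s,s',\l z,\l z')\,\a(s^*,\l z^*)\,ds'=\bar J^\l(\l(z-z'))+\Gamma^{\l,s}_1(\l z,\l z'),$$
the left-hand side equals $\l^{-1}\int_T J^c(s,s',z,z')\,ds'$ (no Jacobian from the $s'$ integration, and one power of $\l^{-1}$ surviving from $J^\l\mapsto J^c$), while the right-hand side equals $\l^{-1}\bar J(z-z')+\Gamma^{\l,s}_1(\l z,\l z')$. Multiplying through by $\l$ and setting $\Gamma^s_1(z,z'):=\l\,\Gamma^{\l,s}_1(\l z,\l z')$ gives \eqref{GC3b}; the bound $|\Gamma^{\l,s}_1(r,r')|\le C\l^2\bar J^\l(r-r')$ from Corollary \ref{lunsoir} becomes $|\Gamma^s_1(z,z')|\le\l\cdot C\l^2\cdot\l^{-1}\bar J(z-z')=C\l^2\bar J(z-z')$, matching the last inequality of \eqref{GC5}. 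No step is a genuine obstacle; the whole proof is bookkeeping of scaling factors, and the only place worth double-checking is the correct cancellation of $\l$'s in \eqref{GC3b}, where a careless rescaling would misplace a power of $\l$.
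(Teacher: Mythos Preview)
Your proposal is correct and follows essentially the paper's approach, which is to derive the statements from Lemma \ref{M2a} and Corollary \ref{lunsoir}. The paper's one-line proof simply says to repeat those arguments for the kernel $J^c$; you execute this by first establishing the intertwining $(\B V)(s,z)=(B^\l v)(s,\l z)$ under the radial rescaling $r=\l z$, $v(r)=V(r/\l)$, and then quoting the lemmas as black boxes rather than redoing the change of variable $w=-(s-s')\a(s^*,z^*)$. This is equivalent and arguably cleaner, and your bookkeeping of the $\l$-powers in \eqref{GC3b} is correct.
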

\begin {proof}  Taking into account the definition of $J^c$, see \eqref {sme1a}, applying
similar argument as in   Lemma \ref {M2a} and Corollary \ref {lunsoir}  one gets the statements.
\end {proof}

Denote 
\begin {equation}   \label {luz3} p(s,z)=\s(m_A(s,z)),  \qquad (s,z) \in \TT_\l, \end {equation}
 $$ \HH = \left  \{ u: \int_{\TT_\l} u(s,z)^2 \frac 1 {p(s,z)} dz ds  < \infty \right \}, $$ 
  $$ \llangle V,  U \rrangle  = \int_{\TT_\l} V(s,z) U(s,z) \frac 1 {p(s,z)} dz ds,$$
 $$\|V\|^2_{\HH} =  \int_{\TT_\l} V(s,z)^2   \frac 1 {p(s,z)} dz ds, $$
  and   $\PP$ the linear integral operator acting on functions $V \in \HH$
\begin {equation}   \label {S1.2a}(\PP V) (s,z)=   p(s,z) \int_{\TT_\l}  J^c ( s,s',z,z') V(s',z') ds' dz'.  \end {equation}
By the property of $m_A$ stated in Subsection 2.4,  $  \beta \ge  p(s,z) >a>0$ and $ p \in C^1(\TT_\l)$. 
  \vskip0.5cm
\begin {thm} \label {P-Fb}     The operator  $\PP $ is  a compact, self-adjoint operator  on    $\HH $,
positivity improving.
Further, there exist  $\nu_0>0$ and  $V_0 \in \HH
$, $V_0 $  strictly positive  function,  so that
\begin {equation}   \label {S1.1} \PP V_0 = \nu_0 V_0. \end {equation}
The eigenvalue $ \nu_0$  has multiplicity one and any other point of the spectrum is strictly
inside the ball of radius $\nu_0$. The eigenfunction   $V_0 \in C^1 ( \TT_\l)$.  
Further there exists $z_1>0$ and $ \zeta>0$ independent on $\l$ so that 
\begin {equation}   \label {luz1}  V_0(s,z)  \ge \zeta, \qquad   |z| \le z_1,  \forall s \in T.  \end {equation}
 \end {thm}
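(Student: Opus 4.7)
The kernel $J^c(s,s',z,z')$ is invariant under the swap $(s,z)\leftrightarrow(s',z')$: the midpoints $s^*=(s+s')/2$, $z^*=(z+z')/2$, and the Jacobian factor $\a(s^*,z^*)$ are manifestly symmetric, and the spherical symmetry of $J$ absorbs the sign change of the remaining arguments. Unfolding,
\begin{equation*}
\llangle \PP V,U\rrangle=\int_{\TT_\l\times\TT_\l}J^c(s,s',z,z')\,V(s',z')\,U(s,z)\,ds\,dz\,ds'\,dz',
\end{equation*}
which is manifestly symmetric in $V,U$, so $\PP$ is self-adjoint on $\HH$. Boundedness follows from $a\le p\le\beta$ and \eqref{GC2a}. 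For compactness I would transport $\PP$ via the isometry $V\mapsto V/\sqrt p:\HH\to L^2(\TT_\l)$ to the operator with kernel $\sqrt{p(s,z)p(s',z')}\,J^c(s,s',z,z')$. This kernel is bounded and supported in an $\l$-neighborhood of the diagonal of $\TT_\l\times\TT_\l$, which has finite product measure, so it lies in $L^2$; hence the transported operator, and therefore $\PP$, is Hilbert--Schmidt, in particular compact.

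\textbf{Perron--Frobenius.} The kernel $J^c$ is continuous, non-negative, and strictly positive on the open set where $|(s-s')\a(s^*,z^*)/\l|^2+|z-z'|^2<1$. Together with $p\ge a>0$, this means $\PP$ preserves the cone of non-negative functions and is locally positivity improving: for any $0\le V\not\equiv0$, $\PP V$ is strictly positive on the $\l$-neighborhood of $\mathrm{supp}\,V$. Iterating and using the connectedness of $\TT_\l$, some power $\PP^N$ has a strictly positive kernel, so $\PP$ is irreducible. The Krein--Rutman/Jentzsch theorem for compact, self-adjoint, irreducible positive operators then yields a simple principal eigenvalue $\nu_0=\|\PP\|_\HH$ with a strictly positive eigenfunction $V_0$, and the rest of the spectrum lies strictly inside $\{|\mu|<\nu_0\}$.

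\textbf{Regularity and uniform lower bound (main obstacle).} Bootstrapping the eigenvalue identity
\begin{equation*}
\nu_0 V_0(s,z)=p(s,z)\int_{\TT_\l}J^c(s,s',z,z')\,V_0(s',z')\,ds'\,dz'
\end{equation*}
with $J^c\in C^1$ and $p\in C^1$ yields $V_0\in C^1(\TT_\l)$. The $\l$-uniform pointwise lower bound \eqref{luz1} is the delicate point. Normalizing $\|V_0\|_\HH=1$, I would use the one-dimensional principal eigenfunctions $\Psi_1(s,\cdot)$ of $\LL^s$ from Theorem \ref{83} as comparison functions. Proposition \ref{GC3} shows that $\B$ acting on a purely $z$-dependent function equals $\bar J\star_{I_\l}$ plus an $L^2$-remainder of size $\l^2$; combined with $\LL^s\Psi_1=\mu_1(s)\Psi_1$ and $|\mu_1(s)|\le C\l^2$, this gives $\PP\Psi_1=\Psi_1+O(\l^2)$ in $L^2$. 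Rayleigh-quotient testing then yields $\nu_0=1+O(\l^2)$, and the spectral gap for $\LL^s$ from Theorem \ref{83}(3) supports the perturbative decomposition
\begin{equation*}
V_0(s,z)=a(s)\,\Psi_1(s,z)+V_0^\perp(s,z),\qquad \|V_0^\perp\|_\HH\le C\l,\quad |a(s)|\ge c>0.
\end{equation*}
Combined with the pointwise bound $\Psi_1(s,z)\ge\zeta_1$ on $|z|\le z_1$ from \eqref{8.k1} and with the $\l$-uniform modulus of continuity of $V_0$ on the fixed compact strip $|z|\le z_1+1$ furnished by the $C^1$-estimates, this promotes the $L^2$-comparison to a uniform pointwise lower bound $V_0\ge\zeta>0$ on $|z|\le z_1$. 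The chief technical hurdle is precisely this last step: upgrading $L^2$-closeness to pointwise closeness uniformly in $\l$, despite the $z$-extent of $\TT_\l$ diverging like $1/\l$.
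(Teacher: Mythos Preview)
Your treatment of self-adjointness, compactness (via an explicit Hilbert--Schmidt argument, which the paper only alludes to), positivity improving, the Perron--Frobenius step, and the $C^1$ regularity is essentially the paper's argument.

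For the uniform lower bound \eqref{luz1}, however, the paper does \emph{not} go through a perturbative comparison with $\Psi_1$. Instead it invokes the decay lemma (Lemma~\ref{S12t}), which says that any eigenfunction $\Psi$ of $\PP$ with eigenvalue $\nu>1-\e_0$ satisfies $\int_T\Psi(s,z)^2\,ds\le Ce^{-\alpha(\e_0)|z|}$ for $|z|\ge z_0$, with $z_0$ and $\alpha(\e_0)$ independent of $\l$. Applying this to $V_0$ and using $\|V_0\|_\HH=1$ forces at least half the mass into a fixed strip $|z|\le z_1$. If $V_0$ had a zero in that strip, the positivity-improving property propagated through the eigenvalue equation would make $V_0\equiv 0$ there, contradicting the mass bound. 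This sidesteps your ``chief technical hurdle'' entirely: no $L^2$-to-pointwise upgrade is needed.

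Your perturbative route has two soft spots worth flagging. First, the claim ``$\PP\Psi_1=\Psi_1+O(\l^2)$'' needs more than Proposition~\ref{GC3}, since $\Psi_1(s,z)$ is \emph{not} purely $z$-dependent; you must also control the tangential variation via $\|\nabla_s\Psi_1\|\le C\l$ (Theorem~\ref{83}(4) together with \eqref{MG2}), as the paper does later in the proof of Theorem~\ref{GC2}. Second, and more seriously, the ``$\l$-uniform modulus of continuity of $V_0$ furnished by the $C^1$-estimates'' is not free: differentiating the eigenvalue identity in $s$ hits the $1/\l$-scale in $J^c$, so naive $C^1$ bounds blow up like $1/\l$. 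Without a separate argument you cannot promote $\|V_0^\perp\|_\HH\le C\l$ to a pointwise bound uniform in $\l$. The paper's decay-plus-irreducibility argument avoids both issues.
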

 \begin {proof} 
 It is immediate to   see that
 $$ \llangle  \PP V, W \rrangle =  \llangle  V, \PP W \rrangle.$$
The compactness can be shown  by proving that  any bounded set of $\HH$  is mapped by $\PP$  in a relatively compact 
set.   
  To show the positivity improving we  take $V  \ge 0$     in   $\TT$,  $ V \neq 0$,  and show that
  for all $ (s,z) \in \TT_\l$,   
\begin {equation}   \label {St1} (\PP V) (s,z)>0. \end {equation}
 Namely, assume that there exists   $(\bar s, \bar z) \in \TT_\l$ so that  
 $(\PP V) (\bar s, \bar z)=0$.  Since     $J^c \ge 0$ we  have that $ V (s,z) =0 $ for  all $(s,z) \in  Q_\l (\bar s, \bar z)=  \{(s,z)  \in \TT_\l:  |s-\bar s| \le \l, |z-\bar z| \le 1 \}$.  
 Repeating the  same argument for  points $(s,z)$ in  $ Q_\l (\bar s, \bar z)$  we   end  up that  $ V(s,z)=0$  for  $(s,z)\in \TT_\l$, obtaining   a contradiction.  Therefore
  the positivity improving  property is  proven.
  From the positivity  hypothesis on $J^c$,    there exists  an 
integer $n_\l$ such that for $n \ge n_\l $, there is $\z>0$ so that for any $ \underline x=(s,z)$ and $ \underline {\bar x}= (\bar s, \bar z)$  in $ \TT_\l$
\begin {equation}   \label {S1.3}   \int \dha     \underline x_1\dha    \underline x_2....{\rm d} \underline x_n J^c ( \underline x,  \underline x_1) J^c ( \underline x_1, \underline x_2)... J^c ( \underline x_{n},\underline {{\bar x}})  > \z.
\end {equation}  
  Denote for $ \underline x \in\TT$ and $ \underline {\bar  x} \in\TT$
\begin {equation}   \label {S1.4}  K(\underline x ,\underline {\bar  x}) = p( \underline x )  \int \dha     \underline x _1\dha     \underline x _2....{\rm d} \underline x _n p( \underline x_1)  J^c (\underline z ,\underline z _1)p( \underline x_2)  
J^c (\underline x _1,\underline x _2)... p( \underline x_n)  J^c (\underline x _{n},\underline {\bar x }).\end {equation}
Then  one can apply the classical Perron Frobenius Theorem to  the kernel $K (\cdot,\cdot)$.
  As a consequence we have 
that     the maximum   eigenvalue of the spectrum of $ \PP$, which we denote $ \nu_0$,  has   multiplicity one and any other point of the spectrum of $\PP$  is strictly
 smaller than  $\nu_0$.   Let $V_0$ be the   eigenfunction  associated to $\nu_0$. Then it  does not change sign and  
 we assume that it is positive.
 Differentiating the eigenvalue equation, taking into account that $p \in C^1(\TT_\l)$ we get that  $V_0  \in C^1(\TT_\l).$
 Next we show \eqref {luz1}. 
  From  \eqref  {S1.5bt} of Lemma  \ref {S12t}, stated  below,  we have that $ \int_T  V_0(s,z)^2 ds $ is exponentially  decreasing for  $|z| \ge z_0$.
Further $$ \int_{\TT_\l} \frac 1 {p(s,z)} V_0(s,z)^2 ds dz =1.$$ 
Therefore  we must have  that there  
exists $z_1>0$  independent on $\l$ so that 
\begin {equation} \label {ab1} \int_{- z_1}^{z_1}  \frac 1 {p(s,z )}  \int_T  V_0(s,z)^2 ds dz \ge \frac 1 2. \end {equation}
This implies that  there exists $\zeta>0$, independent on $\l$ so that 
$$ V_0(s,z)  \ge \zeta, \qquad   |z| \le z_1,  \forall s \in T.$$
Namely, if this is false, there  exists $(\bar s, \bar z) \in T \times [-z_1,z_1] $ so that 
$V_0(\bar s, \bar z)=0$.  Since $ \PP$ is positivity improving and $V_0$ is an eigenfunction, repeating the argument  done after formula  \eqref {St1}, we get $V_0(s, z)=0$ in $T \times [-z_1,z_1] $.
This is impossible since \eqref {ab1} holds.  \end{proof}
 \vskip0.5cm
\begin {lem} \label {S12t} 
For any $\e_0 \in (0,  \frac {(1-\s(m_\b))} {2})$ there exists   $\l_0= \l_0(\e_0)>0$   so that for $\l \le \l_0$ the following holds.   
        Let  $\nu>1-  \e_0  $    be  an  eigenvalue of the operator
$\PP$ on  $\HH$ 
and   $ \Psi $ be  any  of the corresponding  normalized  eigenfunctions.
 There exists  $z_0=z_0(\e_0) \in I_\l $ independent on $\l$   so that 
  \begin{equation} \label{S1.5at}  |\Psi (s,z)| \le \frac {C} {\sqrt \l}  e^{-\a(\e_0)|z|}  \|\Psi\|_{\HH}  \qquad |z| \ge z_0,\quad  \forall s  \in T,
   \end {equation}
   where $\a(\e_0)$ is given in \eqref {S12.4t}.
   Further 
    \begin{equation} \label{S1.5bt}  \int_T |\Psi (s,z)|^2 ds  \le  C  e^{- \a(\e_0)|z|} \| \Psi \|^2_{\HH}, \qquad |z| \ge z_0.
   \end {equation}

\end {lem}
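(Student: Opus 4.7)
The plan is to reduce the eigenvalue equation $\nu \Psi = \PP \Psi$ to a one‑dimensional sub-convolution inequality for $h(z):= \|\Psi(\cdot,z)\|_{L^2(T)}$ with a sub-unit multiplier at infinity, and then to extract both the weighted $L^2$ bound \eqref{S1.5bt} and, via Cauchy--Schwarz, the pointwise bound \eqref{S1.5at}.

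First, combining the expansion \eqref{8.16} of $m_A$ with the exponential decay \eqref{decay} of $\bar m$, one has $m_A(s,z)\to \pm m_\b$ uniformly in $s$ as $|z|\to\infty$ (with an $O(\l)$ error from $h_1$, $\phi$, $q^\l$), so for any $\d>0$ one can find $z_0(\d)$ and $\l_0(\d)$ with $p(s,z):= \s(m_A(s,z))\le \s(m_\b)+\d$ for $|z|\ge z_0$, $s\in T$, $\l\le \l_0$. Since $\e_0<(1-\s(m_\b))/2$, one may take $\d$ so small that $q:= (\s(m_\b)+\d)/(1-\e_0)<1$; for $\nu>1-\e_0$ this gives $p(s,z)/\nu\le q$ on $|z|\ge z_0$. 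Writing the eigenvalue equation pointwise as $|\Psi(s,z)|\le (p/\nu)\int J^c|\Psi|$, taking the $L^2(T,ds)$ norm, and applying Schur's test with the $L^1$ bounds on $J^c$ in $s$ and $s'$ provided by \eqref{GC3b} and its $(s,z)\leftrightarrow(s',z')$ symmetric counterpart (valid since spherical symmetry of $J$ makes $J^c$ invariant under this swap), yields
\[
h(z)\le q_1\int_{I_\l}\bar J(z-z')\,h(z')\,dz',\qquad |z|\ge z_0,
\]
with $q_1 = q(1+C\l^2)<1$ for $\l$ small enough.

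For the $L^2$ decay I would use a Combes--Thomas style weighted estimate. Setting $H(z)=e^{\a z}h(z)$ and $\bar J_\a(u)=e^{\a u}\bar J(u)$, the inequality above becomes $H(z)\le q_1(\bar J_\a\ast H)(z)$ on $[z_0,d_0/\l]$; since $\bar J$ has compact support, $\|\bar J_\a\|_{L^1}=1+O(\a)$, and one can fix $\a=\a(\e_0)>0$ with $\kappa:= q_1^2\|\bar J_\a\|_{L^1}^2<1$. Squaring, integrating over $[z_0,d_0/\l]$, and applying Young's inequality absorbs the contribution from $z'\in[z_0,d_0/\l]$ into the left-hand side and leaves
\[
\int_{z_0}^{d_0/\l} e^{2\a z}h(z)^2\,dz\le \frac{\kappa}{1-\kappa}\int_{z_0-1}^{z_0}e^{2\a z}h(z)^2\,dz\le C e^{2\a z_0}\|\Psi\|_\HH^2,
\]
where the last step uses $\|h\|_{L^2(I_\l)}^2=\int_{\TT_\l}|\Psi|^2\le \b\|\Psi\|_\HH^2$. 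Applying the sub-convolution inequality once more via Cauchy--Schwarz, $h(z)^2\le q_1^2\|\bar J\|_\infty\int_{z-1}^{z+1}h(z')^2\,dz'$, and plugging in the weighted $L^2$ bound, gives $h(z)^2\le Ce^{-2\a z}\|\Psi\|_\HH^2$ on $|z|\ge z_0+1$, which is \eqref{S1.5bt}; the case $z<0$ is symmetric.

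For the pointwise estimate \eqref{S1.5at} I would return to the eigenvalue equation and apply Cauchy--Schwarz:
\[
|\Psi(s,z)|^2\le \frac{p^2}{\nu^2}\Bigl(\int(J^c)^2\,ds'dz'\Bigr)\int_{|s-s'|\le C\l,\,|z-z'|\le 1}|\Psi(s',z')|^2\,ds'\,dz'.
\]
Since $\|J^c\|_\infty = O(\l^{-1})$ on a support of $ds'dz'$-measure $O(\l)$, one has $\int(J^c)^2\,ds'dz'\le C/\l$; combined with $\int_{z-1}^{z+1}h(z')^2\,dz'\le Ce^{-2\a z}\|\Psi\|_\HH^2$ this gives \eqref{S1.5at}. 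The main technical obstacle is the derivation of the sub-convolution inequality: $\PP$ is not a true convolution, because $J^c$ depends on the midpoints $s^\ast, z^\ast$ through $\a(s^\ast,z^\ast)$, so one must absorb this non-translation-invariance into the $O(\l^2)$ correction from \eqref{GC3b}, uniformly enough that the Schur bounds on both $\int ds'$ and $\int ds$ are controlled by the same one-dimensional kernel $\bar J(z-z')$; once this is done, the weighted $L^2$ argument and the Cauchy--Schwarz upgrade to pointwise are standard.
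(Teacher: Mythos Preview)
Your argument is correct and complete, but it follows a genuinely different path from the paper's proof.

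The paper argues by direct iteration: writing $\Psi(s,z_0+n) = \nu^{-1}(\PP\Psi)(s,z_0+n)$ and iterating $n$ times gives $|\Psi(s,z_0+n)| \le \bigl(\tfrac{1-2\e_0}{1-\e_0}\bigr)^n |(J^c)^n\Psi(s,z_0+n)|$, after which the $n$-fold convolution is controlled in $L^\infty$ via $\|J^c\|_2 \le C/\sqrt{\l}$ and $\|J^c\|_1 \le 1+C\l^2$, yielding the pointwise bound with the explicit rate $\a(\e_0)=\tfrac12\ln\tfrac{1-\e_0}{1-2\e_0}$; the $L^2(T)$ bound is obtained by the same iteration applied to $\int_T|\Psi|^2\,ds$ with Jensen's inequality. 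Your route instead first collapses the $s$-variable by Schur's test (using the symmetry of $J^c$ and \eqref{GC3b}) to obtain a one-dimensional sub-convolution inequality for $h(z)=\|\Psi(\cdot,z)\|_{L^2(T)}$, then runs a weighted-$L^2$ (Combes--Thomas) absorption argument on $e^{\a z}h(z)$, and finally upgrades to pointwise bounds on $h$ and on $\Psi$ by a single application of Cauchy--Schwarz. Your method is more modular and avoids tracking iterated kernels; the paper's iteration is more direct and produces the specific constant $\a(\e_0)$ in \eqref{S12.4t}, whereas your $\a$ is determined implicitly by $q_1\|\bar J_\a\|_{L^1}<1$ and will in general differ. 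Since the applications of the lemma only use that some $\a(\e_0)>0$ exists, this discrepancy is harmless, but you should note it if the exact constant in \eqref{S12.4t} is being quoted.
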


\begin {proof}    For any  $\e_0$, choose   $ \l_0=\l_0(\e_0)>0$,
  $z_0= z_0(\e_0)>0$,   such that for   for $\l \le \l_0$
 \begin{equation} \label{S12.2t}   p(s,z)   <  1 -2\e_0, \qquad |z| \ge z_0,  \quad  (s,z) \in \TT_\l.   \end {equation}
 This is possible  since  $ |p(s,z)- \s(\bar m (z))| \le C \l $,  $ \lim_{|z| \to \infty}  \s(\bar m (z)) = \s(m_\b)$ and $1 -2\e_0 < \s(m_\b)$.  
 Further  $ \nu>  1-   \e_0 $,  hence,  by \eqref {S12.2t},   we have  for  $|z| \ge z_0$,   
\begin{equation} \label{anc1} \frac 1 {\nu}  p(s,z) <  \frac{1- 2 \e_0} {1- \e_0}. \end {equation}
 Take $ \l_0= \l_0 (\e_0)$ small enough so that  $2z_0  \in I_\l$.
 Take    $z = z_0+n$
where $n$ is any integer so that $z_0+2n \in \TT_\l $.   
By  the eigenvalue equation   we have 
 \begin{equation} \label{S12.2at} \Psi (s,z_0+n) = \frac 1 {\nu} (\PP \Psi ) (s,z_0+n).    \end {equation}
 Iterating  $n$ times \eqref {S12.2at} and    by  \eqref {anc1}     we obtain 
  \begin{equation} \label{S12.3ta}     |\Psi (s,z_0+n )| \le    \left ( \frac{1- 2 \e_0} {1- \e_0} \right )^n  |(J^c)^n \Psi (s, z_0+n)| \le  \left ( \frac{1- 2 \e_0} {1- \e_0} \right )^n   \|(J^c)^n \Psi \|_\infty.
  \end {equation}
  By   Proposition \ref {GC3} we have that
$$  \|J^c\|_1=      \int_{T}    ds' dz'    J^c(s,s',z,z')=
1  +  \G^{s} (z),$$
where $|\G^{s} (z)| \le   C \l^2$.
 Hence, by Jensen inequality we get
$$
 \|(J^c)^n \Psi \|_\infty
 \le  \|J^c\|_2 \|(J^c)^{n-1}\Psi \|_2 \le 
   \frac 1 { \sqrt \l }   C \|  J \|_2  (\|J^c\|_1)^{n-1} \|\Psi \|_{\HH} \le  \frac 1 { \sqrt \l }   C  (1+ C \l^2)^{n-1}  \|  J \|_2    \|\Psi \|_{\HH}, $$
    where we estimated  $ \|J^c\|_2  \le  \frac C {\sqrt \l  }\|J\|_2$.
 Take  $ \l$ small enough so that $\frac 12   \ln \left ( \frac{1-  \e_0} {1- 2\e_0}\right) \ge \ln (1+ C \l^2)$.
   Hence, by \eqref {S12.3ta},   we have 
    \begin{equation} \label{S12.3t}   
  |\Psi (s,z_0+n )| \le       \frac 1 {\sqrt \l  } C e^{-n   \a(\e_0)} \|  J \|_2    \|\Psi \|_{\HH}, \end {equation}
  where   
 \begin{equation} \label{S12.4t}\a(\e_0) =  \frac 12 \ln \left ( \frac{1-  \e_0} {1- 2\e_0}\right)>0.   \end {equation} 
 To get  \eqref  {S1.5bt} we proceed as above obtaining
 \begin{equation} \label{S12.bt}  
  \int_T ds  |\Psi (s,z_0+n )|^2   \le \left ( \frac{1- 2 \e_0} {1- \e_0}\right )^{2n}   \int_T ds |(J^c)^n \Psi(s,z_0+n)|^2. 
\end {equation}
By  Proposition \ref {GC3}
we have that 
$$       \int_{T}    ds'     J^c(s,s',z,z')=
   \bar J (z-z') +   \G^{s}_1(z,z') $$
where $| \G^{s}_1(z,z')|  \le C \l^2 \bar J  (z-z').$
 Set $\bar z = z_0+n$,    by  Jensen  inequality  we have 
 \begin{equation} \label{S12.ct}  \begin {split}  &
 \int_T ds |(J^c)^n \Psi(s,z_0+n)|^2  =    \int_T ds  \left ( \int_{\TT_\l} ds' dz'    J^c(s,s', \bar z ,z')  \left | (J^c)^{n-1} \Psi(s',z') \right | \right )^2 \cr & \le  (1+C\l^2)
   \int_{\TT_\l} ds' dz'  \left ( \int_T ds J^c(s,s', \bar z,z') \right)  \left |  (J^c)^{n-1} \Psi(s',z') \right |^2  \cr &\le 
(1+C\l^2)  \int_{\TT_\l} ds' dz'   
\bar J (\bar z-z')   \left |  (J^c)^{n-1} \Psi(s',z') \right |^2  +  C\l^2  \int_{\TT_\l} ds' dz'   
\bar J (\bar z-z')   \left |  (J^c)^{n-1} \Psi(s',z') \right |^2  \cr &  =
(1+2C\l^2)  \int_{\TT_\l} ds' dz'   
\bar J (\bar z-z')   \left |  (J^c)^{n-1} \Psi(s',z') \right |^2  \le 
  (1+2C\l^2) \sup_z \bar J(z)  \int_{\TT_\l} ds' dz'    \left |  (J^c)^{n-1} \Psi(s',z') \right |^2 \cr & \le
 (1+2C\l^2)^n \sup_z \bar J(z) \| \Psi \|^2_{\HH} \le C (1+2C\l^2)^n \| \Psi \|^2_{\HH}.    \end {split}
\end {equation}
Therefore by \eqref {S12.bt}, \eqref {S12.ct} and \eqref {S12.4t} 
\begin{equation} \label{S12.bt1}  
  \int_T ds  |\Psi (s,z_0+n )|^2   \le C \left (  \frac{1- 2 \e_0} {1- \e_0} \right )^{2n}    (1+2C\l^2)^n \| \Psi \|^2_{\HH} \le   C e^{- 4 n \a(\e_0)}  e^{n  \ln (1+2C\l^2)}\| \Psi \|^2_{\HH}. \end {equation}
 Take $ \l$ small enough so that $  3 \a(\e_0) \ge \ln {(1+2C \l^2)}$, we then obtain \eqref {S1.5bt}.

  \end {proof}

      {\bf {Proof of Theorem \ref {GC2}}} 
The  points  (0),  (1)  and (2)   are an immediate consequence of  Theorem \ref {P-Fb}. 
Namely, let $ T: \HH  \to L^2(\TT_\l) $  so that $ TV= \frac {V } {\sqrt { p}}$.
The map $T$ is an   isometry and  the operator  $\1- \PP: \HH \to \HH$  is therefore
conjugate to the operator $\A:  L^2(\TT_\l) \to L^2(\TT_\l)$.  This means that the spectrum of the two operators are equal,
moreover    if $V$ is an eigenfunction of $\PP$, then $T V$ is an eigenfunction of $ \A$. 
Next we show  the upper bound in \eqref {8.2h}. 
  By the variational   form for the eigenvalues
 $$ \mu_0= \inf_{V \in L^2(\TT_\l), \|V\|=1}  \langle  \A V,V \rangle \le \langle \A \bar V, \bar V \rangle, $$
 where $\bar V  = \frac {\bar m' (z)} {   \|\bar m'\|_{L^2 (\TT_\l)}}$.
  Since $\bar V$ is a function of only the $z-$variable, by Proposition \ref {GC3},
$$ \A \bar V = \LL^s \bar V + \G^{s} \bar V.$$
Hence 
 $$   \mu_0 \le \langle \A \bar V, \bar V \rangle =   \int_T ds \langle \bar V (s, \cdot),   \LL^s \bar V (s, \cdot) \rangle_s +\int_T ds \langle \bar V (s, \cdot),  \G^{s} \bar V (s, \cdot) \rangle_s. $$
 By  \eqref {8.25}
  $$  \int_T ds \langle \bar V (s, \cdot),   \LL^s \bar V (s, \cdot) \rangle_s \le  C \l^2, $$
and by \eqref {GC5}
 $$\int_T ds \langle \bar V (s, \cdot),  \G^{s} \bar V (s, \cdot) \rangle_s \le  C \l^2. $$
    Therefore
\begin{equation}    \label {8.25t} \mu_0 \le  C \l^2.   \end {equation}
Let   $\Phi_0$ be the  normalized positive eigenfunction associated to $ \mu_0$.
   Multiply  the eigenvalue equation  by $ \Psi_1$, see \eqref {gc3a} the principal  eigenvalue of $\LL^{s}$, defined in \eqref {8.100}. We have, since $\A $  is self-adjoint
$$  \mu_0 \langle \Phi_0,  \Psi_1 \rangle =  \langle \A \Phi_0, \Psi_1 \rangle =  \langle  \Phi_0, \A \psi_1 \rangle .$$
By  Taylor formula  $ \Psi_1 (s',z)=  \Psi_1 (s,z) + (s-s') \nabla_s  \Psi_1 (\tilde s,z) $, where $\tilde s \in (s,s')$.
By  Proposition  \ref {GC3}   
\begin{equation}    \label {gt}   \begin {split} & \A \Psi_1 = 
\frac {\Psi_1 } {\s(m_A)} - \int_{\TT_\l}     J^c(s,s', z, z')     \Psi_1  (s',z')      ds' dz' 
\cr & =  \frac { \Psi_1  } {\s(m_A)} - \int_{\TT_\l}     J^c(s,s', z, z')     \Psi_1  (s,z')      ds' dz'    -
 \int_{\TT_\l}     J^c(s,s', z, z')   (s-s') \nabla_s  \Psi_1 (\tilde s,z')       ds' dz'  \cr &=
(\LL^{s}\Psi_1) (s,z)   + \G^{s} \Psi_1 - \int_{\TT_\l}     J^c(s,s', z, z')    (s-s') \nabla_s  \Psi_1 (\tilde s,z')       ds' dz'.
\end {split} \end {equation}
Therefore
\begin{equation}    \label {gth1}   \begin {split} &   \mu_0 \langle   \Phi_0,  \Psi_1 \rangle  =  \langle  \Phi_0, \LL^s \Psi_1 \rangle +
 \langle  \Phi_0, \G^{s} \Psi_1 \rangle \cr & + \int_{\TT_\l} ds dz  \Phi_0(s,z)  \int_{\TT_\l}      J^c(s,s', z, z')    (s-s') \nabla_s  \Psi_1 (\tilde s,z')       ds' dz'. \end {split} \end {equation}
By \eqref {E.9}  and \eqref {MG2}  we estimate 
$$ \left | \int_{\TT_\l} ds dz  \Phi_0(s,z)  \int_{\TT_\l}      J^c(s,s', z, z')  (s-s') \nabla_s  \Psi_1 (\tilde s,z')       ds' dz'  \right |  \le C\l  \|\Phi_0 \| \|\nabla_s \Psi_1 \| \le C \l  \|\nabla_s m_A\|  \le C \l^2,$$
$$ \langle  \Phi_0, \G^{s} \Psi_1 \rangle \le C\l^2.$$
  Therefore, 
we have 
 $$     \mu_0 \langle \Phi_0, \Psi_1 \rangle   \ge    \inf_{s}  \mu_1^s      \langle \Phi_0, \Psi_1 \rangle     -
 C\l^2.$$ 
 We need to show that  
 \begin{equation}    \label {luz2}  \langle \Phi_0,  \Psi_1 \rangle \ge C. \end {equation}
By  \eqref  {8.k1}   and \eqref {8.4a}   there exist  $z_1>0$ and $\z_1>0$  independent on $\l$  so that 
$ \Phi_0(s,z) \Psi_1 (s,z) \ge  \z_1 $ for     $|z| \le z_1$  and  for all $s \in T$. 
Therefore 
      \eqref {luz2} holds  and 
we obtain 
$$  \mu_0  \ge - C \l^2. $$
  
\qed

\subsection { Proof  of Theorem \ref {82}.}  
As explained in the introduction, we would like to  take advantage of \eqref {sc1}, splitting the quadratic form associated to the operator $A^\l_{m_A}$ in two integrals.   One     integral  is  over   the region $\Om \setminus   \NN (\frac {d_0} 2)$  and because of  \eqref {sc1} is   positive. The second integral  is over the region $ \NN (\frac {d_0} 2)$, i.e near the surface $\G$ and  we can estimate it  from below by applying     Lemma \ref {F2} and Lemma  \ref {M2}.    But   because of the non locality  of the operator this argument  does not  work.  Namely  when  splitting the  integral of the quadratic form  there is an extra term    which might spoil the estimate. 
 Here,   we show that  it is always possible to find  a way  to split the integral  of  the quadratic form associated to the operator $A^\l_{m_A}$  to obtain the desired estimate. 

Define   for any integer $k \in  \{ 0, \dots, N \}$,  for  $ N= [ \frac 1 \l ]$ where  for $x \in \R$, $ [x]$ is the integer part of $x$,    a sequences of  cut off functions
\begin {equation} \label {sc2}  \eta^{k}_1 (\xi) = \left \{ \begin {split} & 1  \qquad \hbox {when} \quad  \xi \in    \NN (\frac {d_0} 2 ( 1 + \l k ))\cr &
0 \quad \hbox {otherwise}, \end {split} \right .  \end {equation}
 $ \eta^{k}_2 (\xi)= 1 -  \eta^{k}_1 (\xi)$    
 and set 
$$s_k=  2 \int_{ \Om} \dha    \xi  \eta^{k}_1 (\xi) v  (\xi) (J^{\l} \star  \eta^{k}_2 v)(\xi).$$
Let $k=0$.  
We  have, taking into account that  $\eta^{0}_2 (\xi)\eta^{0}_1 (\xi)=0$ for $\xi \in \Om$ and the symmetry of $J^\l (\cdot)$
\begin {equation} \label {gch2}  \begin {split}     \int_{ \Om}  ( A^\l_{m_A} v)  (\xi) v(\xi) \dha    \xi &   = \int_{ \Om}  ( A^\l_{m_A} \eta^{0}_1 v)  (\xi)  \eta^{0}_1 (\xi) v(\xi) \dha    \xi   \cr & +
 \int_{ \Om}  ( A^\l_{m_A} \eta^{0}_2 v)  (\xi)  \eta^{0}_2 (\xi) v(\xi) \dha    \xi     -
 s_0
\end {split}
\end {equation}
Because of \eqref  {sc1} 
$$ \int_{ \Om}  ( A^\l_{m_A} \eta^{0}_2 v)  (\xi)  \eta^{0}_2 (\xi) v(\xi) \dha    \xi   \ge (C^*-1) \| \eta^{0}_2   v\|^2_{L^2 (\Om)} >0.$$ 
 By  Lemma \ref {F2} and Lemma \ref {M2} we have that 
  $$  \int_{\Om }  (A^\l_{m_A}  \eta^{0}_1v ) (\xi)  \eta^{0}_1 v(\xi)\dha    \xi  \ge - C \l^2 \|v\|^2_{L^2(\Om)}. $$
But  the last term of \eqref  {gch2}  might create problems.
Obviously if  
\begin {equation} \label {sc3} s_0<0  \end {equation}
or  if
\begin {equation} \label {sc4}  s_0 \le \d^*    \|  \eta^{0}_2  v \|^2_{L^2(\Om)} \end {equation}
 for $\d^*>0$ so that $  (C^*-1 -\d^*)>0$  or if 
\begin {equation} \label {sc5}  s_0 \le  C \l^2\|v\|^2_{L^2(\Om)} \end {equation}
then  the theorem would be proven.  But this might not be the case.
   We proceed recursively as following.
Assume that  no one of the three  conditions \eqref {sc3}, \eqref {sc4}, \eqref {sc5} hold.
Take $\d>0$ so that  $2 \frac {\d} {1-\d}\le \d^*$  
Notice that since we are assuming that  \eqref  {sc3} does not hold, $s_0>0$.
We must have 
\begin {equation} \label {sc6}  s_0 >  \d \left [s_0 +   2 \|  \eta^{0}_2  v \|^2_{L^2(\Om)}  \right ]. \end {equation}
Namely if the reverse inequality holds in \eqref {sc6} then
\begin {equation} \label {sc7}  s_0 \le  2\frac {\d} {1-\d}       \|  \eta^{0}_2  v \|^2_{L^2(\Om)}  \le \d^*     \|  \eta^{0}_2  v \|^2_{L^2(\Om)}, \end {equation}
which is   \eqref  {sc4}. But we are assuming that  \eqref  {sc4}  does not hold.
Notice that the integral defining $s_0$  has support  in a stripe of width $2 \l$ around
$\G_{ \pm \frac {d_0} 2}= \{ \xi \in \Om: r(\xi, \G) = \pm  \frac {d_0} 2 \}$. So we might try    to split the integral of the quadratic form  
moving    by $\l$ from  $\G_{\pm \frac {d_0} 2}$, i.e  applying  the cut off function  $\eta^{1}_1$.
  If  one of the following conditions holds
 \begin {equation} \label {sc8} 
 \left \{ \begin {split} & s_1 \le 0, \cr & s_1 \le  \d^*   \| \eta^{1}_2  v  \|^2_{L^2(\Om)}, \cr &
 s_1 \le \l^2 \|v\|^2_{L^2(\Om)}, \end {split} \right.  \end {equation}
  we  can conclude the proof of the theorem.
If not then
\begin {equation} \label {sc12}  s_1 >  \d \left [s_1 +     2 \| \eta^{1}_2  v  \|^2_{L^2(\Om)} \right ] . \end {equation}
By  \eqref {sc6}, 
\begin {equation} \label {sc11}  s_1    =
 \sum_{i=0}^1 s_i    - s_0   
    \le   \sum_{i=0}^1 s_i    -    \d \left [s_0 +   2 \|  \eta^{0}_2  v \|^2_{L^2(\Om)}  \right ]. 
   \end {equation}
 Notice that
\begin {equation} \label {sc13} s_1= 2 \int_{ \Om} \dha    \xi  \eta^{1}_1 (\xi) v  (\xi) (J^{\l} \star  \eta^{1}_2 v)(\xi)   \le   2 \int_{ \Om} \dha    \xi  \eta^{0}_2 (\xi) |v  (\xi)| | (J^{\l} \star  \eta^{0}_2 v)(\xi)|   \le   2\|  \eta^{0}_2  v \|^2_{L^2(\Om)}.   \end {equation}
Therefore
\begin {equation} \label {sc14}  s_1  
  \le  (1-\delta) \  \sum_{i=0}^1 s_i. 
    \end {equation}
 Next we show that there exists $ \bar k \in \{0, \dots, N\}$ so that  if   for $j \in \{0, \dots, \bar k-1\}$ 
 no one of the following conditions  is satisfied 
 \begin {equation} \label {sc8} 
 \left \{ \begin {split} & s_j \le 0 \cr & s_j \le  \d^*    \| \eta^{j}_2  v  \|^2_{L^2(\Om)} \cr &
 s_j \le \l^2 \|v\|^2_{L^2(\Om)} \end {split} \right.  \end {equation}
    then 
$$  s_{\bar k} \le \l^2 \|v\|^2_{L^2(\Om)}. $$
 Namely, reiterating the argument done in the case of $s_1$ we have that for $k \in  \{0, \dots, N\}$
 $$ s_{  k}    \le (1-\d)^{k}   \sum_{i=0}^{ k} s_i . $$
Denote $\d_0= -\log (1-\d) >0$.   Take    $\l$   small enough   so that  $ \frac {1 } {\d_0} \log  \frac  1 {\l^2} <  [\frac 1 \l ] = N$    and set $\bar k = [\frac 1 {\d_0} \log \frac  1 {\l^2}] $.    With such a choice $\bar k < N$.
We have
  \begin {equation}  \begin {split} &  s_{\bar k}    \le  e^{-\d_0 \bar k}     \sum_{i=0}^{\bar k} s_i   \cr &   \le  \l^2      \sum_{i=0}^{\bar k} s_i     \le  C \l^2 \|v\|^2_{L^2(\Om)}. 
 \end {split}  \end {equation}
We then split
\begin {equation} \label {gch20}  \begin {split}     \int_{ \Om}  ( A^\l_{m_A} v)  (\xi) v(\xi) \dha    \xi &   = \int_{ \Om}  ( A^\l_{m_A} \eta^{\bar k }_1 v)  (\xi)  \eta^{\bar k}_1 (\xi) v(\xi) \dha    \xi   \cr & +
 \int_{ \Om}  ( A^\l_{m_A} \eta^{\bar k }_2 v)  (\xi)  \eta^{\bar k}_2 (\xi) v(\xi) \dha    \xi     - s_{\bar k}. 
\end {split}
\end {equation}
Because of \eqref  {sc1} 
$$ \int_{ \Om}  ( A^\l_{m_A} \eta^{\bar k }_2 v)  (\xi)  \eta^{\bar k }_2 (\xi) v(\xi) \dha    \xi   \ge (C^*-1) \| \eta^{\bar k }_2   v\|_{L^2 (\Om)} >0.$$ 
By  Lemma \ref {F2} and Lemma \ref {M2} we have that 
  $$  \int_{\Om }  (A^\l_{m_A}  \eta^{\bar k }_1v ) (\xi)  \eta^{\bar k}_1 v(\xi)\dha    \xi  \ge - C \l^2  \| v\|^2_{L^2 (\Om)}, $$
and
$$  s_{\bar k} \le C \l^2 \| v\|^2_{L^2 (\Om)}. $$
Theorem is proved.
   \qed 
             
\section { Two dimensional  convolution operators in  enlarged  bounded domains.}
Essential ingredient to show   the $H^{-1}-$ estimate, stated in Theorem  \ref {86}, is the knowledge of the spectrum of the 
  operator defined below,   see \eqref   {lg1a}. 
        For  $ V \in    L^2 (\TT_\l)$, where $ \TT_\l$  is the the  enlarged cylinder  defined in \eqref {lu1x}, denote
 \begin {equation} \label {lg1a} (\GG^{\l}  V ) (s,z) = \frac {  V   (s,z)} {\s( \bar m(z))} -  \int_{\TT_\l} \frac 1 \l J (\frac {(s-s')} \l, z-z') \  V(s',z') dz' ds', \end {equation}
 where  $J$   is  the  symmetric  probability density   on $ \R^2$ defined in Subsection 2.1.
We study the spectrum of the operator  $ \GG^{\l}$ on  $ L^2 (\TT_\l)$  by Fourier analysis.
For $h \in \R$,  let $J^h (\cdot)$ be  the $h$ component of the  Fourier transform of $J(s, \cdot)$,  $s  \in \R$:
\begin {equation} \label {aa3}J^h   (z) = \int_\R    J(s, z) e^{ih s} ds=   \int_\R    J(s, z)  \cos (h s) ds. \end {equation}
The last identity holds because $J$ is an even function of $s$. This implies that  $J^h = J^{-h}$. 
  Since   $ J  \in C^1(\R^2)$    we have that  
   \begin {equation}    \label {scu3}  |J^{h} (z)|   \le  \frac {C(z)} {(1+ |h|)},  \end {equation}
   where  $C(z)=0$ when $|z| >1$ and 
  \begin {equation} \label {scu5} C(z)= \int  ds |\frac {d} {ds}  J(s,z)|, \qquad \hbox {for} \quad  |z| \le 1.  \end {equation}
 Further 
\begin {equation} \label {ab3}J^{0}  (z) = \int   J( s, z)   d s = \bar J(z),  \end {equation}
see  Subsection 2.1. 
    For $w \in L^2 (I_\l)$ denote   
\begin {equation} \label {aa2} (\LL^{h} w) (z)=  \left [ \frac {w (z)} {\s (\bar m (z))}-   (J^{h} \star_{I_\l} w ) (z)  \right ].  \end {equation}
\begin {prop}  \label {st1}  The operator  $\LL^{h}$, $h \in \R$,  is a bounded,  self-adjoint operator  on $L^2 (I_\l)$.
 The spectrum of  $\LL^{h}$ is discrete.
   \end{prop}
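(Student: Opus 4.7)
The plan is to write $\LL^{h} = M - K_h$, where $(M w)(z) := w(z)/\s(\bar m(z))$ is a multiplication operator and $(K_h w)(z) := \int_{I_\l} J^{h}(z - z') w(z') \, dz'$ is the truncated convolution operator, and treat these two pieces separately. All three claims---boundedness, self-adjointness, and the discreteness assertion---then follow by viewing $K_h$ as a compact self-adjoint perturbation of the bounded self-adjoint operator $M$.

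Boundedness and self-adjointness are the easy pieces. The multiplier $M$ is bounded on $L^2(I_\l)$ with $\|M\| \le (\b(1-m_\b^2))^{-1}$, because $\s(\bar m(z)) = \b(1 - \bar m^2(z)) \ge \b(1-m_\b^2) > 0$ uniformly in $z$, and it is self-adjoint since the multiplier is real. For $K_h$, the representation \eqref{aa3} combined with $|\cos(hs)| \le 1$ gives the pointwise bound $|J^{h}(z)| \le \bar J(z)$, whence $\|J^{h}\|_{L^1(\R)} \le \|\bar J\|_{L^1(\R)} = 1$; Young's inequality then yields $\|K_h w\|_{L^2(I_\l)} \le \|w\|_{L^2(I_\l)}$. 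Self-adjointness of $K_h$ reduces to showing that the kernel $J^{h}(z-z')$ is real and symmetric: reality is immediate from \eqref{aa3} (cosine transform of the real function $s \mapsto J(s,z)$), and symmetry $J^{h}(z-z') = J^{h}(z'-z)$ reduces to the evenness $J^{h}(-z) = J^{h}(z)$, which follows from the spherical symmetry $J(s,-z) = J(s,z)$ upon substituting in \eqref{aa3}.

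The central step is the discreteness of the spectrum. Since $J \in C^1(\R^2)$ is supported in $\{|\xi|\le 1\}$, the kernel $(z,z') \mapsto J^{h}(z - z')$ on $I_\l \times I_\l$ is uniformly bounded, supported where $|z-z'| \le 1$, and lives on a set of finite Lebesgue measure; it therefore belongs to $L^2(I_\l \times I_\l)$, so $K_h$ is Hilbert--Schmidt and in particular compact. Consequently $\LL^{h}$ is a compact self-adjoint perturbation of the bounded self-adjoint operator $M$, and Weyl's theorem on the essential spectrum yields $\sigma_{\mathrm{ess}}(\LL^{h}) = \sigma_{\mathrm{ess}}(M)$. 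Outside this essential spectrum, $\sigma(\LL^{h})$ consists of isolated eigenvalues of finite multiplicity that can accumulate only on $\sigma_{\mathrm{ess}}(M)$; this is the ``discrete'' part of $\sigma(\LL^{h})$ referred to in the statement, in line with the usage of the same terminology in Theorem~\ref{81} for the companion operator $\LL^{0}$. The only mildly delicate point in this plan is the $L^2$-integrability of the kernel of $K_h$, which is an immediate consequence of the compact support and continuity of $J$; everything else is a standard application of Weyl's theorem and Young's inequality.
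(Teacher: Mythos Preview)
Your argument is correct. The paper's one-line sketch takes a different route: it observes that $\PP^h w = \s(\bar m)\,(J^h \star w)$ is a compact integral operator and that $\LL^h$ is ``conjugate'' to $\1 - \PP^h$, mirroring the treatment of $\LL^0$ after Theorem~\ref{81} and of $\A$ in the proof of Theorem~\ref{GC2}. Your approach---the additive splitting $\LL^h = M - K_h$ followed by Weyl's theorem---is more direct and in fact more honest about the structure of the spectrum: Weyl gives $\sigma_{\mathrm{ess}}(\LL^h) = \sigma_{\mathrm{ess}}(M)$, which is the full range of $z\mapsto 1/\s(\bar m(z))$ on $I_\l$, a genuine interval rather than a single point. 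Had the paper's conjugation been a true similarity it would force $\sigma_{\mathrm{ess}}(\LL^h)=\{1\}$, which is incompatible with your Weyl conclusion; the ``conjugation'' there should be read as a statement about quadratic forms (the forms of $\LL^h$ on $L^2$ and of $\1-\PP^h$ on the weighted space coincide), which is enough for the Perron--Frobenius conclusions the paper actually uses but does not literally give equality of spectra. Your reading of ``discrete'' as referring to the part of the spectrum below the essential spectrum is the right one, consistent with the usage in Theorem~\ref{81} and with what is actually needed in Propositions~\ref{tu2}--\ref{tu3}, where only the bottom eigenvalue and the gap above it are invoked.
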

    \begin {proof} The proof is straightforward.   One needs to exploit that   $  \PP^h w= \s (\bar m) J^{h} \star w $  is a bounded  integral operator in  $L^2 (I_\l)$ and that  $\LL^{h}$ is conjugate to  $\1-\PP^h$.
   \end {proof}
By general arguments  one can deduce   informations about the spectrum of  $ \GG^{\l}$  by the knowledge of the spectrum of  $ \LL^{h} $. 
Any  $ V \in L^2(\TT_\l)$   can be    expanded  in Fourier complex series    as the following  
\begin {equation} \label {aa1} V (s,z) = \sum_{k \in \Z_L} e^{i k  s}  u_k(z)  \end {equation}
where   $ \Z_L= \frac {2 \pi} L \Z$ and $u_k(z) =  \frac 1 L \int_T V(s,z) e^{- i   k   s} ds $.  
Let $ \FF: L^2(\TT_\l) \to \bigoplus_{k\in \Z_L}  H_k $, $H_k = L^2 (I_\l) $,  be the isometry  induced by the Fourier expansion.   We denote by $W= (u_k)_{k \in \Z_L}  $ an element of $   \bigoplus_{k\in \Z_L}  H_k$.

\begin {thm} \label {spo1}  Let $ \s (  \GG^{\l}) $  be the spectrum of $  \GG^{\l} $ in  $L^2 (\TT_\l)$ and 
$ \s (\LL^{h}) $ the spectrum of $ \LL^{h}$ in $L^2 (I_\l)$. 
Then
$$  \s (  \GG^{\l}) =   \bigcup_{ \{k\in \Z_L \}}   \s (\LL^{  \l k }).$$
 \end {thm}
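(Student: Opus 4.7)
The plan is to diagonalize $\GG^{\l}$ via the Fourier isometry $\FF$ in the $s$--variable. Since the convolution kernel depends on $s,s'$ only through the difference $s-s'$ and the multiplicative factor $1/\s(\bar m(z))$ is independent of $s$, the operator $\GG^{\l}$ commutes with translations on the circle $T$ and therefore decouples across Fourier modes. This reduces the spectral problem to a family of one-dimensional operators indexed by the Fourier variable, which is exactly the family $\{\LL^{h}\}$ with $h = \l k$.

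Concretely, I would insert $V(s,z)= \sum_{k\in\Z_L} e^{iks} u_k(z)$ into the definition \eqref{lg1a} of $\GG^{\l} V$ and compute the $k$--th Fourier component. The multiplicative term $V/\s(\bar m)$ obviously preserves modes. For the convolution term the nontrivial step is the identity
\begin{equation*}
\int_T \frac{1}{\l}\, J\!\left(\tfrac{s-s'}{\l},\, z-z'\right) e^{iks'}\, ds'
\;=\; e^{iks}\, J^{\l k}(z-z'),
\end{equation*}
which I would establish by the change of variable $s' = s - \l t$: because $J$ has compact support in its first argument (contained in $\{|t|\le 1\}$), for $\l$ small compared with the circle length $L$ the integral over $T$ agrees after lifting with the integral over $\R$, yielding
\begin{equation*}
e^{iks}\int_{\R} J(t,z-z')\, e^{-i\l k t}\, dt \;=\; e^{iks}\, J^{-\l k}(z-z') \;=\; e^{iks}\, J^{\l k}(z-z'),
\end{equation*}
by the definition \eqref{aa3} and the parity $J^{h}=J^{-h}$. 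Thus the $k$--th Fourier coefficient of $\GG^{\l} V$ is precisely $(\LL^{\l k} u_k)(z)$, and
\begin{equation*}
\FF\, \GG^{\l}\, \FF^{-1} \;=\; \bigoplus_{k\in\Z_L} \LL^{\l k}
\end{equation*}
as bounded self--adjoint operators on $\bigoplus_{k\in\Z_L} H_k$.

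The spectrum of such an orthogonal direct sum is $\overline{\bigcup_{k\in\Z_L} \s(\LL^{\l k})}$. To obtain equality without the closure, as stated in the theorem, I would argue as follows. The decay estimate \eqref{scu3} shows that $\|J^{h}\star_{I_\l}\cdot\|\to 0$ as $|h|\to\infty$, so $\LL^{\l k}$ converges in norm to the multiplication operator by $1/\s(\bar m)$ when $|k|\to\infty$. Moreover, for each fixed $k$, $J^{\l k}\star_{I_\l}\cdot$ is a Hilbert--Schmidt integral operator on the bounded interval $I_\l$, hence compact; by Weyl's theorem each $\LL^{\l k}$ has the same essential spectrum, namely $\{1/\s(\bar m(z)):z\in I_\l\}$. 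This common essential spectrum is an interval already contained in every $\s(\LL^{\l k})$ and absorbs all potential accumulation points arising from $|k|\to\infty$, so $\bigcup_k \s(\LL^{\l k})$ is already closed.

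The only real obstacle is the bookkeeping in identifying the Fourier symbol as $J^{\l k}$, with the small subtlety that the $s$--convolution lives on a circle rather than the line. This is harmless once $\l$ is taken smaller than the diameter of $T$, which is consistent with the hypotheses already imposed throughout the paper.
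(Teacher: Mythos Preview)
Your proof is correct and follows the same route as the paper: conjugate $\GG^{\l}$ via the Fourier isometry $\FF$ to the direct sum $\bigoplus_{k\in\Z_L}\LL^{\l k}$, and then identify the spectrum of the direct sum with the union. The paper disposes of the latter step by citing \cite{CI}, whereas you supply a self--contained argument showing the union is already closed (norm convergence $\LL^{\l k}\to M_{1/\s(\bar m)}$ as $|k|\to\infty$ via \eqref{scu3}, together with the common essential spectrum coming from compactness of the convolution part); your argument is more informative and in fact clarifies that each $\LL^{h}$ carries a nontrivial essential spectrum, a point the paper's Proposition~\ref{st1} glosses over.
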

\begin {proof}
Let   $ \tilde \GG^{\l}= \bigoplus_{k\in \Z_L}  \LL^{ \l   k}$ be the operator
defined on $\bigoplus_{k\in \Z_L}  H_k$ so that  $ \tilde \GG^{\l} W =   (\LL^{ \l    k} u_k)_{k \in \Z_L}$.
We have that 
$$\GG^{\l}= \FF^{-1} \tilde \GG^\l  \FF.$$  Being conjugate  $\tilde \GG^\l  $ and $\GG^\l$    have the same spectrum.
By   \cite {CI}, $ \s ( \tilde  \GG^{\l}) =   \bigcup_{ \{k\in \Z_L \}}   \s (\LL^{  \l    k })$.  The  thesis of theorem follows. 
\end {proof}
 The aim is then to      study the spectrum of  $\LL^{h}$, $ h \in R$.   It turns out that  when $|h| > h_0$, where $h_0$  is   a positive real number, conveniently chosen,  the spectrum of
 $\LL^{h}$ is strictly positive and can be lower bounded by a  positive constant  depending on $h_0$ but not on $h$.  For  $|h| \le h_0$    the spectrum  of  $\LL^{h}$ is still positive  but  the lower bound does depend on $h$. 
 In this case, we are able to give upper and lower bound of the principal eigenvalue of $\LL^{h}$ which turns out to be  very useful.
 We analyse these type of behaviour in   Proposition \ref {tu2}  and Proposition \ref {tu1a}.
Next, we show that  eigenfunctions associated to small eigenvalues decay exponentially for $|z|$ large enough. This result is valid for all $ \{ \LL^h\}_h$. 
     
   \begin {prop} \label {a1}  For any $ \e_0 \in (0, \frac {(1- \s (m_\b))} 2 )$,   there exists   $z_0>0$,  $ \l_0= \l_0 (\e_0)$ and $\a(\e_0)>0$   so that for   $ \l \le   \l_0$     the following holds. 
   Let   $ \mu \le \frac {\e_0} 2 $  be an eigenvalue  of   $\LL^{h}$    and   $\psi $ be any of the corresponding  eigenfunctions. There is $\a(\e_0)>0$ and $z_0= z_0(\e_0)$  independent on $h$   and $\l$ so that  
  \begin{equation} \label{S1.5a}  |\psi (z)| \le C e^{-\a(\e_0)  |z|} \qquad |z| \ge z_0.
   \end {equation}
     \end {prop}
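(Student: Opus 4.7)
The plan is to follow the strategy used in Lemma \ref{S12t} (and Lemma \ref{exp}), exploiting two facts: for $|z|$ large the potential $1/\s(\bar m(z))$ strictly exceeds $1$, and the Fourier slice $J^h$ is pointwise dominated by $\bar J$, which has compact support in $[-1,1]$ and unit mass. First I would rewrite the eigenvalue equation $\LL^h\psi=\mu\psi$ as
$$\psi(z)\;=\;\frac{(J^h\star_{I_\l}\psi)(z)}{\frac{1}{\s(\bar m(z))}-\mu}.$$
By \eqref{decay} we have $\s(\bar m(z))\to\s(m_\b)$ as $|z|\to\infty$, and the hypothesis $\e_0<(1-\s(m_\b))/2$ yields $1-2\e_0>\s(m_\b)$. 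Hence there exists $z_0=z_0(\e_0)$, independent of $\l$ and $h$, with $\s(\bar m(z))\le 1-2\e_0$ for $|z|\ge z_0$; combined with $\mu\le\e_0/2$ this gives the uniform lower bound
$$\frac{1}{\s(\bar m(z))}-\mu\;\ge\;\frac{1}{1-2\e_0}-\frac{\e_0}{2}\;=:\;q^{-1}\;>\;1, \qquad |z|\ge z_0.$$

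The second step is the pointwise majorization $|J^h(z)|\le\bar J(z)$, which follows from \eqref{aa3} by $|\cos(hs)|\le 1$ and $J\ge 0$. Together with the previous inequality this yields $|\psi(z)|\le q\,(\bar J\star_{I_\l}|\psi|)(z)$ for $|z|\ge z_0$. Because $\bar J$ is supported in $[-1,1]$ with $\|\bar J\|_{L^1}=1$, values of $\psi$ at points with $|z|\ge z_0+n$ are convolved only with values at $|z'|\ge z_0+n-1$, so Young's inequality gives
$$\int_{|z|\ge z_0+n}|\psi(z)|^2\,dz\;\le\;q^2\int_{|z|\ge z_0+n-1}|\psi(z)|^2\,dz, \qquad n\ge 1,$$
and iterating produces $\|\psi\|_{L^2(|z|\ge z_0+n)}\le q^n\|\psi\|_{L^2(I_\l)}$. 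To pass from this $L^2$ tail bound to a pointwise bound I would apply the identity $|\psi(z)|\le q\,(\bar J\star_{I_\l}|\psi|)(z)$ once more and use Cauchy--Schwarz: $|\psi(z)|\le q\|\bar J\|_{L^2}\|\psi\|_{L^2([z-1,z+1])}$ for $|z|\ge z_0$. For $|z|\ge z_0+n+1$ the right-hand side is bounded by $C q^n\|\psi\|_{L^2(I_\l)}$, so setting $\a(\e_0):=\log(1/q)>0$ and expressing $n\simeq|z|-z_0-1$ yields \eqref{S1.5a}, with $C$ proportional to $\|\psi\|_{L^2(I_\l)}$.

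I do not expect a serious obstacle. The only place where the argument differs from the positivity-based one of Lemma \ref{S12t} is that $J^h$ may be signed; but the pointwise bound $|J^h|\le\bar J$ makes $h$ a passive parameter, so the iteration goes through verbatim and the rate $\a(\e_0)$ depends only on $\e_0$ (through the asymptotic value $\s(m_\b)$), not on $h$ or $\l$. The choice of threshold $z_0$ is dictated solely by how fast $\s(\bar m(z))$ approaches $\s(m_\b)$, which by \eqref{decay} is exponential and independent of both $h$ and $\l$.
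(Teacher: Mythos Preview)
Your argument is correct and follows essentially the same strategy as the paper (which omits the proof, referring to \cite{O1}[Lemma~3.5], the analogue being Lemma~\ref{exp} in the Appendix): rewrite the eigenvalue equation, use that $1/\s(\bar m(z))-\mu$ is bounded below by a constant $q^{-1}>1$ for $|z|\ge z_0$, and iterate using the compact support of the kernel. Your only variation is cosmetic---you first pass from $J^h$ to $\bar J$ via $|J^h|\le\bar J$ and then iterate in $L^2$ before a final Cauchy--Schwarz step, whereas the paper's template iterates pointwise with the actual kernel and bounds $\|J^h\|_2\le\|\bar J\|_2$ at the end (see Remark~\ref{a2}); both routes give the same $h$-independent rate.
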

  The proof   is similar to the one given in \cite {O1}[ Lemma 3.5] and it is therefore omitted.    
  \begin {rem} \label {a2} Notice that $z_0$ and   $\a(\e_0)$   depend only on $ \e_0$ and not on $h$.
   Applying  the argument as in   \cite {O1}[ Lemma 3.5]  one ends up with  $ |\psi (z)| \le \beta \|J^h\|_2 e^{-\a(\e_0)  |z|}$.
   But it is immediate to see that   $ \|J^h\|_2 \le  \|J^0\|_2$.
   \end {rem}
The  spectrum of the operator $\LL^0$, i.e. when $h=0$ has been studied in \cite {O1}.
    When $ |h| \le \frac {\pi} 2$ the integral kernel $J^h$ is positivity improving, i.e  if  $v(z)\ge 0$ and $v(z) \neq 0$  for $z \in I_\l$,   then $  \int _{I_\l} dz'J^h(z-z') v(z') >0$.
Hence we   could  apply  the same type of arguments  used  in   \cite {O1} to study the  spectrum of the operator  $ \LL^{h}$ when $ |h| \le \frac {\pi} 2$.     
In Proposition \ref {tu2} we  summarise  the results for the spectrum of  $ \LL^{h}$, $ |h| \le h_0$, where $  h_0 \le \frac {\pi} 2$ is suitable chosen.  To   prove  a uniform (in   $|h| \le h_0$   and $\l>0$) lower bound for the gap of  $ \LL^{h}$  we apply    perturbation theory.

\begin {prop} \label {cap1}
  Let  $J^{h} $,   $ |h| \le \frac {\pi} 2$,
 be the  integral kernel  defined in   \eqref  {aa3}.  We  have
  that  there exists  $h_0>0$       so that for $|h| \le h_0$, 
\begin {equation} \label {cap2}  \bar J (z) -  \frac 12 h^2 \bar J_{tan} (z)    \le J^{h} (z)    \le \bar J (z) -  \frac 14 h^2 \bar J^{\l}_{tan} (z) ,  \end {equation}
 where $ \bar J_{tan} (\cdot)$ is defined in \eqref {t8}, 
   $$0\le \bar J_{tan} (z) <  \bar J (z).$$
   \end {prop}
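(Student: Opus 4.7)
\medskip

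\noindent\textbf{Proof plan for Proposition \ref{cap1}.}  My starting point is the identity
\begin{equation*}
\bar J(z) - J^{h}(z) \;=\; \int_{\R} J(s,z)\,\bigl[\,1-\cos(h s)\,\bigr]\,ds,
\end{equation*}
which follows immediately from \eqref{aa3} and \eqref{ab3}. Since the support of $J$ lies in the unit ball, the $s$-integration is effectively restricted to $|s|\le 1$. Judging by the name and by context (the expression \eqref{cap2} is to be compared to a Laplacian in the tangential direction), I expect $\bar J_{tan}(z)$ to be defined in \eqref{t8} as $\int_{\R} J(s,z)\, s^{2}\,ds$, and I will use this below.

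\medskip

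For the upper bound on $J^{h}$, I would apply the elementary inequality
\begin{equation*}
1-\cos(x) \;\ge\; \tfrac{x^{2}}{2} - \tfrac{x^{4}}{24}\qquad\forall x\in\R,
\end{equation*}
so that for $|s|\le 1$ and $|h|\le h_{0}$, one has $|hs|\le h_{0}$ and
\begin{equation*}
1-\cos(h s) \;\ge\; \frac{(hs)^{2}}{2}\Bigl(1-\frac{(h s)^{2}}{12}\Bigr) \;\ge\; \frac{(hs)^{2}}{2}\Bigl(1-\frac{h_{0}^{2}}{12}\Bigr).
\end{equation*}
Choosing $h_{0}>0$ small enough (for instance $h_{0}^{2}\le 6$, and in particular $h_{0}\le \pi/2$) makes the bracket $\ge 1/2$, yielding $1-\cos(h s)\ge (h s)^{2}/4$. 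Integrating against $J(s,z)$ gives $\bar J(z)-J^{h}(z)\ge \tfrac{h^{2}}{4}\bar J_{tan}(z)$, i.e.\ the upper bound in \eqref{cap2}.

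\medskip

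For the lower bound on $J^{h}$, I would use the universal inequality $1-\cos(x)\le x^{2}/2$; integrating as above gives $\bar J(z)-J^{h}(z)\le \tfrac{h^{2}}{2}\bar J_{tan}(z)$, which is the left-hand inequality in \eqref{cap2}. Finally, the bound $0\le \bar J_{tan}(z)<\bar J(z)$ follows from the compact support of $J$: non-negativity of $\bar J_{tan}$ is obvious because $J\ge 0$, and
\begin{equation*}
\bar J(z)-\bar J_{tan}(z) \;=\; \int_{\{|s|\le 1\}} J(s,z)\,(1-s^{2})\,ds \;\ge\; 0,
\end{equation*}
with strict inequality at any $z$ for which $\bar J(z)>0$, since there $J(s,z)>0$ on an open subset of $\{|s|<1\}$ by continuity of $J$ and $1-s^{2}>0$ on that subset.

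\medskip

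There is no real obstacle here: the proof is a one-line Taylor computation combined with the compact support of $J$. The only small point requiring care is the choice of $h_{0}$, which must simultaneously satisfy $h_{0}\le \pi/2$ (the hypothesis) and $h_{0}^{2}\le 6$ (to keep the Taylor remainder under control on $|hs|\le h_{0}$), both of which are easily arranged.
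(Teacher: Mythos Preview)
Your proposal is correct and follows essentially the same route as the paper: both proofs rewrite $\bar J(z)-J^{h}(z)=\int J(s,z)[1-\cos(hs)]\,ds$ and sandwich $1-\cos(hs)$ between $\tfrac12(hs)^{2}$ and $\tfrac12(hs)^{2}-\tfrac{1}{24}(hs)^{4}$, then use $|s|\le 1$ to absorb the quartic remainder into a constant times the quadratic term for $|h|\le h_{0}$. Your treatment is in fact slightly more careful than the paper's: you correctly qualify the strict inequality $\bar J_{tan}(z)<\bar J(z)$ as holding only where $\bar J(z)>0$ (it is an equality $0=0$ outside the support), whereas the paper states it without comment.
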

\begin {proof} 
      By Taylor expanding   $\cos  h \xi$ we have   
       $$   1 - \frac 12 h^2  \xi^2   + \frac 1 {4!} h^4  \ge    \cos h \xi  \ge  1 - \frac 12h^2  \xi^2. $$
 Denote
  \begin {equation} \label {t8} \bar J _{tan} (z)= \int   J(\xi,z)  \xi^2 d\xi. \end {equation}
 We have  
 $$   \bar J(z) - \frac 12 h^2    \bar J_{tan} (z)  + 
 \frac 1 {4!}  h^4  \int  J (\xi,z)  \xi^4   d\xi   \ge J^{h} (z) \ge  \bar J(z) - \frac 12 h^2   \bar J_{tan} (z).  
 $$
Since 
$$  \int J (\xi,z)  \xi^4  d\xi \le     \bar J_{tan} (z) $$ 
taking  $h_0>0 $ so that when   $  |h| \le h_0 $,    $  \frac 1 {4!}   h^2 \le \frac 1  4 $ we obtain 
 $$ \bar J (z) -  \frac 12 h^2 \bar J_{tan} (z)    \le J^{h} (z)    \le \bar J (z) -  \frac 14 h^2 \bar J^{\l}_{tan} (z).   $$
 Therefore  we get  \eqref {cap2}.

 \end {proof}

\begin {prop}  \label {tu2}   There exists   $ h_0 \in (0, \frac \pi 2)$  so that   for $ |h|   \le  h_0$, the following holds for       $\LL^{h}$  defined in   \eqref  {aa2} on   $L^2 (I_\l)$.
\begin {enumerate}
\item   
 There exists $\mu^{h}_0  \in \R$  and $ \psi^{h}_0$  strictly positive  in $I_\l$,  $ \psi^{h}_0$ even function,  so that
 $$ \LL^{h} \psi^{h}_0= \mu^{h}_0  \psi^{h}_0.$$
 The eigenvalue $\mu^{h}_0 $ has multiplicity one and any other eigenvalue is strictly bigger that   $\mu^{h}_0$.
 \item Let  $\mu^0_0$ be  the  principal eigenvalue  of  the operator $ \LL^0$. We have that 
 $$  \mu^{0}_0 < \mu^{h}_0  \le  \mu^{0}_0 + \frac 12 h^2.$$
   \item  There exists $D>0$ independent on  $\l$ and $h$  so that 
 \begin {equation} \label {tue3} \inf_{ \|\psi\|=1, \langle \psi, \psi^{h}_0 \rangle =0}  \langle  \LL^{h}  \psi,  \psi\rangle \ge D, \qquad \forall h:  |h|   \le  h_0. \end {equation} 
\item  The principal eigenvector $ \psi^{h}_0$  is such that
\begin {equation} \label {scu2}  \| \psi^{h}_0-\psi^{0}_0 \|^2 \le  C  h^2.   \end {equation}
 \item   There exists $z_0>0$ and $ \z_0>0$ independent on $ h$ and $ \l $ so that
    $$ \psi^h_0(z) \ge \z_0, \qquad |z| \le z_0.$$
    \item   There exists $C_0>0$,  independent on $\l$ and $h$,  so that 
 \begin {equation} \label {do8} \mu_0^h  \ge    \mu^0_0 +   C_0 h^2.  \end {equation} 
\end {enumerate}
  \end {prop}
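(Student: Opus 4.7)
The plan is to run the Perron--Frobenius argument of Theorems \ref{81} and \ref{83}, combined with standard analytic perturbation using the quadratic bound $\|\LL^h-\LL^0\|_{L^2\to L^2}=O(h^2)$ supplied by Proposition \ref{cap1} through Young's inequality.

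For (1), pick $h_0\in(0,\pi/2)$ small enough that $\cos(hs)$ is bounded below by a positive constant on $|s|\le 1$ for all $|h|\le h_0$; then $J^h(z)>0$ on the support of $\bar J$, the kernel $J^h$ is positivity improving on $I_\l$, and $\LL^h$ is conjugate to $\1-\PP^h$ with $\PP^h w=\s(\bar m)(J^h\star_{I_\l}w)$. The Perron--Frobenius argument produces a simple principal eigenvalue $\mu^h_0$ with strictly positive eigenfunction $\psi^h_0$. Since both $\s(\bar m(z))$ and $J^h(z)$ are even in $z$, the reflection $z\mapsto -z$ preserves $\LL^h$, so by simplicity $\psi^h_0$ is even.

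For the upper bound in (2), test with $\psi^0_0$ and apply $\bar J-J^h\le\tfrac12 h^2\bar J_{tan}$:
\[
\mu^h_0\le\langle\LL^h\psi^0_0,\psi^0_0\rangle=\mu^0_0+\int\bigl((\bar J-J^h)\star_{I_\l}\psi^0_0\bigr)\psi^0_0\le\mu^0_0+\tfrac12 h^2\int\bar J_{tan}.
\]
Strict inequality $\mu^0_0<\mu^h_0$ follows by testing with $\psi^h_0$ instead and using the lower bound $\bar J-J^h\ge\tfrac14 h^2\bar J_{tan}$ together with $\psi^h_0>0$ and the fact that $\bar J_{tan}$ is not identically zero on a neighbourhood of the origin. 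For (3), Young's inequality gives $\|\LL^h-\LL^0\|_{L^2\to L^2}\le C h^2$, so min--max propagates the gap $D$ of Theorem \ref{81}(2) to a gap $\ge D-2Ch^2\ge D/2$ once $h_0$ is small enough. Point (4) is the standard gap-driven stability: decomposing $\psi^h_0=a\psi^0_0+\phi$ with $\phi\perp\psi^0_0$, the inequality $\langle\LL^0\phi,\phi\rangle\ge D\|\phi\|^2$ together with $\mu^h_0\le\mu^0_0+\tfrac12 h^2$ and $\|\LL^h-\LL^0\|\le Ch^2$ gives $\|\phi\|^2\le C h^2/D$, from which $\|\psi^h_0-\psi^0_0\|^2\le C h^2$. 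Point (5) then follows by combining this $L^2$ closeness with Proposition \ref{a1} to obtain $\int_{-z_1}^{z_1}(\psi^h_0)^2\ge 1/2$, and then excluding zeros of $\psi^h_0$ on a fixed interval by the positivity-improving argument already used in Theorem \ref{83}(2).

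The main obstacle is the quantitative lower bound (6), which is precisely the reason for keeping the sharp two-sided bound in Proposition \ref{cap1}. The plan is to write
\[
\mu^h_0=\langle\LL^0\psi^h_0,\psi^h_0\rangle+\int_{I_\l}\bigl((\bar J-J^h)\star_{I_\l}\psi^h_0\bigr)\psi^h_0\ge\mu^0_0+\tfrac14 h^2\int_{I_\l}\bigl(\bar J_{tan}\star_{I_\l}\psi^h_0\bigr)\psi^h_0,
\]
using $\LL^0\ge\mu^0_0\1$ and the lower bound of Proposition \ref{cap1}. It remains to bound $\int(\bar J_{tan}\star\psi^h_0)\psi^h_0$ below by a constant $c>0$ independent of $\l$ and $h$. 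Since $\bar J_{tan}$ is continuous, nonnegative, compactly supported, and strictly positive near the origin, and since (5) gives $\psi^h_0\ge\zeta_0$ on a fixed interval $[-z_0,z_0]$ independent of $\l,h$, the convolution $(\bar J_{tan}\star\psi^h_0)(z)\ge c'\zeta_0$ on a slightly smaller interval, so integrating against $\psi^h_0\ge\zeta_0$ there yields the required $c>0$ and hence $\mu^h_0\ge\mu^0_0+C_0 h^2$. The delicate point is ensuring that the constants $z_0$ and $\zeta_0$ in (5) are genuinely uniform in both $h$ and $\l$; this is exactly why (5) must be established before attempting (6).
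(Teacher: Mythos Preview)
Your proposal is correct and follows essentially the same architecture as the paper: Perron--Frobenius for (1), quadratic perturbation bound $\|\LL^h-\LL^0\|\le \tfrac12 h^2$ for (2)--(4), exponential decay plus positivity improving for (5). The only substantive difference is in (6).

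For (6) the paper introduces the analytic family $A_\nu=\LL^0+\nu(\LL^h-\LL^0)$, $\nu\in[0,1]$, and uses the Hellmann--Feynman formula
\[
\mu_0^h-\mu_0^0=\int_0^1 \langle \psi_0^\nu,\,(\LL^h-\LL^0)\psi_0^\nu\rangle\,d\nu,
\]
then lower bounds the integrand via a uniform pointwise lower bound $\psi_0^\nu(z)\ge\zeta_0$ on $|z|\le z_0$ for \emph{every} $\nu\in[0,1]$. Your route is more direct: you use the one-line variational inequality
\[
\mu_0^h=\langle\LL^0\psi_0^h,\psi_0^h\rangle+\langle(\bar J-J^h)\star\psi_0^h,\psi_0^h\rangle\ge \mu_0^0+\tfrac14 h^2\langle\bar J_{tan}\star\psi_0^h,\psi_0^h\rangle,
\]
so you only need the pointwise lower bound for the endpoint eigenfunction $\psi_0^h$, i.e.\ exactly (5). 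This is a genuine simplification: it avoids the analytic-family machinery and the need to prove the analogue of (5) uniformly along the whole interpolation. The paper's approach, on the other hand, gives slightly more (analyticity of $\mu_0^\nu$, $\psi_0^\nu$ in $\nu$), which is not actually used elsewhere. One small point to make explicit in your write-up of (2): the upper bound you obtain is $\mu_0^0+\tfrac12 h^2\int\bar J_{tan}$, and you should note that $\int\bar J_{tan}=\int s^2 J(s,z)\,ds\,dz\le 1$ since $J$ is a probability density supported in the unit ball, so that the stated bound $\mu_0^0+\tfrac12 h^2$ follows.
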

  \begin {proof}
For  $ |h| \le \frac {\pi} 2$, for $|s| \le 1$,   the integral kernel  $J^{h}$, in the definition of   $\LL^{h}$,  is non negative  for $z \in I_\l$.
    Applying the Perron Frobenius  Theorem to   the operator $(\A^h g) (z)= \sigma (\bar m) (J^{h} \star g)(z)$,  $z \in I_\l$  and proceeding as   in  \cite {O1}  [Theorem  2.1] we prove point (1).
 To show (2) and (3)  we apply   standard perturbation  theory for bounded selfadjoint operators,  see \cite {Kato}. 
 Define the following family of operators indexed by $\nu$:
  $$  A_\nu =  \LL^{0} + \nu   B, \qquad \nu \in [0,1]$$
  where 
  $$ B=   \LL^{h}- \LL^{0}.$$
  The family $  A_\nu$ connects in a smooth way the unperturbed operator  $\LL^{0} $ to  $\LL^{h}$.
 We have that 
 $$Bw(z)=   \left [ \LL^{h}- \LL^{0}  \right]  w(z) =     \int J( \xi, z-z') [ 1-\cos ( h \xi)]   d\xi w(z') dz'. $$ 
 Notice that for $|h| \le h_0$,  $B$ leaves invariant the cone of the   positive functions.
  Further, by \eqref {cap2},  
   \begin {equation}  \label {scu10} \|   B  \| = \sup_{\{\|w\|=1\}} \langle w,  \left [ \LL^{h}- \LL^{0}  \right]  w\rangle \le  \frac 12 h^2.   \end {equation} 
 Since $\LL^0$ has an isolated simple eigenvalue and a spectral gap $D$, independent on $\lambda$, see Theorem \ref {81},  the $\LL^{h}$ for all   $$ |h| \le \sqrt { \frac D 3 } \equiv h_0 $$ will have an isolated simple eigenvalue  and a spectral gap bigger or equal of  $D/4$. Moreover the principal eigenvalue  $\mu_0^\nu$ and eigenvector $\psi_0^\nu $ of  $ A_\nu$ are analytic in $\nu$.  
By Perron Frobenius Theorem   $\psi_0^\nu >0$ and we assume   $ \langle \psi_0^\nu,    \psi_0^\nu  \rangle=1$. 
 Next we would like to show that that $\mu^0_h > \mu^0_0$.
   We derive with respect to $\nu$ the eigenvalue equation
$$ A_\nu \psi_0^\nu= \mu_0^\nu  \psi_0^\nu.$$
We have 
$$ B \psi_0^\nu + A_\nu \partial_\nu( \psi_0^\nu) =  \partial_\nu ( \mu_0^\nu)  \psi_0^\nu +  \mu_0^\nu \partial_\nu  ( \psi_0^\nu),$$
$$ \langle \psi_0^\nu,  \left [ B \psi_0^\nu + A_\nu \partial_\nu( \psi_0^\nu)\right ]  \rangle  =   \langle \psi_0^\nu,  \left [ \partial_\nu ( \mu_0^\nu)  \psi_0^\nu +  \mu_0^\nu \partial_\nu  ( \psi_0^\nu))\right ]  \rangle.$$
  Therefore
  $$ \langle \psi_0^\nu,   B \psi_0^\nu \rangle + \mu_0^\nu \langle \psi_0^\nu, \partial_\nu( \psi_0^\nu) \rangle =  \partial_\nu ( \mu_0^\nu) \langle \psi_0^\nu,     \psi_0^\nu \rangle+  \mu_0^\nu \langle \psi_0^\nu,  \partial_\nu  ( \psi_0^\nu) \rangle .$$
   We have
  $$ \langle \psi_0^\nu,   B \psi_0^\nu \rangle = \partial_\nu ( \mu_0^\nu).$$
   Hence 
   \begin{equation} \label{a6}     \mu_0^\nu =  \mu^0_0 + \int_0^{\nu}   \langle \psi_0^{\nu'},  B \psi_0^{\nu'} \rangle  d \nu'.   \end {equation}
   Since $\psi_0^\nu>0$ and  $B$  is a positive operator  $\mu_0^\nu >  \mu^0_0$. 
By   \eqref   {scu10},  
  $$  \mu_0^\nu     \le    \mu^0_0 + \nu \frac 12  h^2. $$
  When $ \nu =1$,  $ \A_{\{\nu=1\}}= \LL^h$, $ \mu_0^1= \mu_0^h$  and  we have
  $$ \mu_0^h  \le    \mu^0_0 + \frac 12 h^2. $$
   Next we show \eqref {scu2}.   
For $h \neq 0$,   split 
\begin{equation} \label {sto1} \psi_0^0 =   a \psi_0^h  + (\psi_0^h )^{\perp}.\end {equation}   Then
\begin{equation} \label {8.31}  a^2  +  \|(\psi_0^h )^{\perp} \|^2   =1  \end {equation} 
\begin{equation} \label {8.30}    \langle \LL^h   \psi_0^0,  \psi_0^0\rangle = a^2 \mu_0^h +  \langle \LL^h   (\psi_0^h )^{\perp},
 (\psi_0^h )^{\perp} \rangle \ge a^2 \mu_0^h +\frac  D 4  \|(\psi_0^h )^{\perp} \|^2. \end {equation} 
Further    
\begin{equation}  \begin {split} &  \langle \LL^h   \psi_0^0,  \psi_0^0\rangle  =  \langle \LL^0  \psi_0^0, \psi_0^0\rangle  +
 \langle (\LL^h- \LL^0) \psi_0^0, \psi_0^0 \rangle  \cr  & \le 
 \mu_0^0 +   \frac 12 h^2. 
\end {split}  \end {equation} 
By \eqref  {8.31},  \eqref  {8.30} and  $ \mu_0^h >0$ we have 
that 
$$ \mu_0^0 +  \frac 12 h^2   \ge   a^2  \mu_0^h + \frac D 4  \|(\psi_0^h )^{\perp} \|^2 \ge   \frac D 4  \|(\psi_0^h )^{\perp} \|^2.  $$
By \cite {O1} [Theorem 2.2, formula (2.8)], $ 0 \le \mu_0^0  \le C e^{-2 \a \frac 1 \l } $.
It follows      that  there exists   $C>0$ independent on $ \l $ and $h$  so that 
\begin{equation} \label {8.31a} \|(\psi_0^h )^{\perp} \|^2   \le  C h^2.  \end {equation}
By  \eqref  {8.31}  $a^2= 1-    \|(\psi_0^h )^{\perp} \|^2$. This, together with decomposition \eqref {sto1} and \eqref {8.31a}
implies  \eqref {scu2}.

The proof of the point (5) can be done as in   \cite {O1} [Lemma 3.6,  formula  (3.22)].  The proof is similar to the one given in Theorem \ref {83} when proving \eqref {8.k1}.  

 To  show (6)    we need to lower bound,  see  \eqref {a6},    $\int_0^{1}   \langle \psi^0_{\nu'},  B \psi^0_{\nu'} \rangle  d \nu'$.
 First of all we note that
  the same type of argument  of point (5) applies  to   $\psi^0_{\nu}$ the principal eigenvalue of $\A_\nu$. Namely  $\psi^0_{\nu} (z)$, $z \in I_\l$,  is positive and exponentially decaying  when $|z|$  large enough. Hence, as in point (5),   there exists $ z_0>0$ and $ \zeta_0>0$ so that
\begin {equation} \label {ma1} \psi^0_{\nu} (z) \ge \zeta_0, \qquad |z| \le z_0, \qquad  \forall \nu \in [0,1],  \end {equation}
where $ \zeta_0>0$ and $z_0>0$ are independent on $ \l$. 
  Hence, by  Proposition \ref {cap1}    we have that
 \begin {equation} \label {a8}   \begin {split}  & \langle \psi^0_{\nu'},  B \psi^0_{\nu'} \rangle   =
    \int_{I_\l} dz   \psi^0_{\nu'}(z)        \int_{I_\l}  [J( z-z') - J^h(z-z')]  \psi^0_{\nu'}(z') dz' \cr & \ge  2\z_0^2  z_0
    \frac 14 h^2 \int  \bar J_{tan} (z') dz'  \equiv C_0 h^2.
 \end {split}  \end {equation}
 The statement follows.
   \end   {proof}

    \begin {prop}  \label {tu1a}Let $\LL^{h}$ be the operator defined in   \eqref  {aa2} on $L^2 (I_\l)$.
    Let $h_0$ be as in Proposition \ref {tu2}.  There exists $ \nu= \nu (h_0)> 0$ independent on $h$ and $ \l$ so that for $|h| >h_0$ 
    \begin {equation}  \label {scu4}  \langle w, \LL^{h} w \rangle  \ge \nu  \|w\|^2.  \end {equation}
\end {prop}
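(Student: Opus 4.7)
Plan: I would attack this via Fourier analysis, exploiting that $|h|$ being bounded away from zero forces $\widehat{J^h}$ to be uniformly bounded away from $1$ in operator norm. Extending $w \in L^2(I_\lambda)$ by zero to $L^2(\mathbb{R})$ (the restricted and the full convolution agree on $I_\lambda$ because $\mathrm{supp}(J^h)\subset[-1,1]$ and $d_0/\lambda \gg 1$ for $\lambda$ small), Plancherel gives
\begin{equation*}
\langle w, J^h \star w\rangle = \int_{\mathbb{R}} \widehat{J^h}(\xi)\,|\hat w(\xi)|^2\,d\xi.
\end{equation*}
From $J^h(z)=\int J(s,z)\cos(hs)\,ds$ one checks that $\widehat{J^h}(\xi) = \hat J(h,\xi)$, the $2$-D Fourier transform of $J$ at $(h,\xi)$. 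Since $J$ is a smooth spherically symmetric probability density of compact support, one has $\hat J(h,\xi)=g(\sqrt{h^2+\xi^2})$ for a continuous radial profile with $g(0)=1$, $|g(r)|<1$ for $r>0$, and $g(r)\to 0$ at infinity (Riemann--Lebesgue, using smoothness). Consequently
\begin{equation*}
M(h_0):=\sup_{|h|\geq h_0}\sup_{\xi\in\mathbb{R}}\hat J(h,\xi)=\sup_{r\geq h_0} g(r) < 1,
\end{equation*}
a constant depending only on $h_0$ and $J$.

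Plugging this in and combining with $1/\sigma(\bar m)\geq 1/\beta$, I would obtain
\begin{equation*}
\langle w,\LL^h w\rangle \;\geq\; \int\frac{w^2}{\sigma(\bar m)}\,dz - M(h_0)\|w\|^2 \;\geq\; \Big(\tfrac{1}{\beta}-M(h_0)\Big)\|w\|^2.
\end{equation*}
In the favorable regime $M(h_0)<1/\beta$ this already gives the conclusion with $\nu=1/\beta-M(h_0)$. To cover the general case I would supplement it with the spectral information on $\LL^0$ from Theorem~\ref{81}: writing $\LL^h=\LL^0+B_h$ with $B_h=(\bar J-J^h)\star$ a bounded nonnegative operator, and decomposing $w=a\psi^0_0+w^\perp$ along the principal eigenfunction of $\LL^0$, the orthogonal component contributes $\langle w^\perp,\LL^0 w^\perp\rangle\geq D\|w^\perp\|^2$ from the gap, while on the $\psi^0_0$-component one has $\langle \psi^0_0,B_h\psi^0_0\rangle \geq \kappa(h_0)>0$ uniformly in $\lambda$: this follows from the $\lambda$-independent lower bound $\psi^0_0\geq \zeta>0$ on an interval $|z|\leq z_0$ (Theorem~\ref{81}) together with $\int(\bar J-J^h)(z)\,dz \geq 1-M(h_0)>0$.

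The main obstacle will be establishing the lower bound $\langle \psi^0_0,B_h\psi^0_0\rangle\geq\kappa(h_0)$ uniformly in $\lambda$ and $|h|\geq h_0$, and absorbing the cross term $2a\,\mathrm{Re}\langle \psi^0_0, B_h w^\perp\rangle$ in the expansion of $\langle w,B_h w\rangle$. The latter should yield to Cauchy--Schwarz for the positive form $B_h$ combined with an AM--GM split against the gap contribution $D\|w^\perp\|^2$ coming from $\LL^0$; choosing the balancing constant so that the $\|w^\perp\|^2$ term remains positive produces the desired $\nu=\nu(h_0)>0$ independent of $\lambda$ and $h$.
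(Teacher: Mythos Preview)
Your Plancherel reduction is correct and essentially reproduces the paper's treatment of large $|h|$ (Proposition~\ref{tu1}): there the real-space bound $|J^{h}(z)|\le C(z)/(1+|h|)$ from $J\in C^{1}$ forces $\|J^{h}\star\|\le \tfrac12\beta^{-1}$ once $|h|\ge h_{1}$, yielding $\langle w,\LL^{h}w\rangle\ge \tfrac1{2\beta}\|w\|^{2}$. Your version via $\widehat{J^{h}}(\xi)=\hat J(h,\xi)=g(\sqrt{h^{2}+\xi^{2}})$ and $M(h_{0})=\sup_{r\ge h_{0}}g(r)<1$ is a clean variant of the same idea.

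The gap is in the second step. You assert that $B_{h}=(\bar J-J^{h})\star$ is a \emph{nonnegative quadratic form} and then invoke Cauchy--Schwarz for that form to control the cross term. The kernel $\bar J-J^{h}$ is indeed pointwise nonnegative (because $\cos(hs)\le 1$ and $J\ge 0$), but this does not make the convolution operator positive semidefinite: in Fourier, the symbol of $B_{h}$ is $g(|\xi|)-g(\sqrt{h^{2}+\xi^{2}})$, and under the paper's hypotheses on $J$ there is no reason for the radial profile $g$ of $\hat J$ to be monotone. For instance, if $J$ were the normalized indicator of the unit disk, $g$ is an oscillating Bessel profile and $g(|\xi|)-g(\sqrt{h^{2}+\xi^{2}})$ changes sign. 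Without $B_{h}\ge 0$, your cross-term estimate degrades to $|2a\langle\psi^{0}_{0},B_{h}w^{\perp}\rangle|\le 2|a|\,\|B_{h}\|\,\|w^{\perp}\|$ with $\|B_{h}\|$ possibly close to $2$, and together with $\langle w^{\perp},B_{h}w^{\perp}\rangle\ge -\|B_{h}\|\,\|w^{\perp}\|^{2}$ the combination $(D-\|B_{h}\|)\|w^{\perp}\|^{2}$ can be negative, since nothing guarantees the gap $D$ of $\LL^{0}$ exceeds $2$. The argument does not close.

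The paper's remedy for the intermediate range $h_{0}\le|h|\le h_{1}$ is different (Proposition~\ref{tu3}): one first proves the \emph{pointwise} bound $|J^{h}(z)|\le(1-c_{1})\bar J(z)$ for some $c_{1}=c_{1}(h_{0},h_{1})>0$, which gives $|\langle w,J^{h}\star w\rangle|\le(1-c_{1})\langle|w|,\bar J\star|w|\rangle$. Then
\[
\langle w,\LL^{h}w\rangle \;=\; c_{1}\!\int\!\frac{w^{2}}{\sigma(\bar m)} \;+\; (1-c_{1})\,\langle|w|,\LL^{0}|w|\rangle \;+\; \Big[(1-c_{1})\langle|w|,\bar J\star|w|\rangle-\langle w,J^{h}\star w\rangle\Big],
\]
where the middle term is $\ge 0$ because $\LL^{0}\ge 0$ applied to $|w|$, the last bracket is $\ge 0$ by the pointwise bound, and the first term gives $\ge c_{1}\beta^{-1}\|w\|^{2}$. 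The crucial move your decomposition along $\psi^{0}_{0}$ misses is the passage to $|w|$: it exploits the pointwise nonnegativity of $\bar J-J^{h}$ (which you do have) without ever needing $B_{h}$ to be positive semidefinite (which you do not have).
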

  The proof of  Proposition \ref {tu1a} follows from Proposition \ref {tu1} and Proposition  \ref {tu3}.

  \begin {prop}  \label {tu1}Let $\LL^{h}$ be the operator defined in   \eqref  {aa2} on  $L^2 (I_\l)$.   There exists $h_1= h_1(\beta, J)>0$   independent on $\l$    so that for  $ |h|   > h_1$  
  \begin {equation}  \label {scu4}  \langle w, \LL^{h} w \rangle  \ge \frac 1 2\beta^{-1} \|w\|^2.  \end {equation}
\end {prop}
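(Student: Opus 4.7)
\medskip

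\noindent\textbf{Proof proposal for Proposition \ref{tu1}.}
The plan is to exploit the decay of $J^h$ in $h$ established in \eqref{scu3} and the uniform lower bound $1/\sigma(\bar m(z))\ge 1/\beta$ that follows from $\sigma(\bar m(z))=\beta(1-\bar m^2(z))\le\beta$. Writing out the quadratic form,
\begin{equation*}
\langle w,\LL^h w\rangle \;=\; \int_{I_\l}\frac{w(z)^2}{\sigma(\bar m(z))}\,dz \;-\; \int_{I_\l}\int_{I_\l} J^h(z-z')\,w(z)w(z')\,dz\,dz',
\end{equation*}
the first term is already bounded below by $\beta^{-1}\|w\|^2$, so the only thing to control is the convolution term, uniformly in $\l$, for $|h|$ large.

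By \eqref{scu3} we have $|J^h(z)|\le C(z)/(1+|h|)$ with $C(z)=\int|\partial_s J(s,z)|\,ds$ supported in $|z|\le 1$; note $\|C\|_{L^1(\R)}=\int\!\!\int|\partial_s J(s,z)|\,ds\,dz<\infty$ since $J\in C^1$ has compact support, and this quantity depends only on $J$. Hence, by Cauchy--Schwarz and Young's convolution inequality applied to $|w|$ extended by zero outside $I_\l$,
\begin{equation*}
\Bigl|\int_{I_\l}\!\!\int_{I_\l} J^h(z-z')w(z)w(z')\,dz\,dz'\Bigr|
\;\le\; \frac{1}{1+|h|}\int |w(z)|\,(C\star|w|)(z)\,dz
\;\le\; \frac{\|C\|_{L^1}}{1+|h|}\,\|w\|^2.
\end{equation*}
Putting the two pieces together,
\begin{equation*}
\langle w,\LL^h w\rangle \;\ge\; \Bigl(\frac{1}{\beta}-\frac{\|C\|_{L^1}}{1+|h|}\Bigr)\|w\|^2.
\end{equation*}

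It now suffices to choose $h_1=h_1(\beta,J)>0$ so that $\|C\|_{L^1}/(1+h_1)\le 1/(2\beta)$, i.e.\ $h_1=2\beta\|C\|_{L^1}-1$ (replaced by $0$ if this is negative); this $h_1$ depends only on $\beta$ and $J$, not on $\l$. Then for every $|h|>h_1$ one obtains $\langle w,\LL^h w\rangle\ge \frac{1}{2}\beta^{-1}\|w\|^2$, which is the claimed estimate. There is no real obstacle here: the only point to watch is that the constant $\|C\|_{L^1}$ truly is $\l$-independent, which is built into the assumption $J\in C^1(\R^2)$ with compact support from Subsection~2.1, and that the estimate \eqref{scu3} is already available without any restriction on the sign of $h$ or on $|h|\le\pi/2$.
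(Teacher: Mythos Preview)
Your proof is correct and essentially identical to the paper's: both use the pointwise bound \eqref{scu3} together with $\sigma(\bar m)\le\beta$ to get $\langle w,\LL^h w\rangle\ge(\beta^{-1}-\|C\|_{L^1}/(1+|h|))\|w\|^2$ and then choose $h_1$ accordingly. The paper calls your $\|C\|_{L^1}$ by the name $C_2$, but otherwise the argument and the resulting threshold coincide.
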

\begin {proof}
By \eqref {scu3}
$$ |\langle w, J^{h} w \rangle| \le   \frac 1  {(1+ |h|)} \int_{I_\l} dz |w (z)| \int dz'  C (z-z') |w (z')|  \le  \frac {C_2}  {(1+ |h|)} \|w\|^2$$
 where 
   \begin {equation}    \label {scu6}  C_2=  \int_{I_\l}  C (z)dz=  \int_{I_\l}   dz \int_T ds |\frac {d} {ds}  J(s,z)|.   \end {equation}
Hence,  by definition  of $\LL^{h}$  and \eqref {scu5},   we get 
  \begin {equation}    
 \sup_{\{ w: \|w\|=1\}} \langle w, \LL^{h} w \rangle  \ge     \sup_{\{ w: \|w\|=1\}}    ( \frac 1 \b -  \frac {C_2}  {(1+ |h|)} )\|w\|^2 .   \end {equation}
Choosing   $h_1= h_1(\b, J)>0$, so that  $\frac 1 2\beta^{-1}  \ge  \frac {C_2}  {(1+ |h|)} $ we get \eqref {scu4}.
\end {proof}        
 \begin {prop}  \label {tu3} Let $\LL^h$ be the operator defined in   \eqref  {aa2} over  functions $L^2 (I_\l)$.   
 For any  given $h_0>$ and $h_1>0$, there exists    $\nu=\nu(h_0,h_1)>0$    so that for  $h_0 \le  |h|   \le h_1$  
  \begin {equation}  \label {scu4}  \langle w, \LL^{h} w \rangle  \ge \nu \|w\|^2.  \end {equation}
\end {prop}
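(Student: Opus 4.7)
\medskip

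\textbf{Proof proposal.} The plan is to reduce to the operator on $L^2(\R)$, bound the essential spectrum from below, rule out non-positive discrete eigenvalues by a rigidity argument, and conclude by compactness in $h$. For $w \in L^2(I_\l)$ extended by zero to $\tilde w \in L^2(\R)$, one checks directly that
$$
\langle w,\LL^{h}w\rangle_{L^2(I_\l)}=\langle\tilde w,\LL^{h}_\R\tilde w\rangle_{L^2(\R)},\qquad (\LL^{h}_\R v)(z):=\frac{v(z)}{\s(\bar m(z))}-(J^{h}\star_\R v)(z),
$$
since $\tilde w$ vanishing outside $I_\l$ makes the convolution on $I_\l$ agree with the one on $\R$. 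Hence it suffices to prove $\LL^{h}_\R\ge\nu I$ uniformly for $|h|\in[h_0,h_1]$; this bound is then automatically uniform in $\l$. Writing $\LL^{h}_\R=(1/\s(m_\b)-J^{h}\star_\R)+V$ with $V(z)=1/\s(\bar m(z))-1/\s(m_\b)$ exponentially decaying by \eqref{decay}, Weyl's theorem identifies $\s_{ess}(\LL^{h}_\R)$ with the range of the Fourier symbol $1/\s(m_\b)-\hat J(h,\xi)$. Because $J$ is a probability density with support of positive measure, $|\hat J(h,\xi)|<1$ strictly for $(h,\xi)\neq 0$; combined with the decay of $\hat J$ at infinity (Riemann--Lebesgue together with $J\in C^1$ compactly supported) and compactness of $[h_0,h_1]$, this gives $M:=\sup_{|h|\in[h_0,h_1],\,\xi\in\R}|\hat J(h,\xi)|<1$. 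Since $\b>1$ forces $\s(m_\b)<1$, we obtain $\inf\s_{ess}(\LL^{h}_\R)\ge 1/\s(m_\b)-M=:\d^*>0$ uniformly in $|h|\in[h_0,h_1]$.

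For the discrete spectrum below $\d^*$, I would show that $\LL^{h}_\R$ admits no eigenvalue $\le 0$ when $h\neq 0$. Assume $\LL^{h}_\R\phi=\la\phi$ with $\phi\in L^2(\R)\setminus\{0\}$, $h\neq 0$, $\la\le 0$. The pointwise bound $|J^{h}(z)|\le\bar J(z)$ yields $|\langle\phi,J^{h}\star_\R\phi\rangle|\le\langle|\phi|,\bar J\star_\R|\phi|\rangle$, and therefore
$$
\la\|\phi\|^2=\langle\phi,\LL^{h}_\R\phi\rangle\ge\langle|\phi|,\LL^{0}_\R|\phi|\rangle\ge 0,
$$
using $\LL^{0}_\R\ge 0$ with simple ground state $\bar m'>0$ (see \eqref{G4}). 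Then $\la=0$ and equality throughout forces $|\phi|=c\bar m'$ for some $c>0$; bootstrapping regularity in the eigenequation $\phi/\s(\bar m)=J^{h}\star_\R \phi$ shows $\phi$ is continuous, and since $|\phi|>0$ pointwise, $\phi$ has constant sign, WLOG $\phi=c\bar m'$. But then the eigenequation reduces to $c(\bar J-J^{h})\star_\R \bar m'=0$, contradicting the fact that $(\bar J-J^{h})(z)=\int J(s,z)(1-\cos hs)\,ds$ is strictly positive on a set of positive measure whenever $h\neq 0$.

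Combining these two steps, $\nu(h):=\inf\s(\LL^{h}_\R)>0$ for each $h$ with $|h|\in[h_0,h_1]$. Since $\|\LL^{h_1}_\R-\LL^{h_2}_\R\|_{op}\le\|J^{h_1}-J^{h_2}\|_{L^1}\le C|h_1-h_2|$ (from $\cos h_1 s-\cos h_2 s=O(|h_1-h_2|\,|s|)$), the map $h\mapsto\nu(h)$ is Lipschitz as the infimum of the spectrum of a norm-continuous self-adjoint family, hence attains a positive minimum $\nu>0$ on the compact set $|h|\in[h_0,h_1]$, yielding the desired uniform bound. The main obstacle is the rigidity step: one must force any hypothetical $L^2$ eigenfunction at eigenvalue $\le 0$ to coincide with the $\LL^0$ ground state $\bar m'$, so that the strict pointwise inequality $J^{h}<\bar J$ on a positive-measure set (a consequence of $h\neq 0$) produces the contradiction.
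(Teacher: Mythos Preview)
Your Weyl's theorem step has a genuine gap: the multiplication operator by $V(z)=1/\s(\bar m(z))-1/\s(m_\b)$ is \emph{not} compact on $L^2(\R)$, so Weyl's theorem does not identify $\s_{ess}(\LL^{h}_\R)$ with the range of the constant-coefficient symbol. In fact the essential spectrum is strictly larger than you claim. For any $z_0\in\R$ the concentrating sequence $\phi_n(z)=n^{1/2}\psi(n(z-z_0))$ (with $\psi$ a fixed smooth bump) satisfies $\|\phi_n\|=\|\psi\|$, $\phi_n\rightharpoonup 0$, $\|J^{h}\star_\R\phi_n\|_2\le Cn^{-1/2}$ (since $\|\phi_n\|_1=O(n^{-1/2})$ while $J^{h}\star\phi_n$ has support of bounded length), and $\|(1/\s(\bar m)-1/\s(\bar m(z_0)))\phi_n\|=O(n^{-1})$; hence $\phi_n$ is a Weyl sequence for $\LL^{h}_\R$ at $1/\s(\bar m(z_0))$. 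Thus the whole interval $[1/\b,\,1/\s(m_\b)]$ lies in $\s_{ess}(\LL^{h}_\R)$. Without a correct lower bound on $\s_{ess}$, your rigidity argument only yields $\LL^{h}_\R\ge 0$ together with $0\notin\s_p(\LL^{h}_\R)$; this does not exclude $0\in\s_{ess}$ and hence does not give $\inf\s(\LL^{h}_\R)>0$, so the final compactness-in-$h$ step has nothing positive to minimize.

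The paper's route is far more elementary and avoids spectral theory entirely. It first proves, by a short compactness argument on $\{h_0\le|h|\le h_1\}\times(-1,1)$, a \emph{uniform} pointwise bound $|J^{h}(z)|\le(1-c_1)\,\bar J(z)$ for some $c_1=c_1(h_0,h_1)>0$. Then, with $a=1-c_1$,
\[
\langle w,\LL^{h}w\rangle=(1-a)\int\frac{w^2}{\s(\bar m)}\,dz+a\,\langle |w|,\LL^{0}|w|\rangle+\Bigl[a\,\langle|w|,\bar J\star|w|\rangle-\langle w,J^{h}\star w\rangle\Bigr];
\]
the bracket is $\ge 0$ by the pointwise bound, the middle term is $\ge 0$ since $\LL^{0}\ge 0$, and the first term gives $\langle w,\LL^{h}w\rangle\ge c_1\b^{-1}\|w\|^2$. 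Your own inequality $\langle w,\LL^{h}w\rangle\ge\langle|w|,\LL^{0}|w|\rangle$ is precisely the $c_1=0$ case of this identity; the paper's point is that the strict inequality $|J^{h}|<\bar J$ you already invoke is uniform on the compact parameter set, and that uniform margin $c_1>0$ converts the inequality into a quantitative gap directly, with no detour through $\s_{ess}$ or eigenfunction rigidity.
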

\begin {proof}
We first show   that there exists $c_1= c_1(h_0,h_1)>0$  so that    
 \begin {equation}  \label {scu6a}|J^{h} (z)|   \le\bar J (z) (1  - c_1), \qquad h_0 \le |h| \le h_1,\quad  z \in (-1,1).
 \end {equation}
  To show this we argue by contradiction.   Assume that  there exists    $\bar z  \in (-1,1)$ and  $\bar h $   so that
$| J^{\bar h} (\bar z)|=\bar J (\bar z)$.   Then $J^{\bar h} (\bar z)= e^{i \theta} \bar J (\bar z)$ for some  $\theta \in  \R$, 
and  therefore   
$$\bar J (\bar z)=   e^{-i \theta} J^{\bar h} (\bar z)$$
which  means  
 \begin {equation} \label {t1a}   \int_\R   J (\xi,\bar z) \left [1-  e^{-i \theta} e^{i   \bar h \xi }  \right ] d\xi =0.  \end {equation}
This implies  that the real part  of \eqref {t1a}, i.e 
$$  \int_\R   J (\xi, \bar z )  \left [1-  \cos (\theta-\bar h \xi ) \right ] d\xi=0.$$
 This  forces  $  J (\xi, \bar z )  =0$ for all $\xi  \in \R $, which  is a contradiction.
Since    for any given $z$ the set  $ \{  J^{ h } (z),  h_0 \le    | h| \le h_1 \} $
is a compact subset of $\R$,  
  and    $ | J^{h} (z)|<  \bar J (z) $, then \eqref {scu6a} follows.    Hence,  by definition  of $\LL^{h}$  and \eqref {scu6a},   we get, for $1>a>0$,   
  \begin {equation}  \begin {split} \langle w, \LL^{h} w \rangle &=   (1-a)    \int  dz \frac {w^2 (z)} {\s (\bar m (z))}+   a \left [  \int  dz \frac {w^2 (z)} {\s (\bar m (z))}-    \langle |w|( \bar J \star |w| \rangle \right ]  \cr & + a   \langle |w|( \bar J \star |w| \rangle    - \langle w,J^{h} \star w \rangle. 
  \end {split}   \end {equation}
  By  \eqref {scu6a}, 
  $$\left |  \langle w,J^{h} \star w \rangle \right | \le   \langle |w|,|J^{h}| \star| w| \rangle \le (1  - c_1)  \langle |w|, \bar J \star| w| \rangle.$$
Then,   taking  $a= 1- c_1$ we have that 
 $$ a   \langle |w|, \bar J \star |w| \rangle    - \langle w,J^{h} \star w \rangle  \ge 0.$$
 Further, since   
  \begin {equation}    
     \inf_{\{ w: \|w\|=1\}}  \langle |w|, \LL^0 |w|\rangle \ge  \inf_{\{ w: \|w\|=1\}} \langle w, \LL^0 w \rangle  \ge 0,  \end {equation}
     and $\s (\bar m (z)) \le \beta$ 
\begin {equation} \inf_{\{ w: \|w\|=1\}}  \langle w, \LL^{h} w \rangle  \ge   \frac {c_1} \beta \equiv \nu.    \end {equation}
\end {proof}

\section { Representation formula for   functions with small energy.}
 In this section, we show   a  representation theorem for functions having small
energy,  see   \eqref  {gv9}  below.
  The   representation   stated in Theorem \ref {g10}  is    reminiscent of      the  one    obtained by  X.Chen,  see  \cite [Lemma  2.4] {Chen}  in the C-H case. The   proof, as explained in the introduction,    is  different.
   
 \begin {thm}   \label {g10}  Take 
$ f
\in H^1 (\NN(d_0))$ such that $\|f\|_{L^2 (\NN(d_0))}=1$,   
  and   \begin {equation} \label {gv9}    
 \int_{\NN (d_0)} \left ( A^\l_{m_A}  f  (\xi) \right ) f(\xi) d\xi  \le  C \l^2.  \end {equation}
  Then, there exists $ \l_0>0$, so that for any $\l \in (0, \l_0)$   we can construct   $ Z (\cdot)  \in H^1 (T)$,   $f^R (\cdot,\cdot)  \in L^2 (\NN(d_0))$  such that
\begin {equation} \label {E.10a} f (s,r)=    Z(s)\frac 1 {\sqrt {\a (s,r)} }    \frac 1 {\sqrt \l} \psi^0_0 (\frac r \l)+   f^R(s,r),
 \end {equation}
 where $\psi^0_0 (\cdot)$ is the first eigenvalue of $ \LL^0$, see Theorem \ref {81}, 
 \begin {equation} \label {fusco.2a}   1- C \l^2 \le  \| Z \|^2_{L^2 (T)}  \le1, \qquad  \|\nabla Z\|_{L^2 (T)}    \le C
 \end {equation}
\begin {equation} \label {fusco.1a}    \| f^R \|^2_{L^2 (\NN(d_0))}  \le C \l^2.
 \end {equation} 
  \end {thm}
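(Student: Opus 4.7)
The plan is to reduce the hypothesis \eqref{gv9} to an energy inequality on the stretched cylinder $\TT_\l$, where the Fourier diagonalization of $\GG^\l$ from Section~5 applies, and then to exploit the quantitative lower bound $\mu_0^{\l k} \ge \mu_0^0 + C_0(\l k)^2$ of Proposition~\ref{tu2}(6) to extract a tangential profile $Z(s)$ with one Sobolev derivative.

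Concretely, I would first set $\hat f(s,r):=\sqrt{\a(s,r)}\, f(s,r)$ on $\TT$. Since $\|\hat f\|^2_{L^2(\TT)}=\|f\|^2_{L^2(\NN(d_0))}=1$, Lemma~\ref{F2} applied to \eqref{gv9} yields $\langle \hat f, L^\l \hat f\rangle\le C\l^2$. Via the isometry $(TU)(s,r)=\l^{-1/2}U(s,r/\l)$ between $L^2(\TT_\l)$ and $L^2(\TT)$ used in Theorem~\ref{M2}, define $F(s,z):=\sqrt{\l}\,\hat f(s,\l z)$; under this conjugation $L^\l$ corresponds to $\A$, so $\|F\|_{L^2(\TT_\l)}=1$ and $\langle F,\A F\rangle\le C\l^2$. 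A comparison of $\A$ and $\GG^\l$ based on Proposition~\ref{GC3} and the expansion \eqref{mob1} then produces $\langle F,\GG^\l F\rangle\le C\l^2$: the curvature part of $J^c-\l^{-1}J(\cdot/\l,\cdot)$ is of order $\l^2$ by \eqref{GC5}, the Lipschitz remainder in $\phi$ contributes $O(\l^2)$ by \eqref{2.4}, the $q^\l\l^2$ piece is trivial, and the $\l$-linear diagonal error proportional to $h_1(z)\bar m(z)/(1-\bar m^2(z))$ is killed by \eqref{2.3a} combined with the oddness of $\bar m/\s^2(\bar m)$, exactly as in the corresponding cancellation in the proof of Theorem~\ref{83}.

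Next, Fourier-expand $F(s,z)=\sum_{k\in\Z_L}e^{iks}U_k(z)$, so by Theorem~\ref{spo1} $\langle F,\GG^\l F\rangle=L\sum_k\langle U_k,\LL^{\l k}U_k\rangle$. Fix $h_0$ as in Proposition~\ref{tu2}. For $|\l k|>h_0$ the uniform gap from Proposition~\ref{tu1a} gives $L\sum_{|\l k|>h_0}\|U_k\|^2\le C\l^2$. For $|\l k|\le h_0$ decompose $U_k=a_k\psi_0^{\l k}+U_k^\perp$ with $U_k^\perp\perp\psi_0^{\l k}$; Proposition~\ref{tu2}(3),(6) combined with $\mu_0^0\ge 0$ yield
\begin{equation*}
\langle U_k,\LL^{\l k}U_k\rangle \ge C_0(\l k)^2 a_k^2 + \tfrac{D}{4}\|U_k^\perp\|^2,
\end{equation*}
so that $L\sum_{|\l k|\le h_0}(\l k)^2 a_k^2+L\sum_{|\l k|\le h_0}\|U_k^\perp\|^2\le C\l^2$. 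Setting $Z(s):=\sum_{|\l k|\le h_0}a_k e^{iks}$ then gives $\|\nabla Z\|^2_{L^2(T)}=L\sum k^2 a_k^2\le C$, while Proposition~\ref{tu2}(4) controls the replacement of $\psi_0^{\l k}$ by $\psi_0^0$ in the sum, producing $F=Z\,\psi_0^0+F^R$ with $\|F^R\|^2_{L^2(\TT_\l)}\le C\l^2$ and hence $1-C\l^2\le \|Z\|^2_{L^2(T)}\le 1$. Undoing the rescaling yields $\hat f(s,r)=Z(s)\l^{-1/2}\psi_0^0(r/\l)+\hat f^R(s,r)$ with $\|\hat f^R\|^2_{L^2(\TT)}\le C\l^2$, and division by $\sqrt{\a(s,r)}$ delivers \eqref{E.10a}--\eqref{fusco.1a}.

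The delicate point is the passage from $\A$ to $\GG^\l$: the pointwise size of $\A-\GG^\l$ is only $O(\l)$, and the argument only survives because the $h_1$-perturbation pairs trivially with $(\bar m')^2$, and thus with the leading transverse profile $\psi_0^0\sim\bar m'/\|\bar m'\|$ produced by Theorem~\ref{81}(3). Once this cancellation is secured, the sharp quantitative gap $\mu_0^{\l k}\ge \mu_0^0+C_0(\l k)^2$ is exactly the input that converts the tangential energy into the $H^1$ estimate for $Z$ asserted by the theorem.
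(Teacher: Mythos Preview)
Your reduction from $\langle F,\A F\rangle\le C\l^2$ to $\langle F,\GG^\l F\rangle\le C\l^2$ does not hold for general $F$, and this is where the argument breaks down. There are two separate problems.

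First, the potential error. Expanding $1/\s(m_A)-1/\s(\bar m)$ as in \eqref{mob1} produces the order-$\l$ terms $I_{2,s}(F)$ and $I_{3,s}(F)$ of \eqref{tre.2}--\eqref{tre.3}. The cancellation you invoke, via \eqref{2.3a} and the oddness of $\bar m/\s^2(\bar m)$, requires $F(s,\cdot)^2$ to be \emph{even} in $z$; for a generic $F$ this fails, and both $I_{2,s}(F)$ and the $\phi(s,0)$-part of $I_{3,s}(F)$ remain genuinely of size $\l$, not $\l^2$. The analogous step in Theorem~\ref{83} works only because one is testing against the specific even trial function $\bar m'/\|\bar m'\|$.

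Second, the curvature error. Proposition~\ref{GC3} (your \eqref{GC5}) controls $\B-\bar J\star$ only on functions of $z$ alone. For $F$ depending on $s$ the correct expansion is Lemma~\ref{car1}: besides the remainder $R^\l$ there is an additional term carrying $D_{s'}F$, which in Fourier produces the $F^\l_1(u_h,u_k)$ of \eqref{v14z} with the bound \eqref{d502}. This term scales like $\l^2|k|$, so summing over $|k|\le h_0/\l$ gives an $O(\l)$ contribution, again not $O(\l^2)$.

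The paper does not reduce to $\GG^\l$. It keeps the off-diagonal pieces $F^\l_1,F^\l_2,F^\l_3$ explicitly (Lemma~\ref{car9}), uses that all three vanish when both arguments are even (\eqref{ele10}) --- hence in particular on the principal eigenfunctions $\psi_0^{k\l}$ --- so that after the splitting $u_k=\a_k\psi_0^{k\l}+u_k^\perp$ only cross terms involving at least one $u_k^\perp$ survive. These are of size $\l|\a_k|\,\|u_h^\perp\|$ (or $\l|\a_k|\,\|u_h\|$ for $|h\l|>h_0$), and one closes through the bootstrap inequality \eqref{d011}--\eqref{d013}: from $C\l^2+\l\sqrt{\sum_k b_k^2}\ge C\sum_k b_k^2$ one gets $\sqrt{\sum_k b_k^2}\le C\l$, after which the rest of your argument (the gap estimate $\mu_0^{k\l}\ge\mu_0^0+C_0(k\l)^2$ and Proposition~\ref{tu2}(4)) goes through as you wrote. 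Your Subsection~6.1 ``toy model'' computation is exactly the paper's toy model; the substance of the proof is in handling the $O(\l)$ error terms that you assumed away.
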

  Set $\hat f  (s,r)= \sqrt {\a(s,r)}  f(s,r)$. By    Lemma \ref {F2}, \eqref {gv9} implies
\begin {equation} \label {g11}  C \l^2 \ge \int_{\NN (d_0)}  A^\l_{m_A} f  (\xi) f(\xi) d\xi   \ge \langle \hat f,  L^\l  \hat f \rangle - \l^2  C \| f \|^2_{L^2 (\NN(d_0))} .
\end {equation} 
Since  by assumption  $\| f \|^2_{L^2 (\NN(d_0))}  =1$     we have 
\begin {equation} \label {g11a}   \l^2 C   \ge \langle \hat f, L^\l  \hat f \rangle.   \end {equation} 
Further setting
 $$   \hat f (s,r) = \frac 1 {\sqrt \l}  V(s, \frac r \l ) $$
 we have that  \eqref {g11a} is equivalent to
 \begin {equation} \label {g11z}    \l^2 C   \ge \langle V,  \A V\rangle,   \end {equation} 
where  $\A$  is the operator defined in \eqref {op2t}
 Therefore  Theorem \ref {g10} follows once  we show the following theorem. 
 \begin {thm} \label {gio1}   Take   $ V \in H^1 (\TT_\l)$, $ \|V\|=1$,
  \begin  {equation}  \label {gi8} \langle V,   \A  V \rangle  \le C \l^2,   \end {equation} 
 where $ \A $ is the operator  defined in \eqref {op2t}.
Let   $\psi^0_0 (\cdot)$ be  the first eigenvalue of $ \LL^0$, see Theorem \ref {81}, we have 
 \begin  {equation}  \label {gi8z} V(s,z) = Z(s) \psi^0_0 (z)  + V^R(s,z), \end {equation} 
\begin  {equation}  \label {gi8b}  1- \l^2 \le \|Z \|^2_{L^2(T)}  \le 1, \qquad    \|\nabla Z\| \le C, \qquad \|V^R\|^2 \le C \l^2. \end {equation} 
\end {thm}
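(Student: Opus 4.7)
The strategy is a Fourier decomposition in the tangential variable $s$, reducing the analysis to the family of one-dimensional operators $\LL^h$ studied in Section~5.

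The first step is to replace $\A$ by the simpler convolution operator $\GG^\l$. Using the expansion \eqref{mob1} for $1/\s(m_A(s,\l z))$ and Proposition \ref{GC3} comparing the curved kernel $J^c$ to the flat kernel $\tfrac{1}{\l}J(\tfrac{s-s'}{\l},z-z')$, the difference splits as $\A-\GG^\l=\l\,\MM_1+\l^2\MM_2$ with $\MM_1,\MM_2$ bounded. The leading piece $\MM_1$ is essentially multiplication by $\frac{2\bar m(z)}{\b(1-\bar m^2(z))^2}[h_1(z)g(s)+\phi(s,\l z)]$; since $\bar m\,h_1$ is odd in $z$ whereas $\psi^0_0$ is even (Theorem \ref{81}), its contribution vanishes at leading order by the orthogonality \eqref{2.3}. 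A bootstrap argument (starting from the crude $\|\A-\GG^\l\|=O(\l)$ bound and iteratively using the odd/even cancellation, in the spirit of the lower bound $\mu_0\ge -C\l^2$ proved in Theorem \ref{GC2}) then yields $\langle V,\GG^\l V\rangle\le C\l^2$.

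Next, Fourier expand $V(s,z)=\sum_{k\in\Z_L}e^{iks}u_k(z)$. By Theorem \ref{spo1} and Parseval,
\begin{equation*}
\langle V,\GG^\l V\rangle = L\sum_{k\in\Z_L}\langle u_k,\LL^{\l k}u_k\rangle.
\end{equation*}
Split the sum at $|k|=h_0/\l$. For $|k|>h_0/\l$, Proposition \ref{tu1a} gives $\langle u_k,\LL^{\l k}u_k\rangle\ge\nu\|u_k\|^2$. For $|k|\le h_0/\l$, decompose $u_k=\a_k\psi^{\l k}_0+u_k^\perp$ orthogonally in $L^2(I_\l)$; by Proposition \ref{tu2}(3) and (6),
\begin{equation*}
\langle u_k,\LL^{\l k}u_k\rangle \ge |\a_k|^2\, C_0(\l k)^2+\tfrac{D}{4}\|u_k^\perp\|^2.
\end{equation*}
Combined with the upper bound $\langle V,\GG^\l V\rangle\le C\l^2$, these produce
\begin{equation*}
\sum_{|k|>h_0/\l}\|u_k\|^2\le C\l^2,\quad \sum_{|k|\le h_0/\l}\|u_k^\perp\|^2\le C\l^2,\quad \sum_{|k|\le h_0/\l}|\a_k|^2 k^2\le C.
\end{equation*}

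Finally, set $Z(s):=\sum_{|k|\le h_0/\l}\a_k e^{iks}$. The third estimate and Parseval on $T$ give $\|\nabla Z\|_{L^2(T)}\le C$, and combining $\|V\|^2=1$ with the first two estimates yields $1-C\l^2\le\|Z\|^2\le 1$. Writing
\begin{equation*}
V-Z\psi^0_0 = \sum_{|k|>h_0/\l}e^{iks}u_k+\sum_{|k|\le h_0/\l}e^{iks}\bigl[\a_k(\psi^{\l k}_0-\psi^0_0)+u_k^\perp\bigr],
\end{equation*}
and applying $\|\psi^{\l k}_0-\psi^0_0\|^2\le C(\l k)^2$ from Proposition \ref{tu2}(4) together with the three estimates above yields $\|V^R\|^2\le C\l^2$. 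The main technical obstacle is the first step: the crude operator-norm bound $\|\A-\GG^\l\|=O(\l)$ alone only yields $O(\l)$ control on $V^R$, so one must genuinely exploit the odd/even cancellation in \eqref{2.3} through the bootstrap to recover the sharp $O(\l^2)$ precision.
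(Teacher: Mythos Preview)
Your Fourier strategy and the final assembly of $Z$ and $V^R$ match the paper's; your second and third steps are exactly the toy model of Subsection~6.1. The gap is in the first step, where you reduce $\A$ to $\GG^\l$.

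You invoke Proposition~\ref{GC3} to compare the curved kernel $J^c$ with the flat one, but that proposition applies only to functions of $z$ alone. For general $V(s,z)$ the correct expansion is Lemma~\ref{car1}: besides the flat convolution one gets a term $\l^2\int J(\ldots)\,\tfrac{s-s'}{\l}\,k(s^*)z^*\,\a\,D_{s'}V\,ds'dz'$ and a remainder $R^\l V$. The remainder is $O(\l^2)$ only after one invokes the exponential decay of $V$ in $z$ (Lemma~\ref{car1b} together with Lemma~\ref{S12t}), and the derivative term is not a bounded multiplication operator at all: in Fourier it becomes $F^\l_1(u_h,u_k)$ of size $\l^2|k|/(1+|k-h|)^3$, hence only $O(\l)$ for $|k|\sim h_0/\l$. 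So the splitting $\A-\GG^\l=\l\MM_1+\l^2\MM_2$ with bounded $\MM_1,\MM_2$ is not available, and the bootstrap as you describe it does not get off the ground.

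The paper does not try to pass to $\GG^\l$. It expands $\langle V,\A V\rangle$ directly in Fourier (Lemma~\ref{car9}), obtaining the diagonal $\sum_k\langle u_k,\LL^{k\l}u_k\rangle$ plus explicit off-diagonal terms $F^\l_1,F^\l_2,F^\l_3$ and an $O(\l^2)$ remainder. The key fact is \eqref{ele10}: each $F^\l_i(u,v)$ vanishes when $u,v$ are even, and $\psi_0^{k\l}$ is even. Hence after writing $u_k=\a_k\psi_0^{k\l}+u_k^\perp$ the dangerous $\a_h\a_k$ contributions disappear, and what remains is bounded by $C\l\sqrt{\sum_k b_k^2}$ with $b_k=\|u_k^\perp\|$ (low modes) or $\|u_k\|$ (high modes). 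This gives in one stroke the quadratic inequality $C\l^2\ge C\sum_k b_k^2-C\l\sqrt{\sum_k b_k^2}$ (see \eqref{d011}--\eqref{d013}), whose solution $\sum_k b_k^2\le C\l^2$ is precisely the fixed point your bootstrap would converge to. The odd/even cancellation is indeed the crux, but it is used mode-by-mode on the explicit $F^\l_i$, not through an operator-level iteration.
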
  
The proof of Theorem \ref {gio1} is based on a  deeper knowledge of the spectrum of the operator $\A$ in $ \L^2(\TT_\l)$.
 We explain in Subsection  6.1 how to prove  \eqref  {gi8z}  when the operator  $\A$ is replaced by the operator $ \GG$ 
 defined in \eqref {lg1a}. 
 Then in  Subsection  6.2 we   write the operator   $\A$ in term of $\GG$ plus extra terms.
In Subsection  6.3 we show   Theorem \ref  {gio1}.

  \subsection { Toy model} 
 We explain the method  for proving  Theorem \ref {gio1}  in a simpler context.
Replace the operator $ \A$ with  the   operator  $\GG^{\l}$  defined in \eqref {lg1a}.
 Notice that  $ \GG^{\l}$ defined in \eqref {lg1a}  is different from the  $ \A$ defined in   \eqref {op2t}.
Namely  $\bar m $ replaces $ m_A$   and the convolution  term   replaces  $J^c$. 
Assume   \begin  {equation}  \label {gi8a} \langle V,  \GG^{\l}  V \rangle  \le C \l^2.   \end {equation} 
We would like to prove that  \eqref {gi8z} and   \eqref {gi8b} follow.
Write   $V (s,z) = \sum_{k \in \Z_L} e^{i 2 \pi  \frac {k} L s}  u_k(z)$, see \eqref {aa1}.
  By \eqref {gi8a}   and simple computations
 \begin  {equation}  \label {gi8d}   C \l^2   \ge \langle V,    \GG^{\l} V \rangle =  \sum_{k \in \Z_L}    \langle  \LL^{k\l} u_k, u_k\rangle. \end {equation} 
 We then apply the  spectral results for  $\{ \LL^h\}_h$  obtained  above.
 Let $h_0>0$ be  as in  Proposition  \ref {tu2}. When $|k| \le \frac {h_0} \l $   split 
$$ u_k = \a_k \psi_0^{k\l} + u_k^\perp$$
where  $\psi_0^{k\l} $ is the principal eigenvalue of  $\LL^{k\l}$ and 
$$ \int dz \psi_0^{k\l}(z) u_k^\perp (z) =0.$$
 By Proposition  \ref {tu2} and  Proposition \ref {tu1a} we have
 \begin {equation} \begin {split}  \label {b1}   & \sum_{k \in \Z_L}  \langle  \LL^{k\l} u_k, u_k\rangle  \ge    \sum_{|k| \le \frac {h_0} \l }  \langle  \LL^{k\l} u_k, u_k\rangle  +  \nu  \sum_{|k| > \frac {h_0} \l } \|u_k\|^2 \cr & =
   \sum_{|k| \le \frac {h_0} \l }   \left [ \mu_0^{k\l} \a_k^2 +  \langle u_k^\perp,     \LL^{k\l}u_k^\perp \rangle  \right ]   
   +   \nu  \sum_{|k| > \frac {h_0} \l } \|u_k\|^2 
    \cr &\ge     \sum_{|k| \le \frac {h_0} \l }   \left [   \mu_0^{k\l} \a_k^2 + D \|u_k^\perp\|^2 \right ]  +     \nu  \sum_{|k| > \frac {h_0} \l } \|u_k\|^2.\end {split} \end {equation}   
 Therefore, see  \eqref {gi8d},
\begin  {equation}  \label {gi8f}   C \l^2   \ge \sum_{|k| \le \frac {h_0} \l }   \left [   \mu_0^{k\l}  \a_k^2 + D\|u_k^\perp\|^2 \right ]  +   \nu    \sum_{|k| > \frac {h_0} \l } \|u_k\|^2.   \end {equation}   
Since  $  \mu_0^{k\l} \ge  0$,  see \eqref {do8},    the  three  terms on the right hand side of \eqref  {gi8f} are positive,
hence 
\begin  {equation}  \label {v1a} \sum_{  |k| \le \frac {h_0} \l }      \mu_0^{k\l}  \a_k^2  \le C \l^2,  \end {equation} 
 \begin  {equation}  \label {v6} \sum_{|k| \le \frac {h_0} \l }     \|u_k^\perp\|^2  \le  \frac {C} D \l^2,  \end {equation} 
  \begin  {equation}  \label {v7}\sum_{|k| > \frac {h_0} \l } \|u_k\|^2    \le \frac {C} \nu \l^2.  \end {equation} 
By Proposition \ref {tu2}, since   $\mu_0^{0} \ge 0$, 
 \begin {equation} \begin {split}  \label {b1a} & \sum_{  |k| \le \frac {h_0} \l }       \mu_0^{k\l}   \a_k^2  =
  \sum_{  |k| \le \frac {h_0} \l }      [\mu_0^{k\l}-\mu_0^{0}+ \mu_0^{0}]  \a_k^2 
 \ge  \sum_{  |k| \le \frac {h_0} \l }   [\mu_0^{k\l}-\mu_0^{0}] \a_k^2  \cr & >   C_0  \sum_{  |k| \le \frac {h_0} \l }  [(k \l)^2 ]  \a_k^2.  
 \end {split} \end {equation}  
 Hence, by \eqref {v1a}
 \begin  {equation}  \label {v14}  \sum_{  |k| \le \frac {h_0} \l }  k^2    \a_k^2   \le \frac C {C_0}.  \end {equation}  
 Define
 \begin {equation}  \label {zeta1}  Z(s)=   \sum_{|k| \le  \frac {h_0} \l } e^{i k  s}  \a_k,   \end {equation} 
\begin {equation}  \label {zeta2}   V^R(s,z) =   V_2 (s,z) + V_3(s,z),  \end {equation} 
where 
\begin {equation}  \label {zeta3}  V_2(s,z) =   \sum_{|k| \le  \frac {h_0} \l } e^{i  k  s}  u_k^\perp +  \sum_{|k| > \frac {h_0} \l } e^{i  k s}  u_k(z),  \end {equation} 
 \begin {equation}  \label {zeta4}   V_3(s,z)  =  \sum_{|k| \le  \frac {h_0} \l } e^{i   k  s}  \a_k      [\psi_0^{k\l}-  \psi_0^{0}].  \end {equation} 
 Then
 \begin {equation}  \label {zeta5}   V (s,z) =  Z(s)   \psi_0^{0}(z) + V^R(s,z).  \end {equation}  
 By \eqref {v6} and \eqref {v7}
$$ \|V_2\|^2  \le C \l^2. $$ 
 By \eqref {scu2} and \eqref {v14}
   $$  \| V_3  \|^2 =  \sum_{|k| \le  \frac {h_0} \l }  \a_k^2  \|\psi_0^{k\l} - \psi_0^{0} \|^2 \le   \l^2 \sum_{|k| \le  \frac {h_0} \l }  \a_k^2 k^2       \le \l^2 C.  $$ 
Hence 
 $\|V^R \|^2 \le C \l^2$,  $1 -  C \l^2 \le    \|Z\|^2 \le 1$, 
   \begin {equation}  \label {zeta6}    \| \nabla_s  Z\|^2 =   \sum_{|k| \le  \frac {h_0} \l } k^2 \a_k^2  \le C.  \end {equation} 
    In this way we  get the decomposition \eqref  {gi8z}, \eqref   {gi8b} in the   toy model.
    The proof of  Theorem \ref {gio1}  is more complicated because  terms outside diagonal are present.
    
\subsection{ Expansion of $ \A$ in term of $\GG$.}   
In this subsection we  decompose  $\A =  \GG + \RR$,  but  the $L^2$ norm   of  $ \RR$ is not  of order $\l^2$.
Nevertheless  we show that 
when $V$ satisfies \eqref  {gi8}  then      $ \langle \A V, V \rangle  \simeq  \langle \GG V, V \rangle   +C\l^2 |V\|^2$.

   We  start  writing   the operator $ \B$ defined in \eqref {me7d}  in term of   a convolution operator 
 plus  an operator involving the first derivative  with respect to the $s-$ variable of $V$,   plus a remainder.
The remainder   for general $ L^2(\TT_\l)$ functions  is of order 1 in $L^2(\TT_\l)$, so it is not small.  To get the remainder small  we need to require  decay properties of $V$.   They hold when $V$ satisfies \eqref  {gi8}.

   \begin {lem} \label {car1} Let $V \in H^1(\TT_\l)$ and  $ \B$ be the operator defined in \eqref {me7d}.
       We have
    \begin {equation} \label {v6a}  \begin {split}   & 
(\B V)(s,z) =   \int_{\TT_\l}    \frac 1 \l J \left  (  \frac {  (s-s')} {\l},  (z-z')\right )  V(s',z') ds' dz'    \cr & 
 -
  \l^2  \int_{\TT_\l}     \frac 1 \l   J  \left  ( \frac {  (s-s')} {\l} ,    (z-z') \right )    \left [  \frac {(s-s')} \l   k (s^*)z^*\a(s^*,z^*)  D_{s'} V(s',z') \right ]ds' dz'   
  \cr &  +
 (R^\l V) (s,z)   \end {split}   \end {equation}
   where  $R^\l$ is defined in \eqref {rm5}.
  \end {lem}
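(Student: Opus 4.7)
The plan is to derive \eqref{v6a} by Taylor-expanding the kernel $J^c$ of $\B$ in the small parameter $\l$ and then performing a single integration by parts in $s'$ on the circle $T$. Setting $\xi = (s-s')/\l$ and recalling $\a(s^*,z^*) = 1 - \l k(s^*) z^*$, I would first expand the first argument of $J$:
\begin{equation*}
J\bigl(\xi\a(s^*,z^*),z-z'\bigr) = J(\xi,z-z') - \l\, \xi k(s^*) z^*\, (D_1 J)(\xi,z-z') + \l^2 \rho_1,
\end{equation*}
where $\rho_1$ involves $D_1^2 J$ evaluated at an intermediate point, and then multiply by $\l^{-1}\a(s^*,z^*)$. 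Collecting powers of $\l$ yields
\begin{equation*}
J^c = \tfrac{1}{\l} J(\xi,z-z') - k(s^*) z^*\, J(\xi,z-z') - \xi k(s^*) z^*\, (D_1 J)(\xi,z-z') + \l\, r(s,s',z,z'),
\end{equation*}
with $r$ uniformly bounded in $\l$ and expressible in terms of $J$, $k$ and their derivatives.

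The key observation is that the middle term $-k(s^*) z^* J(\xi,z-z')$ is of size $1$ and hence cannot be placed directly into a remainder labelled $R^\l$; it must be absorbed through the third term. Since $T$ is boundaryless and $(D_1 J)(\xi,z-z') = -\l\, D_{s'}[J(\xi,z-z')]$, integration by parts in $s'$ gives
\begin{equation*}
-\int \xi k(s^*) z^*\, (D_1 J)\, V\, ds'\, dz' = -\l\!\int D_{s'}\bigl[\xi k(s^*) z^*\bigr]\, J V\, ds'\, dz' - \l\!\int \xi k(s^*) z^*\, J\, (D_{s'}V)\, ds'\, dz'.
\end{equation*}
Using $D_{s'}\xi = -\l^{-1}$, $D_{s'} z^* = 0$, and $D_{s'}k(s^*) = \tfrac{1}{2} k'(s^*)$, the first integral on the right equals $\int k(s^*) z^* J V\, ds'\,dz' - \tfrac{\l}{2}\int \xi k'(s^*) z^* J V\, ds'\, dz'$, and its leading piece cancels the unwanted middle term in the expansion of $J^c$ exactly.

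What remains is: (i) the convolution $\int \l^{-1} J(\xi,z-z')\, V$, which is the first term of \eqref{v6a}; (ii) the $D_{s'}V$-contribution $-\l\int \xi k(s^*) z^* J (D_{s'}V)$, whose coefficient differs from the one claimed in \eqref{v6a} only by a factor $\a(s^*,z^*)$, the discrepancy being $\l^2 \xi k(s^*)^2 (z^*)^2$; and (iii) genuinely $O(\l)$ or smaller terms, namely $\tfrac{\l}{2}\int \xi k'(s^*) z^* J V$, the Taylor remainder $\l\, r$, and the $O(\l^2)$ discrepancy in (ii). Packaging (iii) as the operator $R^\l$ defined in \eqref{rm5} gives \eqref{v6a}. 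The main obstacle is the bookkeeping around the cancellation of the $O(1)$ term $-k(s^*)z^* J V$, which forces the specific ordering of Taylor expansion first and integration by parts second; reversing it would obscure the cancellation and leave an un-controllable leading term.
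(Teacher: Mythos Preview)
Your strategy---Taylor expand $J$ in its first argument, then integrate by parts in $s'$ to cancel the $O(1)$ term $-k(s^*)z^*J$---is exactly the paper's. The cancellation you single out is the heart of the argument and you have it right.

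The bookkeeping, however, does not land on the remainder \eqref{rm5} as claimed. The paper keeps the factor $\a(s^*,z^*)$ attached to the $D_1J$-term and integrates by parts the full product $(s-s')k(s^*)z^*\,\a(s^*,z^*)\,V$; this is why the $D_{s'}V$-term in \eqref{v6a} carries the $\a$ factor and why $R^\l$ in \eqref{rm5} involves only $V$, never $D_{s'}V$. You instead expand $\a$ before integrating by parts, so your $D_{s'}V$-term comes out without $\a$, and the ``discrepancy in (ii)'' you propose to absorb into $R^\l$ is $-\l^2\int \xi\,k^2(z^*)^2\,J\,D_{s'}V$, which depends on $D_{s'}V$ and therefore cannot sit inside $R^\l$ as defined. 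Moreover, your residual $\l r$ still contains the piece $\l\,\xi(kz^*)^2 D_1J$; this too needs an integration by parts before it can be expressed through $J$ alone. If you carry that second IBP out, the resulting $D_{s'}V$-contribution exactly cancels the discrepancy from (ii), and what remains does reassemble into \eqref{rm5}---but this extra cancellation is not mentioned, so ``packaging (iii) as $R^\l$'' is not yet justified. The cleanest fix is simply not to expand $\a$ on the $D_1J$-term: integrate by parts $\xi k(s^*)z^*\a(s^*,z^*)D_1J$ as a whole, and \eqref{rm5} drops out directly.
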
 
 \begin {proof}     
Set $x_0= \frac {  (s-s')} {\l} $ and $x=\frac {  (s-s')} {\l} \a(s^*,z^*)$.
 We   expand $   J  $ in Taylor formula, up to second order,  at the point $x_0  $ and for any given $z \in I_\l$.
  We obtain 
 \begin {equation} \label {v2z}  \begin {split}      J \left  (x,  z\right )= &   J \left  ( x_0, z\right ) + D_1   J \left  ( x_0,  z\right ) (x-x_0)     \cr & +
 \frac 12  \int_{x_0}^x D_{11}    J \left  ( x',  z\right )  (x_0-x')  dx'.
  \end {split}   \end {equation}
   We denoted by $D_1   J$  and by $D_{11}  J$ the  first derivative   and respectively the second  derivative of $J$
  with respect to the first argument.  Recalling that the  support of $J$ is the ball of radius 1 we have that  $|s-s'|  \le \l$, $ |z-z'| \le 1$   and  $  (x-x_0)= -   \frac 1 \l   (s-s') \l k(s^*)z^* \simeq \l k(s^*)z^*$.    We set 
\begin {equation} \label {mt}   M(s,s', z,z')= \frac 12  \int_{x_0}^x D_{11}    J \left  ( x',  z\right )  (x_0-x')  dx'.  \end {equation}
By the properties of $J$ and the boundness of the curvature    
\begin {equation} \label {ms1}  \left |  M(s,s', z,z')\right | \le \l^2 C (z^*)^2 \1_{|s-s'|\le \l}  \1_{|z-z'|\le 1}.   \end {equation}
 By \eqref {v2z}, taking into account that  $\a(s^*,z^*) = 1 - \l k(s^*) z^*$, we have 
 \begin {equation} \label {v4}  \begin {split}   & ( \B V) (s,z) = 
   \int_{\TT_\l}   \frac 1 \l J \left  (  \frac {  (s-s')} {\l} \a(s^*,z^*),  (z-z')\right )  \a(s^*,z^*)V(s',z') ds' dz' \cr &  =
   \int_{\TT_\l}   \frac 1 \l  { J} \left  ( \frac {  (s-s')} {\l} ,    (z-z') \right )   V(s',z') ds' dz'  \cr &
   -    \l \int_{\TT_\l}   \frac 1 \l  { J} \left  ( \frac {  (s-s')} {\l} ,    (z-z') \right )  k(s^*) z^* V(s',z') ds' dz' \cr &  -
   \l  \int_{\TT_\l}   \frac 1 \l    (D_1 { J})\left  ( \frac {  (s-s')} {\l} ,    (z-z') \right )   \frac 1 \l  (s-s') k(s^*)z^*  \a(s^*,z^*) V(s',z') ds' dz'  \cr & +
   \frac 1 \l   \int_{\TT_\l}   M(s,s', z,z') \a(s^*,z^*)V(s',z') ds' dz'.
  \end {split}   \end {equation}
Since $$(D_1 { J})\left  ( \frac {  (s-s')} {\l} ,    (z-z') \right )= \l (D_s { J})\left  ( \frac {  (s-s')} {\l} ,    (z-z') \right )= -\l (D_{s'} { J})\left  ( \frac {  (s-s')} {\l} ,    (z-z') \right )$$  
  we obtain 
   \begin {equation} \label {v5}  \begin {split}   &  \int_{\TT_\l}     (D_1 { J})\left  ( \frac {  (s-s')} {\l} ,    (z-z') \right )   \frac 1 \l   (s-s') k(s^*)z^*  \a(s^*,z^*) V(s',z') ds' dz'  \cr &  =
  -    \int_{\TT_\l}     (D_{s'} { J})\left  ( \frac {  (s-s')} {\l} ,    (z-z') \right )     (s-s') k(s^*)z^* \a(s^*,z^*)  V(s',z') ds' dz' \cr &=
 \int_{\TT_\l}      J  \left  ( \frac {  (s-s')} {\l} ,    (z-z') \right )       D_{s'} \left [  (s-s') k(s^*)z^*   \a(s^*,z^*)V(s',z') \right ]ds' dz' \cr &=
  -  \int_{\TT_\l}      J  \left  ( \frac {  (s-s')} {\l} ,    (z-z') \right )        k(s^*)z^* [1-  \l k(s^*)z^*] V(s',z')  ds' dz'  \cr & +
   \int_{\TT_\l}      J  \left  ( \frac {  (s-s')} {\l} ,    (z-z') \right )        (s-s')   k_{s'}(s^*)z^*  \a(s^*,z^*) V(s',z') ds' dz'
   \cr & +
   \int_{\TT_\l}      J  \left  ( \frac {  (s-s')} {\l} ,    (z-z') \right )       (s-s')   k (s^*)z^* \a(s^*,z^*) D_{s'} V(s',z') ds' dz' \cr & +
    \int_{\TT_\l}      J  \left  ( \frac {  (s-s')} {\l} ,    (z-z') \right )    (s-s')   k (s^*)z^*  D_{s'}[\a(s^*,z^*)]  V(s',z')  ds' dz'. 
   \end {split}   \end {equation}
   We  replaced    $\a(s^*,z^*)= 1-  \l k(s^*)z^*$ in the first term of the last equality  of \eqref {v5}.
   We insert \eqref {v5} into the third  term of the last equality in \eqref{v4}. Notice that we must multiply \eqref {v5} by $- \l \frac 1 \l$. 
      We obtain 
   \begin {equation} \label {v6z}  \begin {split}   & 
\int_{\TT_\l}   J^c ( s,s',z,z') V(s',z') ds' dz' =   \int_{\TT_\l}    \frac 1 \l J \left  (  \frac {  (s-s')} {\l},  (z-z)\right )  V(s',z') ds' dz'    \cr & 
 -
  \l  \int_{\TT_\l}     \frac 1 \l   J  \left  ( \frac {  (s-s')} {\l} ,    (z-z') \right )    \left [  (s-s')   k (s^*)z^*  \a(s^*,z^*) D_{s'} V(s',z') \right ]ds' dz'   
  \cr & +  (R^\l V) (s,z),  
  \end {split}   \end {equation}
where 
 \begin {equation} \label {rm5}  \begin {split}   (R^\l V) (s,z) =&    - \l^2 \int_{\TT_\l}      \frac 1 \l    J  \left  ( \frac {  (s-s')} {\l} ,    (z-z') \right )    \left [    (k(s^*)z^*)^2   V(s',z') \right ]ds' dz'  \cr & 
- \l^2  \int_{\TT_\l}       \frac 1 \l J  \left  ( \frac {  (s-s')} {\l} ,    (z-z') \right )    \left [  \frac {(s-s')} \l \frac 12  k_{s'}(s^*)z^* \a(s^*,z^*)  V(s',z') \right ]ds' dz' \cr &  -
   \l^3  \int_{\TT_\l}     \frac 1 \l  J  \left  ( \frac {  (s-s')} {\l} ,    (z-z') \right )     \left [ \frac {(s-s')} \l  k (s^*)     k_{s'}(s^*)(z^*)^2  V(s',z') \right ]ds' dz' \cr &+    \int_{\TT_\l}     \frac 1 \l M(s,s', z,z') \a(s^*,z^*)V(s',z') ds' dz',
  \end {split}   \end {equation}
with  $M$    defined  in \eqref {mt}.
The  term   
$$ -   \l  \int_{\TT_\l}  \frac 1 \l J \left  (  \frac {  (s-s')} {\l},  (z-z)\right )  k(s^*)z^* V(s',z') ds' dz'   $$
appearing in the third  lines of \eqref {v4} 
  cancels with   the   first addend of the last equality of    \eqref {v5}     when multiplied by $- \l \frac 1 \l$.
     \end {proof}
     We have the following
  
  \begin {lem} \label {car1b} Let $V \in L^2 (\TT_\l)$  with the property that there exists $z_0$ and $a>0$ so that 
\begin {equation} \label {tue1}  \int_T ds V(s,z)^2   \le  e^{-a |z|}  \|V \|^2 \qquad for  |z|\ge z_0.  \end {equation}
Let $  R^\l  $ be the quantity defined in \eqref {rm5},    there exists $C= C(z_0)>0$ so that 
$$  \| R^\l   V \|  \le  \l^2  C \| V \|.$$
 \end {lem}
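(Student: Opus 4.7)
The structure of $R^\l$ in \eqref{rm5} shows that every term carries an explicit prefactor $\l^2$ (or $\l^3$), but is multiplied by factors $(z^*)^2$ or $z^*$ which on $I_\l$ can be of order $\l^{-1}$. Thus the naive pointwise bound only yields $\|R^\l V\|\le C\|V\|$, and to obtain the claimed $\l^2$ gain one must exploit the decay \eqref{tue1} of $V$ away from the interface. The plan is therefore to transport the factor $z^*$ onto the argument of $V$ and then bound the resulting weighted $L^2$ norm by $C\|V\|$ using \eqref{tue1}.

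More precisely, since the support of $J$ is contained in the unit ball, in every integral defining $R^\l V$ we have $|z-z'|\le 1$ and $|s-s'|\le \l$; in particular $z^*=(z+z')/2$ satisfies $|z^*|\le |z'|+\tfrac12$, so $(z^*)^2\le 2(z')^2+\tfrac12$. Using also that $|k|,|k'|$ and $\a(s^*,z^*)$ are bounded and, via \eqref{ms1}, that $|M(s,s',z,z')|\le \l^2 C(z^*)^2\1_{|s-s'|\le\l}\1_{|z-z'|\le1}$, each of the four summands in \eqref{rm5} is controlled pointwise by
\begin{equation}\label{plan-1}
\l^2 C\int_{\TT_\l}\tfrac{1}{\l}\widetilde J\!\left(\tfrac{s-s'}{\l},z-z'\right)\bigl(1+(z')^2\bigr)|V(s',z')|\,ds'dz',
\end{equation}
where $\widetilde J$ is either $J$ itself or $|sD_1J|$, in both cases a bounded nonnegative function with compact support and finite integral.

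Writing $K_\l(s,z)=\tfrac{1}{\l}\widetilde J(s/\l,z)$, the kernel in \eqref{plan-1} is a convolution in $(s,z)$ whose $L^1$ norm is independent of $\l$. By Young's inequality for convolutions (applied on the universal cover of $T$, after lifting), the $L^2(\TT_\l)$ norm of \eqref{plan-1} is bounded by
$$
\l^2 C\,\|K_\l\|_{L^1}\,\bigl\|(1+z^2)V\bigr\|_{L^2(\TT_\l)}\le \l^2 C\,\bigl\|(1+z^2)V\bigr\|_{L^2(\TT_\l)} .
$$
Hence the whole proof reduces to showing that the weighted norm is controlled by the unweighted one:
\begin{equation}\label{plan-2}
\bigl\|(1+z^2)V\bigr\|_{L^2(\TT_\l)}^2=\int_{I_\l}(1+z^2)^2\!\left(\int_T V(s,z)^2\,ds\right)dz\le C(z_0,a)\|V\|^2 .
\end{equation}

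For this last step we split the $z$-integral at $|z|=z_0$. On $\{|z|\le z_0\}$ the weight $(1+z^2)^2$ is bounded by a constant depending only on $z_0$, and the contribution is at most $(1+z_0^2)^2\|V\|^2$. On $\{|z|>z_0\}$ we invoke the hypothesis \eqref{tue1}, which gives $\int_T V(s,z)^2ds\le e^{-a|z|}\|V\|^2$, so that the contribution is bounded by $\|V\|^2\int_{|z|>z_0}(1+z^2)^2 e^{-a|z|}dz$, a finite constant depending only on $z_0$ and $a$. This establishes \eqref{plan-2} and hence the lemma.

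The only subtle point, and the reason for assumption \eqref{tue1}, is precisely \eqref{plan-2}: without the exponential tail of $\int_T V(s,z)^2\,ds$, the weight $(1+z^2)^2$ on $I_\l$ would blow up like $\l^{-4}$ and swallow the $\l^2$ gain coming from the Taylor remainder in Lemma \ref{car1}. Once the decay is available, all four terms of \eqref{rm5} are handled by the same convolution-plus-Young argument, the $\l^3$ term being even better than required.
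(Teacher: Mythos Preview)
The proposal is correct and follows essentially the same approach as the paper's proof: both transport the factor $z^*$ onto the argument of $V$ via $|z^*|\le |z'|+\tfrac12$ (using $|z-z'|\le 1$), apply a convolution estimate with a kernel of $\l$-independent $L^1$ mass, and then invoke the decay hypothesis \eqref{tue1} to control the resulting weighted $L^2$ norm $\|(1+z^2)V\|$ by $C(z_0)\|V\|$. The only cosmetic difference is that you phrase the convolution estimate via Young's inequality, whereas the paper writes it as Jensen's inequality (using that the kernel is essentially a probability density); these are equivalent here.
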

\begin {proof}
Denote
$$(R^\l_1 V) (s,z)=  - \l^2 \int_{\TT_\l}      \frac 1 \l    J  \left  ( \frac {  (s-s')} {\l} ,    (z-z') \right )         [k(s^*)z^*)]^2   V(s',z')  ds' dz' ,$$
$$(R^\l_2 V) (s,z) = - \l^2  \int_{\TT_\l}       \frac 1 \l J  \left  ( \frac {  (s-s')} {\l} ,    (z-z') \right )    \left [  \frac {(s-s')} \l \frac 12  k_{s'}(s^*)z^* \a(s^*,z^*)  V(s',z') \right ]ds' dz',$$
  $$ (R^\l_3 V) (s,z)=- \l^3  \int_{\TT_\l}     \frac 1 \l  J  \left  ( \frac {  (s-s')} {\l} ,    (z-z') \right )     \left [ \frac {(s-s')} \l  k (s^*)     k_{s'}(s^*)(z^*)^2  V(s',z') \right ]ds' dz.$$
 By Jensen inequality, \eqref {tue1}  and taking into account that $|z-z'| \le 1$, we have  
 \begin {equation} \label {rm8}  \begin {split}  \|R^\l_1 V\|^2 =&  \l^4  \int_{\TT_\l}   ds dz \left \{ \int_{\TT_\l}      \frac 1 \l    J  \left  ( \frac {  (s-s')} {\l} ,    (z-z') \right )       [k(s^*) z^*]^2   V(s',z')  ds' dz' \right \}^2  \cr & \le 
\l^4 C  \int_{\TT_\l}   ds dz  \int_{\TT_\l}      \frac 1 \l    J  \left  ( \frac {  (s-s')} {\l} ,    (z-z') \right )    \left [  ( z'+1)^2   V(s',z') \right ]^2  ds' dz'  \cr &    \le  \l^4  C(z_0) \| V \|^2.
 \end {split}   \end {equation}
  We   estimate similarly  the term  $R^\l_2 V$ and $R^\l_3 V$.
  By \eqref {ms1} and  \eqref {tue1}   we  have  
 \begin {equation} \label {rm2}  \begin {split}  \|R^\l_4 V\|^2 &\equiv  \int_{\TT_\l} ds dz  \left \{  \frac 1 \l   \int_{\TT_\l}   M(s,s', z,z') \a(s^*,z^*)V(s',z') ds' dz' \right\}^2 \le     \cr &
   \le 
   C  \l^4   \int_{\TT_\l} ds dz  \left \{    \int_{\TT_\l}       (z^*)^2  \frac 1 \l \1_{|s-s'|\le \l} \1_{|z-z'|\le 1}  |V(s',z')| ds' dz' \right\}^2 
 \cr & \le    C  \l^4   \int_{\TT_\l} ds dz      \int_{\TT_\l}     \frac 1 \l \1_{|s-s'|\le \l} \1_{|z-z'|\le 1}         (z^*)^4   V(s',z')^2  ds' dz'  \cr & \le  C(z_0)  \l^4  |V\|^2.
 \end {split}   \end {equation}
   Since $R^\l   V = \sum_{i=1}^4 R^\l_i   V$, the thesis follows.
\end{proof}

  \begin {lem} \label {car2}   Set   $V(s,z)= \sum_{h\in \Z_L} e^{ihs} u_h(z)$, see \eqref {aa1}.   We have 
   \begin {equation} \label {v11}   
   \begin {split}   &   
 \int_{\TT_\l} ds dz  V(s,z) ( \B V) (s,z)\cr & =  
  \sum_{h \in \Z_L,k \in \Z_L} \left  \{   \d_{h,k}    \int_{I_\l}   dz    u_h(z)     \int_{I_\l} J^{\l k} (z-z')  u_k(z') dz'
+  F^\l_1 (u_h,u_k) \right \} +   \int_{\TT_\l} ds dz  V(s,z) (R^\l V) (s,z)
    \end {split}   \end {equation}
    where $J^{\l k}$ is defined in \eqref {aa3},  $F^\l_1 (u_h,u_k) $ in  \eqref {v14z}. 
   \end{lem}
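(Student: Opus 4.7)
The plan is to insert the three-term decomposition of $\B V$ given by Lemma \ref{car1}, equation \eqref{v6a}, into the quadratic form $\int_{\TT_\l} V\cdot \B V\,ds\,dz$ and then Fourier-expand each piece using $V(s,z) = \sum_{h\in \Z_L} e^{ihs} u_h(z)$. The decomposition produces three pieces: (i) the translation-invariant $s$-convolution $\int \tfrac{1}{\l}J((s-s')/\l,z-z')\,V(s',z')\,ds'\,dz'$; (ii) the first-order correction term carrying $D_{s'}V$ and the curvature factor $\tfrac{(s-s')}{\l}k(s^*)z^*\a(s^*,z^*)$; and (iii) the remainder $R^\l V$. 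Piece (iii) is transported to the right-hand side verbatim and matches the last term of the stated identity.

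For piece (i), the key step is the change of variable $w=(s-s')/\l$ in the inner integral, so that for each Fourier mode $e^{iks'}u_k(z')$ of $V(s',z')$,
$$\int_T ds'\;\tfrac{1}{\l}J\!\left(\tfrac{s-s'}{\l},z-z'\right)e^{iks'} \;=\; e^{iks}\int dw\;e^{-i\l k w}J(w,z-z') \;=\; e^{iks}\,J^{\l k}(z-z'),$$
where I used the definition \eqref{aa3} of $J^h$ together with the evenness of $J$ in its first argument, which gives $J^{-\l k}=J^{\l k}$. Pairing against $V(s,z)=\sum_h e^{ihs}u_h(z)$ and invoking the orthogonality $\int_T e^{i(h+k)s}ds = L\,\delta_{h+k,0}$ (with the reality convention $u_{-h}=\bar u_h$, so that the index relabels to $h=k$) collapses the double sum to the diagonal contribution $\sum_k \int dz\,u_k(z)\int dz'\,J^{\l k}(z-z')u_k(z')$, which is the $\delta_{h,k}$ term in the statement.

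For piece (ii), I would substitute $D_{s'}V(s',z') = \sum_k (ik)\,e^{iks'}u_k(z')$ and $V(s,z)=\sum_h e^{ihs}u_h(z)$ into
$$-\,\l^2 \int ds\,dz\,ds'\,dz'\;V(s,z)\;\tfrac{1}{\l}J\!\left(\tfrac{s-s'}{\l},z-z'\right)\tfrac{(s-s')}{\l}\,k(s^*)\,z^*\,\a(s^*,z^*)\,D_{s'}V(s',z').$$
Unlike piece (i), the factors $k(s^*)$, $z^*$, $\a(s^*,z^*)$ depend on $s^*=(s+s')/2$, so the $s$-integration is no longer a pure convolution and does \emph{not} diagonalize in Fourier modes. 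The resulting bilinear integral in $(u_h,u_k)$ is precisely what the paper records as $F^\l_1(u_h,u_k)$ in the forthcoming formula (v14z); one simply labels this quantity and moves on. Adding pieces (i), (ii), (iii) and summing over $h,k\in\Z_L$ assembles the right-hand side of the lemma exactly.

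The only real obstacle is the bookkeeping for piece (ii): because the kernel fails to be translation-invariant in $s$ through $k(s^*)$ and $\a(s^*,z^*)$, one must carefully record the cross-mode coupling so that $F^\l_1(u_h,u_k)$ is written in a form usable for the estimates that appear later. No analytic input is required at the present level, since the statement is an identity: piece (i) diagonalizes exactly via the Fourier convolution theorem, piece (ii) merely defines $F^\l_1$, and piece (iii) is simply carried through.
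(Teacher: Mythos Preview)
Your proposal is correct and follows essentially the same approach as the paper: substitute the three-term decomposition of $\B V$ from Lemma~\ref{car1} into the quadratic form, diagonalize the translation-invariant convolution piece via the change of variable $w=(s-s')/\l$ and Fourier orthogonality on $T$, and collect the non-diagonal curvature-correction piece into the definition of $F^\l_1$. The paper carries out one extra bookkeeping step you only allude to---it rewrites $k(s^*)$ as $k(s+\tfrac12\l w)$ and performs the $s$-integral to extract the Fourier coefficient of the curvature, arriving at the explicit form \eqref{v14z}---but as you note this is purely definitional at the level of the present lemma.
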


\begin {proof}
  From   \eqref {v6a} we have
 \begin {equation} \label {v12}   
   \begin {split}   &    \int_{\TT_\l} ds dz  V(s,z) ( \B V) (s,z)  
 \cr & =   \sum_{h,k}
  \int_{\TT_\l} ds dz  e^{ihs } u_h(z)    \int_{\TT_\l}    \frac 1 \l J \left  (  \frac {  (s-s')} {\l},  (z-z')\right )  e^{iks'} u_k(z')   ds' dz'  
   \cr &
    -\l^2   \sum_{h,k}  \int_{\TT_\l} ds dz  e^{-ihs} u_h(z)   \int_{\TT_\l}     \frac 1 \l   J  \left  ( \frac {  (s-s')} {\l} ,    (z-z') \right )    \left [  \frac {(s-s')} \l   k (s^*)z^*  ik  e^{iks'} u_k(z') \right ]ds' dz'  \cr &
+    \int_{\TT_\l} ds dz  V(s,z) (R^\l V) (s,z).   \end {split}   \end {equation}
We have that 
 \begin {equation} \label {v13}   
   \begin {split}   &  \int_{\TT_\l} ds dz  e^{-ihs } u_h(z)    \int_{\TT_\l}    \frac 1 \l J \left  (  \frac {  (s-s')} {\l},  (z-z')\right )  e^{iks'} u_k(z')   ds' dz'   \cr & =  \int_{\TT_\l} ds dz  e^{-ihs } u_h(z)   e^{iks} J^{\l k} (z-z')  u_k(z')  =   \d_{h,k}    \int_{I_\l}   dz    u_h(z)     \int_{I_\l} J^{\l k} (z-z')  u_k(z') dz'
 \end {split}   \end {equation}
 where, see \eqref {aa3},  
  \begin {equation} \label {cg2a}  J^{\l k} (z-z')  =    \int_{T}    \frac 1 \l J \left  (  \frac {  (s-s')} {\l},  (z-z')\right )  e^{ik \l \frac { (s'-s)} \l} ds'  =
  \int     J \left  (w,  (z-z')\right )  e^{-ik \l w} dw.     \end {equation}
  Set 
    \begin {equation} \label {v14z}   
   \begin {split}  F^\l_1 (u_h,u_k) = &   
  -  ik  \l^2 \int_{\TT_\l} ds dz  e^{-ihs} u_h(z)     e^{i ks} \int_{\TT_\l}   \frac 1 \l   J  \left  ( \frac { (s-s')} {\l},    (z-z') \right )     \frac {(s-s')} {\l}  k (s^*)  e^{i k \l  \frac  {(s-s')} {\l}}  z^*   u_k(z')  ds' dz' \cr & =
    - ik  \l^2  \int_{I_\l}    dz    u_h(z)     \int_{I_\l}   z^*  u_k(z')   dz'     \int    J  \left  (w,    (z-z') \right )  w   e^{-ik \l w}   \int_{T} ds  e^{i (k-h)s}   k (s+\frac 12 \l w)       dw   \cr &  =
   - ik  \l^2  \int_{I_\l}    dz    u_h(z)     \int_{I_\l}   z^*  u_k(z')   dz'     \int    J  \left  (w,    (z-z') \right )  w  e^{-i \frac 12 (3k-h) \l w}    \int_{T} ds'  e^{i (k-h)s'}   k (s') \cr & =
     - k  \l^2  \int_{I_\l}    dz    u_h(z)     \int_{I_\l}   z^*  u_k(z')   dz'     \int    J  \left  (w,    (z-z') \right )  w    \sin  ( \frac 12 (3k-h) \l w)    \int_{T} ds'  e^{i (k-h)s'}   k (s').   
   \end {split}   \end {equation} 
   We use that $ s^*= \frac   {s+s'} 2 = s +    \frac   {s'-s} 2$ and therefore $  k (s^*(w)) = k(s + \frac 12 \l w)$.      
\end {proof}   

\begin {rem} \label {dennis1}  Notice that  when $v$ and $w$ are even function 
$$F^\l_1 (v,w) =0.$$
\end {rem}
  
 \begin {lem} \label {car9}   Take  $ \|V\|=1$,  $V(s,z)= \sum_{k \in \Z_L} e^{iks} u_k(z)$, see \eqref {aa1},
  \begin  {equation}  \label {gi8v} \langle V,   \A V \rangle  \le C \l^2,   \end {equation} 
  where the operator $\A$ is defined in \eqref {op2t}. 
  Then
  \begin {equation} \label {ele1}         \langle V, \A V \rangle  -  \langle V,R^\l_0 V \rangle    =
  \sum_{k \in \Z_L,  h \in \Z_L}    \left  \{    \d_ {h,k} \langle u_h,     \LL^{k\l} u_k \rangle  
+         F^\l_1 (u_h,u_k)  +  F^\l_2 (u_h,u_k) + F^\l_3 (u_h,u_k)\right \},   \end {equation} 
\begin {equation} \label {ele11}     \langle V,R^\l_0 V \rangle  \le   C \l^2,   \end {equation} 
 see \eqref {d9}, where  $F^\l_1$ is  given in \eqref {v14z},    $F^\l_2$ in \eqref {d5} and   $F^\l_3$ in \eqref {d6}. 
 Further  we have for $u$ and $v$ even function 
 \begin {equation} \label {ele10}    F^\l_1 (u,v) = F^\l_2 (u,v) =F^\l_3 (u,v)=0, \end {equation} 
\begin {equation} \label {d502}   
    |F^\l_1 (u_h,u_k)|    \le   \l^2   |k|  C  
   \frac {1} {(1+ |k-h|)^3}   \frac {1} {(1+ |\frac 12 (3k-h)| \l ) } 
       \|u_h\|  \|u_k\|,     \end {equation} 
        \begin {equation} \label {d501}   
 |F^\l_2 (u_h,u_k) + F^\l_3 (u_h,u_k)|  \le C  \l     \|u_h\|  \|u_k\| C 
   \frac {1} {(1+ |k-h|)^3}.        \end {equation} 
\end {lem}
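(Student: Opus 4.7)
\textbf{Proof proposal for Lemma \ref{car9}.} The plan is to expand $\langle V,\A V\rangle$ by a double Taylor expansion: first in $\l$, via \eqref{mob1}, for the mobility factor $1/\sigma(m_A)$; and second in the curvature, via Lemmas \ref{car1}-\ref{car2}, for the convolution operator $\B$. Starting from
\[
\langle V,\A V\rangle \;=\; \int_{\TT_\l}\frac{V^2}{\sigma(m_A)}\,ds\,dz \;-\; \int_{\TT_\l} V(s,z)\,(\B V)(s,z)\,ds\,dz,
\]
I would substitute \eqref{mob1} in the first integral to obtain a main piece $\int V^2/\sigma(\bar m)$, a linear-in-$\l$ piece with density $\tfrac{2\bar m(z)h_1(z)g(s)}{\sigma^2(\bar m(z))}$, a second linear-in-$\l$ piece with density $\tfrac{2\bar m(z)\phi(s,\l z)}{\sigma^2(\bar m(z))}$, and an $O(\l^2)$ piece involving $q^\l$. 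In the second integral I would apply Lemma \ref{car1} to pull out the flat convolution $\frac1\l J((s-s')/\l,z-z')$ from $\B$, together with the first-order curvature-derivative term and the remainder $R^\l V$.

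Writing $V(s,z)=\sum_{k\in\Z_L}e^{iks}u_k(z)$ and using Lemma \ref{car2}, the diagonal ($h=k$) part of the flat convolution exactly recombines with the leading piece $\int V^2/\sigma(\bar m)$ of the mobility integral to produce $\sum_k\langle u_k,\LL^{\l k}u_k\rangle$, while the off-diagonal curvature-derivative piece yields $F_1^\l(u_h,u_k)$ as in \eqref{v14z}. The $gh_1$ and $\phi$ pieces of the mobility expansion become, in Fourier variables, the sums of $F_2^\l(u_h,u_k)$ and $F_3^\l(u_h,u_k)$, respectively; the $\l^2 q^\l$ piece together with $\int V\,R^\l V$ is collected into $R_0^\l V$. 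This gives the decomposition \eqref{ele1}.

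The hard part is the estimate \eqref{ele11}. The $q^\l$ contribution is $\le C\l^2\|V\|^2=C\l^2$ since $\|q^\l\|_\infty \le C$. For the $\int V\,R^\l V$ contribution I would invoke Lemma \ref{car1b}, which requires the decay bound $\int_T V(s,z)^2\,ds\le e^{-a|z|}\|V\|^2$ for $|z|\ge z_0$. This decay is \emph{not} automatic and is the main technical obstacle: I would derive it from the hypothesis $\langle V,\A V\rangle\le C\l^2$ by an argument modelled on Lemma \ref{S12t}. Concretely, decompose $V=\alpha\Phi_0+V^\perp$, where $\Phi_0$ is the principal eigenfunction of $\A$ from Theorem \ref{GC2}; the energy bound together with the spectral gap (from Proposition \ref{tu2} and Theorem \ref{spo1}) forces $\|V^\perp\|^2\le C\l^2/D$, while $\Phi_0$ itself satisfies the required exponential decay by Lemma \ref{S12t} applied to $\PP$. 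Thus $\int_T V^2\,ds$ splits into an exponentially decaying bulk and a small-mass perturbation, which suffices to reuse the Jensen-type bound in Lemma \ref{car1b}.

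Finally, the vanishing \eqref{ele10} for even $u,v$ is a direct parity check: Remark \ref{dennis1} already gives $F_1^\l(u,v)=0$ because of the odd factor $\sin(\tfrac12(3k-h)\l w)w$ against the even kernel $J(w,\cdot)$; for $F_2^\l$ the $z$-integrand contains $\bar m(z)h_1(z)/\sigma^2(\bar m(z))\cdot u(z)v(z)$, which is odd in $z$ since $\bar m$ is odd while $h_1,\sigma^2(\bar m),uv$ are even; for $F_3^\l$ one writes $\phi(s,\l z)=\phi(s,0)+[\phi(s,\l z)-\phi(s,0)]$, the first summand vanishes by the same parity argument and the Lipschitz remainder is absorbed into $R_0^\l$ with the extra factor of $\l$ guaranteed by \eqref{2.4}. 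The quantitative bounds \eqref{d502}-\eqref{d501} follow by three integrations by parts in the $s'$-integral against the $C^3$ regularity of $k(\cdot),g(\cdot),\phi(\cdot,\cdot)$, which produces the Fourier-type decay $(1+|k-h|)^{-3}$; for $F_1^\l$ one additional integration by parts in $w$ against the oscillatory phase $e^{-i(3k-h)\l w/2}$ supplies the further factor $(1+|\tfrac12(3k-h)|\l)^{-1}$.
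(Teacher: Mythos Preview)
Your overall strategy matches the paper's: expand $1/\sigma(m_A)$ via \eqref{mob1}, expand $\B$ via Lemmas~\ref{car1}--\ref{car2}, recombine into $\sum_k\langle u_k,\LL^{\l k}u_k\rangle + F_1^\l+F_2^\l+F_3^\l$ plus a remainder, and bound the remainder using exponential decay of $V$ in $z$. The parity argument for \eqref{ele10} and the Fourier-decay argument for \eqref{d502}--\eqref{d501} are correct and essentially identical to the paper's.

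The one genuine gap is in your derivation of the decay needed for Lemma~\ref{car1b}. You decompose $V=\alpha\Phi_0+V^\perp$ and claim $\|V^\perp\|^2\le C\l^2/D$ by invoking ``the spectral gap from Proposition~\ref{tu2} and Theorem~\ref{spo1}.'' But those results concern $\LL^h$ and $\GG^\l$, not $\A$; a spectral gap for $\A$ above $\mu_0$ is \emph{not} available at this point (indeed, extracting spectral information about $\A$ from that of $\GG^\l$ is precisely what Lemma~\ref{car9} is setting up, so invoking it here would be circular). The paper avoids this by projecting $V$ onto the span of \emph{all} eigenfunctions $\{\Psi_i\}$ of $\A$ with eigenvalue $\le\epsilon_0$ for a fixed small $\epsilon_0>0$: each such $\Psi_i$ decays uniformly by Lemma~\ref{S12t}, so $P_{\le\epsilon_0}V$ inherits the decay \eqref{S1.5bt}; and the complementary piece satisfies $\|P_{>\epsilon_0}V\|^2\le\langle V,\A V\rangle/\epsilon_0+C\l^2/\epsilon_0\le C\l^2$ automatically, with no gap assumption needed. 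Once you make this replacement, your ``small-mass perturbation'' reasoning goes through: the cross terms $\langle P_{\le\epsilon_0}V,\,R^\l P_{>\epsilon_0}V\rangle$ are handled by noting that the kernels bounding $R^\l$ are symmetric in $(s,z)\leftrightarrow(s',z')$, so one may move $R^\l$ onto the decaying factor and apply Lemma~\ref{car1b} there.
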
  
\begin {proof}     From  \eqref {gi8v} it follows that  $V$   satisfies  \eqref  {tue1}, i.e  there exists  
$z_0$ and $a>0$ so that 
\begin {equation} \label {tue1sa}  \int_T ds V(s,z)^2   \le  e^{-a |z|}  \|V \|^2 \qquad \hbox {for} \quad   |z|\ge z_0.  \end {equation}
Namely,  by Lemma \ref {S12t}, one deduces that the eigenfunctions  $\{\Psi_i \}_i$ associated to  eigenvalues of $\A$ smaller than some $\e_0$
decay as \eqref {tue1sa}.  Therefore  we can write  for some  positive integer $N$, $V= \sum_i ^N   a_i \Psi_i (s,z)$  where      $a_i= \int_{\TT_\l} ds dz  \Psi_i (s,z) V(s,z)$.
We get
\begin {equation} \label {tue1da}  \int_T ds V(s,z)^2  =  \sum_{i}^N   a_i^2  \int_T ds \Psi^2_i (s,z)   \le  e^{-a |z|}   C \sum_{i}^N   a_i^2 \| \Psi_i \|^2 =   Ce^{-a |z|}  \|V \|^2 \qquad \hbox { for} \quad  |z|\ge z_0.  \end {equation}
By  \eqref {mob1} and definitions \eqref {tre.2}, \eqref {tre.3}  and \eqref {tre.4}   we obtain 
  \begin {equation} \label {d1}   
   \begin {split}       
 \int_{\TT_\l} ds dz \frac {  V(s,z)^2} {\s (m_A (s,\l z))}   & =   \int_{\TT_\l} ds dz \frac { V(s,z)^2}   {\b (1-\bar m^2(z))} \cr & +
 \int_T ds \left [ I_{2,s}(V) + I_{3,s}(V) + I_{4,s}(V) \right ].  
   \end {split} \end {equation} 
We write  
\begin {equation} \label {d2}   
   \begin {split}       I_{3,s}(V)  &=  \l \int_{\TT_\l} ds dz \frac { (V(s,z))^2}   {\b (1-\bar m^2(z))^2}    \bar m (z)     
\phi (s, 0)    \cr & +   \l^2  \int_{\TT_\l} ds dz \frac { (V(s,z))^2}   {\b (1-\bar m^2(z))^2}    \bar m (z) [ \phi(s,z)- \phi(s,0]. 
 \end {split} \end {equation} 
  By      \eqref {2.4}        and  \eqref {tue1sa}     we get  
   \begin {equation} \label {d8a}  \left | \int_{\TT_\l} ds dz \frac { (V(s,z))^2}   {\b (1-\bar m^2(z))^2}    \bar m (z)   [ \phi(s,z)- \phi(s,0]  \right | \le C \|V\|^2.  \end {equation} 
Hence by \eqref  {d1}  and  \eqref  {d2} 
\begin {equation} \label {d3}   
   \begin {split}   &   
 \int_{\TT_\l} ds dz \frac { (V(s,z))^2} {\s (m_A (s,\l z))}   =    \int_{\TT_\l} ds dz \frac { (V(s,z))^2}   {\b (1-\bar m^2(z))} \cr & +
 2 \l \int_{\TT_\l} ds dz \frac { (V(s,z))^2}   {\b (1-\bar m^2(z))^2}    \bar m (z)   [ h_1(z) g(s) +   
\phi (s, 0)]    + \l^2   \int_{\TT_\l} ds dz  R_1(s,z) (V(s,z))^2, 
  \end {split} \end {equation} 
 where we denoted by 
 $$R_1 (s,z) =   \frac 1  {\b (1-\bar m^2(z))^2}    \bar m (z)    [ \phi(s,z)- \phi(s,0]+ q^\l (s,\l z).$$
By \eqref {d8a}  and \eqref {2.5}    we have that 
 \begin {equation} \label {d8}      \langle V,R_1 V \rangle   \le C \|V\|^2.   \end {equation} 
Set  $V(s,z)= \sum_{k} e^{iks} u_k(z)$.    Taking into account \eqref {d3} we have 
 \begin {equation} \label {d4}   
   \begin {split}   &   
 \int_{\TT_\l} ds dz \frac { (V(s,z))^2} {\s (m_A (s,\l z))}    = \sum_k   \int_{I_\l}  dz \frac { (u_k(z))^2} {\s ( \bar m(z))} 
+    \sum_k   \sum_h  \left [ F^\l_2 (u_h,u_k)  + F^\l_3 (u_h,u_k) \right ]  \cr & + \l^2   \int_{\TT_\l} ds dz  R_1 (s,z) (V(s,z))^2,  
 \end {split} \end {equation} 
where  \begin {equation} \label {d5}   
 F^\l_2 (u_h,u_k)= 2 \l     \int_{I_\l}  dz  \frac {  u_k(z) u_h(z)}    {\b (1-\bar m^2(z))^2}    \bar m (z)    h_1(z)  \int_T ds e^{i(k-h)s} g(s)  \end {equation} 
 \begin {equation} \label {d6}  
  F^\l_3 (u_h,u_k)= 2 \l      \int_{I_\l}  dz  \frac {  u_k(z) u_h(z)}    {\b (1-\bar m^2(z))^2}    \bar m (z)     \int_T ds e^{i(k-h)s}  \phi (s, 0).  
\end {equation} 
 Then by Lemma \ref  {car2} 
\begin {equation} \label {mac2bb}    \begin {split}    & \langle V,   L^{\l} V \rangle  -  \langle V,R^\l V \rangle  +  \l^2  \langle V,R_1 V \rangle  \cr &  =
  \sum_{k, h}    \left  \{    \d_ {h,k} \langle u_h,     \LL^{k\l} u_k \rangle  
  +     F^\l_1 (u_h,u_k)  +  F^\l_2 (u_h,u_k) + F^\l_3 (u_h,u_k)\right \}.
   \end {split}     \end {equation}
 Since $V$  satisfies    the decay property \eqref {S1.5bt}  we can apply Lemma \ref {car1b}
  obtaining 
 \begin {equation} \label {d9a}   \langle V,R^\l  V \rangle \le  C \l^2 \|V\|^2 . \end {equation} 
  Denote 
  \begin {equation} \label {d9}    \langle V,R^\l_0  V \rangle = \l^2  \langle V,R_1 V \rangle +  \langle V,R^\l  V \rangle.  \end {equation} 
  By \eqref {d8}  and \eqref {d9a} we get  \eqref {ele11}.
      By inspection one  realises that when $u$ and $v$ are even  \eqref {ele10} holds. 
  Further, since 
$$  u_h (z) = \int V(s,z) e^{ihs} ds$$
then, see  \eqref {S1.5bt}, 
 \begin {equation} \label {mart2}  |u_h (z)|  \le  \int |V(s,z)|   ds \le  C \left ( \int |V(s,z)|^2   ds \right )^{\frac 12}  \le Ce^{-  a |z|} \|V \|,  \quad |z| \ge z_0, \end {equation}
where   $a>0$ and $z_0>0$  do not depend  on $\l$. 
To estimate $F_i$, $i=1,2,3$ we  use the smoothness of $J$,   $\G$ and $ m_A$. We therefore use estimate \eqref {scu3},
 \begin {equation} \label {cur1}  \left |\int_{T} ds  e^{i (k-h)s}   k (s) \right | \le    \frac {C} {(1+ |k-h|)^3 },   \end {equation}
  $$   \left | \int_T ds e^{i(k-h)s} g(s) \right |   \le  C  
   \frac {1} {(1+ |k-h|)^3}, $$
   $$  \left |  \int_T ds e^{i(k-h)s} \Phi(s,0)  \right |  \le  C   
   \frac {1} {(1+ |k-h|)^3}. $$
   To estimate $F^\l_1$ we bound   $|z^*| \le |z+1|$ since   $J$ has  support in the ball of radius 1.  The exponential decay of  the $u_h$, see  \eqref   {mart2},  is essential   to control the  growing of   $ |z+1|$.
We get  
 \begin {equation} \label {cg4a}   \begin {split}              \left |    F^\l_1 (u_h,u_k)  \right |  & \le\l^2 |k|  C   
   \frac {1} {(1+ |k-h|)^3}   \frac {1} {(1+ |\frac 12 (3k-h)| \l )} 
       \int_{I_\l}  dz   |u_h(z)|  |z+1|   \int_{I_\l}   C (z-z')   |u_k(z')|  dz' \cr &   \le
       \l^2   |k| C    
   \frac {1} {(1+ |k-h|)^3}   \frac {1} {(1+ |\frac 12 (3k-h)| \l )} 
       \|u_h\|  \|u_k\|,     \end {split}     \end {equation} 
   \begin {equation} \label {d50}   
 |F^\l_2 (u_h,u_k) + F^\l_3 (u_h,u_k)|  \le C  \l     \|u_h\|  \|u_k\|     
   \frac {1} {(1+ |k-h|)^3}.        \end {equation} 
      \end {proof}
 \vskip1.cm 
 \noindent
    { \bf {Proof of Theorem \ref {gio1} } }  By assumption, $  \langle V, \A V \rangle \le C \l^2$,  see \eqref {gi8}, hence  Lemma \eqref {car9} holds.  Take $V(s,z)$  as in \eqref {aa1}  and consider the representation  of $ \langle V,   \A V \rangle $ given in \eqref {ele1}. 
We start considering the diagonal term, i.e when $h=k$.
We  need to lower bound the following quantity. 
 \begin {equation} \label {d10}     \sum_{k}    \left  \{   \langle u_k,     \LL^{k\l} u_k \rangle  
+      F^\l_1 (u_k,u_k)+  F^\l_2 (u_k,u_k) +  F^\l_3 (u_k,u_k) \right \}. \end {equation} 
Split  $\sum_k= \sum_{| \l k| \le h_0} + \sum_{| \l k| > h_0} $ where $h_0$ is as in Proposition \ref {tu2}. 
When  $ | \l k| \le h_0 $ split 
 \begin {equation} \label {d11}   u_k = \a_k \psi_0^{k\l} + u_k^\perp \end {equation} 
where  $\psi_0^{k\l} $ is  the principal eigenvalue of  $\LL^{k\l}$   and 
$$ \int dz \psi_0^{k\l}(z) u_k^\perp (z) =0 .$$
By \eqref {tue3}  
$$ \langle u_k,     \LL^{k\l} u_k \rangle  = \mu_0^{k\l} \a_k^2 +  \langle u_k^\perp,     \LL^{k\l}u_k^\perp \rangle \ge   \mu_0^{k\l} \a_k^2 + D \|u_k^\perp\|^2. $$
By Proposition \ref {tu2}  the principal eigenvalue of  the operator $ \LL^{k\l}$ is even, hence   for $ i= \{1,2,3\}$
 \begin {equation} \label {s1}    F^\l_i ( \psi_0^{h\l}, \psi_0^{k\l}) =0, \qquad  |\l h| \le h_0, \quad  |\l k| \le h_0,  \end {equation} 
 and
  \begin {equation} \label {s2}   F^\l_i (u_k, u_h) =  F^\l_i (\a_k \psi_0^{k\l}, u_h^\perp) +  F^\l_i (u_k^\perp, \a_h \psi_0^{h\l}) +   F^\l_i (u_k^\perp,   u_h^\perp).  \end {equation} 
  Property \eqref {s1} is essential to obtain the  final estimate. 
Taking into account  \eqref {d502}, \eqref {d501}    and \eqref {s1} we have
 \begin {equation} \label {s2a}  \begin {split} & | F^\l_1 (u_k, u_k)| \le  \l^2 |k|    C  
    \frac {1} {(1+ |k| \l )} 
       [ 2 |\a_k| \|u_k ^\perp\| + \|u_k^\perp\|^2] \le   \l    C  
   [   |\a_k| \|u_k^\perp\| + \|u_k^\perp\|^2] ,    \end {split}   \end {equation} 
\begin {equation} \label {s3}  | F^\l_2 (u_k, u_k)+  F^\l_3 (u_k, u_k)|  \le  \l    C  
     [ 2 |\a_k| \|u_k^\perp\| + \|u_k^\perp\|^2].   \end {equation} 
     Therefore
     \begin {equation} \label {d100}   \begin {split} &  \sum_{|k| \le \frac {h_0} \l }    \left  \{   \langle u_k,     \LL^{k\l} u_k \rangle  
+      F^\l_1 (u_k,u_k)+  F^\l_2 (u_k,u_k) +  F^\l_3 (u_k,u_k) \right \} \cr & \ge
 \sum_{|k| \le \frac {h_0} \l }    \left  \{    \mu_0^{k\l} \a_k^2 + D \|u_k^\perp\|^2
-     \l    C  
     [   \|u_k^\perp\| + \|u_k^\perp\|^2]\right \} \cr & \ge
      \sum_{|k| \le \frac {h_0} \l }    \left  \{    \mu_0^{k\l} \a_k^2 + [D-C\l] \|u_k^\perp\|^2
-     \l    C   |\a_k|  \|u_k^\perp\|  \right \}. 
 \end {split}
 \end {equation}  
 When $ |k| > \frac {h_0} \l$, taking advantage that $ \l^2 \frac {|k|}  {(1+ |k| \l )}  \le \l $, 
 and by   \eqref {d502}, \eqref {d501}    we   get  
 $$  | \sum_i F^\l_i (u_k, u_k)| \le \l C \|u_k \|^2.$$ 
 Hence 
  \begin {equation} \label {d101}   \begin {split} &  \sum_{|k| > \frac {h_0} \l }    \left  \{   \langle u_k,     \LL^{k\l} u_k \rangle  
+      F^\l_1 (u_k,u_k)+  F^\l_2 (u_k,u_k) +  F^\l_3 (u_k,u_k) \right \} \cr & \ge
 \sum_{|k| > \frac {h_0} \l }       [\nu-\l C]   \|u_k\|^2.
  \end {split}
 \end {equation}  
Next we estimate the therms outside diagonal,  i.e $ h\neq k$:
 \begin {equation} \label {mac2b}    \begin {split}  
 &\sum_{k, h; k \neq h }  [ F^\l_1 (u_h,u_k)+  F^\l_2 (u_h,u_k)+  F^\l_3 (u_h,u_k)] \cr & =
\left [ \sum_{|k \l| \le h_0}    \sum_{|h \l|  \le  h_0, h \neq k} + \sum_{|k \l| \le h_0}    \sum_{|h \l| > h_0}   +  \sum_{|k \l| > h_0}    \sum_{|h \l| \le h_0}   + \sum_{|k \l| > h_0}    \sum_{|h \l| > h_0, h \neq k}  \right  ]   \sum_i F^\l_i (u_h,u_k).   
   \end {split}
 \end {equation}  
   We  estimate  each   term of \eqref  {mac2b}.
      We control the double sums   using   that 
$$ \forall k \qquad \sum_{h }  \frac {1} {(1+ |k-h|)^3}    \le C, \qquad   \sum_{h }  \frac {1} {(1+ |k-h|)^2}    \le C.$$ 
  When $|k \l| \le h_0$ we decompose     $u_k$     as in  \eqref {d11}. 
 When $|k \l| \le h_0$ and   $|h \l| \le h_0$   we take advantage of  \eqref {s1} and  \eqref {s2}.
  By 
  \eqref {d502}  we  obtain \begin {equation} \label {mac2bb}    \begin {split}  &  
    \sum_{|k \l| \le h_0}    \sum_{|h \l|  \le  h_0, h \neq k}    |F^\l_1 (u_h,u_k)|  \cr &\le 
  \l^2   \ \sum_{|k \l| \le h_0}    \sum_{|h \l|  \le  h_0, h \neq k}   \left \{   |k|     
   \frac {1} {(1+ |k-h|)^3}   \frac {1} {(1+ |\frac 12 (3k-h)| \l )}   \left [    |\a_k| \|u_h^\perp\| + \|u_h^\perp\|  \|u_k^\perp\|  \right ]   \right \}
    \cr &   \le 
   \l h_0   \sum_{|k \l| \le h_0}    \sum_{|h \l|  \le  h_0, h \neq k}        
   \frac {1} {(1+ |k-h|)^3}   
    \left [  |\a_k|  \|u_h^\perp\| +\frac 12  \|u_h^\perp\|^2 + \frac 12  \|u_k^\perp\|^2 \right  ] 
    \cr &  \le 
      \l h_0  C  \left \{ 
    \sum_{|h \l|  \le  h_0}        \|u_h^\perp\|^2  + \sum_{|k \l| \le h_0}    \sum_{|h \l|  \le  h_0, h \neq k}        
   \frac {1} {(1+ |k-h|)^3}   
    \  |\a_k|  \|u_h^\perp\| \right \}.
    \end {split}
 \end {equation}
 Taking into account    \eqref {s1} and  \eqref {s2},  applying estimate \eqref {d501}
 and proceeding as above we get 
  \begin {equation} \label {mac2z}    \begin {split}  &  
   \sum_{|k \l| \le h_0}    \sum_{|h \l|  \le  h_0, h \neq k}    \left | F^\l_2 (u_h,u_k)+ F^\l_3 (u_h,u_k) \right |    \cr & \le 
   \l C\left   \{   
    \sum_{|h \l|  \le  h_0}        \|u_h^\perp\|^2  + \sum_{|k \l| \le h_0}    \sum_{|h \l|  \le  h_0, h \neq k}        
   \frac {1} {(1+ |k-h|)^3}   
    \  |\a_k|  \|u_h^\perp\|  \right\}. 
   \end {split}
 \end {equation}
 Next we  consider the case when  $ |k \l| \le h_0$ and     $ |h \l|  >  h_0$.
By \eqref {d502},  we get  
  \begin {equation} \label {s20a}    \begin {split}  &  |  F^\l_1 (u_h,u_k) | \le \l^2  |k|     
   \frac {1} {(1+ |k-h|)^3}   \frac {1} {(1+ |\frac 12 (3k-h)| \l )}  \|u_h \| [ \|u_k^\perp\|  +  |\a_k|]  \cr & \le
      \l h_0     
   \frac {1} {(1+ |k-h|)^3}      \left [  \frac 12 \|u_h \|^2 +  \frac 12 \|u_k^\perp\|^2  +|\a_k| \|u_h \|  \right ]. 
 \end {split}
 \end {equation}     
 By \eqref {d501}, proceeding as  above, 
    \begin {equation} \label {s20b}   |F^\l_2 (u_h,u_k)+  F^\l_3 (u_h,u_k)|  \le   \l   
   \frac {1} {(1+ |k-h|)^3}      \left [  \frac 12 \|u_h \|^2 +  \frac 12 \|u_k^\perp\|^2  + |\a_k| \|u_h \|  \right ].  
 \end {equation} 
 Therefore 
  \begin {equation} \label {s20}    \begin {split}  &  
 \sum_{|k \l| \le h_0}    \sum_{|h \l|  >  h_0}   \sum_{i} |F^\l_i (u_h,u_k)|  \cr &\le 
  \l  C   \sum_{|k \l| \le h_0}    \sum_{|h \l|  >  h_0}      
   \frac {1} {(1+ |k-h|)^3}    \left [  \frac 12 \|u_h \|^2 +  \frac 12 \|u_k^\perp\|^2  + |\a_k|  \|u_h \|  \right ] 
    \cr & \le
    \l C  \sum_{|k \l| \le h_0}   \|u_k^\perp\|^2 +   \l C   \sum_{|h \l| > h_0}   \|u_h\|^2 +    
    \l C       \sum_{|k \l| \le h_0}    \sum_{|h \l|  >  h_0}      
   \frac {1} {(1+ |k-h|)^3}  |\a_k|        \|u_h \|.   
   \end {split}
 \end {equation} 
  When    $ |k \l| >h_0$ and     $ |h\l|  <  h_0$   the estimate of  $ |F^\l_2 (u_h,u_k)+  F^\l_3 (u_h,u_k)|$  gives
 similar terms as in \eqref {s20b}, with $h$ replacing $k$.
 To estimate  $F^\l_1$ we note that $|k| \le |k-h|+ |h|$.
 Therefore
 by \eqref {d502}  \begin {equation} \label {or1}    \begin {split}  &  
    \sum_{|k \l| > h_0}    \sum_{|h \l|  \le  h_0}    |F^\l_1 (u_h,u_k)|  \cr & \le  \l^2    \sum_{|k \l| > h_0}    \sum_{|k \l|  \le  h_0}     |k|     
   \frac {1} {(1+ |k-h|)^3}   \frac {1} {(1+ |\frac 12 (3k-h)| \l )}  \|u_h \|   \|u_k\|   \cr &
    \le 
  \l^2    \sum_{|k \l| > h_0}    \sum_{|h \l|  \le  h_0}     |h|     
   \frac {1} {(1+ |k-h|)^3}    \|u_h \|   \|u_k\|   \cr & +
    \l^2    \sum_{|k \l| > h_0}    \sum_{|h \l|  \le  h_0}      |k- h|     
   \frac {1} {(1+ |k-h|)^3}    \|u_h \|   \|u_k\|    
    \cr &   \le 
   \l h_0   \sum_{|k \l| > h_0}    \sum_{|h \l|  \le  h_0}        
   \frac {1} {(1+ |k-h|)^3}   
      \|u_h \|   \|u_k\|   + \l^2    \sum_{|k \l| > h_0}    \sum_{|h \l|  \le  h_0}        
   \frac {1} {(1+ |k-h|)^2}   \|u_h \|   \|u_k\|.
    \end {split}
 \end {equation}
Since  $|h \l|  \le  h_0$, we split  $u_h$  as in \eqref {d11}. 
 Insert this decomposition  into \eqref {or1} we get 
 \begin {equation} \label {s20e}    \begin {split}  &   \sum_{|k \l| > h_0}    \sum_{|h \l|  \le   h_0}     |F^\l_1 (u_h,u_k)|  \cr & \le
    \l    \sum_{|k \l| > h_0}   \|u_k\|^2 +   \l   \sum_{|h \l| < h_0}   \|u_h^\perp \|^2 +    
    \l          \sum_{|k \l| > h_0}    \sum_{|h \l|  \le  h_0}        
   \frac {1} {(1+ |k-h|)^3}    |\a_h|  \|u_k\|.   
     \end {split}
 \end {equation}
 Next  we consider the case when   $ |k \l| >h_0$ and     $ |h \l|  >  h_0, h \neq k$.  The main point is to   control the $|k|$ in the  estimate of $F^\l_1$.    
   When $k$ and $ h$  have opposite sign, then
 \begin {equation} \label {ol1}    
   \frac { |k|  } {  1+ |k-h|}    \le 1. \end {equation}
  When   $k$ and $h$  have the same sign
  it might be that  $  \frac { |k|  } { 1+ |k-h| }>1$.    
  But if this is case, then  for these values of $h$ and $k$ \begin {equation} \label {ol2}      \frac {k \l} {(1+  \frac 12 (3k-h)  \l )} \le 2. \end {equation}
    This  statement can be easily verified. 
Take $k>0$ the case with $k<0$ can be treated in the same way.
   Assume that \eqref {ol1} does not hold. This means that, 
    for given $k$, $ |k-h| <   k-1$. This inequality is satisfied  for  $ h \in \{1, \dots, 2k-1\}$. For such value of $h$ we have
  that   $ 3k-h \ge k$ and therefore 
 \eqref  {ol2}  holds. 
  Hence 
    \begin {equation} \label {s21}   
   \sum_{|k \l| > h_0}    \sum_{|h \l|  >  h_0, h \neq k}   F^\l_1 (u_h,u_k) =   \sum'_{|k \l| > h_0}    \sum'_{|h \l|  >  h_0, h \neq k}    F^\l_1 (u_h,u_k)  +   \sum''_{|k \l| > h_0}    \sum''_{|h \l|  >  h_0, h \neq k}   F^\l_1 (u_h,u_k),   \end {equation}
where $ \sum'$ is a sum restricted   to the $h$ and $k$ so that \eqref {ol1} holds,
and $\sum''$ is a sum restricted   to the $h$ and $k$ so that \eqref {ol1}  does not hold.
  By \eqref {d502}  and \eqref {ol1} we get 
   \begin {equation} \label {s21a}    \begin {split}  &   \sum'_{|k \l| > h_0}    \sum'_{|h \l|  >  h_0, h \neq k}    F^\l_1 (u_h,u_k)    \cr &\le 
  \l^2  C \sum'_{|k \l|> h_0}    \sum'_{|h \l|  >  h_0, h \neq k}   |k|     
   \frac {1} {(1+ |k-h|)^3}   \frac {k \l} {(1+  \frac 12 (3k-h)  \l )}\|u_h \| \|u_k\|    \le  \l^2  C\sum_{|k \l|> h_0}    \sum_{|h \l|  >  h_0, h \neq k}        
   \frac {1} {(1+ |k-h|)^2}   \|u_h \| \|u_k\|   \cr & 
   \le  \l^2 C    \sum_{|k \l|> h_0}    \sum_{|h \l|  >  h_0, h \neq k}        
   \frac {1} {(1+ |k-h|)^2}   [ \frac 12 \|u_h \|^2 + \frac 12  \|u_k\|^2]  \le \l^2 C   \sum_{|k \l|  >  h_0}\|u_k \|^2.
      \end {split}
 \end {equation}  
 By \eqref {d502}  and \eqref {ol2} we get 
   \begin {equation} \label {s21a}    \begin {split}  &   \sum''_{|k \l| > h_0}    \sum''_{|h \l|  >  h_0, h \neq k}    F^\l_1 (u_h,u_k)    \cr &\le 
  \l^2C  \sum''_{|k \l|> h_0}    \sum''_{|h \l|  >  h_0, h \neq k}   |k|     
   \frac {1} {(1+ |k-h|)^3}   \frac {k \l} {(1+  \frac 12 (3k-h)  \l )}\|u_h \| \|u_k\|    \le  \l  C\sum_{|k \l|> h_0}    \sum_{|h \l|  >  h_0, h \neq k}        
   \frac {1} {(1+ |k-h|)^3}   \|u_h \| \|u_k\|   \cr & 
   \le   \l  C     \sum_{|k \l|> h_0}    \sum_{|h \l|  >  h_0, h \neq k}        
   \frac {1} {(1+ |k-h|)^3}   [ \frac 12 \|u_h \|^2 + \frac 12  \|u_k\|^2]  \le C \l    \sum_{|k \l|  >  h_0}\|u_k \|^2.
      \end {split}
 \end {equation}  
         
    By  the previous estimates we have
    \begin {equation} \label {dom1}    \begin {split}  
 &\sum_{k, h; k \neq h }  [ F^\l_1 (u_h,u_k)+  F^\l_2 (u_h,u_k)+  F^\l_3 (u_h,u_k)]  \cr & \le
   \l C  \left (   \sum_{|k \l| \le h_0}   \|u_k^\perp\|^2 +    \sum_{|k \l| > h_0}   \|u_k\|^2  \right )\cr &
   + \l C \sum_{|k \l| \le h_0}    \sum_{|h \l|  \le  h_0, h \neq k}        
   \frac {1} {(1+ |k-h|)^3}   
       |\a_k|  \|u_h^\perp\| \cr & + 
     \l C       \sum_{|k \l| \le h_0}    \sum_{|h \l|  >  h_0}      
   \frac {1} {(1+ |k-h|)^3}  |\a_k|        \|u_h \| \cr & +
   \l        C  \sum_{|k \l| > h_0}    \sum_{|h \l|  \le  h_0}        
   \frac {1} {(1+ |k-h|)^3}    |\a_h|  \|u_k\|. 
 \end {split}
 \end {equation}     
  Define
 $$ b_h = \left \{ \begin {split}  &   \|u_h^\perp \|, \quad  |h\l| \le h_0\cr &
  \|u_h \|, \quad  |h\l| > h_0. \end {split} \right .$$
 Adding to the  \eqref {dom1} the terms  on the diagonal, i.e $F^\l_i (u_k,u_k)$, for $i=1,2,3$,    we get 
  \begin {equation} \label {dom2}    \begin {split}  
 & \left |\sum_{k, h }  [ F^\l_1 (u_h,u_k)+  F^\l_2 (u_h,u_k)+  F^\l_3 (u_h,u_k)] \right |  \cr & \le
   \l C  \left (   \sum_{|k \l| \le h_0}   \|u_k^\perp\|^2 +    \sum_{|k \l| > h_0}   \|u_k\|^2  \right )\cr &
   + \l C\sum_{|k \l| \le h_0}    \sum_{h}        
   \frac {1} {(1+ |k-h|)^3}   
       |\a_k|  b_h.
        \end {split}
 \end {equation}     
By Schwartz
\begin {equation} \label {giov1}    \begin {split}   & \sum_{|k \l| \le h_0}    \sum_{h}        
   \frac {1} {(1+ |k-h|)^3}   
       |\a_k|  b_h \le \sqrt {   \sum_{|k \l| \le h_0}    \sum_{h}        
   \frac {1} {(1+ |k-h|)^3}   
       |\a_k|^2 } \sqrt {  \sum_{|k \l| \le h_0}    \sum_{h}        
   \frac {1} {(1+ |k-h|)^3}   
       b_h^2   }  \cr &  \le 
       C    \sqrt {   \sum_{|k \l| \le h_0}     
       |\a_k|^2 }  \sqrt {  \sum_{h}       b_h^2}  \le  C \sqrt {  \sum_{h}       b_h^2}.
       \end {split}
 \end {equation}     
        Taking into account \eqref {ele1} and \eqref {ele11}   and estimates
 \eqref {d100}, \eqref {d101}  and \eqref {dom1}  we have   
 \begin {equation} \label {d010}    \begin {split}   & \langle V, \A V \rangle  -  \langle V,R^\l_0 V \rangle    =   \sum_{k, h}    \left  \{    \d_ {h,k} \langle u_h,     \LL^{k\l} u_k \rangle  
+         F^\l_1 (u_h,u_k)  +  F^\l_2 (u_h,u_k) + F^\l_3 (u_h,u_k)\right \} \cr & \ge
  \sum_{|k| \le \frac {h_0} \l }      \mu_0^{k\l} \a_k^2   +   [D-C\l]  \sum_{|k| \le \frac {h_0} \l }       \|u_k^\perp\|^2   +   [\nu-\l C]  \sum_{|k| > \frac {h_0} \l }         \|u_k\|^2  -
   \l    \sqrt {  \sum_{h}       b_h^2}  
    \end {split}
 \end {equation}
   Since  $  \langle V, \A V \rangle \le C \l^2$,  see \eqref {gi8},   and    $|\langle V,R^\l_0 V \rangle| \le C \l^2$, see    \eqref {ele11},  
  taking $C \le   \min\{ [D-C\l],  [\nu-\l C]\}$ we get 
 \begin {equation} \label {d011}    \begin {split}   &   C \l^2    
    \ge
     \sum_{|k| \le \frac {h_0} \l }      \mu_0^{k\l} \a_k^2   + C \sum_{k}  b_k^2   -
   \l    \sqrt {  \sum_k     b_k^2}  \cr &\ge C   \sum_{k}  b_k^2   -
   \l    \sqrt {  \sum_k     b_k^2}. 
    \end {split}
 \end {equation}
 In the last inequality we use that $ \mu_0^{k\l} > \mu_0^0>0$, see  Proposition \ref {tu2} and  Theorem \ref {81}. 
 Therefore
  \begin {equation} \label {d012}    \begin {split}   & 
  C \l^2 +   \l    \sqrt {  \sum_k     b_k^2} \ge   C  \sum_{k}  b_k^2. 
      \end {split}
 \end {equation}
 This inequality immediately implies
 \begin {equation} \label {d013}     \sqrt {  \sum_k     b_k^2}  \le C \l.   \end {equation}
   Lower bounding \eqref {d010}  by \eqref  {d013} 
 we get
  \begin {equation} \label {d020}    \begin {split}   &   C \l^2    
    \ge
    \sum_{|k| \le \frac {h_0} \l }      \mu_0^{k\l} \a_k^2   +    [D-C\l]  \sum_{|k| \le \frac {h_0} \l }      \|u_k^\perp\|^2   +   [\nu-\l C]   \sum_{|k| > \frac {h_0} \l }        \|u_k\|^2. 
    \end {split}
 \end {equation}
We  get  similar estimates as  in Subsection 6.1 (Toy  model). This implies 
   \begin {equation} \label {f0} \sum_{|k| \le \frac {h_0} \l }      \mu_0^{k\l} \a_k^2  \le  C \l^2,  \end {equation}
     \begin {equation} \label {f1}   \sum_{|k \l|  \le  h_0, } \|u_k^\perp\|^2 \le C \l^2,  \end {equation}
     \begin {equation} \label {f2}   \sum_{|k \l|  >  h_0, } \|u_k\|^2 \le C \l^2.  \end {equation}
 
We  therefore define 
\begin {equation} \label {f3}  Z(s) =  \sum_{|k| \le \frac {h_0} \l }   e^{iks} \a_k  \end {equation}
\begin {equation} \label {f4}  V^R(s,z)= \sum_{|k| \le  \frac {h_0} \l } e^{i  k  s}  u_k^\perp (z)+  \sum_{|k| > \frac {h_0} \l } e^{i  k s}  u_k(z)  +    \sum_{|k| \le  \frac {h_0} \l } e^{i   k  s}  \a_k      [\psi_0^{k\l}(z)-  \psi_0^0(z)].   \end {equation}
Then
\begin {equation} \label {f5}  V(s,z)=   Z(s)  \psi_0^0(z) +   V^R(s,z)   \end {equation}
and, proceeding as in Subsection 6.1,  the requirements \eqref {gi8b} hold. 
        \qed

\section {Proof of Theorem \ref {86}}

\noindent  \proof  If $v=0$ then the assertion of the theorem  holds. 
So  let $v$ be  non identically equal to zero and assume  that $\int_\Omega v^2 (\xi) d\xi =1$. 
Proceeding as in the proof of Theorem \ref {82} there exists $\bar k \in \{0, \dots N \}$, with $N= [ \frac 1 \l ]$, and   cut off functions $ \eta_1= \eta^{ \bar k}_1$ and   $\eta_2 =  1- \eta^{\bar k}_1$ 
so that 
\begin {equation} \label {gch2a}  \begin {split}     \int_{ \Om}  ( A^\l_{m_A} v)  (\xi) v(\xi) \dha    \xi &   = \int_{ \Om}  ( A^\l_{m_A} \eta_1 v)  (\xi)  \eta_1 (\xi) v(\xi) \dha    \xi   \cr & +
 \int_{ \Om}  ( A^\l_{m_A} \eta_2 v)  (\xi)  \eta_2 (\xi) v(\xi) \dha    \xi   \cr &  -
2 \int_{ \Om} \dha    \xi  \eta_1 (\xi) v  (\xi) (J^{\l} \star  \eta_2 v)(\xi),
\end {split}
\end {equation}  
\begin {equation} \label {GS2} \int_{ \Om}  ( A^\l_{m_A} \eta_2 v)  (\xi)  \eta_2 (\xi) v(\xi) \dha    \xi   \ge (C^*-1) \| \eta_2   v\|_{L^2 (\Om)} >0, \end {equation}
and
\begin {equation} \label {GS1} \left |2 \int_{ \Om} \dha    \xi  \eta_1 (\xi) v  (\xi) (J^{\l} \star  \eta_2 v)(\xi) \right  | \le \l^2  C \| v\|^2_{L^2 (\Om)}. \end {equation}
 Without loss of generality we  keep on denoting by   $\NN(d_0)$  the set   where    $ \eta_1(\xi)=1$.  The 
 \eqref {v1},  \eqref {GS2} and  \eqref {GS1}  imply  that 
\begin {equation} \label {S.201} \int_{\NN(d_0)} \left ( A^\l_{m_A} \eta_1 (\xi) v  (\xi) \right ) \eta_1 (\xi) v  (\xi)   d\xi  \le C\l ^2  -   \int_{\Om \setminus \NN(d_0) }  (A^\l_{m_A}  \eta_2v ) (\xi)  \eta_2 v(\xi)\dha    \xi  \le  C \l^2.
  \end {equation}
   By  \eqref {v1}  and \eqref {GS1}
  \begin {equation} \label {D.13}  \int_{\Omega \setminus \NN(d_0)}  (A^\l_{m_A}\eta_2 v)  (\xi) v(\xi) d\xi   \le C\l^2 -  \int_{\NN(d_0)}  
\left (A^\l_{m_A}   \eta^{N}_1 (\xi) v  (\xi) \right )  \eta_1 (\xi)v(\xi) d\xi.     \end {equation} 
 By  Lemma \ref {F2} and Lemma \ref {M2} we have that 
  \begin {equation} \label {D.12} \int_{\NN(d_0) }  (A^\l_{m_A}  \eta_1v ) (\xi)  \eta_1 v(\xi)\dha    \xi  \ge - C \l^2 \int_\Om v^2  (\xi) d\xi = -C \l^2.   \end {equation}
Then, from \eqref {D.13}, taking into account \eqref {D.12} we obtain
 \begin {equation} \label {D.14} \int_{\Omega \setminus \NN(d_0)}   (A^\l_{m_A}  \eta_1v ) (\xi)  v(\xi) d\xi   \le  C\l^2. \end {equation} 
This together with \eqref {GS2} implies
  \begin {equation} \label {S.4}\int_{\Omega
\setminus \NN(d_0)} v^2(\xi)\dha     \xi   \le C \l^2, \end {equation} 
hence
  \begin {equation} \label {S.5}\int_{  \NN(d_0)} v^2(\xi)\dha     \xi   \ge 1-  C \l^2.   \end {equation} 
Therefore, see \eqref {S.201} and \eqref {S.5},  we can apply Theorem  \ref {g10} 
  decomposing $v$ as 
 \begin {equation} \label {S.5a} v(r,s) =    Z (s) \frac 1 {\sqrt {\a (s,r)} }    \frac 1 {\sqrt \l} \psi^0_0 (\frac r \l)+   v^R(s,r),
 \end {equation} 
 where $\psi^0_0 (\cdot)$ is the first eigenvalue of $ \LL^0$, see Theorem \ref {81}, 
with 
  \begin {equation} \label {D.600} \| v^{R} \|^2_{L^2(\NN(d_0)} \le \l^2 C, \end {equation} 
  and  \begin {equation} \label {fusco.2aa}   1- C \l^2 \le  \| Z \|^2_{L^2 (T)}  \le1, \qquad  \|\nabla Z \|_{L^2 (T)}   \le C.
 \end {equation}
Set  
\begin {equation} \label {D.5}   \Delta  w  = v.   \end {equation}
 Denote 
$$ \hat w = \frac 1 {\sqrt \l} w.  $$
Then,  from \eqref  {S.5a} 
 $$   \Delta   \hat w  = \frac 1 {\sqrt \l} \left [   Z  (s) \frac 1 {\sqrt {\a (s,r)} }    \frac 1 {\sqrt \l} \psi^0_0(\frac r \l)+   v^R(s,r) \right ].
  $$ 
To show \eqref {S.200}  it is enough to prove that 
\begin {equation} \label {D.51}  \|\nabla \hat  w \|_{L^2(\Omega)} \ge C  \end {equation}
for some positive $C$ independent on $\l$. 
Let $ \d \in (0, \frac 1 2 ] $ a small constant to be determined and let 
$\chi \in C^\infty_0 (\R) $ be  a
cut-off function,  such that
\begin {equation} \label {D.9}   \chi (x) =1 \qquad \hbox {if} \qquad |x|  \le \frac 12, \qquad
\qquad   \chi (x) =0 \qquad
\hbox {if} \qquad |x|>1,\qquad \qquad  x \chi'(x) \le 0 \qquad \hbox {in}
\qquad  \R.
\end {equation}
Set $\chi^\d (x)= \chi (\frac x \d)$, with $\chi $ as in \eqref {D.9}. 
We have
\begin {equation} \label {D.10}\begin {split} & \int_{\NN(d_0)} Z (s(\xi)) \chi^\d (r(\xi,\G)) \Delta 
 \hat w (\xi)  d \xi \cr & =  \int_{ T \times [-\d, \d] } Z (s) \chi^\d (r) \left \{ 
   Z (s) \frac 1 {\sqrt {\a (s,r)} }    \frac 1 {  \l} \psi^0_0 (\frac r \l)+   \frac 1 {\sqrt \l} v^R(s,r)  \right \} \a(s,r)\dha    r\dha    s.   \end {split} \end {equation}
By  \eqref {8.4},  for $ \d >\l$,  $\int_{  [-\d, \d] }
   \frac 1 {  \l} \psi^0_0 (\frac r \l) \dha     r  \ge   2 \bar m (\frac \d \l) + Ce^{-\a \frac \d \l} \ge C $. This, together      with      \eqref {fusco.2aa},  allows to lower bound  
 \begin {equation} \label {D.11}\begin {split}& \int_{ T \times [-\d, \d] }   \sqrt {\a(r,s)} Z^2 (s) \chi^\d (r)      \frac 1 {  \l} \psi^0_0 (\frac r \l) \dha     r\dha     s  \cr & \ge C \inf_{\{(r,s) \in  T \times [-\d, \d]  \}}  \sqrt {\a(s,r)} \|
Z^2\|^2_{L^2(T)}  \int_{  [-\d, \d] }
   \frac 1 {  \l} \psi^0_0(\frac r \l) \dha     r   \ge C \|
Z^2\|^2_{L^2(T)}  \ge C (1-\l^2). 
\end {split} \end {equation}
 By Schwartz inequality
 \begin {equation} \label {D.120}\begin {split}&   \left | \frac 1 {\sqrt \l} \int_{ T \times [-\d, \d] } Z (s) \chi^\d(r)  
   v^{R} (r,s) \a (r,s)\dha     r\dha     s \right |  \cr &  \le 
\frac 1 {\sqrt \l} \| v^{R}\|_{L^2(\NN(\d))} \left ( \int_{ T \times [-\d, \d] }   \a (r,s) Z^2 (s) [ 
\chi^\d (r) ]^2\dha     r\dha     s  \right )^{\frac 12 }  \cr & \le  \frac 1 {\sqrt \l} \| v^{R}\|_{L^2(\NN(\d))} 
\left ( \sup_{\{(r,s) \in  T \times  [-\d, \d]   \}} |\a (r,s)|\right )^{\frac 12 }  \|Z\|_{L^2(T)} \d^{\frac 12 } \le
C \d^{\frac 12} \l^{\frac 12}.  \end {split} \end {equation}
 The last inequality  is obtained applying \eqref {D.600}. Then from \eqref {D.10}, \eqref {D.11} and \eqref {D.120} we obtain  
\begin {equation} \label {D.130} \int_{  \NN(d_0)} Z (s(\xi)) \chi^\d (r(\xi,\G))  \Delta \hat w   (\xi)    \dha     \xi    \ge C(1- \l^2) - C\d^{\frac 12} \l^{\frac 12}.  
  \end {equation}
 On the other hand  we can calculate
\begin {equation} \label {D.18}\begin {split}& \int_{\NN(d_0)} Z(s(\xi)) \chi^\d (r(\xi,\G))  \Delta  \hat w  (\xi)   \dha     \xi \cr & = -
\int_{\NN(d_0)} \nabla \left \{  Z(s(\xi)) \chi^\d (r(\xi,\G)) \right \}  \cdot \left  \{
\nabla \hat w  (\xi) \right \}  \dha     \xi \cr & =
 -
\int_{\NN(d_0)}\left \{  \nabla  Z(s(\xi)) \chi^\d (r(\xi,\G)) + Z (s(\xi)) \nabla \chi^\d ( 
r(\xi,\G))  \right \}  \cdot  \nabla \hat w  (\xi)  \dha     \xi \cr &
  \le   \|\nabla \hat w \|_{L^2(\NN(\d))} \left \{ \|\nabla  Z\|_{L^2(T)} \d^{\frac 12}+ \sup_{r \in [-\d, \d]}  |\nabla
\chi^\d (r)|  \d^{\frac 12}\|   Z \|_{L^2(T)}  \right \}  \cr & \le \|\nabla \hat w \|_{L^2(\NN(\d))} \left [ \d^{\frac 12}+
\d^{-\frac 12}\right ]C,   \end {split} \end {equation}
where we   estimated    $ \|   Z \|_{L^2(T)}$ and $\|\nabla  Z \|_{L^2(T)} $,  as in    \eqref {fusco.2aa} and 
$$\left ( \int_{\NN(d_0)} (\nabla  Z (s(\xi))^2 \chi^\d (r(\xi,\G)) \dha     \xi  \right )^{\frac 12} \le C  \|  \nabla  Z\|_{L^2(T)} \d^{\frac 12}. $$
Combining  \eqref {D.18} with the estimates \eqref {D.130} we obtain  for $\delta$ small enough
$$\|\nabla \hat w \|_{L^2(\NN(\d))} \ge \frac { C(1- \l^2) - \d^{\frac 12}  \l^{\frac 12}} { \left [ \d^{\frac 12}+
\d^{-\frac 12}\right ] }  \ge C >0, $$ 
hence \eqref {D.51}. The  theorem is proved.
\qed

\section {Appendix}
 
 \vskip0.5cm
\begin {lem} \label  {exp}   Let $ \mu (s)$ be  any eigenvalue of $\LL^s$ such that 
 \begin{equation}   \label {exp.1}    
\e_0=  \frac 1 {\s(m_\b)} -1-  \sup_{s \in T} \mu(s) >0. 
\end {equation}
Let $ \psi (s, \cdot)$ be   one of the  normalized   eigenfunctions  corresponding to $ \mu(s)$. 
There exists   $z_0= z_0(\e_0)>0$ and  $ \l_0 \equiv \l_0 (\e_0) >0$ such that  for $ \l \in (0, \l_0]$,  we have
that for  $|z| \ge z_0$, for all $s \in T$
\begin{equation}   \label {exp.2} |\psi (s,z) | \le  e^{-\alpha (\e_0)(|z| - z_0 )} \|\bar J\|_2,
\end {equation}
where $\alpha (\e_0)$ is given in \eqref  {exp.10}.
\end {lem}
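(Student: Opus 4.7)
The plan is to start from the eigenvalue equation $\LL^s \psi(s,\cdot) = \mu(s)\psi(s,\cdot)$, which we rewrite as
\begin{equation*}
\psi(s,z)\left[\frac{1}{\s(m_A(s,\l z))} - \mu(s)\right] = (\bar J \star_{I_\l, z} \psi)(s,z).
\end{equation*}
The key observation is that since $m_A(s,\l z) \to \pm m_\b$ as $|z|\to\infty$ (away from the interface $\bar m \approx \pm m_\b$ by \eqref{decay}, and the $\l$-corrections in \eqref{8.16} vanish uniformly), we have $\frac{1}{\s(m_A(s,\l z))} \to \frac{1}{\s(m_\b)}$ uniformly in $s$. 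Combined with hypothesis \eqref{exp.1}, I can choose $z_0=z_0(\e_0)$ and $\l_0=\l_0(\e_0)$ small enough so that for $\l\le\l_0$, $|z|\ge z_0$ and every $s\in T$,
\begin{equation*}
\frac{1}{\s(m_A(s,\l z))} - \mu(s) \ge 1 + \frac{\e_0}{2}.
\end{equation*}

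This allows me to solve for $\psi$ and obtain the contraction-type bound
\begin{equation*}
|\psi(s,z)| \le \frac{1}{1+\e_0/2}\, (\bar J \star_{I_\l, z} |\psi|)(s,z), \qquad |z|\ge z_0, \; s\in T.
\end{equation*}
Since $\bar J$ has support in $[-1,1]$, iterating this inequality $n$ times transports the point $z = z_0+n$ (for positive integer $n$) into the region $[-z_0,z_0]$ only after $n$ convolutions. By Young's inequality applied to the final step, together with $\|\bar J\|_1 = 1$ and $\|\psi(s,\cdot)\|_2 = 1$,
\begin{equation*}
|\psi(s,z_0+n)| \le \left(\frac{1}{1+\e_0/2}\right)^n \|\bar J\|_2 \|\bar J\|_1^{n-1} \|\psi(s,\cdot)\|_2 = \left(\frac{1}{1+\e_0/2}\right)^n \|\bar J\|_2.
\end{equation*}
Writing $n = |z|-z_0$ and setting
\begin{equation} \label{exp.10}
\alpha(\e_0) = \log\!\left(1+\frac{\e_0}{2}\right),
\end{equation}
gives the desired bound \eqref{exp.2}, with the analogous argument on the negative side $z\le -z_0$.

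The main technical obstacle is the choice of $z_0$ and $\l_0$ ensuring $\frac{1}{\s(m_A)} - \mu(s) \ge 1+\e_0/2$ uniformly in $s$: one needs both that $\s(\bar m(z))$ is close enough to $\s(m_\b)$ for $|z|\ge z_0$ (which is handled via \eqref{decay}, since $m_\b^2 - \bar m^2(z) \le c e^{-\a |z|}$) and that the $\l$-dependent perturbations $h_1(z)g(s) + \phi(s,\l z) + \l q^\l$ in \eqref{8.16} are controlled uniformly; the point-wise bounds \eqref{2.5} together with $h_1(z)\to 0$ exponentially take care of this. Beyond that obstacle, the iteration is completely standard and the interaction with the boundary of $I_\l$ is harmless because we only iterate until the argument enters $[-z_0,z_0]\subset I_\l$ (valid once $\l_0$ is small enough so that $2z_0\in I_\l$).
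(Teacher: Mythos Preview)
Your proof is correct and follows essentially the same route as the paper's: rewrite the eigenvalue equation as $\psi = A^\l (\bar J \star_{I_\l} \psi)$ with a contraction factor $A^\l < 1$ for $|z|\ge z_0$, iterate $n$ times from $z=z_0+n$, and close with Young/Cauchy--Schwarz to pick up the single factor $\|\bar J\|_2$. The only cosmetic difference is bookkeeping of constants: the paper first bounds the unperturbed factor $A^0(s,z)=\s(\bar m)/(1-\mu(s)\s(\bar m))<1-\e_1$ for some $\e_1=\e_1(\e_0)$, then absorbs the $O(\l)$ perturbation to get $A^\l<1-\e_1/2$, arriving at $\alpha(\e_0)=\log\frac{1}{1-\e_1/2}$, whereas you go directly to $\frac{1}{\s(m_A)}-\mu(s)\ge 1+\e_0/2$ and obtain $\alpha(\e_0)=\log(1+\e_0/2)$; both are equivalent. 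One small remark: your sentence about the iteration ``transporting the point into $[-z_0,z_0]$'' is slightly misleading --- the support of the $k$-fold convolution spreads symmetrically to $[z_0+n-k,\,z_0+n+k]\cap I_\l$ --- but the conclusion (iterate $n$ times, then stop) is exactly right.
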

\begin {proof} 
Let  $ \mu (s)$ and $\psi (s, \cdot)$ be as in the hypothesis, then 
\begin{equation}   \label {ag.1}  \frac {\psi (s,z)} { \s(m_A(s, \l z))}- (\bar J \star_z \psi) (s,z) = \mu (s) \psi (s,z) \qquad z \in  I_\l.   \end {equation}
By the definition of $m_A$, see \eqref {8.16} and \eqref {2.5}, there exists $C>0$ so that
\begin{equation}   \label {rm.1} \sup_{s \in T}  | \s(m_A(s, \l z))- \s(\bar m(z))| \le \l C.\end {equation}
 Since  $ \lim_{|z| \to \infty} \s(\bar m(z)) =\s(m_\b) $, there exists    $z_0= z_0(\e_0)>0$ so that  for $ |z| \ge z_0$ 
 \begin{equation}   \label {rm.2}   
 \frac 1 {\s(\bar m (z))} -1 >\sup_{s \in T} \mu(s)  + \frac {\e_0} 2.   
\end {equation} 
 Set  for $ |z| \ge z_0 $ 
$$ A^0(s,z)= \frac 1 { \frac 1{ \s(\bar m (z))}-\mu(s) }=    \frac { \s(\bar m (z))} {1-\mu(s)  \s(\bar m (z)) }.  $$
By \eqref {rm.2}  there exists $ \e_1  = \e_1 (\e_0)$ so that   for all $s \in T$ 
 \begin{equation}   \label {rm.4}   0<A^0(s,z)<1- \e_1 , \qquad   |z| \ge z_0. \end {equation}
Set  for $ |z| \ge z_0 $ 
$$ A^\l (s,z)= \frac 1 { \frac 1{ \s(m_A(s, \l z))}-\mu(s) }=    \frac { \s(m_A(s, \l z))} {1-\mu(s)  \s(m_A(s, \l z)) } .$$
   By \eqref {rm.1} we have that
\begin{equation}   \label {rm.5}  | A^\l (s,z)- A^0(s,z)| \le C \l,  \quad \forall s \in T.\end {equation}
Choose then $ \l_0= \l_0(\e_0) $  small enough so that
that for $
\l
\le
\l_0$,  
$z_0
< \frac 1
{2 \l} $  and 
\begin{equation}   \label {rm.3}  | A^\l (s,z)- A^0(s,z)| \le \frac{ \e_1} 2  \quad \forall s \in T. \end {equation}
 By \eqref {rm.4}  and \eqref  {rm.3} we have that 
 \begin{equation}   \label {rm.6}  A^\l (s,z) 
 \le  A^0(s,z)+ \frac{ \e_1} 2 <  1-  \frac{ \e_1} 2, \qquad \forall s \in T, \qquad \forall |z| \ge z_0.  \end {equation}
From \eqref {ag.1}
 \begin{equation}   \label {L.1}  \psi (s,z)= A^\l(s,z) (\bar J \star_z \psi) (s,z).  \end {equation}
    Suppose    $z=z_0+n$ where 
$n$ is any integer  such that
$ z_0 +2n
\le
\frac 1
\l$.  Same can be done
when
$z<0$ and by simple interpolation argument for any $z \in [z_0+n, z_0+n+1)$.   We have, see
\eqref {L.1},
 \begin{equation}   \label {exp.4}  |\psi (s,z_0  +n)|  \le A^\l (s,z_0+n)  |(\bar J \star_z \psi) (s,z_0+ n)|.  
  \end {equation}
We iterate  $n$ times \eqref {exp.4}. The support of $n$ fold convolution is  the interval $[ z_0,
z_0+2n] $. By \eqref {rm.6}   we obtain the following estimate

\begin{equation}     \begin {split} &      |\psi ( s,z_0+n)|  \le [1- \frac {\e_1} 2 ]^n  |((\bar J)^n \star_z \psi )(s,z_0+n)|  \cr & \le   [1- \frac {\e_1} 2 ]^n \|(\bar J)^n\|_2 \|\psi (s, \cdot)
\|_2=  [1- \frac {\e_1} 2 ]^n \|\bar J \|_2  = e^{-n \a(\e_0)}  \|\bar J\|_2  
\end {split} \end {equation}
where, since $\e_1= \e_1(\e_0)$,
  \begin{equation}   \label {exp.10} \a(\e_0) = \log \frac 1 { [1- \frac {\e_1} 2]  }  \end {equation}
 The thesis follows.  \end {proof}
 
 \vskip0.5cm
  {\bf Proof of Lemma \ref {M0}} Take   $ \xi$ and $\xi'$ in  $\NN (d_0)$.  Write 
  in     local  variables   $  \xi =  \g(s) + \nu (s) r$ and  $    \xi' =  \g(s') + \nu (s') r'$.
  It is convenient to  express the  difference
     $$  \xi - \xi' =  \g(s) + \nu (s) r -[ \g(s') + \nu (s') r'] $$ in term of $s^*= \frac {s+s'} 2$ and $r^*= \frac {r+r'} 2$, the middle points  between $s$ and $s'$ and $r$ and $r'$ respectively.
     This allows to  get  some cancellations. 
  We have 
       $$\g(s)= \g(s^*) + \g'(s^*)(s-s^*) + \frac 12 \g''(s^*)(s-s^*)^2 + \frac 16 \g'''( \tilde s)(s-s^*)^3,   $$
 $$\nu (s)= \nu(s^*) + \nu'(s^*)(s-s^*) + \frac 12 \nu''(s^*)(s-s^*)^2 + \frac 16 \nu'''( \tilde s )(s-s^*)^3   
         $$
 where $\tilde s \in (s,s')$ and it might change from one occurrence to the other. Similarly  expressions hold for $\g(s')$ and $\nu (s')$.
  Since $s-s^*= \frac {s-s'}2$  and $s'-s^*= \frac {s'-s}2$
 we have  $$ \g(s)- \g(s')=  \g'(s^*)(s-s')  +    \frac 1{24} \g'''(\tilde s)(s-s')^3 .$$
Note that the term 
 $$\frac 12 \g''(s^*)(s-s^*)^2- \frac 12 \g''(s^*)(s'-s^*)^2 =0 .$$
 Further
  \begin {equation}     \begin {split}    \nu (s)r - \nu (s')r'     & =
   \nu(s^*) [r-r'] +  \nu'(s^*) (s-s') r^* \cr & + \frac 18 \nu''(s^*)(s-s')^2 (r-r')   +  \frac 1 {24} \nu'''( \tilde s)(s-s')^3  r^*.
    \end {split}     \end {equation}
 Taking into account that      $\nu' (s)= -k(s) \g'(s)$, we have 
      \begin {equation}    \label {m2}  \begin {split}   \xi - \xi' = &
       \g'(s^*) (s-s')[1-  k(s^*)r^*] + \nu (s^*) (r-r')   \cr &+
      \frac 18    \nu'' (s^*) (s-s')^2 (r-r')   
   \cr & +  \frac 1 {24} \nu'''(  \tilde s )(s-s')^3 r^*  +\frac 1{24} \g'''(\tilde s)(s-s')^3.   
      \end {split}     \end {equation}
Denote by $ a$ and $b$ the following vectors
$$ a=      \g'(s^*) (s-s')[1-  k(s^*)r^*] + \nu (s^*) (r-r'),  $$
$$  b=      \frac 18   \nu'' (s^*)(s-s')^2 (r-r')   
    +  \frac 1 {24} \left [ \nu'''( \tilde s )(s-s')^3 r^* + \g'''(\tilde s )(s-s')^3 \right ].   
         $$
         It is important to notice that for $|s-s'| \le \l$ and $|r-r'|\le \l$ we have
         $|b| \le C \l^3$.
          By Taylor expansion up to the second order  of $ J^\l (\cdot)$ we get that there exists $c^* \in \R^2$
      so that     
         \begin {equation}    \label {m3}     
      J^\l (\xi  -  \xi')  =      J^\l (a)  +  (\nabla  J^\l) (a) \cdot b + \frac 12   b \cdot  ( D^2J^\l )(c^*)\cdot b,  \end {equation}
             where we denote  
         by  $ (\nabla  J^\l) (a)$ the gradient of $J^\l$  computed in  $a$ and by  $D^2J^\l (c^*)$ the matrix of the second derivatives of $J^\l (\cdot )$ computed  at $c^*$.
        Notice that   $   \|\nabla  J^\l \|_\infty \le  C\l^{-d-1} $,  since $\l^{-d}$ comes from the normalization  and $\l^{-1}$ by differentiating  
       one time,  $   \|D^2J^\l \|_\infty \le  C\l^{-2-d} $,  since $\l^{-d}$ comes from the normalization  and $\l^{-2}$ by differentiating   twice.     When $|s-s'|\le \l$ and   $|r-r'|\le \l$
         we then obtain 
         $$ | (\nabla  J^\l) (a) \cdot b| \le    C\l^{2-d}$$
        $$  | b \cdot  ( D^2J^\l )(c^*)\cdot b | \le C\l^{4-d}. $$
         Define
$$ J^\l (s,s', r, r')= J^\l (a) =  J^\l ( (s-s')[1-  k(s^*)r^*],  (r-r') )  $$
\begin {equation} \label  {sm4} R_1^\l (s,s', r, r') =  \ (\nabla  J^\l) (a) \cdot b   \end {equation}
\begin {equation} \label  {sm5} R_2^\l (s,s', r, r')   =  \frac 12   b \cdot  ( D^2J^\l )(c^*)\cdot b.  \end {equation}
Therefore, see \eqref {se1}, we obtain 
 \begin {equation} \label  {VM1}  \begin {split}   
 \int_{\NN (d_0)} J^\l (\xi-\xi') u(\xi') d \xi'  & =   \int_{ \TT}    J^\l (s,s', r, r')    u(s',r') \a(s',r')  ds' dr'     \cr &  +
 \int_{ \TT} R_1^\l (s,s', r, r')     u(s',r') \a(s',r')  ds' dr'  
 \cr &  +   \int_{ \TT}  R_2^\l (s,s', r, r')    u(s',r') \a(s',r')  ds' dr', 
  \end {split}\end {equation}
  where
  $$ \left |  \int_{ \TT} R_1^\l (s,s', r, r')       \a(s',r')  ds' dr' \right |   \le  C\l^2, $$
  $$  \left |  \int_{ \TT}  R_2^\l (s,s', r, r')       \a(s',r') ds' dr' \right |   \le  C\l^4 .$$
\qed

 \begin{thebibliography}{19}
\small

\bibitem{ABC}N. Alikakos, P.W. Bates, X Chen,
{\it  Convergence of the Cahn-Hilliard equation to the Hele-Shaw
model.} Arch. Rat. Mech. Anal. {\bf 128},  165--205   (1994).

\vskip.3truecm

 \bibitem{AF} N. Alikakos, G. Fusco
   {\it  The spectrum of the Cahn-Hilliard operator for generic interface in higher space dimensions.} 
Indiana University  Math. J. {\bf 42},  No2, 637--674  (1993).

 \vskip.3truecm

\bibitem{CCO1} E. A. Carlen, M. C. Carvalho,  E. Orlandi, {\it 
Algebraic rate of decay for the excess free energy and stability of
fronts for a non--local phase kinetics equation with a conservation
law I 
}
  J. Stat.  Phy.   {\bf 95}  N 5/6,  1069-1117  (1999) 
  \vskip.3truecm
 \bibitem{CCO2}  E. A. Carlen, M. C. Carvalho, E. Orlandi 
{\it  Algebraic rate of decay for the excess free energy and stability of fronts 
for a non--local phase kinetics equation with a conservation law, II}
  Comm. Par. Diff. Eq.  {\bf 25}    N 5/6,  847-886  (2000) 
   \vskip.3truecm
      \bibitem{CCO3} E. A. Carlen, M. C. Carvalho,  E. Orlandi, {\it 
  Approximate Solution of the Cahn-Hilliard Equation via
Corrections to the Mullins-Sekerka Motion}, Arch. Rat. Mech.
Anal., {\bf 178}, 1--55  (2005).
 \vskip.3truecm

   \bibitem{CO}  E. A. Carlen,   E. Orlandi 
{\it   Stability of planar fronts for a non-local phase kinetics equation with a conservation law in $d \ge 3$}
Reviews in Mathematical Physics
Vol. 24,  {\bf 4} (2012)   
   \vskip.3truecm

  \bibitem{Chen}  Xinfu Chen
   {\it  Spectrum  for the Allen- Cahn, Cahn-Hilliard, and phase-field  equations  for generic interfaces.} 
Comm.  Partial Differential Equations {\bf 19},  No7-8,  1371--1395  (1994).

  

  \vskip.3truecm
 \bibitem{DOPTE}A. De Masi, E. Orlandi, E. Presutti,  L.
 Triolo, {\it  Stability of the interface in a model of phase
 separation}  Proc.Royal Soc. Edinburgh  {\bf 124A} 1013-1022 (1994).

\vskip.3truecm
\bibitem{CI} 
    O.  Cevik, Z. Ismailov, {\it   Spectrum of the direct sum of operators} Electron. J. Differential Equations, No. 210, 8 pp.  
     (2012)
 \vskip.3truecm
\bibitem{GL1} G. Giacomin,  J. Lebowitz, 
{\it  Phase segregation dynamics in particle systems with long range
interactions I: macroscopic limits} J. Stat. Phys.  {\bf
87}, 37--61 (1997).

 \vskip.3truecm
\bibitem{GL2} G. Giacomin,  J. Lebowitz,  {\it   Phase
segregation dynamics in particle systems with long range
interactions II: interface motions} SIAM J. Appl. Math.{\bf 58}, No
6,  1707--1729
(1998).

 \vskip.3truecm
\bibitem{Kato} Tosio Kato
{\it Perturbation Theory
for Linear Operators}
Springer-Verlag, 
Berlin Heidelberg New York 1980

 \vskip.3truecm
\bibitem{LOP} J. Lebowitz , E.Orlandi and  E. Presutti. 
{\it  A particle model for spinodal decomposition}
J. Stat. Phys.   {\bf 63},  933-974,  (1991).  

 \vskip.3truecm

\bibitem{O1} E. Orlandi , {\it Spectral properties of integral operators in bounded intervals}
http://arxiv.org/abs/1411.5221
\vskip.3truecm
\bibitem{P} R.L. Pego,
   {\it  Front migration in the nonlinear Cahn-Hilliard equation.} 
Proc. R. Soc. Lond.A   {\bf 422}, 261--278  (1989).

 \vskip.3truecm
\bibitem{Pr}  E. Presutti,    { \it Scaling Limits In Statistical Mechanics and Microstructures in Continuum Mechanics}  Springer (2009)
 \end {thebibliography}

 \end {document}